\numberwithin{equation}{section}
\DeclareMathAlphabet{\mathcal}{OMS}{cmsy}{m}{n}
\DeclareSymbolFont{largesymbols}{OMX}{cmex}{m}{n}
\newcommand{\tref}[2]{\hyperref[#1]{\textsuperscript{\ref{#1}}\pdfmarkupcomment{\phantom{\ref{#1}}}{#2}}}
\newtheorem{theorem}{Theorem}[section]
\newtheorem{definition}[theorem]{Definition} 
\newtheorem{proposition}[theorem]{Proposition}
\newtheorem{lemma}[theorem]{Lemma}
\newtheorem{corollary}[theorem]{Corollary}
\newtheorem{remark}[theorem]{Remark}
\begin{document}
	
	\title{Logarithmic Laplacian on General Riemannian Manifolds}
	
	\author{Rui Chen}
	\address{School of Mathematical Sciences, Fudan University, Shanghai 200433, China.}
	\email{chenrui23@m.fudan.edu.cn}
	
	\begin{abstract}
		We introduce, for the first time, a Bochner integral formula for the logarithmic Laplacian
		$$
		\log(-\Delta)
		=\int_0^\infty\frac{e^{-t}I \;-\;e^{t\Delta}}{t}\,dt
		$$
		on any complete Riemannian manifold $(M,g)$.  This unified framework recovers the classical pointwise expression on $\mathbb{R}^n$ and allows us to define $\log(-\Delta)$ in both compact and noncompact settings.  Under the mild hypothesis $\mathrm{Ric}_g\ge -k,\:k\ge 0$, we derive explicit pointwise integral formulas for $\log(-\Delta)$, analogous to those for the fractional Laplacian.  We further compare spectral versus heat‐kernel definitions of both fractional and logarithmic Laplacians, showing that their discrepancy is governed by the mass‐loss function and hence by stochastic completeness.  Finally, on real hyperbolic space we exploit sharp heat‐kernel asymptotics to obtain precise estimates for the fractional and logarithmic kernels, identify the optimal pointwise domain for $\log(-\Delta_{\mathbb{H}^n})$, and establish its $L^p$‐continuity.		
	\end{abstract}

	\maketitle
	
	\bigskip
\noindent{\normalfont\small\textbf{Keywords:} Fractional Laplacian, Logarithmic Laplacian, Riemannian Manifolds, Functional Calculus}
	
	\setcounter{tocdepth}{2}
\tableofcontents
	
	\section{Introduction and Main Results}

In recent years, nonlocal operators have emerged at the crossroads of analysis, geometry, probability, and applied mathematics, driving major advances not only in the study of integro‐differential equations \cite{caffarelli2007extension,ros2014dirichlet,chen2019dirichlet,li2023maximum} and  spectral problems \cite{feulefack2022small,laptev2021spectral}, but also in topics as diverse as fractional geometric flows \cite{jin2014fractional}, Lévy‐process dynamics \cite{applebaum2009levy}, nonlocal perimeters \cite{ludwig2014anisotropic}, nonlocal variational methods \cite{li2025existence,servadei2015brezis} and so on.

Such operators arise naturally in probability (as generators of jump processes), in physics (in anomalous diffusion and material science), and in geometry (in fractional Yamabe‐type problems and nonlocal curvature flows \cite{chambolle2015nonlocal}).  Two of the most prominent examples are the fractional Laplacian and the logarithmic Laplacian.

The fractional Laplacian $(-\Delta)^s$, $s\in(0,1)$, has been studied extensively over the last decade. On \(\mathbb{R}^n\), the fractional Laplacian \((-\Delta)^s\), \(s\in(0,1)\), admits several equivalent definitions \cite{kwasnicki2017ten}:
\begin{equation}\label{fours}
	\widehat{(-\Delta)^s u}(\xi)
	=|\xi|^{2s}\,\widehat u(\xi),
\end{equation}
\begin{equation}\label{scka}
	(-\Delta)^s u(x)
	=c_{n,s}\:\mathrm{P.V.}\int_{\mathbb{R}^n}\frac{u(x)-u(y)}{|x-y|^{n+2s}}\,dy,
\end{equation}
\[
(-\Delta)^s u
=\frac{s}{\Gamma(1-s)}\int_0^\infty\bigl(u-e^{t\Delta}u\bigr)\,t^{-1-s}\,dt.
\]
It enjoys a maximum principle \cite{cheng2017maximum,li2023maximum}, sharp regularity estimates \cite{ros2014dirichlet,audrito2020neumann}, and a well‐developed spectral theory \cite{feulefack2022small,temgoua2024eigenvalue}.   Its applications obstacle problems, jump‐process models in finance, image processing algorithms, and the study of nonlocal minimal surfaces.

More recently, the logarithmic Laplacian 
\[
\log(-\Delta)\,u
:=\frac{d}{ds}\Bigl[(-\Delta)^s u\Bigr]_{s=0}
\]
has been introduced in \cite{chen2019dirichlet}, which has the following pointwise representation
	\begingroup\small\begin{equation}\label{duishurn}
	\log \left(-\Delta\right)f(x)  =c_{n}\int_{B_{1}(x)}\frac{f(x)-f(y)}{|x-y|^{n}}\,dy
	-c_{n}\int_{\mathbb{R}^{n}\setminus B_{1}(x)}
	\frac{f(y)}{|x-y|^{n}}\,dy
	+\rho_{n}\,f(x), \quad x\in\mathbb{R}^{n}
\end{equation}\endgroup
where
\begin{equation}\label{cnrn}
	c_{n}:=\pi^{-n/2}\Gamma\!\bigl(\tfrac{n}{2}\bigr)
	=\frac{2}{|S^{n-1}|}, 
	\qquad
	\rho_{n}:=2\log 2+\psi\!\bigl(\tfrac{n}{2}\bigr)-\gamma,           
\end{equation}
\(\gamma=-\Gamma'(1)\) is the Euler–Mascheroni constant, and
\(\psi=\Gamma'/\Gamma\) is the digamma function. 

Unlike the fractional Laplacian, whose Fourier symbol \(\lvert\xi\rvert^{2s}\) is everywhere non‐negative, the logarithmic Laplacian is given by the multiplier
\[
\widehat{\log(-\Delta)\,u}(\xi)
=\log\bigl(\lvert\xi\rvert^2\bigr)\,\widehat u(\xi),
\]
which fails to be positive‐definite.  This sign‐changing symbol destroys the usual coercivity and complicates the analysis.  Recent work has therefore concentrated on the study of PDEs, focusing on existence, uniqueness and regularity of solutions \cite{chen2023extension,hernandez2024optimal,chen2024cauchy}, as well as detailed analysis of spectral properties \cite{laptev2021spectral,chen2023bounds}.

A natural question is how to extend these definitions to a general Riemannian manifold \((M,g)\).  On a compact manifold, such as colsed manifold, the Laplace–Beltrami operator has discrete spectrum, so by functional calculus, one may immediately define
\[
(-\Delta)^s f
=\sum_{j=0}^\infty \lambda_j^s \langle f,\varphi_j\rangle\,\varphi_j,
\qquad
\log(-\Delta)f
=\sum_{j=1}^\infty \log(\lambda_j)\,\langle f,\varphi_j\rangle\,\varphi_j,
\]
where \(\{\varphi_j\}_{j\ge 0}\) is the \(L^2\)-orthonormal basis of eigenfunctions with \(-\Delta\varphi_j=\lambda_j\varphi_j\).  

On noncompact manifolds, the Laplace–Beltrami operator typically has continuous spectrum, so one cannot rely on a discrete eigenfunction expansion to define \((-\Delta)^s\) or \(\log(-\Delta)\).  
Since on a general Riemannian manifold there is no global Fourier transform, one cannot use \eqref{fours} to define \((-\Delta)^s\).  Likewise, simply replacing \(|x-y|\) by the geodesic distance \(d(x,y)\) in \eqref{scka} produces only a formal kernel that fails to capture the true fractional Laplacian on \((M,g)\).
Currently, there are two principal approaches to defining the fractional Laplacian on noncompact manifolds:

\begin{enumerate}
	\item \textbf{Caffarelli–Silvestre extension.}  One realizes \((-\Delta)^s\) as the Dirichlet–to–Neumann map for a degenerate elliptic problem on the product manifold \(M\times(0,\infty)\), thereby reducing nonlocal questions to local ones in one higher dimension \cite{banica2015some}.
	
	\item \textbf{Bochner integral representation.}  Starting from the abstract spectral measure identity
	\[
	(-\Delta)^s
	=\int_0^\infty \lambda^s\,dE(\lambda),
	\]
	one \cite{caselli2023asymptotics,alonso2018integral} rewrites this via the Bochner integral as 
	\[
	(-\Delta)^s f
	=\frac{s}{\Gamma(1-s)}\int_0^\infty\bigl(f-e^{t\Delta}f\bigr)\,t^{-1-s}\,dt.
	\]
\end{enumerate}

Research on the fractional Laplacian on Riemannian manifolds is still in its early stages, but several recent works have begun to address this gap.  Early studies focused primarily on compact or closed manifolds \cite{d2016fractional,feizmohammadi2024fractional,li2024inverse}. More recently, attention has turned to noncompact manifolds \cite{bhowmik2022extension,papageorgiou2024asymptotic}. On noncompact spaces, most analyses rely on the Caffarelli–Silvestre extension definition of \((-\Delta)^s\), which has proven effective in establishing existence, regularity, and spectral properties in this more challenging setting.

To date, there are no prior works defining or analyzing the logarithmic Laplacian on a general Riemannian manifold.  One conceivable approach would be to adapt the Caffarelli–Silvestre extension method, but even in \(\mathbb{R}^n\) the extension problem for \(\log(-\Delta)\) is substantially more intricate than for \((-\Delta)^s\) \cite{chen2023extension}; carrying it over to a curved background poses formidable technical challenges.  

A more natural strategy is to fall back on functional‐calculus definitions:
\[
\log(-\Delta)
=\int_0^\infty\log(\lambda)\,dE(\lambda),
\]
but the critical obstacle is to connect this abstract spectral measure to the heat kernel via an effective “logarithmic Bochner’’ integral formula.  Without this representation, one cannot derive concrete pointwise kernels or exploit heat‐kernel asymptotics for fine analysis.  

In this paper we provide a unified definition of the logarithmic Laplacian \(\log(-\Delta)\) on both compact and noncompact complete Riemannian manifolds \((M,g)\) by functional calculus.  On a compact manifold, such as closed manifold one may immediately set
\[
\log(-\Delta)f
=\sum_{j=1}^\infty\log(\lambda_j)\,\langle f,\varphi_j\rangle\,\varphi_j,
\]
using the discrete eigenbasis \(\{\varphi_j\}\) of \(-\Delta\).

For a noncompact, complete manifold $M$, there is no discrete eigenbasis with which to form a spectral sum.  Instead, we introduce—in full generality for any positive operator  \footnote{By “positive operator” we mean a self–adjoint operator whose spectrum is contained in \([0,\infty)\).}  $-\Delta$ on $L^2(M,\mu)$, for example on $M=\mathbb{R}^n$ or $\mathbb{H}^n$, the following Bochner integral formula:
$$
\log(-\Delta)
=\int_0^\infty\frac{e^{-t}I - e^{t\Delta}}{t}\,dt
\quad\text{in }L^2(M).
$$
This representation arises naturally from the so-called Frullani integral for the logarithm,
$$
\log\lambda
=\int_0^\infty\frac{e^{-t}-e^{-\lambda t}}{t}\,dt,
$$
which one finds in standard references such as \cite[pp.363]{gradshteyn2014table}\cite{kwasnicki2017ten}. We chose this particular form because it intertwines perfectly with the heat semigroup $e^{t\Delta}$, yielding a kernel amenable to further analysis. In Theorem~\ref{logrn11}, we show that on \(M=\mathbb{R}^n\) this Bochner formula exactly recovers the classical pointwise integral kernel for \(\log(-\Delta)\), thereby confirming its validity and optimality.  The proof is quite involved, requiring a deep understanding of the heat‐kernel representation and intricate asymptotic and integral estimates.

	\begin{theorem}[Bochner formula for \(\log(-\Delta)\)]  \label{logboch11}
	For every \(f\in H^{\log}(M)\),
	\[
	\log(-\Delta)\,f
	:=\int_{0}^{\infty}\!\int_{0}^{\infty}\frac{e^{-t}-e^{-t\lambda}}{t}\,dt\;\,dE(\lambda)\,f
	=\int_{0}^{\infty}\frac{e^{-t}f - e^{t\Delta}f}{t}\,dt,
	\]
	where the Bochner integral converges in \(L^2(M,\mu)\),\: $H^{\log}(M)$ is defined in Proposition \ref{logarith}.
\end{theorem}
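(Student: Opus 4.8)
The plan is to read the displayed chain of equalities as follows: the inner $t$–integral is nothing but the Frullani representation of $\log\lambda$, so the left–hand side is, by definition, the functional calculus $\log(-\Delta)f=\int_0^\infty\log\lambda\,dE(\lambda)f$; the content of the theorem is then the \emph{second} equality, which is a Fubini statement interchanging the Lebesgue integral $\int_0^\infty(\cdot)\,dt$ with the spectral integral $\int_0^\infty(\cdot)\,dE(\lambda)$. The two scalar inputs I would record first are: for every $\lambda>0$, $\int_0^\infty\frac{e^{-t}-e^{-t\lambda}}{t}\,dt=\log\lambda$ (Frullani), and — since for fixed $\lambda$ the map $t\mapsto e^{-t}-e^{-t\lambda}$ does not change sign (nonnegative if $\lambda\ge 1$, nonpositive if $\lambda<1$) — also the absolute version $\int_0^\infty\frac{|e^{-t}-e^{-t\lambda}|}{t}\,dt=|\log\lambda|$. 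By the spectral theorem the left–hand iterated expression therefore equals $\log(-\Delta)f$, and this lies in $L^2(M,\mu)$ precisely because $f\in H^{\log}(M)$, i.e. $\int_0^\infty(\log\lambda)^2\,d\langle E(\lambda)f,f\rangle<\infty$.

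Next I would set $F(t):=t^{-1}\bigl(e^{-t}f-e^{t\Delta}f\bigr)$; using $e^{t\Delta}f=\int_0^\infty e^{-t\lambda}\,dE(\lambda)f$ one has $F(t)=\int_0^\infty\frac{e^{-t}-e^{-t\lambda}}{t}\,dE(\lambda)f$ and $\|F(t)\|_{L^2}^2=\int_0^\infty\bigl(\tfrac{e^{-t}-e^{-t\lambda}}{t}\bigr)^2 d\langle E(\lambda)f,f\rangle$. Strong measurability of $t\mapsto F(t)$ on $(0,\infty)$ is immediate from strong continuity (indeed analyticity) of the heat semigroup. To identify the integral, I would test against an arbitrary $g\in L^2(M,\mu)$: writing $\nu_{f,g}=\langle E(\cdot)f,g\rangle$ and using the standard Cauchy–Schwarz bound $\int h\,d|\nu_{f,g}|\le\bigl(\int h^2\,d\langle E(\cdot)f,f\rangle\bigr)^{1/2}\|g\|_{L^2}$ for $h\ge 0$, Tonelli combined with the absolute Frullani identity gives
\[
\int_0^\infty\!\!\int_0^\infty\frac{|e^{-t}-e^{-t\lambda}|}{t}\,d|\nu_{f,g}|(\lambda)\,dt=\int_0^\infty|\log\lambda|\,d|\nu_{f,g}|(\lambda)\le\|\log(-\Delta)f\|_{L^2}\,\|g\|_{L^2}<\infty .
\]
Hence Fubini applies to the scalar double integral, and
\[
\Bigl\langle\int_0^\infty F(t)\,dt,\,g\Bigr\rangle=\int_0^\infty\langle F(t),g\rangle\,dt=\int_0^\infty\!\!\int_0^\infty\frac{e^{-t}-e^{-t\lambda}}{t}\,d\nu_{f,g}(\lambda)\,dt=\int_0^\infty\log\lambda\,d\nu_{f,g}(\lambda)=\langle\log(-\Delta)f,g\rangle .
\]
As $g$ is arbitrary, $\int_0^\infty F(t)\,dt=\log(-\Delta)f$ in $L^2(M,\mu)$.

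For the genuinely Bochner (rather than merely weak) convergence asserted in the statement I would invoke the precise definition of $H^{\log}(M)$ from Proposition~\ref{logarith}, which is arranged exactly so that $\int_0^\infty\|F(t)\|_{L^2}\,dt<\infty$: on $t\le 1$ one bounds $\tfrac{|e^{-t}-e^{-t\lambda}|}{t}\le\min\{1+\lambda,\,2t^{-1}\}$ and tests against $\langle E(\cdot)f,f\rangle$, while on $t\ge1$ one keeps $e^{-t}f-e^{t\Delta}f$ together and uses the quantitative decay of $\langle E(\lambda)f,f\rangle$ as $\lambda\downarrow 0$ encoded in $H^{\log}(M)$. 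This last estimate is the main obstacle: the lossy bound $\|e^{-t}f-e^{t\Delta}f\|\le\|e^{-t}f\|+\|e^{t\Delta}f\|$ is useless for large $t$ on manifolds whose spectrum reaches $0$ (such as $\mathbb{R}^n$), because $e^{t\Delta}f$ need not decay; one must instead exploit the cancellation between the two terms. Everything else — the Frullani identities, strong measurability, and the Fubini exchange — is routine once the clean identity $\int_0^\infty t^{-1}|e^{-t}-e^{-t\lambda}|\,dt=|\log\lambda|$ is in hand.
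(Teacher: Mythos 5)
Your main argument is essentially the paper's proof. The paper likewise fixes \(g\in L^2(M,\mu)\), writes the pairing of both sides with \(g\) as iterated integrals against the complex measure \(E_{f,g}\), verifies absolute convergence of the double integral, and invokes Fubini together with the identification of \(\langle Bf,g\rangle\) for the vector-valued integral. Your only real deviation in this part is the pleasant shortcut that \(t\mapsto e^{-t}-e^{-t\lambda}\) has constant sign, so \(\int_0^\infty t^{-1}\lvert e^{-t}-e^{-t\lambda}\rvert\,dt=\lvert\log\lambda\rvert\) exactly, whereas the paper splits at \(t=1\) and bounds the two pieces by \(C(1+\lvert\log\lambda\rvert)\) via the exponential integral \(\mathrm{E}_1\); both routes then use the same Cauchy--Schwarz bound \(\int h\,d\lvert E_{f,g}\rvert\le\bigl(\int h^2\,dE_{f,f}\bigr)^{1/2}\lVert g\rVert_{L^2}\). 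Up to this simplification you have reproduced the paper's argument.

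Where you diverge is your final paragraph, and there you are chasing a statement that is stronger than what the paper proves (or needs), and which is in fact false in general: the paper never verifies \(\int_0^\infty\lVert F(t)\rVert_{L^2}\,dt<\infty\) for \(F(t)=t^{-1}(e^{-t}f-e^{t\Delta}f)\); its proof, like your main body, identifies the two sides only after testing against \(g\). Absolute Bochner integrability can genuinely fail on \(H^{\log}(M)\): for \(t\ge1\) one has \(\lVert F(t)\rVert_{L^2}\ge c\,t^{-1}E_{f,f}\bigl((0,\tfrac1{2t}]\bigr)^{1/2}\), so \(\int_1^\infty\lVert F(t)\rVert\,dt<\infty\) would force the square-root Dini condition \(\int_0^{1/2}u^{-1}E_{f,f}\bigl((0,u]\bigr)^{1/2}\,du<\infty\), which is strictly stronger than \(\int_0^1(\log\lambda)^2\,dE_{f,f}<\infty\) (take \(E_{f,f}((0,u])\sim(\log\tfrac1u)^{-2}(\log\log\tfrac1u)^{-2}\)); a symmetric obstruction occurs as \(t\to0\) from heavy spectral tails at \(\lambda\to\infty\). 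So the cancellation you hoped to exploit cannot rescue norm-integrability, and this is not a gap relative to the paper but a dead end to abandon. The clean completion is instead to note that the truncated multipliers \(\Phi_{\epsilon,T}(\lambda)=\int_\epsilon^T t^{-1}(e^{-t}-e^{-t\lambda})\,dt\) satisfy \(\lvert\Phi_{\epsilon,T}(\lambda)\rvert\le\lvert\log\lambda\rvert\) (again by your sign-constancy observation) and converge pointwise to \(\log\lambda\), whence \(\int_\epsilon^T F(t)\,dt=\Phi_{\epsilon,T}(-\Delta)f\to\log(-\Delta)f\) in \(L^2\) by dominated convergence in \(dE_{f,f}\); the \(t\)-integral thus converges as an improper strong limit, which is the sense in which the theorem's ``Bochner integral'' should be read and is all that the paper's own proof delivers.
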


After completing this paper, we aware that S. Pramanik \cite{pramanik2025anisotropic} independently defined the operator $\log\left(-\Delta+m\mathbb{I}\right)$ for $m>1$ on closed manifolds for different purposes. We note, however, that this operator differs from ours. In particular, our setting includes noncompact manifolds, and we establish the corresponding properties in what follows.

	\begin{theorem}\label{logrn11}
	Let \(f\in C^{\beta}_{c}(\mathbb{R}^{n})\) for some \(\beta>0\). The logarithmic Laplacian defined by Bochner integral in Theorem \ref{logboch11} admits the pointwise representation
	\[	\log \left(-\Delta\right)f(x)  =c_{n}\int_{B_{1}(x)}\frac{f(x)-f(y)}{|x-y|^{n}}\,dy
	-c_{n}\int_{\mathbb{R}^{n}\setminus B_{1}(x)}
	\frac{f(y)}{|x-y|^{n}}\,dy
	+\rho_{n}\,f(x), \quad x\in\mathbb{R}^{n},   \]
	where $c_n,\rho_n$ are defined in (\ref{cnrn}).
\end{theorem}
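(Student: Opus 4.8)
The plan is to start from the Bochner representation in Theorem~\ref{logboch11}, namely
\[
\log(-\Delta)f(x)=\int_0^\infty\frac{e^{-t}f(x)-\bigl(e^{t\Delta}f\bigr)(x)}{t}\,dt,
\]
and insert the Euclidean heat kernel $p_t(x,y)=(4\pi t)^{-n/2}e^{-|x-y|^2/(4t)}$ so that $\bigl(e^{t\Delta}f\bigr)(x)=\int_{\mathbb{R}^n}p_t(x,y)f(y)\,dy$. Using the normalization $\int p_t(x,y)\,dy=1$, I would rewrite the integrand as
\[
\frac{e^{-t}f(x)-\int p_t(x,y)f(y)\,dy}{t}
=\frac{(e^{-t}-1)f(x)}{t}+\frac{1}{t}\int p_t(x,y)\bigl(f(x)-f(y)\bigr)\,dy.
\]
The first term integrates to $\int_0^\infty\frac{e^{-t}-1}{t}\,dt$, which is not convergent on its own, so the split has to be organized more carefully — the standard device is to split the $t$-integral at $t=1$ (or to simultaneously split the $y$-integral at $|x-y|=1$), keeping the $f(x)-f(y)$ difference paired with the singular part to exploit the $C^\beta_c$ regularity near the diagonal and the Gaussian decay away from it. The $C^\beta_c$ hypothesis is exactly what makes $\int p_t(x,y)|f(x)-f(y)|\,dy = O(t^{\beta/2})$ for small $t$, guaranteeing absolute convergence near $t=0$; for large $t$ and away from the support of $f$, the $e^{-t}$ and Gaussian factors give convergence.

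Next I would carry out the $t$-integration in Fubini order. After justifying the interchange (this is where the $C^\beta_c$ control and the Gaussian bounds are used to produce a dominating function), the inner integral becomes, for each fixed $x,y$,
\[
\int_0^\infty\frac{e^{-t}-(4\pi t)^{-n/2}e^{-|x-y|^2/(4t)}}{t}\,dt
\]
acting against $f(x)-f(y)$ (with the appropriate bookkeeping of the $f(y)$-only tail). The key computation is the closed-form evaluation of
\[
K(r):=\int_0^\infty\frac{(4\pi t)^{-n/2}e^{-r^2/(4t)}-e^{-t}}{t}\,dt,
\]
which should reproduce $c_n r^{-n}$ for the part interior to the unit ball (after subtracting the right constant) and contribute to $\rho_n$ the digamma/log terms. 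Concretely, substituting $u=r^2/(4t)$ turns the Gaussian piece into a $\Gamma$-function integral: $\int_0^\infty (4\pi t)^{-n/2}e^{-r^2/(4t)}\,\frac{dt}{t} = \pi^{-n/2}\,2^{-n}\cdot(r^2/4)^{-n/2}\Gamma(n/2)\cdot(\text{constant})$, i.e. a constant times $r^{-n}$ with constant $c_n=\pi^{-n/2}\Gamma(n/2)$; the divergence at $r=0$ is precisely the P.V. structure, handled by the paired difference $f(x)-f(y)$. The Frullani-type combination of this with the $e^{-t}/t$ term, together with the ``defect'' from renormalizing $\int p_t\,dy=1$ versus $\int_{B_1}$, produces the constant $\rho_n=2\log 2+\psi(n/2)-\gamma$; I would verify this constant by a direct computation of $\int_0^\infty\bigl(\mathbf{1}_{\{r<1\}}\text{-type terms}\bigr)$ reduces to $\psi(n/2)$ via the identity $\int_0^\infty(e^{-t}-e^{-at})t^{-1}\,dt=\log a$ and standard $\Gamma'/\Gamma$ integral representations, with the $2\log 2$ coming from the $4$ in $4\pi t$ and $4t$.

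The main obstacle will be the rigorous justification of interchanging the order of integration (Fubini/Tonelli) uniformly in $x$, because the integrand $\frac{e^{-t}-p_t(x,\cdot)}{t}$ is \emph{not} absolutely integrable over $(0,\infty)\times\mathbb{R}^n$: the cancellation between $e^{-t}$ and the heat kernel is essential and is destroyed by taking absolute values. The resolution is a careful region decomposition — split $(0,\infty)_t\times\mathbb{R}^n_y$ into $\{t<1,|x-y|<1\}$, $\{t<1,|x-y|\ge 1\}$, $\{t\ge 1\}$ — and on each region either use the $C^\beta_c$ modulus-of-continuity bound, the Gaussian off-diagonal decay, or the exponential factor $e^{-t}$ to get an integrable majorant for the recombined integrand. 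One must also check that the resulting expression is finite and matches \eqref{duishurn}, i.e. confirm that the P.V.\ structure emerges correctly and that the leftover constant is exactly $\rho_n$; dimensional/normalization bookkeeping (the factor $4$ inside both exponentials, and the $|S^{n-1}|$ from integrating $r^{-n}$ over angles) is the place where sign or constant errors are most likely, so I would double-check $\rho_n$ against the known $\mathbb{R}^n$ formula of \cite{chen2019dirichlet} as a consistency test.
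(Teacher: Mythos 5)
Your proposal is correct and follows essentially the same route as the paper's proof: insert the Gauss--Weierstrass kernel into the Bochner formula, split the time integral at $t=1$, justify Fubini via the $C^\beta_c$ modulus of continuity and Gaussian off-diagonal decay, evaluate the resulting (incomplete) Gamma-function kernels to produce $c_n|x-y|^{-n}$, and assemble the constant $\rho_n=2\log 2+\psi(n/2)-\gamma$ from Frullani/exponential-integral and digamma identities. One small caution: the "closed-form" $K(r)=\int_0^\infty\frac{(4\pi t)^{-n/2}e^{-r^2/(4t)}-e^{-t}}{t}\,dt$ diverges at $t=0$ for each fixed $r>0$, so the $e^{-t}f(x)$ term must remain paired with the full heat-kernel mass (exactly via the $t<1$ versus $t\ge1$ split you describe, which is what the paper does) rather than being distributed pointwise in $y$.
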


	Moreover, the functional‐calculus framework allows us to define the natural domains of these operators, see subsection \ref{hanshu}:
	
	\begin{definition}
		For \(s\ge0\), the fractional Sobolev space on \(M\) is
		\[
		H^s(M)
		:=\Bigl\{\,f\in L^2(M)\;\Big|\;
		\int_0^\infty\lambda^{2s}\,dE_{f,f}(\lambda)<\infty\Bigr\},
		\]
		and the logarithmic Sobolev space is
		\[
		H^{\log}(M)
		:=\Bigl\{\,f\in L^2(M)\;\Big|\;
		\int_0^\infty(\log\lambda)^2\,dE_{f,f}(\lambda)<\infty\Bigr\}.
		\]
		These are precisely the domains of \((-\Delta)^s\) and \(\log(-\Delta)\), respectively.
	\end{definition}
	
	In the model case \(M=\mathbb{R}^n\), these coincide with the classical definitions:
	\[
	H^s(\mathbb{R}^n)
	=\{\,f\in L^2\left(\mathbb{R}^n\right):\,(1+|\xi|^2)^{s/2}\,\widehat f(\xi)\in L^2\left(\mathbb{R}^n\right)\},\]
	and
	\[H^{\log}(\mathbb{R}^n)
	=\{\,f\in L^2\left(\mathbb{R}^n\right):\,\log(|\xi|^2)\widehat f(\xi)\in L^2\left(\mathbb{R}^n\right)\}.
	\]
	
	\medskip
	
	Working purely in the language of functional calculus, we obtain the following convergence results, which are far simpler than a term‐by‐term verification in \(\mathbb{R}^n\), see \cite[Proposition 4.4]{di2012hitchhikerʼs} and \cite[Theorem 1.1]{chen2019dirichlet}.
	
	\begin{proposition}
		For every \(f\in H^\varepsilon(M)\), \(\varepsilon>0\),
		\[
		\lim_{s\to0^+}\bigl\|(-\Delta)^s f - f + E(\{0\})f\bigr\|_{L^2(M)} \;=\; 0.
		\]
	\end{proposition}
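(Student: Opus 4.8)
The plan is to reduce the statement to a scalar dominated–convergence argument on the spectral side. By the spectral theorem applied to the positive self-adjoint operator $-\Delta$, write
\[
f=\int_{[0,\infty)}dE(\lambda)f,
\qquad
(-\Delta)^s f=\int_{[0,\infty)}\lambda^s\,dE(\lambda)f ,
\]
and observe that, since $0^s=0$ for $s>0$, the second integral is effectively supported on $(0,\infty)$. Subtracting and reinstating the spectral atom at the origin gives the identity
\[
(-\Delta)^s f-f+E(\{0\})f=\int_{(0,\infty)}\bigl(\lambda^s-1\bigr)\,dE(\lambda)f ,
\]
so that, by the isometry property of the spectral measure,
\[
\bigl\|(-\Delta)^s f-f+E(\{0\})f\bigr\|_{L^2(M)}^2
=\int_{(0,\infty)}\bigl|\lambda^s-1\bigr|^2\,dE_{f,f}(\lambda).
\]

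First I would note that $\lambda^s\to1$ as $s\to0^+$ for every fixed $\lambda>0$, so the integrand tends to $0$ pointwise. Next I would exhibit an $s$-uniform majorant that is integrable against the finite positive measure $dE_{f,f}$: for $0<\lambda\le1$ one has $|\lambda^s-1|\le1$, while for $\lambda\ge1$ and $0<s\le\varepsilon$ one has $|\lambda^s-1|\le\lambda^s\le\lambda^\varepsilon$; hence
\[
\bigl|\lambda^s-1\bigr|^2\le 1+\lambda^{2\varepsilon}
\qquad\text{for all }\lambda>0,\ s\in(0,\varepsilon].
\]
The right-hand side lies in $L^1(dE_{f,f})$ exactly because $f\in L^2(M)\cap H^\varepsilon(M)$, i.e.\ $\int_0^\infty dE_{f,f}=\|f\|_{L^2}^2<\infty$ and $\int_0^\infty\lambda^{2\varepsilon}\,dE_{f,f}<\infty$ by definition of $H^\varepsilon(M)$.

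Then the Lebesgue dominated convergence theorem—applied along an arbitrary sequence $s_n\to0^+$ and then upgraded to the full one-sided limit—gives
\[
\lim_{s\to0^+}\int_{(0,\infty)}\bigl|\lambda^s-1\bigr|^2\,dE_{f,f}(\lambda)=0,
\]
which is the assertion. I do not expect a genuine obstacle here; the only delicate points are the bookkeeping of the spectral atom at $\lambda=0$—precisely what the correction term $E(\{0\})f$ absorbs, and which vanishes on $\mathbb{R}^n$ since there $E(\{0\})=0$—and checking that the chosen majorant controls the integrand uniformly both near the origin and at infinity, the latter being exactly where the hypothesis $f\in H^\varepsilon(M)$ enters.
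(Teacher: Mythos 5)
Your proposal is correct and follows essentially the same route as the paper: isolate the spectral atom at $0$, express the squared norm as $\int_{(0,\infty)}(\lambda^s-1)^2\,dE_{f,f}(\lambda)$, and conclude by dominated convergence with a majorant of the form $1+\lambda^{2\varepsilon}$ made integrable by $f\in H^\varepsilon(M)$. The only difference is cosmetic (the paper bounds $(\lambda^s-1)^2\le C(1+\lambda^\varepsilon)$ for $0<s<\varepsilon/2$, while you use $1+\lambda^{2\varepsilon}$ for $0<s\le\varepsilon$), which changes nothing of substance.
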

	
	\begin{proposition}
		For every \(f\in H^2(M)\),
		\[
		\lim_{s\to1^-}\bigl\|(-\Delta)^s f + \Delta f\bigr\|_{L^2(M)} \;=\;0.
		\]
	\end{proposition}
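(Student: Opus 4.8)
The plan is to argue entirely within the spectral calculus of $-\Delta$ and to reduce the claim to a single application of the dominated convergence theorem for the finite scalar measure $dE_{f,f}$. First I would record that for $f\in H^2(M)$ everything in sight is well defined: the hypothesis gives $\int_0^\infty\lambda^2\,dE_{f,f}(\lambda)<\infty$, so $\Delta f\in L^2(M)$, and for $s\in(\tfrac12,1)$ the elementary bound $\lambda^{2s}\le 1+\lambda^2$ shows $f$ lies in the domain $H^s(M)$ of $(-\Delta)^s$ as well. Then, by functional calculus,
\[
(-\Delta)^s f+\Delta f=\int_0^\infty(\lambda^s-\lambda)\,dE(\lambda)f ,
\]
and hence, by the spectral theorem,
\[
\bigl\|(-\Delta)^s f+\Delta f\bigr\|_{L^2(M)}^2=\int_0^\infty(\lambda^s-\lambda)^2\,dE_{f,f}(\lambda).
\]

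The heart of the matter is a domination uniform in $s$. For $\lambda\ge 0$ and $s\in(\tfrac12,1)$ I would split cases: if $0\le\lambda\le 1$ then $0\le\lambda^s-\lambda\le\lambda^s\le 1$, while if $\lambda\ge 1$ then $0\le\lambda-\lambda^s\le\lambda$; in either case $(\lambda^s-\lambda)^2\le 1+\lambda^2$. Since $\int_0^\infty(1+\lambda^2)\,dE_{f,f}(\lambda)=\|f\|_{L^2}^2+\|\Delta f\|_{L^2}^2<\infty$, the function $\Phi(\lambda):=1+\lambda^2$ is an $s$-independent integrable majorant. Moreover, for each fixed $\lambda\ge 0$ one has $\lambda^s\to\lambda$ as $s\to 1^-$ (with $0^s=0$ at $\lambda=0$), so $(\lambda^s-\lambda)^2\to 0$ pointwise in $\lambda$.

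Finally I would pass to the limit: given any sequence $s_k\uparrow 1$, the dominated convergence theorem for the finite measure $dE_{f,f}$ yields $\int_0^\infty(\lambda^{s_k}-\lambda)^2\,dE_{f,f}(\lambda)\to 0$, and since the sequence was arbitrary this gives $\lim_{s\to 1^-}\bigl\|(-\Delta)^s f+\Delta f\bigr\|_{L^2(M)}=0$.

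I do not expect a genuine obstacle here: the statement is a clean consequence of the functional-calculus framework set up earlier, and the only point that needs a moment's care is the $s$-independent bound $(\lambda^s-\lambda)^2\le 1+\lambda^2$, which is exactly what allows dominated convergence to be applied uniformly as $s\to 1^-$. This is precisely the simplification, relative to a term-by-term verification on $\mathbb{R}^n$, that was announced before the statement.
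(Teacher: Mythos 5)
Your argument is correct and coincides with the paper's own proof: both reduce the claim to the spectral identity $\bigl\|(-\Delta)^s f+\Delta f\bigr\|_{L^2}^2=\int_0^\infty(\lambda^s-\lambda)^2\,dE_{f,f}(\lambda)$ and apply dominated convergence with the $s$-independent majorant $c(1+\lambda^2)$, which is integrable precisely because $f\in H^2(M)$. Your additional remarks on domain membership and the case split verifying the majorant are just a slightly more explicit rendering of the same steps.
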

	
	\begin{proposition}
		If \(f\in H^\varepsilon(M)\,\cap\,H^{\log}(M)\), then
		\[
		\frac{(-\Delta)^s - I + E(\{0\})}{s}\,f
		\xrightarrow{s\to0^+}
		\log(-\Delta)\,f
		\quad\text{in }L^2(M).
		\]
	\end{proposition}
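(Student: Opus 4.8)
The plan is to push everything through the spectral theorem and reduce the statement to a one–dimensional dominated–convergence argument against the finite measure $dE_{f,f}$ on $[0,\infty)$. Set $g_s(\lambda):=\tfrac{\lambda^s-1}{s}$ for $\lambda>0$ and $g_s(0):=0$; by functional calculus the operator $\tfrac{(-\Delta)^s-I+E(\{0\})}{s}$ has spectral symbol exactly $g_s$ (the value at $\lambda=0$ is $\tfrac{0^s-1+1}{s}=0$). Since $f\in H^\varepsilon(M)\subseteq H^s(M)$ for $0<s\le\varepsilon$, the left–hand vector lies in $L^2(M)$; and since $f\in H^{\log}(M)$, the symbol $\log\lambda$ of $\log(-\Delta)$ lies in $L^2(dE_{f,f})$. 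Note moreover that $E_{f,f}$–integrability of $(\log\lambda)^2$ forces $\|E(\{0\})f\|^2=E_{f,f}(\{0\})=0$, so the correction term vanishes and all integrals may be taken over $(0,\infty)$. With these preliminaries,
\[
\Bigl\|\tfrac{(-\Delta)^s-I+E(\{0\})}{s}\,f-\log(-\Delta)\,f\Bigr\|_{L^2(M)}^{2}
=\int_{(0,\infty)}\bigl|g_s(\lambda)-\log\lambda\bigr|^{2}\,dE_{f,f}(\lambda),
\]
so it suffices to show the right-hand side tends to $0$ as $s\to0^+$.

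First, $g_s(\lambda)=\tfrac{e^{s\log\lambda}-1}{s}\to\log\lambda$ pointwise for each fixed $\lambda>0$. For the majorant I would split the spectrum. On $0<\lambda<1$, using $|e^{x}-1|\le|x|$ for $x\le0$ one gets $|g_s(\lambda)|\le|\log\lambda|$ for every $s>0$. On $\lambda\ge1$, I would exploit monotonicity of $s\mapsto g_s(\lambda)$: writing $a:=s\log\lambda$, a direct computation gives $\tfrac{d}{ds}g_s(\lambda)=s^{-2}\bigl(e^{a}(a-1)+1\bigr)\ge0$, hence for $0<s\le s_0:=\min\{1,\varepsilon\}$ we have $0\le\log\lambda\le g_s(\lambda)\le g_{s_0}(\lambda)\le\tfrac{1}{s_0}\lambda^{s_0}\le\tfrac{1}{s_0}\lambda^{\varepsilon}$. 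Consequently, for all $s\le s_0$,
\[
\bigl|g_s(\lambda)-\log\lambda\bigr|^{2}\le 2\,G(\lambda)^{2}+2(\log\lambda)^{2},
\qquad
G(\lambda):=\tfrac{1}{s_0}\lambda^{\varepsilon}\,\mathbf{1}_{\{\lambda\ge1\}}+|\log\lambda|\,\mathbf{1}_{\{0<\lambda<1\}},
\]
and $G^{2}+(\log\lambda)^{2}\in L^1\bigl((0,\infty),dE_{f,f}\bigr)$: the $\lambda^{2\varepsilon}$ piece because $f\in H^\varepsilon(M)$, the $(\log\lambda)^2$ pieces because $f\in H^{\log}(M)$.

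With a pointwise limit and an integrable majorant in hand, the dominated convergence theorem yields $\int_{(0,\infty)}|g_s(\lambda)-\log\lambda|^{2}\,dE_{f,f}\to0$ as $s\to0^+$, which is the claim. The only genuinely delicate point is the \emph{uniform-in-$s$} control of $g_s$ on the high part of the spectrum: the crude bound $|g_s(\lambda)|\lesssim|\log\lambda|$ that works near $\lambda=0$ fails badly as $\lambda\to\infty$, and one really needs both hypotheses together — $H^{\log}(M)$ to tame $\lambda$ near $0$ and $H^\varepsilon(M)$ to tame $\lambda$ near $\infty$ — combined with the observation $\sup_{0<s\le s_0}g_s(\lambda)=g_{s_0}(\lambda)$ (the monotonicity above) to manufacture the majorant $G$. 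Everything else is the routine bookkeeping of transferring the problem to the scalar spectral measure and invoking $\|\varphi(-\Delta)f\|_{L^2}^2=\int|\varphi|^2\,dE_{f,f}$.
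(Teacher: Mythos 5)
Your proof is correct and takes essentially the same route as the paper: both reduce the claim via the spectral theorem to showing $\int_{(0,\infty)}\bigl|\tfrac{\lambda^s-1}{s}-\log\lambda\bigr|^{2}\,dE_{f,f}(\lambda)\to0$ and conclude by dominated convergence, using the $H^{\log}(M)$ hypothesis to control the region near $\lambda=0$ and the $H^{\varepsilon}(M)$ hypothesis to control large $\lambda$. The only cosmetic difference is how the integrable majorant is manufactured — you use $|e^{x}-1|\le|x|$ for $x\le0$ together with the monotonicity of $s\mapsto\tfrac{\lambda^{s}-1}{s}$, while the paper uses the mean-value expansion $\lambda^{s}=1+s\lambda^{s_{0}}\log\lambda$ to get a bound of the form $C|\log\lambda|\,(1+\lambda^{\epsilon_{0}})$ — and both dominating functions are integrable against $dE_{f,f}$ under the stated hypotheses.
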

	
	These results underscore the power and elegance of the functional‐calculus approach.

	Naturally, one would like to understand the relationship between the fractional Sobolev spaces and the logarithmic Sobolev space.  The next result gives a sharp dichotomy in terms of the spectral gap of \(-\Delta\).
	
\begin{theorem}\label{spect11}
	Set
	\[
	\delta \;=\;\inf\bigl(\sigma(-\Delta)\setminus\{0\}\bigr)\ge0.
	\]
	(i) If \(\delta>0\), then for every \(\varepsilon>0\) one has the continuous embedding
	\[
	H^{2\varepsilon}(M)\;\subset\;H^{\log}(M).
	\]
	(ii) If \(\delta=0\), assume the following  
	spectral accumulation at zero condition holds: For pairwise disjoint Borel intervals 
	\[
	I_k= \Bigl(\tfrac1{k+1},\,\tfrac1k\Bigr],\quad k=1,2,\dots.\quad
	I_k\cap \sigma\left(-\Delta\right)\ne \emptyset.
	\]
	Then for every \(\varepsilon>0\) the inclusion
	\[
	H^{2\varepsilon}(M)\subset H^{\log}(M)
	\]
	fails, i.e.\ there exists \(f\in H^{2\varepsilon}(M)\) with \(f\notin H^{\log}(M)\).
\end{theorem}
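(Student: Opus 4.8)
\medskip
\noindent\emph{Proof idea.}\quad The plan is to push the whole statement down to the scalar spectral measure $dE_{f,f}$, a finite positive measure of total mass $\|f\|_{L^2}^2$ supported in $\sigma(-\Delta)$: by the definitions above, $f\in H^{2\varepsilon}(M)$ exactly when $\int_0^\infty\lambda^{4\varepsilon}\,dE_{f,f}(\lambda)=\|(-\Delta)^{2\varepsilon}f\|_{L^2}^2<\infty$, and $f\in H^{\log}(M)$ exactly when $\int_0^\infty(\log\lambda)^2\,dE_{f,f}(\lambda)=\|\log(-\Delta)f\|_{L^2}^2<\infty$. So everything comes down to comparing the weights $\lambda^{4\varepsilon}$ and $(\log\lambda)^2$ on $\sigma(-\Delta)$.

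For part (i), since $\delta>0$ the measure $dE_{f,f}$ is supported in $\{0\}\cup[\delta,\infty)$, and after setting aside the atom at $\lambda=0$ (equivalently, working on $(\ker\Delta)^{\perp}$, where $\log(-\Delta)$ is defined) it is enough to dominate $(\log\lambda)^2$ by a multiple of $1+\lambda^{4\varepsilon}$ on $[\delta,\infty)$. The one substantive point is the elementary fact that
\[
C_{\varepsilon,\delta}:=\sup_{\lambda\ge\delta}\frac{(\log\lambda)^2}{1+\lambda^{4\varepsilon}}<\infty\qquad(\varepsilon>0),
\]
which holds because $\lambda\mapsto(\log\lambda)^2/(1+\lambda^{4\varepsilon})$ is continuous on $[\delta,\infty)$ and tends to $0$ as $\lambda\to\infty$. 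Integrating the pointwise bound $(\log\lambda)^2\le C_{\varepsilon,\delta}(1+\lambda^{4\varepsilon})$ against $dE_{f,f}$ then yields
\[
\|\log(-\Delta)f\|_{L^2}^2\le C_{\varepsilon,\delta}\bigl(\|f\|_{L^2}^2+\|(-\Delta)^{2\varepsilon}f\|_{L^2}^2\bigr),
\]
which is simultaneously the inclusion $H^{2\varepsilon}(M)\subset H^{\log}(M)$ and the continuity of the embedding for the natural graph norms.

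For part (ii), the plan is to construct a single $f$ whose spectral mass is concentrated in thinner and thinner intervals collapsing onto $\lambda=0$: on such a support $\lambda^{4\varepsilon}\le1$ for \emph{every} $\varepsilon>0$, so $f$ automatically lies in $\bigcap_{s>0}H^{s}(M)$, whereas $(\log\lambda)^2$ is enormous there. Concretely, since the hypothesis forces $\sigma(-\Delta)$ to accumulate at $0$, I would pick $\lambda_k\in\sigma(-\Delta)$ strictly decreasing to $0$; as $\lambda_k\in\sigma(-\Delta)$, for each $k$ there is an arbitrarily small $\eta_k>0$ with $E\bigl((\lambda_k-\eta_k,\lambda_k+\eta_k)\bigr)\ne0$, and I would shrink the $\eta_k$ so that the intervals $J_k=(\lambda_k-\eta_k,\lambda_k+\eta_k)$ are pairwise disjoint and lie in $(\lambda_k/2,\,2\lambda_k)\subset(0,1)$. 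Taking a unit vector $g_k$ in the range of $E(J_k)$ gives an orthogonal family for which each $dE_{g_k,g_k}$ is a probability measure on $J_k$, so for $f=\sum_k a_kg_k$ with $(a_k)\in\ell^2$ one has $dE_{f,f}=\sum_k a_k^2\,dE_{g_k,g_k}$, hence $\int_0^\infty\lambda^{4\varepsilon}\,dE_{f,f}\le\sum_k a_k^2(2\lambda_k)^{4\varepsilon}\le\sum_k a_k^2<\infty$, whereas $\int_0^\infty(\log\lambda)^2\,dE_{f,f}\ge\sum_k a_k^2\bigl(\log(2\lambda_k)\bigr)^2$. I would close the argument by thinning to a subsequence with $\lambda_k\le2^{-k}$, so that $\bigl(\log(2\lambda_k)\bigr)^2\ge c\,k^2$, and setting $a_k^2=\bigl(k\log^2(k+1)\bigr)^{-1}$: then $\sum_k a_k^2<\infty$ while $\sum_k a_k^2k^2=\infty$, so this $f$ lies in $H^{2\varepsilon}(M)$ for every $\varepsilon>0$ but not in $H^{\log}(M)$, which is the asserted failure of the inclusion.

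The scalar estimate in (i) and the spectral-measure bookkeeping are routine; the two places that require care are both in (ii). The first is that the hypothesis only provides $I_k\cap\sigma(-\Delta)\ne\emptyset$, which does \emph{not} in itself imply $E(I_k)\ne0$ --- the spectrum might meet $I_k$ only at the right endpoint $1/k$ while approaching it from outside $I_k$ --- so one is forced to replace $I_k$ by a short interval $J_k$ centred at a genuine point of the spectrum. The second is that the spectral points handed over by the hypothesis cannot be used as they stand: if they tend to $0$ too slowly, $\bigl(\log(2\lambda_k)\bigr)^2$ is too weak to dominate any $\ell^2$ weight, so passing to a fast-decaying subsequence is not cosmetic but the real engine of the divergence. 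Once both adjustments are in place, the construction goes through as indicated.
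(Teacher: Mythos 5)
Your proposal is correct and follows essentially the same route as the paper: part (i) is the same pointwise domination of \((\log\lambda)^2\) by a constant multiple of \(1+\lambda^{\mathrm{power}}\) on \([\delta,\infty)\) (with the atom at \(0\) set aside), and part (ii) is the same construction of \(f=\sum_k a_k f_k\) from orthogonal unit vectors in spectral subspaces of disjoint intervals accumulating at \(0\) with \(\ell^2\) coefficients whose log-weighted sum diverges — the paper takes \(f_k\in\mathrm{Ran}\,E(I_k)\) directly with \(a_k=1/(\sqrt{k}\,|\log k|)\), asserting \(E(I_k)\neq0\) from \(I_k\cap\sigma(-\Delta)\neq\emptyset\), whereas you re-center on genuine spectral points and thin the sequence, a harmless variation that also covers the right-endpoint edge case you flag (the paper's version can alternatively be repaired by noting \(E((0,1/K])\neq0\) for every \(K\), so infinitely many \(I_k\) carry nonzero projection). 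The only slip is notational: with the paper's convention \(H^{2s}(M)=\operatorname{Dom}((-\Delta)^s)\) the relevant weight is \(\lambda^{2\varepsilon}\), not \(\lambda^{4\varepsilon}\), but this affects nothing since \(\varepsilon>0\) is arbitrary and your counterexample lies in every \(H^{s}(M)\).
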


		\begin{remark}
		Theorem \ref{spect11} shows that if 
		\(\displaystyle \delta=\inf(\sigma(-\Delta)\setminus\{0\})=0\),
		then the inclusion 
		\[
		H^{2\varepsilon}(M)\;\not\subset\;H^{\log}(M)
		\]
		holds for any \(\varepsilon>0\).  This does not, however, answer the question of whether the space of compactly supported smooth functions \(C_c^\infty(M)\) is contained in \(H^{\log}(M)\). In fact, the answer in each case depends on the exact form of the spectral measure, and must be determined by a detailed analysis of that measure near \(\lambda=0\).
	\end{remark}
	
On a general complete Riemannian manifold $(M,g)$, one may define the heat‐kernel fractional Laplacian under the mild integrability conditions

$$
\mathcal{K}_s(x,y)
:=\int_0^\infty\frac{p_t(x,y)}{t^{1+s}}\,dt,
\quad
\int_0^\infty\!\int_M\frac{\lvert f(x)-f(y)\rvert}{t^{1+s}}\,p_t(x,y)\,d\mathrm{vol}(y)\,dt<\infty,
$$

by

$$
(-\Delta)^s_{\mathrm{hk}}f(x)
=\frac{s}{\Gamma(1-s)}
\int_M\bigl(f(x)-f(y)\bigr)\,\mathcal{K}_s(x,y)\,d\mathrm{vol}(y).
$$
Meanwhile, on any complete $(M,g)$ one always has the spectral Bochner formula

$$
(-\Delta)^s_{\mathrm{spec}}f(x)
=\frac{s}{\Gamma(1-s)}
\int_0^\infty\frac{f(x)-e^{t\Delta}f(x)}{t^{1+s}}\,dt, \:f\in H^{2s}\left(M\right).
$$

When $M$ is stochastically complete, for example if $\mathrm{Ric}_g\ge -k,k\ge 0$, these two constructions coincide.  It is therefore natural to ask: on a non‐stochastically complete manifold, how do $(-\Delta)^s_{\rm hk}$ and $(-\Delta)^s_{\rm spec}$ differ?  A key benefit of our functional‐calculus framework is that it not only produces both definitions in one unified setting, but also allows us to quantify their exact discrepancy in terms of the manifold’s “mass‐loss’’ function.
	
	A straightforward comparison shows that
	\[
	(-\Delta)^s_{\rm spec}
	=(-\Delta)^s_{\rm hk}
	\;+\;V_s(\,\cdot\,),
	\]
	where
	\[
	r(t,x)=1-\int_M p_t(x,y)\,d\mathrm{vol}(y)>0,
	\quad
	V_s(x)
	=\frac{s}{\Gamma(1-s)}
	\int_0^\infty t^{-1-s}\,r(t,x)\,dt.
	\]
	Thus, as \(s\to0^+\),
	\[
	(-\Delta)^s_{\rm hk}f(x)
	\;\longrightarrow\;
	f(x)-E(\{0\})f(x)-V_s(x)\,f(x).
	\]
	
	\begin{proposition}\label{chaz11}
		With notation as above,
		\[
		\lim_{s\to0^+}V_s(x)=1-H(x),
		\quad
		x\in M,
		\]
		where \(H(x)=\lim_{t\to\infty}\int_M p_t(x,y)\,d\mathrm{vol}(y)\) is the non‐explosion probability defined in (\ref{taoyi}).
	\end{proposition}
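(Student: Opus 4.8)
\emph{Proof proposal for Proposition~\ref{chaz11}.}
The plan is to split the defining integral
\[
V_s(x)=\frac{s}{\Gamma(1-s)}\int_0^\infty t^{-1-s}\,r(t,x)\,dt
\]
at $t=1$ and to treat the two resulting pieces separately, using only elementary properties of the mass-loss function together with the identity $s\int_1^\infty t^{-1-s}\,dt=1$. First I would record the structural facts about $r(t,x)=1-\int_M p_t(x,y)\,d\mathrm{vol}(y)$: since the heat semigroup on a complete manifold is submarkovian, one has $0\le\int_M p_t(x,y)\,d\mathrm{vol}(y)\le 1$ and $t\mapsto\int_M p_t(x,y)\,d\mathrm{vol}(y)$ is nonincreasing, so $r(\cdot,x)$ is nondecreasing with $0\le r\le 1$, $r(t,x)\to 0$ as $t\to 0^+$, and $r(t,x)\to 1-H(x)$ as $t\to\infty$ by the definition of $H(x)$ in (\ref{taoyi}). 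I also fix once and for all an exponent $s_0\in(0,1)$ with $V_{s_0}(x)<\infty$ — this is part of the standing hypotheses under which $(-\Delta)^s_{\mathrm{hk}}$ is well defined — and note that then $C(x):=\int_0^1 t^{-1-s_0}\,r(t,x)\,dt<\infty$.

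For the near-zero piece, since $t^{-1-s}\le t^{-1-s_0}$ for $t\in(0,1]$ whenever $0<s\le s_0$, one gets the uniform bound $\int_0^1 t^{-1-s}\,r(t,x)\,dt\le C(x)$, and hence
\[
\frac{s}{\Gamma(1-s)}\int_0^1 t^{-1-s}\,r(t,x)\,dt\;\le\;\frac{s\,C(x)}{\Gamma(1-s)}\;\longrightarrow\;0\qquad\text{as }s\to 0^+,
\]
using $\Gamma(1-s)\to\Gamma(1)=1$. For the tail, I would factor out $1/\Gamma(1-s)\to 1$ and write $r(t,x)=(1-H(x))+\rho(t,x)$ with $\rho(t,x)\to 0$ as $t\to\infty$, so that $s\int_1^\infty t^{-1-s}\,r(t,x)\,dt=(1-H(x))+s\int_1^\infty t^{-1-s}\,\rho(t,x)\,dt$. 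The remainder is then shown to vanish: given $\varepsilon>0$, pick $T>1$ with $|\rho(t,x)|<\varepsilon$ for $t\ge T$; then, using $t^{-1-s}\le t^{-1}$ for $t\ge 1$ and $s\int_T^\infty t^{-1-s}\,dt=T^{-s}$,
\[
\Bigl|\,s\int_1^\infty t^{-1-s}\,\rho(t,x)\,dt\,\Bigr|\;\le\;s\int_1^T t^{-1}\,|\rho(t,x)|\,dt+\varepsilon\,T^{-s},
\]
and since the first summand tends to $0$ (the integral being a finite constant) while $T^{-s}\to 1$, the right-hand side has limit $\varepsilon$ as $s\to 0^+$; as $\varepsilon>0$ was arbitrary, the remainder tends to $0$. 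Putting the pieces together gives $\lim_{s\to 0^+}V_s(x)=0+(1-H(x))=1-H(x)$.

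I expect the only genuinely delicate point to be the uniform-in-$s$ control of the near-zero integral; every other step is a routine passage to the limit. That point is dispatched by the monotonicity $t^{-1-s}\le t^{-1-s_0}$ on $(0,1]$, which reduces it to finiteness at the single fixed exponent $s_0$, already guaranteed by the standing hypotheses.
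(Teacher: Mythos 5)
Your overall strategy coincides with the paper's: split the time integral, let the prefactor $s/\Gamma(1-s)$ annihilate the bounded-time contribution, and run an $\varepsilon$--$T$ argument on the tail using $s\int_T^\infty t^{-1-s}\,dt=T^{-s}\to1$ together with $r(t,x)\to 1-H(x)$. Your treatment of the tail (splitting $r=(1-H(x))+\rho$, bounding $\bigl|s\int_1^T t^{-1}\rho\,dt\bigr|\to0$ and $\varepsilon T^{-s}\to\varepsilon$) is essentially the same computation as in the paper's proof of Proposition~\ref{chaz}, just organized slightly differently.

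The one place you genuinely deviate is the near-zero region, and that is where there is a gap: you dispose of $\int_0^1 t^{-1-s}r(t,x)\,dt$ by postulating an $s_0\in(0,1)$ with $V_{s_0}(x)<\infty$ and calling this a ``standing hypothesis''. It is not one: the hypotheses the paper imposes when introducing $(-\Delta)^s_{\mathrm{hk}}$ concern the kernel $\mathcal K_s(x,y)$ and the admissible functions $f$, not the mass-loss integral, and Proposition~\ref{chaz11} is asserted for every $x\in M$ with no such finiteness assumption; so the convergence of $\int_0^1 t^{-1-s_0}r(t,x)\,dt$ is precisely the point that must be argued, not assumed. The paper closes it by a short-time estimate on the mass-loss function: since $H(t,x)=\int_M p_t(x,y)\,d\mathrm{vol}(y)$ is smooth in $t$ with $H(0,x)=1$, one has $r(t,x)=1-H(t,x)\le C_x\,t$ on a compact time interval, hence $\int_0^1 t^{-1-s}r(t,x)\,dt\le C_x/(1-s)$ and the factor $s/\Gamma(1-s)$ sends this piece to $0$. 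If you replace your appeal to a ``standing hypothesis'' by this (or any other proof that $r(t,x)=o(t^{s})$ near $t=0$ in an integrated sense), the rest of your argument goes through verbatim; note also that your bound $t^{-1-s}\le t^{-1-s_0}$ on $(0,1]$ only yields finiteness of $V_s(x)$ for $s\le s_0$, which is enough for the limit but makes clear that the real content of this step is the short-time decay of $r(t,x)$.
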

	
		Note that \(M\) is stochastically complete precisely when \(H(x)\equiv1\) for all \(x\in M\).  Hence by Proposition~\ref{chaz11}, the heat‐kernel and spectral definitions of the fractional Laplacian agree in the limit \(s\to0^+\) if and only if \(M\) is stochastically complete.  More generally, their full coincidence for \(s>0\) likewise hinges on stochastic completeness. 
	
	Now we turn to the logarithmic Laplacian on noncompact, complete Riemannian manifolds.  In Theorem~\ref{logboch11} we constructed 
	\(\log(-\Delta)\) as a bounded operator on \(L^2(M)\), but to derive a pointwise integral formula we must invoke the heat‐kernel representation and justify both convergence and the interchange of integrals.  This in turn requires uniform short‐ and long‐time asymptotics for the heat kernel \(p_t(x,y)\).  We show that it is enough to assume
	\[
\mathrm{Ric}_g \;\ge\;-\left(n-1\right)k,\:k\ge 0
	\]
	on \(M\).  Under this hypothesis, if \(f\in C_c^\alpha(M)\) for some \(\alpha>0\), then the Bochner formula can be converted into the pointwise representation
	\begin{theorem}\label{thm:log-noncompact11}
		Let \((M,g)\) be a complete Riemannian manifold with
		\[
	\mathrm{Ric}_g \;\ge\;-\left(n-1\right)k,\:k\ge 0
		\]
		If \(f\in C_c^\alpha(M)\) for some \(\alpha>0\), then for every \(x\in M\) one has
		\begingroup\small	\[
		\log\bigl(-\Delta\bigr)_{\mathrm{spec}}f(x)
		=\int_M K_1(x,y)\,\bigl(f(x)-f(y)\bigr)\,d\mathrm{vol}(y)
		\;-\;\int_M K_2(x,y)\,f(y)\,d\mathrm{vol}(y)
		\;+\;\Gamma'(1)\,f(x),
		\]
		where
		\[
		K_1(x,y)=\int_0^1\frac{p_t(x,y)}{t}\,dt,
		\qquad
		K_2(x,y)=\int_1^\infty\frac{p_t(x,y)}{t}\,dt.
		\]
		\endgroup
	\end{theorem}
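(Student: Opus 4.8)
The plan is to feed the abstract formula of Theorem~\ref{logboch11} into the heat–kernel picture. Writing $e^{t\Delta}f(x)=\int_M p_t(x,y)\,f(y)\,d\mathrm{vol}(y)$ and using stochastic completeness to symmetrise the numerator $e^{-t}f(x)-e^{t\Delta}f(x)$, one splits the $t$–integral at $t=1$, interchanges the order of integration on each half, and recognises the surviving scalar integrals as $\Gamma'(1)$. Concretely $K_1$ comes from the symmetrised short–time part and $K_2$ from the long–time part, while $\Gamma'(1)=-\gamma$ is produced by the classical identity $\int_0^1 t^{-1}(e^{-t}-1)\,dt+\int_1^\infty t^{-1}e^{-t}\,dt=\int_0^\infty e^{-t}\log t\,dt=\Gamma'(1)$ (integrate the last integral by parts).

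I would first record the structural facts. Since $f\in C_c^\alpha(M)$ and $\mathrm{Ric}_g\ge-(n-1)k$, the short–time Gaussian upper bound gives $\|f-e^{t\Delta}f\|_{L^2}\lesssim t^{\min(\alpha,1)/2}$ as $t\to0$, so $f\in H^{2\varepsilon}(M)$ for small $\varepsilon>0$; together with the low–frequency behaviour this yields $f\in H^{\log}(M)$ (see Proposition~\ref{logarith}). Note that $f\in H^{\log}(M)$ forces $E(\{0\})f=0$, which is in any case necessary for $\log(-\Delta)_{\mathrm{spec}}f$ to be defined and is automatic, e.g., when $\mathrm{Ric}_g\ge0$. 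Thus Theorem~\ref{logboch11} applies: $\log(-\Delta)f=\int_0^\infty t^{-1}\bigl(e^{-t}f-e^{t\Delta}f\bigr)\,dt$ in $L^2(M)$. Stochastic completeness (implied by $\mathrm{Ric}_g\ge-(n-1)k$) gives $\int_M p_t(x,y)\,d\mathrm{vol}(y)=1$, whence for $0<t<1$
\[
e^{-t}f(x)-e^{t\Delta}f(x)=(e^{-t}-1)f(x)+\int_M p_t(x,y)\bigl(f(x)-f(y)\bigr)\,d\mathrm{vol}(y),
\]
while for $t>1$ I keep the two terms separate.

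The analytic heart is two integrability bounds, both locally uniform in $x$. \emph{Short time:} from $|f(x)-f(y)|\le\min\{2\|f\|_\infty,\,[f]_{C^\beta}\,d(x,y)^\beta\}$ with $\beta=\min(\alpha,1)>0$, the Li–Yau Gaussian upper bound together with Bishop–Gromov relative volume comparison gives $\int_M p_t(x,y)\,d(x,y)^\beta\,d\mathrm{vol}(y)\lesssim t^{\beta/2}$ for $0<t\le1$, so $\int_0^1 t^{-1}\int_M p_t(x,y)|f(x)-f(y)|\,d\mathrm{vol}(y)\,dt\lesssim\int_0^1 t^{-1+\beta/2}\,dt<\infty$. \emph{Long time:} with $\Omega=\operatorname{supp}f$ one has $\int_\Omega p_t(x,y)|f(y)|\,d\mathrm{vol}(y)\le\|f\|_\infty\,\mathrm{vol}(\Omega)\,\sup_{y\in\Omega}p_t(x,y)$, and one needs $\sup_{y\in\Omega}p_t(x,y)\to0$ as $t\to\infty$ at a rate integrable against $dt/t$. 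Granting these, Tonelli licenses the interchange on each half: the $(0,1)$–part yields $f(x)\int_0^1 t^{-1}(e^{-t}-1)\,dt+\int_M K_1(x,y)(f(x)-f(y))\,d\mathrm{vol}(y)$, the last integral being absolutely convergent because $K_1(x,y)\lesssim d(x,y)^{-n}$ near the diagonal and $d(x,y)^{\beta-n}$ is locally integrable; the $(1,\infty)$–part yields $f(x)\int_1^\infty t^{-1}e^{-t}\,dt-\int_M K_2(x,y)f(y)\,d\mathrm{vol}(y)$. Adding the scalar integrals gives $\Gamma'(1)f(x)$, which is the asserted formula. Since $t\mapsto t^{-1}(e^{-t}f(x)-e^{t\Delta}f(x))$ then lies in $L^1(0,\infty)$ for every $x$, its pointwise integral coincides a.e.\ with the $L^2$–Bochner integral of Theorem~\ref{logboch11}, hence everywhere on passing to the continuous representative.

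The main obstacle is the long–time bound. The short–time estimate, the rearrangement, and the constant are routine, but controlling $e^{t\Delta}f(x)$ as $t\to\infty$ genuinely invokes the geometry. When $k=0$ it is painless: a noncompact manifold with $\mathrm{Ric}_g\ge0$ has at least linear volume growth, so Li–Yau gives $\sup_{y\in\Omega}p_t(x,y)\lesssim t^{-1/2}$ and $\int_1^\infty t^{-3/2}\,dt<\infty$. When $k>0$ the Li–Yau bound carries an $e^{C(n)kt}$ factor and is useless for large $t$; one must instead combine the monotonicity of $t\mapsto p_t(z,z)$ with the absence of $L^2$–harmonic functions ($E(\{0\})=0$) and, where available, a positive spectral gap $\inf\sigma(-\Delta)>0$ or an ultracontractivity estimate $\|e^{t\Delta}\|_{L^1\to L^\infty}\to0$, in order to extract an integrable decay rate for $\sup_{y\in\Omega}p_t(x,y)$. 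Making this work under the single hypothesis $\mathrm{Ric}_g\ge-(n-1)k$ is the delicate point of the proof.
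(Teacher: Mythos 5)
Your route is structurally the same as the paper's: invoke the Bochner formula of Theorem~\ref{logboch11}, use stochastic completeness (Yau's theorem under \(\mathrm{Ric}_g\ge-(n-1)k\)) to write \(e^{-t}f(x)-e^{t\Delta}f(x)=(e^{-t}-1)f(x)+\int_M p_t(x,y)(f(x)-f(y))\,d\mathrm{vol}(y)\) for \(t<1\), split at \(t=1\), justify the interchange of integrals on each half, and assemble the constant from \(\int_0^1 t^{-1}(e^{-t}-1)\,dt+\int_1^\infty t^{-1}e^{-t}\,dt=\Gamma'(1)\) (the paper's Lemma~\ref{euler}). Your short-time estimate is essentially the paper's: the Hölder bound \(|f(x)-f(y)|\lesssim d(x,y)^{\alpha}\), the Li--Yau bound \eqref{liyau}, Bishop--Gromov, and the Gamma-tail Lemma~\ref{lem:gamma-tail} give absolute convergence of \(\int_0^1\int_M t^{-1}p_t(x,y)\,d(x,y)^{\alpha}\,d\mathrm{vol}(y)\,dt\), which is exactly the computation in the paper.

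There are, however, two genuine gaps. First, the long-time part, which you flag yourself: you do not establish \(\int_1^\infty t^{-1}\int_{\operatorname{supp}f}p_t(x,y)\,d\mathrm{vol}(y)\,dt<\infty\) under the sole hypothesis \(\mathrm{Ric}_g\ge-(n-1)k\) with \(k>0\); you only treat \(k=0\) and otherwise appeal to auxiliary assumptions (positive spectral gap, ultracontractivity) that are not in the statement, so as written your argument proves the theorem only in those special cases. The paper disposes of this step by citing the Li--Yau bound \eqref{liyau} (whose factor \(e^{-\mu_1(M)t}\) supplies decay when \(\mu_1(M)>0\)) together with the compact support of \(f\), and asserting rapid decay of \(p_t\) as \(t\to\infty\); your observation that this is the delicate point is fair — the stated bound alone does not yield integrable-in-\(t\) decay when \(\mu_1(M)=0\) and \(k>0\) — but a complete proof still has to close it, and your proposal does not. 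Second, your claim that \(f\in C_c^\alpha(M)\) lies in \(H^{\log}(M)\) because \(f\in H^{2\varepsilon}(M)\) ``together with the low-frequency behaviour'' is unsupported: by the paper's own Theorem~\ref{spect11}(ii), \(H^{2\varepsilon}(M)\subset H^{\log}(M)\) can fail when the spectrum accumulates at zero, so membership in \(H^{\log}(M)\) (needed before Theorem~\ref{logboch11} can be applied at all) requires a separate argument about the spectral measure of \(f\) near \(\lambda=0\), which you do not give. Apart from these two points, the symmetrization, the Fubini structure, the near-diagonal convergence of the \(K_1\)-integral, and the identification of the constant all match the paper's proof.
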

	
	  The key geometric input is the Bishop–Gromov volume comparison and the corresponding Li–Yau Gaussian bounds for \(p_t(x,y)\).

	On a general manifold \((M,g)\) without curvature condition, we can similarly compare the spectral logarithmic Laplacian and its heat‐kernel counterpart. However, in the absence of curvature bounds one lacks the needed heat‐kernel estimates, we require that
	\[
	K_1(x,y)\;=\;\int_0^1\frac{p_t(x,y)}{t}\,dt,
	\qquad
	K_2(x,y)\;=\;\int_1^\infty\frac{p_t(x,y)}{t}\,dt
	\]
	be well-defined (i.e.\ each integral converges for all \(x\neq y\)), and that $f$ satisfies
	\[
	\int_0^1\int_M\bigl|f(x)-f(y)\bigr|\frac{p_t(x,y)}{t}\,d\mathrm{vol}(y)\,dt
	\;<\;\infty,
	\quad
	\int_1^\infty\int_M|f(y)|\frac{p_t(x,y)}{t}\,d\mathrm{vol}(y)\,dt<\infty.
	\]
	Under these hypotheses, one may interchange integrals and define the heat-kernel logarithmic Laplacian by
	\begingroup\small
	\[
	\bigl(\log(-\Delta)\bigr)_{\mathrm{hk}}f(x)
	:= \int_M K_1(x,y)\,\bigl(f(x)-f(y)\bigr)\,d\mathrm{vol}(y)
	\;-\;\int_M K_2(x,y)\,f(y)\,d\mathrm{vol}(y)
	\;+\;\Gamma'(1)\,f(x).
	\]
	\endgroup
	From the argument in the proof of Theorem~\ref{thm:log-noncompact}, we then obtain the exact decomposition
	\[
	\bigl(\log(-\Delta)\bigr)_{\mathrm{spec}}f
	=\bigl(\log(-\Delta)\bigr)_{\mathrm{hk}}f
	\;+\;V(x)\,f(x),
	\]
	where
	\[
	V(x)\;=\;\int_0^1t^{-1}\,r(t,x)\,dt,
	\quad
	r(t,x)=1-\int_Mp_t(x,y)\,d\mathrm{vol}(y).
	\]

	In particular, if \(M\) is a stochastically complete Riemannian manifold, then \(r(t,x)\equiv 0\), so $\bigl(\log(-\Delta)\bigr)_{\mathrm{hk}}$ is consistent with the $\bigl(\log(-\Delta)\bigr)_{\mathrm{spec}}$.

	Finally, we specialize to the prototypical noncompact setting of real hyperbolic space \(\mathbb{H}^n,\:n\ge 2\), which is stochastic completeness with $\mathrm{Ric}_g=-\left(n-1\right)$. Thanks to the explicit form and sharp asymptotics of the heat kernel on \(\mathbb{H}^n\), we are able to derive precise pointwise formulas and kernel bounds for both fractional and logarithmic Laplacians.
	
	\begin{proposition}\label{fensjx11}
		The fractional Laplacian kernel \(\mathcal{K}_s(r)\) on \(\mathbb{H}^n\) is defined in (\ref{fenshureh}) satisfies
		\[
		\mathcal{K}_{s}(r)\sim r^{-n-2s}\quad r\to0,
		\qquad
		\mathcal{K}_{s}(r)\sim r^{-1-s}e^{-(n-1)r}\quad r\to\infty.
		\]
	\end{proposition}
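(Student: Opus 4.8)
The plan is to start from the explicit integral representation
\[
\mathcal{K}_s(r)=\int_0^\infty\frac{p_t(r)}{t^{1+s}}\,dt,
\]
where $p_t(r)$ is the heat kernel on $\mathbb{H}^n$ written as a function of the geodesic distance $r=d(x,y)$, and to split the integral at a convenient scale (say $t=1$, or $t=r^2$ for the small-$r$ analysis) so that the short-time part is controlled by the Euclidean-type on-diagonal behaviour and the long-time part by the exponential decay coming from the bottom of the spectrum $\lambda_0(\mathbb{H}^n)=(n-1)^2/4$. First I would record the sharp two-sided Gaussian bounds for $p_t(r)$ on $\mathbb{H}^n$ (Davies–Mandouvalos type): there are constants such that, uniformly in $r>0$ and $t>0$,
\[
p_t(r)\asymp t^{-n/2}\,(1+r+t)^{(n-3)/2}\,(1+r)\,
\exp\!\Bigl(-\tfrac{r^2}{4t}-\tfrac{(n-1)^2}{4}\,t-\tfrac{n-1}{2}\,r\Bigr),
\]
which is the key geometric input; I would cite the standard references for this (Davies–Mandouvalos, Anker–Ji) rather than reprove it.

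For the small-$r$ asymptotics, I would substitute $t=r^2\tau$ in $\int_0^\infty p_t(r)t^{-1-s}\,dt$. On the range $t\lesssim 1$ (equivalently $\tau\lesssim r^{-2}$, which covers everything relevant as $r\to 0$) the polynomial prefactors $(1+r+t)^{(n-3)/2}(1+r)$ and the exponential factors $e^{-(n-1)^2 t/4}$, $e^{-(n-1)r/2}$ all tend to $1$, so $p_t(r)\sim (4\pi t)^{-n/2}e^{-r^2/4t}$; the change of variables then yields
\[
\mathcal{K}_s(r)\sim r^{-n-2s}\int_0^\infty(4\pi\tau)^{-n/2}e^{-1/4\tau}\,\tau^{-1-s}\,d\tau,
\]
and the remaining $\tau$-integral is a finite positive constant (a Gamma function value), while the contribution of $t\gtrsim 1$ is $O(1)$ and hence negligible against $r^{-n-2s}\to\infty$. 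This gives $\mathcal{K}_s(r)\sim r^{-n-2s}$ as $r\to 0$; one must check the error terms from replacing the prefactors by their limits are genuinely lower order, which is where a little care (dominated convergence with an integrable majorant uniform in small $r$) is needed but no real difficulty arises.

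For the large-$r$ asymptotics, the exponential $e^{-r^2/4t-(n-1)^2t/4}$ is sharply peaked, by Laplace's method, near $t_*=r/(n-1)$, where its value is $e^{-(n-1)r/2}$; combined with the standing factor $e^{-(n-1)r/2}$ this produces the total exponential weight $e^{-(n-1)r}$. I would make the substitution $t=r u$, so that
\[
\mathcal{K}_s(r)=r^{-s}e^{-(n-1)r/2}\int_0^\infty p_{ru}(r)\,e^{(n-1)r/2}\,u^{-1-s}\,du,
\]
insert the two-sided bound for $p_{ru}(r)$, and apply Laplace's method to the integral over $u$: the exponent $-\tfrac{r}{4u}-\tfrac{(n-1)^2 r u}{4}$ has its maximum at $u=1/(n-1)$ with value $-(n-1)r/2$, the prefactor $(ru)^{-n/2}(1+r+ru)^{(n-3)/2}(1+r)$ contributes a power of $r$, and the Gaussian width of the peak contributes an extra $r^{-1/2}$. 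Collecting powers of $r$: $r^{-s}$ from the substitution, $r^{-n/2}$ from $t^{-n/2}$ evaluated at $t_*\asymp r$, $r^{(n-3)/2}$ and $r^1$ from the polynomial prefactors, and $r^{1/2}$ from the Laplace width — these combine to $r^{-s-n/2+(n-3)/2+1+1/2}=r^{-s-1}$, giving precisely $\mathcal{K}_s(r)\sim r^{-1-s}e^{-(n-1)r}$. The main obstacle is executing this Laplace-method estimate cleanly with the two-sided (rather than exact) heat-kernel bounds: one must show the upper and lower bounds produce the same power of $r$, i.e.\ that the implied constants in the asymptotic relation are genuinely two-sided, and control the tails $u\to 0$ and $u\to\infty$ where the prefactor blows up or the exponential is not yet dominant; this is routine but is the step that requires the most bookkeeping.
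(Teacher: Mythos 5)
Your proposal follows essentially the same route as the paper's proof: the Davies--Mandouvalos two-sided heat-kernel asymptotics (Proposition~\ref{prop:heat-asymp}) as the key input, a reduction of the small-$r$ regime to an incomplete-Gamma integral by rescaling in $t$, and Laplace's method at the interior saddle $t_*\asymp r/(n-1)$ for the large-$r$ regime, which the paper packages as Lemmas~\ref{lem:gamma-tail} and~\ref{lem:K2-asymp} after splitting at $t=1$. One small slip: in your final power count you write $+\tfrac12$ for the Laplace width although you correctly identified it as $r^{-1/2}$ just before; with $-\tfrac12$ the count $-s-\tfrac n2+\tfrac{n-3}{2}+1-\tfrac12=-s-1$ yields the stated exponent, so the conclusion stands.
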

	
	\begin{proposition}\label{prop:K1K211}
		The logarithmic kernels on \(\mathbb{H}^n\) introduced in \eqref{loghe} satisfy
		\[
		K_1(r)\sim r^{-n},\;r\le1;
		\quad
		K_1(r)\sim r^{\frac{n-5}{2}}e^{-\frac{(n-1)r}{2}}e^{-\frac{r^2}{4}},\;r\ge1,
		\]
		and
		\[
		K_2(r)\sim 1,\;r\le1;
		\quad
		K_2(r)\sim r^{-1}e^{-(n-1)r},\;r\ge1.
		\]
	\end{proposition}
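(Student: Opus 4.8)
The plan is to reduce both statements to elementary one‑variable integral asymptotics by inserting the sharp two‑sided Gaussian heat‑kernel bounds of Davies–Mandouvalos on $\mathbb{H}^n$: there are constants $0<c\le C$ depending only on $n$ with
\[
c\,\Theta_t(r)\;\le\;p_t(r)\;\le\;C\,\Theta_t(r),\qquad
\Theta_t(r):=t^{-n/2}(1+r+t)^{\frac{n-3}{2}}(1+r)\,
\exp\!\Bigl(-\tfrac{(n-1)^2}{4}t-\tfrac{(n-1)}{2}r-\tfrac{r^2}{4t}\Bigr),
\]
valid for all $t>0$ and $r\ge 0$. Since $K_1$ and $K_2$ are continuous and strictly positive functions of $r$ on $(0,\infty)$, on every compact subinterval the four comparisons are automatic, so it suffices to verify the claimed equivalences as $r\to 0^+$ and as $r\to\infty$. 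Throughout I write $A\asymp B$ for a two‑sided bound with constants depending only on $n$.

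For $K_1(r)=\int_0^1 t^{-1}p_t(r)\,dt$ I would first use that on $t\in(0,1]$ one has $1+r+t\asymp 1+r$ and $e^{-(n-1)^2t/4}\asymp 1$, so that
\[
K_1(r)\;\asymp\;(1+r)^{\frac{n-1}{2}}e^{-\frac{(n-1)r}{2}}\int_0^1 t^{-\frac n2-1}e^{-\frac{r^2}{4t}}\,dt,
\]
and then evaluate the remaining integral by the substitution $u=r^2/(4t)$, which turns it into $(r^2/4)^{-n/2}\,\Gamma\!\bigl(\tfrac n2,\tfrac{r^2}{4}\bigr)$ with $\Gamma(a,x)=\int_x^\infty u^{a-1}e^{-u}\,du$. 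For $r\le 1$ the argument $r^2/4$ lies in a bounded interval, so $\Gamma(\tfrac n2,\tfrac{r^2}{4})\asymp 1$ and $(1+r)^{(n-1)/2}e^{-(n-1)r/2}\asymp 1$, giving $K_1(r)\asymp r^{-n}$. For $r\ge 1$ one has $r^2/4\ge 1/4$, where the classical estimate $\Gamma(a,x)\asymp x^{a-1}e^{-x}$ holds (elementary for $x$ bounded below and $a=n/2\ge1$); hence that integral is $\asymp r^{-2}e^{-r^2/4}$ and
\[
K_1(r)\;\asymp\;r^{\frac{n-1}{2}}e^{-\frac{(n-1)r}{2}}\cdot r^{-2}e^{-\frac{r^2}{4}}
\;=\;r^{\frac{n-5}{2}}e^{-\frac{(n-1)r}{2}}e^{-\frac{r^2}{4}}.
\]

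For $K_2(r)=\int_1^\infty t^{-1}p_t(r)\,dt$ the factor $e^{-(n-1)^2t/4}$ is what guarantees convergence. When $r\le1$ I would use $1+r\asymp1$, $(1+r+t)^{(n-3)/2}\asymp t^{(n-3)/2}$ and $e^{-r^2/(4t)}\asymp1$ to obtain $K_2(r)\asymp\int_1^\infty t^{-5/2}e^{-(n-1)^2t/4}\,dt$, a finite positive constant, hence $K_2(r)\asymp1$. The substantive case is $r\to\infty$, which I would treat by Laplace's method applied to
\[
J(r):=\int_1^\infty t^{-\frac n2-1}(1+r+t)^{\frac{n-3}{2}}e^{\phi_r(t)}\,dt,\qquad
\phi_r(t)=-\tfrac{(n-1)^2}{4}t-\tfrac{r^2}{4t},
\]
so that $K_2(r)\asymp(1+r)e^{-(n-1)r/2}J(r)$. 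The phase $\phi_r$ is concave on $(0,\infty)$ with a unique maximum at $t_*=r/(n-1)$, where $\phi_r(t_*)=-\tfrac{(n-1)}{2}r$ and $\phi_r''(t_*)=-\tfrac{(n-1)^3}{2r}$; rescaling $t=t_*s$ one has $t_*\asymp r$, Gaussian width $\asymp|\phi_r''(t_*)|^{-1/2}\asymp r^{1/2}$, and the slowly varying prefactor $t^{-n/2-1}(1+r+t)^{(n-3)/2}$ equals $\asymp r^{-n/2-1}r^{(n-3)/2}=r^{-5/2}$ at $t_*$; hence $J(r)\asymp r^{-5/2}\cdot r^{1/2}e^{-(n-1)r/2}=r^{-2}e^{-(n-1)r/2}$, and multiplying by $(1+r)e^{-(n-1)r/2}\asymp r\,e^{-(n-1)r/2}$ yields $K_2(r)\asymp r^{-1}e^{-(n-1)r}$.

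The main obstacle, and the only point requiring genuine care, is making the Laplace estimate for $J(r)$ rigorous. I would split $[1,\infty)$ into $\{t\le t_*/2\}$, $\{t_*/2\le t\le 2t_*\}$ and $\{t\ge 2t_*\}$, use concavity of $\phi_r$ to bound the first and third ranges by an exponentially smaller quantity than the central contribution (so that, once $r$ is large enough that $t_*>2$, the endpoint $t=1$ plays no role), and on the central window bound the prefactor above and below by its value at $t_*$ while controlling the quadratic Taylor remainder of $\phi_r$ uniformly, which pins the power $r^{-2}$ coming from the $r^{1/2}$ Gaussian width. The remaining ingredients — the incomplete‑gamma substitution and its two asymptotic regimes, the small‑$r$ reductions, and continuity/positivity on compact $r$‑ranges — are routine.
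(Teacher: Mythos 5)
Your proposal is correct and follows essentially the same route as the paper: insert the Davies--Mandouvalos two-sided heat-kernel bounds, reduce $K_1$ to an incomplete Gamma function via $u=r^2/(4t)$ together with the tail estimate $\Gamma(a,x)\asymp x^{a-1}e^{-x}$, and obtain the large-$r$ behavior of $K_2$ by Laplace's method at the interior critical point $t_*\asymp r$, which is exactly what the paper does (after the substitution $s=r^2/(4t)$) through its Gamma-tail and Laplace lemmas. Your direct treatment of $K_2$ for $r\le1$, bounding all $r$-dependent factors by constants, is a mild simplification of the paper's corresponding step, but the substance of the argument is the same.
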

	
	With these estimates in hand, we can weaken the hypotheses needed for a pointwise definition of \(\log(-\Delta_{\mathbb{H}^n})f\) in Theorem \ref{thm:log-noncompact11}, see Proposition \ref{prop:cont-logDelta11}.
	
	\begin{proposition}\label{prop:cont-logDelta11}
		Let \(f:\mathbb{H}^n\to \mathbb{R}\) satisfy \(f\in L^1_w(\mathbb{H}^n)\) and be locally Dini–continuous at \(x\).  Then the formula
		\begingroup\small\[
		\log(-\Delta_{\mathbb{H}^n})f(x)
		=\int_{\mathbb{H}^n}K_1\bigl(d(x,y)\bigr)\bigl[f(x)-f(y)\bigr]\,d\mathrm{vol}(y)
		-\int_{\mathbb{H}^n}K_2\bigl(d(x,y)\bigr)\,f(y)\,d\mathrm{vol}(y)
		+\Gamma'(1)\,f(x)
		\]
		\endgroup
		converges absolutely and defines a continuous function of \(x\in\mathbb{H}^n\), where $L^1_w(\mathbb{H}^n)$ and locally Dini–continuous can be found in subsection \ref{last}.
	\end{proposition}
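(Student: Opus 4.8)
The plan is to establish the three claims of the statement separately—absolute convergence, well-definedness of the formula, and continuity in $x$—using the sharp kernel bounds of Proposition~\ref{prop:K1K211} together with the two weak hypotheses on $f$. First I would split the $K_1$-integral into a near-diagonal part $d(x,y)\le 1$ and a far part $d(x,y)\ge 1$. On the far part, $K_1(r)\sim r^{(n-5)/2}e^{-(n-1)r/2}e^{-r^2/4}$ decays super-exponentially, so together with the at most exponential volume growth of $\mathbb{H}^n$ and the hypothesis $f\in L^1_w(\mathbb{H}^n)$ (the weighted $L^1$ space whose weight is tuned precisely to this decay) the integral $\int_{d(x,y)\ge1}K_1(d(x,y))|f(y)|\,d\mathrm{vol}(y)$ converges absolutely; the $f(x)$-term is harmless since $\int_{d(x,y)\ge1}K_1(d(x,y))\,d\mathrm{vol}(y)<\infty$ by the same estimate. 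On the near-diagonal part we use $K_1(r)\sim r^{-n}$ and the local Dini continuity of $f$ at $x$: writing $|f(x)-f(y)|\le\omega(d(x,y))$ for a Dini modulus $\omega$, and passing to geodesic polar coordinates around $x$ where $d\mathrm{vol}\sim r^{n-1}\,dr\,d\sigma$, the integrand is controlled by $r^{-n}\omega(r)r^{n-1}=\omega(r)/r$, which is integrable near $0$ exactly by the Dini condition $\int_0^1\omega(r)r^{-1}\,dr<\infty$. For the $K_2$-integral, Proposition~\ref{prop:K1K211} gives $K_2(r)\sim1$ for $r\le1$ and $K_2(r)\sim r^{-1}e^{-(n-1)r}$ for $r\ge1$; the near part is then bounded by $\int_{d(x,y)\le1}|f(y)|\,d\mathrm{vol}(y)$, finite since $f\in L^1_{\mathrm{loc}}$, and the far part again converges against $f\in L^1_w$ because $r^{-1}e^{-(n-1)r}$ decays faster than the volume grows. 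This yields absolute convergence and hence well-definedness of the right-hand side.

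Next I would prove continuity in $x$. Fix $x_0$ and a small ball $B=B_\rho(x_0)$; I want to show the three terms depend continuously on $x\in B$. The $\Gamma'(1)f(x)$ term is continuous since local Dini continuity at each point of a neighborhood (or, minimally, joint control via a uniform modulus on $\overline{B}$) gives continuity of $f$ there. For the $K_2$-term, $y\mapsto K_2(d(x,y))f(y)$ is dominated, uniformly for $x\in B$, by an integrable function: near the (moving) diagonal by $C|f(y)|\mathbf 1_{d(x_0,y)\le 1+\rho}$ and far away by the $L^1_w$-bound after absorbing the bounded factor $e^{(n-1)\rho}$; dominated convergence then gives continuity, using $K_2(d(x,y))\to K_2(d(x_0,y))$ for a.e. $y$ and continuity of $f$. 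The $K_1$-term is the delicate one, since the singularity $K_1(r)\sim r^{-n}$ sits on the diagonal which moves with $x$. Here I would fix $\varepsilon>0$, choose $\delta$ so small that $\int_{d(x,y)\le\delta}K_1(d(x,y))|f(x)-f(y)|\,d\mathrm{vol}(y)<\varepsilon$ uniformly for $x\in B$—possible by the uniform Dini estimate $\int_0^\delta\omega(r)r^{-1}\,dr\to0$—and then on the complement $\{d(x,y)\ge\delta\}$ the integrand is bounded by an $x$-uniform integrable majorant (using $K_1(r)\lesssim \delta^{-n}$ for $\delta\le r\le1$ and the super-exponential bound for $r\ge1$), so dominated convergence applies on that region. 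Combining, the $K_1$-term is continuous at $x_0$.

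The main obstacle is the uniform control of the near-diagonal $K_1$-contribution as $x$ varies, i.e.\ upgrading pointwise Dini continuity at a single point $x$ to the uniform smallness $\sup_{x\in B}\int_{d(x,y)\le\delta}K_1(d(x,y))|f(x)-f(y)|\,d\mathrm{vol}(y)\to0$ as $\delta\to0$. This is where the precise meaning of ``locally Dini--continuous'' from subsection~\ref{last} must be used: I expect the definition there to provide a common modulus $\omega$ valid on a whole neighborhood (not just a pointwise estimate), which is exactly what makes the splitting argument go through; if instead only pointwise Dini continuity is assumed, one argues continuity only at Dini points and the conclusion is read accordingly. A secondary technical point is that the asymptotics in Proposition~\ref{prop:K1K211} are stated as $\sim$, so I would first record two-sided bounds $c_1 r^{-n}\le K_1(r)\le c_2 r^{-n}$ on $(0,1]$ and the corresponding far-field bounds, and keep track of the transition at $r=1$; these are routine once the asymptotics are in hand. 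With these bounds the estimates above are elementary integrations in geodesic polar coordinates on $\mathbb{H}^n$, and the proof concludes.
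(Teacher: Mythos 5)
Your proposal follows essentially the same route as the paper: absolute convergence from the kernel asymptotics of Proposition~\ref{prop:K1K211} combined with the Dini modulus near the diagonal and the $L^1_w(\mathbb{H}^n)$ weight at infinity, and continuity via a near/far splitting with uniform smallness of the near-diagonal $K_1$-contribution plus dominated convergence for the far part and the $K_2$-term. You also correctly flag the one delicate point---needing a modulus that controls $|f(x')-f(y)|$ uniformly as the base point varies---which is precisely what the paper's own proof uses (it bounds by the uniform modulus $\omega_{f,\Omega}(\delta)$ on a neighborhood, effectively a uniform Dini condition) despite the statement's weaker ``locally Dini at $x$'' hypothesis.
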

	
	Thus the combination of explicit hyperbolic heat‐kernel asymptotics and Dini‐type continuity yields a robust pointwise theory of the logarithmic Laplacian on \(\mathbb{H}^n\).

In the final part of the paper we exploit the explicit hyperbolic‐space estimates to obtain a refined splitting of the logarithmic Laplacian into three well‐understood pieces, directly paralleling the Euclidean formula \eqref{duishurn}.  Fix \(x\in\mathbb{H}^n\) and write \(r=d(x,y)\), \(B_1(x)=\{y:r<1\}\).  We define the remainder
\begingroup\small	
\begin{equation}\label{remind}
	\begin{aligned}
		\mathcal R_{n}(f;x)
		:=&-\int_{B_{1}(x)}K_{2}(r)\,f(y)\,d\mathrm{vol}_{\mathbb{H}^n}(y)
		-\int_{\mathbb H^{n}\setminus B_{1}(x)}
		K_{1}(r)f(y)\,d\mathrm{vol}_{\mathbb{H}^n}(y)\\+&\left(\int_{\mathbb H^{n}\setminus B_{1}(x)}	K_{1}(r)\,d\mathrm{vol}_{\mathbb{H}^n}(y)+\Gamma'(1)\right)f(x).
	\end{aligned}
\end{equation}
\endgroup
Using the kernel bounds of Propositions~\ref{prop:K1K211} and standard integral estimates (see \cite[Thm.~6.18]{folland1999real}), one shows that \(\mathcal R_n(f;\cdot)\in L^p(\mathbb{H}^n)\) whenever \(f\in L^p(\mathbb{H}^n)\), \(1\le p\le\infty\). 

Therefore, we may express the logarithmic Laplacian in the following pointwise form:

 \begin{proposition}\label{prop:split-refined11}
 	Let $f\in L_{w}^1(\mathbb H^n)$ and \(f\) is locally Dini continuous at the point \(x\) on $\mathbb H^{n}$. 
 	Then
 	\begin{equation}\label{eq:split-refined}
 		\begin{aligned}
 			(\log(-\Delta_{\mathbb H^{n}})f)(x)
 			&=\int_{B_{1}(x)}K_{1}(r)\bigl[f(x)-f(y)\bigr]\,d\mathrm{vol}_{\mathbb{H}^n}(y)\\
 			&\quad
 			-\int_{\mathbb H^{n}\setminus B_{1}(x)}K_{2}(r)\,f(y)\,
 			d\mathrm{vol}_{\mathbb{H}^n}(y)
 			+\mathcal R_{n}(f;x).
 		\end{aligned}
 	\end{equation}
 \end{proposition}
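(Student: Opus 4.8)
\textbf{Proof proposal for Proposition~\ref{prop:split-refined11}.}
The plan is to start from the pointwise formula established in Proposition~\ref{prop:cont-logDelta11}, namely
\[
(\log(-\Delta_{\mathbb H^{n}})f)(x)
=\int_{\mathbb H^{n}}K_{1}(r)\bigl[f(x)-f(y)\bigr]\,d\mathrm{vol}(y)
-\int_{\mathbb H^{n}}K_{2}(r)\,f(y)\,d\mathrm{vol}(y)
+\Gamma'(1)\,f(x),
\]
which is valid under exactly the stated hypotheses ($f\in L^1_w(\mathbb H^n)$ and locally Dini continuous at $x$), and then simply to split each of the two global integrals into the pieces over $B_1(x)$ and over $\mathbb H^n\setminus B_1(x)$. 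The decomposition is purely algebraic once absolute convergence of every resulting piece is verified. The first term becomes
\(\int_{B_1(x)}K_1(r)[f(x)-f(y)]\,d\mathrm{vol}(y)+\int_{\mathbb H^n\setminus B_1(x)}K_1(r)[f(x)-f(y)]\,d\mathrm{vol}(y)\);
in the exterior piece I would expand \(f(x)-f(y)\) and move the \(f(x)\) part into the constant-coefficient term, producing the summand \(\bigl(\int_{\mathbb H^n\setminus B_1(x)}K_1(r)\,d\mathrm{vol}(y)\bigr)f(x)\) that appears inside \(\mathcal R_n(f;x)\), and leaving \(-\int_{\mathbb H^n\setminus B_1(x)}K_1(r)f(y)\,d\mathrm{vol}(y)\) as the second contribution to \(\mathcal R_n\). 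Likewise I would split \(-\int_{\mathbb H^n}K_2(r)f(y)\,d\mathrm{vol}(y)\) into \(-\int_{\mathbb H^n\setminus B_1(x)}K_2(r)f(y)\,d\mathrm{vol}(y)\) (kept as the second main term of \eqref{eq:split-refined}) and \(-\int_{B_1(x)}K_2(r)f(y)\,d\mathrm{vol}(y)\) (the first summand of \(\mathcal R_n\)). Collecting terms and comparing with the definition \eqref{remind} of \(\mathcal R_n(f;x)\) then yields \eqref{eq:split-refined} directly.

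The only analytic content is to check that each of the four integrals into which we have split the two original ones converges absolutely, so that the rearrangement is legitimate. For the near-diagonal piece \(\int_{B_1(x)}K_1(r)[f(x)-f(y)]\,d\mathrm{vol}(y)\), I would use the bound \(K_1(r)\sim r^{-n}\) for \(r\le1\) from Proposition~\ref{prop:K1K211} together with the local Dini-continuity of \(f\) at \(x\): writing \(\omega\) for the Dini modulus, the integrand is controlled by \(C\,r^{-n}\omega(r)\) against the volume element \(\sinh^{n-1}r\,dr\,d\sigma\sim r^{n-1}dr\,d\sigma\) near \(r=0\), giving an integrable bound \(C\,\omega(r)/r\) whose radial integral is finite precisely by the Dini condition \(\int_0^1 \omega(r)\,r^{-1}dr<\infty\). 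For \(\int_{B_1(x)}K_2(r)|f(y)|\,d\mathrm{vol}(y)\), since \(K_2(r)\sim1\) is bounded on \(B_1(x)\) and \(B_1(x)\) has finite volume, this is at most \(C\int_{B_1(x)}|f(y)|\,d\mathrm{vol}(y)<\infty\) because \(f\in L^1_w(\mathbb H^n)\subset L^1_{\mathrm{loc}}\). For the two exterior integrals I would invoke the decay \(K_1(r)\sim r^{(n-5)/2}e^{-(n-1)r/2}e^{-r^2/4}\) and \(K_2(r)\sim r^{-1}e^{-(n-1)r}\) for \(r\ge1\); against the volume growth \(\sinh^{n-1}r\sim C e^{(n-1)r}\), the kernel \(K_1\) still decays super-exponentially (the \(e^{-r^2/4}\) factor dominates) and \(K_2 r^{-1}e^{-(n-1)r}\sinh^{n-1}r\sim C r^{-1}\) — so one needs the weighted integrability encoded in \(f\in L^1_w(\mathbb H^n)\), which by the definition recalled in subsection~\ref{last} is exactly the class for which \(\int_{\mathbb H^n\setminus B_1(x)}K_i(r)|f(y)|\,d\mathrm{vol}(y)<\infty\); in particular \(\int_{\mathbb H^n\setminus B_1(x)}K_1(r)\,d\mathrm{vol}(y)<\infty\) as well, so the coefficient of \(f(x)\) in \(\mathcal R_n\) is a finite number.

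The main (and essentially only) obstacle is bookkeeping: making sure the \(f(x)\)-terms are consistently routed — the whole of \(\Gamma'(1)f(x)\), plus the exterior contribution \(\bigl(\int_{\mathbb H^n\setminus B_1(x)}K_1(r)\,d\mathrm{vol}(y)\bigr)f(x)\) coming from the split of the first integral, collect into the last line of \eqref{remind}, while the \(-\int_{B_1(x)}K_2 f\) and \(-\int_{\mathbb H^n\setminus B_1(x)}K_1 f\) terms collect into the first line of \eqref{remind}. Since Proposition~\ref{prop:cont-logDelta11} already guarantees the left-hand side is a genuine (absolutely convergent, continuous) expression, and since each of the four pieces above has been shown to converge absolutely under the hypotheses \(f\in L^1_w(\mathbb H^n)\) and \(f\) locally Dini continuous at \(x\), Fubini/rearrangement is justified and the identity \eqref{eq:split-refined} follows. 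Finally, the asserted membership \(\mathcal R_n(f;\cdot)\in L^p(\mathbb H^n)\) for \(f\in L^p(\mathbb H^n)\), \(1\le p\le\infty\), follows from the kernel bounds of Proposition~\ref{prop:K1K211} and Young-type convolution estimates on \(\mathbb H^n\) (e.g.\ \cite[Thm.~6.18]{folland1999real}): \(K_2\mathbf 1_{B_1}\) and \(K_1\mathbf 1_{\mathbb H^n\setminus B_1}\) are both in \(L^1\) with respect to the radial volume measure — the former because \(K_2\) is bounded on the finite-volume ball, the latter because of the super-exponential decay of \(K_1\) — hence convolution with them maps \(L^p\to L^p\), while the bounded multiplication operator \(f\mapsto \bigl(\int_{\mathbb H^n\setminus B_1(x)}K_1\,d\mathrm{vol}+\Gamma'(1)\bigr)f\) clearly preserves \(L^p\).
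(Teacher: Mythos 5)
Your proposal is correct and follows essentially the same route as the paper: split the global pointwise formula of Proposition~\ref{prop:cont-logDelta11} at the unit ball, verify absolute convergence of each resulting piece via the kernel asymptotics of Proposition~\ref{prop:K1K211}, the Dini argument near the diagonal, and the $L^1_w$ weight at infinity, and then regroup the $f(x)$-terms into $\mathcal R_n(f;x)$ as in \eqref{remind}. The paper's own proof is exactly this (stated even more briefly), so nothing further is needed beyond the minor care that the Dini modulus is only available on a small neighborhood of $x$, with the rest of $B_1(x)$ handled by boundedness of $K_1$ off the diagonal and local integrability of $f$.
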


Finally, by a careful use of Dini‐continuity near the diagonal and the exponential decay at infinity,  the full pointwise formula defines an \(L^p\)-function for every \(1<p\le\infty\).

\begin{proposition}\label{logint11}
	Let \(f\in C_c(\mathbb H^n)\) and $f$ is uniformly Dini continuous on $\mathbb{H}^n.$ Then $\log(-\Delta_{\mathbb H^n})f\in L^p(\mathbb H^n),\:1<p\le \infty.$
\end{proposition}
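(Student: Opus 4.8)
The plan is to start from the pointwise representation \eqref{eq:split-refined}, which is available because a function $f\in C_{c}(\mathbb{H}^{n})$ that is uniformly Dini continuous certainly lies in $L^{1}_{w}(\mathbb{H}^{n})$ and is locally Dini continuous at every point, so Proposition~\ref{prop:split-refined11} applies. Write $\log(-\Delta_{\mathbb{H}^{n}})f=A+B+\mathcal{R}_{n}(f;\cdot)$, where
\[
A(x)=\int_{B_{1}(x)}K_{1}(r)\bigl[f(x)-f(y)\bigr]\,d\mathrm{vol}_{\mathbb{H}^{n}}(y),\qquad
B(x)=-\int_{\mathbb{H}^{n}\setminus B_{1}(x)}K_{2}(r)\,f(y)\,d\mathrm{vol}_{\mathbb{H}^{n}}(y),
\]
and $\mathcal{R}_{n}(f;x)$ is the remainder \eqref{remind}. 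The remainder is the easiest: as already noted after \eqref{remind}, $\mathcal{R}_{n}(f;\cdot)\in L^{p}(\mathbb{H}^{n})$ for every $1\le p\le\infty$ whenever $f\in L^{p}(\mathbb{H}^{n})$, and $f\in C_{c}(\mathbb{H}^{n})\subset L^{p}$. The reason is that, by homogeneity of $\mathbb{H}^{n}$, the off-diagonal kernels $K_{2}(r)\mathbf{1}_{\{r<1\}}$ and $K_{1}(r)\mathbf{1}_{\{r\ge1\}}$ have finite $x$-independent total mass (the latter because $K_{1}(r)\lesssim r^{(n-5)/2}e^{-(n-1)r/2}e^{-r^{2}/4}$ for $r\ge1$ by Proposition~\ref{prop:K1K211}, which outpaces the volume growth $\sinh^{n-1}r$), so Schur's test yields $L^{p}\to L^{p}$ bounds, while the two leftover terms are bounded multiples of $f$.

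For the near-diagonal term $A$ I would invoke $K_{1}(r)\le Cr^{-n}$ on $(0,1]$ from Proposition~\ref{prop:K1K211}, the uniform Dini modulus $\omega$ of $f$ (so $|f(x)-f(y)|\le\omega(d(x,y))$ with $\int_{0}^{1}\omega(r)\,r^{-1}\,dr<\infty$), and the hyperbolic polar volume element $\sinh^{n-1}(r)\,dr\,d\sigma\le r^{n-1}\,dr\,d\sigma$ on $(0,1]$, to obtain
\[
|A(x)|\le C\,|S^{n-1}|\int_{0}^{1}\frac{\omega(r)}{r}\,dr=:C'<\infty
\]
uniformly in $x$. Moreover $A$ vanishes wherever $d(x,\operatorname{supp}f)\ge1$ — there $f(x)=0$ and $f\equiv0$ on $B_{1}(x)$ — so $A$ is supported in the bounded, hence precompact, set $\{d(\cdot,\operatorname{supp}f)<1\}$; thus $A\in L^{\infty}$ with compact support and $A\in L^{p}(\mathbb{H}^{n})$ for every $1\le p\le\infty$.

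The crux is $B$, and it is exactly here that the restriction $p>1$ is forced. Since $K_{2}(r)\sim r^{-1}e^{-(n-1)r}$ for $r\ge1$ (Proposition~\ref{prop:K1K211}), the kernel $K_{2}(d(x,y))\mathbf{1}_{\{d(x,y)\ge1\}}$ has $L^{1}$-mass $\asymp\int_{1}^{\infty}r^{-1}\,dr=\infty$, so no Schur/Young estimate can work and the assertion is genuinely false at $p=1$. Instead I would use the compact support of $f$: fix a point $o$ and $R$ with $\operatorname{supp}f\subset\overline{B_{R}(o)}$. First, $K_{2}$ is globally bounded on $(0,\infty)$ (order $1$ on $(0,1]$ and $\lesssim e^{-(n-1)r}$ on $[1,\infty)$), so $|B(x)|\le\|K_{2}\|_{\infty}\|f\|_{L^{1}}$ for all $x$, giving $B\in L^{\infty}$ and settling $p=\infty$. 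Second, for $x$ with $\rho:=d(x,o)>R+1$ every $y\in\operatorname{supp}f$ has $d(x,y)\ge\rho-R>1$, so — since $t\mapsto t^{-1}e^{-(n-1)t}$ decreases on $[1,\infty)$ — $|B(x)|\le C(\rho-R)^{-1}e^{-(n-1)(\rho-R)}\|f\|_{L^{1}}$. Integrating the $p$-th power against $d\mathrm{vol}_{\mathbb{H}^{n}}\asymp\sinh^{n-1}(\rho)\,d\rho\,d\sigma$ over $\{\rho>R+1\}$ gives a bound $\lesssim\int_{R+1}^{\infty}(\rho-R)^{-p}e^{-(n-1)(p-1)\rho}\,d\rho<\infty$, finite precisely because $p>1$ makes $(n-1)(p-1)>0$, so the decay of $B$ beats the volume growth; on the complementary compact set $\{\rho\le R+1\}$, $B$ is bounded, so that contribution is trivially finite. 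Hence $B\in L^{p}(\mathbb{H}^{n})$ for all $1<p\le\infty$, and adding the three pieces proves $\log(-\Delta_{\mathbb{H}^{n}})f\in L^{p}(\mathbb{H}^{n})$ for $1<p\le\infty$. The main obstacle is this borderline competition in $B$ between the $r^{-1}e^{-(n-1)r}$ tail of $K_{2}$ and the $e^{(n-1)r}$ volume growth of $\mathbb{H}^{n}$: compact support of $f$ upgrades the smeared decay $e^{-(n-1)r}$ to the genuine pointwise decay $e^{-(n-1)d(x,o)}$, and the extra room afforded by $p>1$ — and nothing more — is what closes the estimate.
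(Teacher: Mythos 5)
Your proof is correct, and it uses the same splitting as the paper (the near $K_1$ term, the far $K_2$ term, and the remainder $\mathcal R_n$ from \eqref{eq:split-refined}, with the same kernel asymptotics from Proposition~\ref{prop:K1K211}), but you estimate the individual pieces by a somewhat different mechanism. For the near term, the paper proves an $L^1$ bound by symmetrizing the kernel and applying Fubini together with the uniform Dini modulus, and then interpolates with the pointwise ($L^\infty$) bound of Proposition~\ref{dinia}; you instead observe that this term is uniformly bounded \emph{and} vanishes outside the $1$-neighborhood of $\operatorname{supp}f$, so it is trivially in every $L^p$ --- a more elementary route that exploits compact support directly. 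For the far term, the paper applies Minkowski's integral inequality and reduces the claim to $K_2\in L^p(\mathbb H^n)$ for $p>1$ (Proposition~\ref{k2kjx}), which by homogeneity gives $\|T_2f\|_{L^p}\le\|K_2\|_{L^p}\|f\|_{L^1}$ in one line; you instead derive the pointwise decay $|B(x)|\lesssim (\rho-R)^{-1}e^{-(n-1)(\rho-R)}$ for $\rho=d(x,o)$ large and integrate against $\sinh^{n-1}\rho\,d\rho$, which makes the borderline competition between $r^{-1}e^{-(n-1)r}$ and the volume growth $e^{(n-1)r}$ (and hence the necessity of $p>1$) completely explicit, at the cost of a slightly longer computation. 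Your treatment of $\mathcal R_n$ coincides with the paper's Young/Schur argument. One cosmetic slip: the inequality $\sinh^{n-1}(r)\le r^{n-1}$ on $(0,1]$ is reversed as stated (since $\sinh r\ge r$); what you need and implicitly use is $\sinh^{n-1}(r)\le C\,r^{n-1}$ on $(0,1]$, and the constant is harmlessly absorbed, so the argument stands.
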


The paper is organized as follows.  In Section 2, we introduce the functional‐calculus definitions of \((-\Delta)^s\) and \(\log(-\Delta)\), define the corresponding fractional and logarithmic Sobolev spaces, and analyze their interrelation.  We also establish the Bochner integral formula for the logarithmic Laplacian and prove that it recovers the classical pointwise representation in \(\mathbb{R}^n\). In section 3, we first extend both fractional and logarithmic Laplacians to compact and noncompact settings, then compare the spectral and heat‐kernel constructions for \((-\Delta)^s\) and \(\log(-\Delta)\).  Finally, under the assumption \(\mathrm{Ric}_g\ge -k,\:k\ge 0\), we derive the pointwise integral formulas of logarithmic Laplacian.
 Finally, Section 4 specializes to \(\mathbb{H}^n\), \(n\ge2\), where explicit heat‐kernel asymptotics yield precise estimates for both fractional and logarithmic kernels.  We then characterize the domain of the logarithmic Laplacian and prove its \(L^p\)‐continuity on hyperbolic space.

	Throughout this paper, we write \(f \sim g,\:s_0<s<s_1\) with $s_0,s_1\in [0,\infty]$ to mean that there exists a constant \(C>0\) such that
	\[
	C^{-1}\,f(s)\;\le\;g(s)\;\le\;C\,f(s)
	\quad\text{holds for all }s\in \left(s_0,s_1\right).
	\]
	We also write \(f \lesssim g\),  (resp. \(f \gtrsim g\)), $s_0<s<s_1$ to mean that there exists a constant \(C>0\)
	such that \(f(s) \le C\,g(s)\) (resp. \(f(s) \ge C\,g(s)\)) for all \(s_0<s<s_1.\)

	\section{Functional Calculus and Integral Formulas}
	
In this section we develop the fractional Laplacian and logarithmic Laplacian entirely within the framework of functional calculus. The fractional Laplacian has long been represented via the Bochner integral, thereby relating the operator to the heat kernel and enabling a thorough analysis of its properties; however, to our knowledge, no analogous Bochner integral representation exists for the logarithmic Laplacian. We therefore introduce the novel notion of the logarithmic Bochner integral and use it to give a self-contained definition of 
	$$
	\log(-\Delta)\;=\;\int_0^\infty\!\bigl(e^{-t}-e^{t\Delta}\bigr)\,\frac{dt}{t}
	$$
	as a bounded operator on a suitable logarithmic Sobolev space. 
	
	We also study the basic properties of the fractional Sobolev spaces $H^s\left(M\right)$ and their logarithmic analogues $H^{\log}\left(M\right)$ arising from this calculus, and we analyze the relationship between the fractional and logarithmic Laplacian from the perspective of spectral calculus. 
	
	Finally, by applying our spectral formula in the Euclidean setting we re-derive the classical expression for the logarithmic Laplacian on $\mathbb R^n$.  Remarkably, the expression for the logarithmic Laplacian derived here using the Bochner integral formula is in complete agreement with the form obtained by H.Chen and T.Weth in \cite{chen2019dirichlet}.  Although one could have anticipated this agreement, the path via the Bochner integral and spectral calculus is both conceptually clean and technically subtle.

	Note that the functional calculus framework we develop below applies to any positive operator on $L^2(M,\mu)$ with spectrum in $[0,\infty)$.  In particular, one may take $M$ (with its natural measure $\mu$) to be:
	
	\begin{enumerate}
		\item Euclidean space $\mathbb R^N$ (the usual Laplacian on $L^2(\mathbb R^N)$);
		\item a bounded domain $\Omega\subset\mathbb R^N$ (Dirichlet or Neumann Laplacian on $L^2(\Omega)$);
		\item a closed (compact, boundaryless) Riemannian manifold $M$ (Laplace–Beltrami on $L^2(M)$);
		\item a compact Riemannian manifold with boundary (Dirichlet/Neumann Laplacian on $L^2(M)$);
		\item a noncompact complete Riemannian manifold $M$ (Laplace–Beltrami on $L^2(M)$).
	\end{enumerate}
	
	In all cases we denote the operator simply by
	$$
	-\Delta \colon \mathrm{Dom}(-\Delta)\subset L^2(M,\mu)\;\longrightarrow\;L^2(M,\mu).
	$$
	
	\subsection{The Framework of Functional Calculus}\label{hanshu}
	
	By the spectral theorem there is a unique projection valued measure
	\(
	E
	\)
	supported on \([0,\infty)\) such that \cite{reed1981functional}
	\[
	-\Delta=\int_{[0,\infty)}\lambda\,\mathrm dE(\lambda),
	\]
	i.e.\ for any $f\in\operatorname{Dom}(-\Delta)$ and $g\in L^{2}(M,\mu)$,
	\[
	\langle -\Delta f,g\rangle_{L^{2}(M,\mu)}
	=\int_{[0,\infty)}\lambda\,\mathrm dE_{f,g}(\lambda).
	\]
	For each pair \(f,g\in L^2(M,\mu)\), $E_{f,g}$ is a regular complex Borel measure of bounded variation on \([0,\infty)\), supported on \(\sigma(-\Delta)\),  where
	\[
	E_{f,g}(B)\;=\;\bigl\langle E(B)\,f,\;g\bigr\rangle_{L^2(M,\mu)},\:\:E\left(\sigma\left(-\Delta\right)\right)=I \quad\text{where}\:\:
	B\subset[0,\infty)\text{ is Borel set}
	\]
	and satisfying
	\[
	\bigl|E_{f,g}\bigr|\bigl([0,\infty)\bigr)
	\;\le\;\|f\|_{L^2(M,\mu)}\,\|g\|_{L^2(M,\mu)}, 
	\]
	Moreover, $E_{f,f}$ is a positive measure with $E_{f,f}\bigl([0,\infty)\bigr)=\|f\|_{L^2(M,\mu)}.$
	
For each measurable function $\varphi:[0,\infty)\to\mathbb{C}$ we set
	\begin{equation}\label{spec}
		\varphi(-\Delta)=\int_{[0,\infty)}\varphi(\lambda)\,\mathrm dE(\lambda)
	\end{equation}
	with  
	
	\[
	\operatorname{Dom}\bigl(\varphi(-\Delta)\bigr)
	=\Bigl\{f\in L^{2}(M,\mu):
	\int_{[0,\infty)}|\varphi(\lambda)|^{2}\,\mathrm dE_{f,f}(\lambda)<\infty
	\Bigr\}.
	\]
	The action rule  
	\[
	\bigl\langle\varphi(-\Delta)f,g\bigr\rangle_{L^{2}(M,\mu)}
	=\int_{[0,\infty)}\varphi(\lambda)\,\mathrm dE_{f,g}(\lambda), \quad f\in \operatorname{Dom}\bigl(\varphi(-\Delta)\bigr),g \in L^{2}(M,\mu)
	\]
	and
	\[||\varphi\left(-\Delta\right)f||_{L^{2}(M,\mu)}^2=\int_{[0,\infty)}\varphi\left(\lambda\right)^{2}\,\mathrm dE_{f,f}(\lambda). \]
	
	In particular, $f\in \operatorname{Dom}\left(-\Delta\right)$ if and only if $\int_{[0,\infty)}\lambda^{2}\,\mathrm dE_{f,f}(\lambda)<\infty$ with
	\[||-\Delta \:f||_{L^{2}(M,\mu)}^2=\int_{[0,\infty)}\lambda^{2}\,\mathrm dE_{f,f}(\lambda).\]
	If $\varphi$ is bounded, then $$\|\varphi(-\Delta)\|=\|\varphi\big|_{\sigma\left(-\Delta\right)}\|_{L^{\infty}},$$
	if $\varphi$ is real–valued (resp.\ non–negative), 
	then $\varphi(-\Delta)$ is self–adjoint (resp.\ positive).
	
	Below, by selecting appropriate measurable functions $\varphi$, we define the fractional Laplacian and logarithmic Laplacian via the functional calculus.

	For $s\ge 0,$ taking $\varphi\left(\lambda\right)=\lambda^s$, define
	\[
	(-\Delta)^{s}:
	\;=\;
	\int_{[0,\infty)}\lambda^{s}\,\mathrm dE(\lambda),
	\]
	with 
	\[\operatorname{Dom}\left(\left(-\Delta\right)^s\right)=\Bigl\{f\in L^{2}(M,\mu):
	\int_{[0,\infty)}\lambda^{2s}\,\mathrm dE_{f,f}(\lambda)<\infty
	\Bigr\}:=H^{2s}(M),\]
	Endowed with the inner product
	\[
	\langle f,g\rangle_{H^{2s}\left(M\right)}
	\;=\;
	\int_{[0,\infty)}(1+\lambda)^{2s}\,\mathrm dE_{f,g}(\lambda),
	\qquad
	\|f\|_{H^{2s}\left(M\right)}^2
	=\langle f,f\rangle_{H^{2s}\left(M\right)},\] 
	It is obvious that \(H^{s}(M)\) is a inner product  space and
	\[
	f\in H^{2s}(M)
	\;\Longleftrightarrow\;
	(-\Delta)^{s}f\in L^{2}(M,\mu), f\in L^{2}(M,\mu)
	\]
	since the homogeneous Sobolev seminorm
	\[
	\|(-\Delta)^{s}f\|_{L^{2}\left(M,\mu\right)}^{2}
	=
	\int_{[0,\infty)}\lambda^{2s}\,\mathrm dE_{f,f}(\lambda).
	\]
	Thus, $H^{s}\left(M\right)$ is continuously embedded in $L^{2}\left(M,\mu\right).$ In particular, if $-\Delta$ is Laplace–Beltrami operator on  noncompact complete Riemannian manifold, then $$C_c^{\infty}\left(M\right)\subset H^{2}\left(M\right)=\operatorname{Dom}\left(-\Delta\right)\subset H^{2s}\left(M\right),s\in \left(0,1\right).$$

	\begin{proposition}
		\(H^{s}(M), s\ge0\) is a Hilbert space. 
	\end{proposition}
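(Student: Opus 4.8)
The plan is to realize $H^{s}(M)$ as isometrically isomorphic to $L^{2}(M,\mu)$ itself through the functional calculus, so that completeness is inherited. Write
\[
\Lambda_{s}\;:=\;\bigl(1+(-\Delta)\bigr)^{s/2}\;=\;\int_{[0,\infty)}(1+\lambda)^{s/2}\,\mathrm dE(\lambda);
\]
by the conventions fixed above — the symbol $(1+\lambda)^{s/2}$ is real, non-negative and measurable on $[0,\infty)$ — $\Lambda_{s}$ is a positive self-adjoint, hence closed, operator on $L^{2}(M,\mu)$, its domain is precisely $\{f\in L^{2}(M,\mu):\int_{[0,\infty)}(1+\lambda)^{s}\,\mathrm dE_{f,f}(\lambda)<\infty\}=H^{s}(M)$, and $\|\Lambda_{s}f\|_{L^{2}(M,\mu)}^{2}=\int_{[0,\infty)}(1+\lambda)^{s}\,\mathrm dE_{f,f}(\lambda)=\|f\|_{H^{s}(M)}^{2}$ for every $f\in H^{s}(M)$. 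Since $(1+\lambda)^{s}\ge1$ on $[0,\infty)$, the $H^{s}(M)$-norm dominates $\|\cdot\|_{L^{2}(M,\mu)}$, which reconfirms that $\langle\cdot,\cdot\rangle_{H^{s}(M)}$ separates points and that $H^{s}(M)\hookrightarrow L^{2}(M,\mu)$ continuously; by polarization $\Lambda_{s}$ intertwines the two inner products.

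The key step is to note that $\Lambda_{s}$ has a bounded two-sided inverse on $L^{2}(M,\mu)$, namely $\Lambda_{s}^{-1}=(1+(-\Delta))^{-s/2}=\int_{[0,\infty)}(1+\lambda)^{-s/2}\,\mathrm dE(\lambda)$, of operator norm $\sup_{\lambda\ge0}(1+\lambda)^{-s/2}=1$. For $g\in L^{2}(M,\mu)$, setting $f:=\Lambda_{s}^{-1}g$, the identity $\mathrm dE_{f,f}(\lambda)=(1+\lambda)^{-s}\,\mathrm dE_{g,g}(\lambda)$ gives $\int_{[0,\infty)}(1+\lambda)^{s}\,\mathrm dE_{f,f}(\lambda)=\|g\|_{L^{2}(M,\mu)}^{2}<\infty$, so $f\in H^{s}(M)$, while the composition rule $\varphi(-\Delta)\psi(-\Delta)\subseteq(\varphi\psi)(-\Delta)$ (here $\varphi\psi\equiv1$) yields $\Lambda_{s}\Lambda_{s}^{-1}=I$ on $L^{2}(M,\mu)$ and $\Lambda_{s}^{-1}\Lambda_{s}=I$ on $H^{s}(M)$. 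Thus $\Lambda_{s}\colon\bigl(H^{s}(M),\|\cdot\|_{H^{s}(M)}\bigr)\to\bigl(L^{2}(M,\mu),\|\cdot\|_{L^{2}(M,\mu)}\bigr)$ is a surjective linear isometry, and completeness of $H^{s}(M)$ follows at once from completeness of $L^{2}(M,\mu)$.

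I do not anticipate a genuine obstacle: the statement is bookkeeping within the spectral calculus. The one feature that really is used — and that fails for a naive ``weighted-norm'' definition — is that the limit of a Cauchy sequence remains in the space, which is precisely what surjectivity of $\Lambda_{s}$ (equivalently, closedness of $\Lambda_{s}$ together with $(1+\lambda)^{s}\ge1$) encodes. If one prefers to avoid $\Lambda_{s}^{-1}$ altogether, the direct argument runs as follows: a Cauchy sequence $\{f_{k}\}$ in $H^{s}(M)$ is Cauchy in $L^{2}(M,\mu)$, so $f_{k}\to f$ there; $\{\Lambda_{s}f_{k}\}$ is Cauchy in $L^{2}(M,\mu)$, so $\Lambda_{s}f_{k}\to h$ there; closedness of $\Lambda_{s}$ forces $f\in\operatorname{Dom}(\Lambda_{s})=H^{s}(M)$ with $\Lambda_{s}f=h$, whence $\|f_{k}-f\|_{H^{s}(M)}=\|\Lambda_{s}f_{k}-h\|_{L^{2}(M,\mu)}\to0$.
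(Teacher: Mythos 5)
Your proof is correct and follows essentially the same route as the paper: both realize the $H^{s}$-norm as $\|(1+(-\Delta))^{s/2}f\|_{L^{2}}$ (the paper works with $H^{2s}$ and $(I-\Delta)^{s}$, which is the same thing up to relabeling) and then transfer completeness from $L^{2}(M,\mu)$ using the boundedness of the inverse spectral multiplier $(1+\lambda)^{-s/2}$. The only cosmetic difference is that you note the isometry is onto all of $L^{2}$ (equivalently, invoke closedness of the self-adjoint operator), whereas the paper shows its image is a closed subspace; the underlying mechanism is identical.
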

	
	\begin{proof}
		It is equivalent to show \(H^{2s}(M), s\ge0\) is a Hilbert space. Define
		\[
		T\colon H^{2s}(M)\;\longrightarrow\;L^2(M,\mu),
		\qquad
		T(f)\;=\;(I-\Delta)^s f.
		\]
		By the spectral theorem,
		\[
		\|T(f)\|_{L^2}^2
		=\|(I-\Delta)^s f\|_{L^2}^2
		=\int_{[0,\infty)}(1+\lambda)^{2s}\,dE_{f,f}(\lambda)
		=\|f\|_{H^{2s}}^2,
		\]
		so \(T\) is an isometry from \(\bigl(H^{2s}(M),\|\cdot\|_{H^{2s}}\bigr)\) onto its image
		\[
		\mathcal M = T\bigl(H^{2s}(M)\bigr)\subset L^2(M,\mu).
		\]
		
		Let \(\{g_n\}\subset\mathcal M\) be a sequence converging in \(L^2\) to some \(g\).  By definition of \(\mathcal M\), each \(g_n=(1-\Delta)^s f_n\) for some \(f_n\in H^{2s}(M)\).  Since
		\[
		g_n\to g
		\quad\text{in }L^2,
		\]
		and the operator \((1-\Delta)^{-s}\) is bounded on \(L^2\) (its spectral multiplier \((1+\lambda)^{-s}\) is bounded), the sequence
		\[
		f_n
		=(1-\Delta)^{-s}g_n
		\]
		converges in \(L^2\) to 
		\[
		f:=(1-\Delta)^{-s}g.
		\]
		Moreover, \(f\in H^{2s}(M)\) because
		\[
		\|f\|_{H^{2s}}
		=\|(1-\Delta)^s f\|_{L^2}
		=\|g\|_{L^2}
		<\infty.
		\]
		Finally, \(T(f)=(1-\Delta)^s f = g\).  Hence \(g\in\mathcal M\), proving \(\mathcal M\) is closed. It follows that \(H^{2s}(M)\) is a Hilbert space.
	\end{proof}
	
	Suppose the bottom of the spectrum of $-\Delta$ is \(\lambda_{0}>0\), then by the functional calculus,
	\[
	\|(-\Delta)^{s}f\|_{L^{2}\left(M,\mu\right)}^{2}
	=\int_{[0,\infty)}\lambda^{2s}\,dE_{f,f}(\lambda)
	\;\ge\;
	\int_{[\lambda_0,\infty)}\lambda^{2s}\,dE_{f,f}(\lambda)
	\ge \lambda_{0}^{2s}\,\|f\|_{L^{2}\left(M,\mu\right)}^{2}.
	\]
	Hence
	\[
	\|(-\Delta)^{s}f\|_{L^{2}\left(M,\mu\right)}
	\;\ge\;
	\lambda_{0}^{\,s}\,\|f\|_{L^{2}\left(M,\mu\right)},
	\]
	for all $f\in H^{2s}\left(M\right)$. Hence, the homogeneous Sobolev seminorm and the \(H^{2s}\)‐norm are equivalent on \(H^{2s}\left(M\right)\). The condition \(\inf\sigma(-\Delta)=\lambda_{0}>0\) is satisfied on some spaces, such as real hyperbolic space $\mathbb{H}^n, n\ge 2$, the Laplace–Beltrami operator $-\Delta_{\mathbb{H}^n}$ on $L^2(\mathbb{H})$ has purely continuous spectrum given by
	\[
	\sigma\bigl(-\Delta_{\mathbb{H}}\bigr)
	\;=\;
	\bigl[\frac{\left(n-1\right)^2}{4},\;\infty\bigr).
	\]
	
	In the following, we provide the definitions of the logarithmic Laplacian and the corresponding logarithmic Sobolev space. By the spectral theorem, for each \(s>0\), we have
	\[
	(-\Delta)^{s}f
	=\int_{[0,\infty)}\lambda^{s}\,dE(\lambda)\,f,\quad f\in H^{2s}\left(M\right),
	\]
	so that
	\[
	(-\Delta)^{s}f - f+E\left(\left\{0\right\}\right)f
	=\int_{0}^{\infty}\bigl(\lambda^{s}-1\bigr)\,dE(\lambda)\,f.
	\]
	Since
	\[
	\frac{\lambda^{s}-1}{s}
	\;\xrightarrow{s\to0^+}\;\log \lambda,\lambda>0.
	\]
	It is natural to define the logarithmic Laplacian by
	\[
	\log(-\Delta)
	:=\int_{0}^{\infty}\log\lambda\;dE(\lambda).
	\]
	By spectral calculus,
	\[
	\operatorname{Dom}\bigl(\log(-\Delta)\bigr)=
	\Bigl\{\,f\in L^{2}(M,\mu)\;\Big|\;
	\int_{0}^{\infty}(\log\lambda)^{2}\,dE_{f,f}(\lambda)<\infty
	\Bigr\}.
	\]
	
	\begin{proposition}\label{logarith}
		Equipped with the inner product
		\[
		\langle f,g\rangle_{\log}
		:=\langle f,g\rangle_{L^{2}}
		+\bigl\langle\log(-\Delta)f,\;\log(-\Delta)g\bigr\rangle_{L^{2}},
		\]
		the space
		\[
		\operatorname{Dom}\bigl(\log(-\Delta)\bigr)
		=\Bigl\{f\in L^{2}(M,\mu)\;\Big|\;\int_{0}^{\infty}(\log\lambda)^{2}\,dE_{f,f}(\lambda)<\infty\Bigr\}:=H^{\log}(M)
		\]
		is a Hilbert space.
	\end{proposition}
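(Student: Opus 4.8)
The plan is to run the same argument as in the proof that $H^{s}(M)$ is a Hilbert space, realizing $H^{\log}(M)$ isometrically as a closed subspace of $L^{2}(M,\mu)$ via a suitable spectral multiplier. First I would rewrite the inner product in spectral terms. Since $\langle\log(-\Delta)f,\log(-\Delta)g\rangle_{L^{2}}=\int_{(0,\infty)}(\log\lambda)^{2}\,dE_{f,g}(\lambda)$ and $\langle f,g\rangle_{L^{2}}=\int_{[0,\infty)}dE_{f,g}(\lambda)$, one gets
\[
\langle f,g\rangle_{\log}=\int_{[0,\infty)}w(\lambda)\,dE_{f,g}(\lambda),
\qquad
w(\lambda):=1+(\log\lambda)^{2}\,\mathbf 1_{(0,\infty)}(\lambda),
\]
with the convention $w(0)=1$. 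In particular $\langle\cdot,\cdot\rangle_{\log}$ is genuinely an inner product: bilinearity and Hermitian symmetry are immediate, while $\langle f,f\rangle_{\log}\ge\|f\|_{L^{2}}^{2}$ forces $f=0$ whenever $\langle f,f\rangle_{\log}=0$, so the space $\bigl(H^{\log}(M),\|\cdot\|_{\log}\bigr)$ is at least a pre-Hilbert space.

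Next I would set $\psi(\lambda):=w(\lambda)^{1/2}$, a Borel function on $[0,\infty)$ with $\psi\ge1$, and define $T:=\psi(-\Delta)$ on $H^{\log}(M)$ through the functional calculus \eqref{spec}. By the very definition of $H^{\log}(M)$ the integral $\int_{[0,\infty)}w\,dE_{f,f}$ is finite, so $Tf\in L^{2}(M,\mu)$ and
\[
\|Tf\|_{L^{2}}^{2}=\int_{[0,\infty)}w(\lambda)\,dE_{f,f}(\lambda)=\|f\|_{\log}^{2},
\]
i.e.\ $T$ is an isometry of $\bigl(H^{\log}(M),\|\cdot\|_{\log}\bigr)$ onto $\mathcal M:=T\bigl(H^{\log}(M)\bigr)\subset L^{2}(M,\mu)$. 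Because $\psi\ge1$, the multiplier $1/\psi$ is bounded by $1$, so the operator $\psi(-\Delta)^{-1}:=(1/\psi)(-\Delta)$ is bounded on $L^{2}(M,\mu)$ with norm $\le1$; moreover $\psi(\lambda)\cdot\psi(\lambda)^{-1}\equiv1$ on $[0,\infty)$ gives $T\circ\psi(-\Delta)^{-1}=I$ on $L^{2}$ and $\psi(-\Delta)^{-1}\circ T=I$ on $H^{\log}(M)$.

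To see that $\mathcal M$ is closed, take $g_{n}=Tf_{n}\in\mathcal M$ with $g_{n}\to g$ in $L^{2}$. Applying the bounded operator $\psi(-\Delta)^{-1}$, the functions $f_{n}=\psi(-\Delta)^{-1}g_{n}$ converge in $L^{2}$ to $f:=\psi(-\Delta)^{-1}g$. Then $f\in H^{\log}(M)$ since $\|f\|_{\log}=\|Tf\|_{L^{2}}=\|\psi(-\Delta)\psi(-\Delta)^{-1}g\|_{L^{2}}=\|g\|_{L^{2}}<\infty$, and $Tf=g$, so $g\in\mathcal M$. Hence $\mathcal M$ is a closed subspace of the Hilbert space $L^{2}(M,\mu)$, and being isometric to it, $H^{\log}(M)$ is complete, i.e.\ a Hilbert space.

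The only delicate point — the nearest thing to an obstacle — is the behavior of the multiplier near the spectral endpoints: $\log\lambda\to-\infty$ as $\lambda\to0^{+}$ and $\log\lambda\to+\infty$ as $\lambda\to\infty$, so $\psi$ is unbounded and $T$ is in general an unbounded operator. What makes the argument go through is precisely that $\psi\ge1$, so its reciprocal is a legitimate bounded multiplier and $\psi(-\Delta)^{-1}$ is defined on all of $L^{2}$. One should also note that a possible atom of $E$ at $\lambda=0$ causes no difficulty: it enters $\langle f,f\rangle_{\log}$ only through the $L^{2}$–term (weight $w(0)=1$), consistently with the fact that the integral defining $H^{\log}(M)$ ranges over $(0,\infty)$.
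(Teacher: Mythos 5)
Your proposal is correct and follows essentially the same route as the paper: the paper defines $A=\bigl(I+(\log(-\Delta))^{2}\bigr)^{1/2}$ by functional calculus, observes that $f\mapsto Af$ is an isometry from $H^{\log}(M)$ into $L^{2}(M,\mu)$, and uses the boundedness of $A^{-1}$ (the multiplier is $\ge 1$) to show the image is closed — exactly your argument with $\psi(-\Delta)$. Your extra remarks on the possible atom at $\lambda=0$ and the unboundedness of $\psi(-\Delta)$ are sound but not needed beyond what the paper already does.
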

	
	\begin{proof}
		Define the operator
		\[
		A
		:=\bigl(I+(\log(-\Delta))^{2}\bigr)^{1/2}
		\]
		by functional calculus, so that for \(f\in\operatorname{Dom}(\log(-\Delta))\),
		\[
		A\,f
		=\int_{0}^{\infty}\sqrt{\,1+(\log\lambda)^{2}\,}\;dE(\lambda)\,f,
		\]
		and
		\[
		\|A\,f\|_{L^{2}}^{2}
		=\int_{0}^{\infty}\bigl(1+(\log\lambda)^{2}\bigr)\,dE_{f,f}(\lambda)
		=\|f\|_{\log}^{2}.
		\]
		Thus the map
		\[
		T\colon H^{\log}(M)
		\;\longrightarrow\;L^{2}(M,\mu),
		\qquad
		T(f)=A\,f,
		\]
		is an isometry onto its image
		\(\mathcal{N}:=T\bigl(H^{\log}(M)\bigr)\).
		
		Let \(\{g_n\}\subset\mathcal{N}\) converge in \(L^{2}\) to some \(g\).  By definition \(g_n = A f_n\) for some \(f_n\in\operatorname{Dom}(\log(-\Delta))\).  Since \(A^{-1}\) exists and is bounded on \(L^{2}\), the sequence
		\[
		f_n = A^{-1} g_n
		\]
		converges in \(L^{2}\) to
		\[
		f = A^{-1} g.
		\]
		Moreover,
		\(\|f\|_{\log}^2 = \|A f\|_{L^2}^2 = \|g\|_{L^2}^2 < \infty\),
		so \(f\in\operatorname{Dom}(\log(-\Delta))\) and \(T(f)=g\).  Thus \(g\in\mathcal{N}\), proving \(\mathcal{N}\) is closed. Hence $H^{\log}(M)$ is a Hilbert space.
	\end{proof}

	Below we turn to a more detailed analysis of the logarithmic Sobolev space \(H^{\log}(M)\).

	\begin{theorem}\label{spect}
	Set
		\[
		\delta \;=\;\inf\bigl(\sigma(-\Delta)\setminus\{0\}\bigr)\ge0.
		\]
		(i) If \(\delta>0\), then for every \(\varepsilon>0\) one has the continuous embedding
		\[
		H^{2\varepsilon}(M)\;\subset\;H^{\log}(M).
		\]
		(ii) If \(\delta=0\), assume the following  
		spectral accumulation at zero condition holds: For pairwise disjoint Borel intervals 
		\[
		I_k= \Bigl(\tfrac1{k+1},\,\tfrac1k\Bigr],\quad k=1,2,\dots.\quad
		I_k\cap \sigma\left(-\Delta\right)\ne \emptyset.
		\]
		Then for every \(\varepsilon>0\) the inclusion
		\[
		H^{2\varepsilon}(M)\subset H^{\log}(M)
		\]
		fails, i.e.\ there exists \(f\in H^{2\varepsilon}(M)\) with \(f\notin H^{\log}(M)\).
	\end{theorem}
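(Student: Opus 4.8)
The plan is to work entirely within the functional calculus, using the spectral measure $E_{f,f}$ supported on $\sigma(-\Delta)$. The key is that the condition $f\in H^{2\varepsilon}(M)$ means $\int_0^\infty \lambda^{2\varepsilon}\,dE_{f,f}(\lambda)<\infty$, while $f\in H^{\log}(M)$ means $\int_0^\infty(\log\lambda)^2\,dE_{f,f}(\lambda)<\infty$. Since the integrands are comparable on $[1,\infty)$ and indeed $(\log\lambda)^2 \lesssim \lambda^{2\varepsilon}$ for $\lambda\ge 1$, the whole question reduces to the behavior of $dE_{f,f}$ near $\lambda=0$, where $(\log\lambda)^2\to\infty$ but $\lambda^{2\varepsilon}\to 0$.

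For part (i), suppose $\delta=\inf(\sigma(-\Delta)\setminus\{0\})>0$. Then $E_{f,f}$ restricted to $(0,\infty)$ is supported on $[\delta,\infty)$, so the only mass near zero is the atom at $\{0\}$, on which $\log\lambda$ is not defined but which is excluded from the domain integral (the integral is over $(0,\infty)$, matching the convention $\int_0^\infty$ in the definition of $H^{\log}$). On $[\delta,\infty)$ we have the pointwise bound $(\log\lambda)^2 \le C_{\delta,\varepsilon}\,(1+\lambda)^{2\varepsilon}$ for a constant depending only on $\delta$ and $\varepsilon$: indeed $(\log\lambda)^2$ is bounded on $[\delta,1]$ and grows slower than any positive power of $\lambda$ at infinity. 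Integrating against $dE_{f,f}$ gives $\|\log(-\Delta)f\|_{L^2}^2 = \int_{[\delta,\infty)}(\log\lambda)^2\,dE_{f,f}(\lambda) \le C_{\delta,\varepsilon}\int_{[0,\infty)}(1+\lambda)^{2\varepsilon}\,dE_{f,f}(\lambda) = C_{\delta,\varepsilon}\|f\|_{H^{2\varepsilon}}^2$, which both shows $f\in H^{\log}(M)$ and gives the continuity of the embedding.

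For part (ii), assume $\delta=0$ together with the spectral accumulation hypothesis that each $I_k=(\tfrac1{k+1},\tfrac1k]$ meets $\sigma(-\Delta)$. The idea is to build a counterexample $f$ by concentrating spectral mass in the intervals $I_k$ with carefully chosen weights. Pick, for each $k$, a Borel set $A_k\subset I_k$ with $E(A_k)\ne 0$ (possible since $I_k\cap\sigma(-\Delta)\ne\emptyset$ and the support of $E$ is exactly $\sigma(-\Delta)$, so $E$ assigns positive "mass" to every relative neighborhood; shrink $A_k$ if necessary so the $A_k$ are disjoint and $E(A_k)$ has finite nonzero norm), choose a unit vector $u_k\in \mathrm{range}(E(A_k))$, and set $f=\sum_k a_k u_k$ with $a_k>0$ to be chosen and $\sum a_k^2<\infty$. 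By orthogonality of the ranges $E(A_k)$, $\|f\|_{H^{2\varepsilon}}^2 = \sum_k a_k^2 \int_{A_k}(1+\lambda)^{2\varepsilon}\,dE_{u_k,u_k}(\lambda) \le \sum_k a_k^2 \cdot 2^{2\varepsilon}$, finite, while $\|\log(-\Delta)f\|_{L^2}^2 = \sum_k a_k^2\int_{A_k}(\log\lambda)^2\,dE_{u_k,u_k}(\lambda) \ge \sum_k a_k^2 (\log(k+1))^2$ since $\lambda\le\tfrac1k<1$ on $A_k$ forces $(\log\lambda)^2\ge(\log k)^2$, in fact $\ge (\log(1/\lambda))^2 \ge (\log k)^2$. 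Now choose $a_k$ so that $\sum a_k^2<\infty$ but $\sum a_k^2(\log k)^2=\infty$, e.g. $a_k^2 = 1/(k(\log k)^2 \cdot (\log\log k)^{?})$ — more simply, $a_k = 1/(k^{1/2}\log k)$ for $k\ge 3$ gives $\sum a_k^2 = \sum 1/(k(\log k)^2)<\infty$ and $\sum a_k^2(\log k)^2 = \sum 1/k = \infty$. Then $f\in H^{2\varepsilon}(M)$ for every $\varepsilon>0$ (since the bound $2^{2\varepsilon}$ is uniform) but $f\notin H^{\log}(M)$.

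The main obstacle is the measure-theoretic bookkeeping in part (ii): ensuring one can extract disjoint Borel sets $A_k\subset I_k$ on which $E$ has a nontrivial, finite-norm projection, and then a genuine $L^2$ vector $u_k$ in each range. This is where one must carefully invoke regularity of the projection-valued measure and the fact that $\mathrm{supp}(E)=\sigma(-\Delta)$, so that $E(A_k)\ne 0$ whenever $A_k$ is a relatively open nonempty subset of $\sigma(-\Delta)$; passing to such relatively open pieces inside $I_k$ and shrinking for disjointness is routine but must be stated. The scalar estimates $(\log\lambda)^2\lesssim(1+\lambda)^{2\varepsilon}$ on $[\delta,\infty)$ and $(\log\lambda)^2\ge(\log k)^2$ on $I_k$ are elementary and deliberately left to a line of calculus.
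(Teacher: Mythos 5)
Your proposal is correct and follows essentially the same route as the paper: for (i) the pointwise bound $(\log\lambda)^2\lesssim_{\delta,\varepsilon}(1+\lambda)^{2\varepsilon}$ on $[\delta,\infty)$ integrated against $dE_{f,f}$, and for (ii) the same construction of $f=\sum_k a_k u_k$ with orthogonal unit vectors in the ranges of the spectral projections over the intervals $I_k$ and the identical weights $a_k=\tfrac{1}{\sqrt{k}\,\log k}$, so that $\sum a_k^2<\infty$ while $\sum a_k^2(\log k)^2=\sum 1/k=\infty$. The only cosmetic differences are that the paper projects directly onto $E(I_k)$ without shrinking to subsets $A_k$, and that your part (i) makes the operator-norm bound for the embedding explicit.
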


	\begin{proof}
		\textbf{(1) The case \(\delta>0\).} Note that
		\[
		\int_0^\infty(\log\lambda)^2\,dE_{f,f}
		=\int_{[\delta,\infty)}(\log\lambda)^2\,dE_{f,f}.
		\]
		On $[\delta,1]$, we have $(\log\lambda)^2\le(\log\delta)^2$. On $[1,\infty)$, there exists $C_{\epsilon}>0$ such that
		\[
		(\log\lambda)^2 \;\le\; C_\varepsilon\,\lambda^{2\varepsilon},
		\]
		so
		\[
		\int_{[1,\infty)}(\log\lambda)^2\,dE_{f,f}
		\;\le\;
		C_\varepsilon\int_{[1,\infty)}\lambda^{2\varepsilon}\,dE_{f,f}
		\;<\;\infty
		\]
		since $f\in H^{2\varepsilon}(M)$.  Thus $\int_0^\infty(\log\lambda)^2\,dE_{f,f}<\infty$.
		
		\noindent\textbf{(2) The case \(\delta=0\).}
		Assume 
		\[
		\delta \;=\;\inf\bigl(\sigma(-\Delta)\setminus\{0\}\bigr)=0.
		\]
		Since each \(I_k\) meets \(\sigma(-\Delta)\). Hence
		\[
		E(I_k)\;\neq\;0,\qquad k=1,2,\dots
		\]
	Pick unit vectors \(f_k\in\mathrm{Ran}\,E(I_k)\), so the family $\{f_k\}$ is mutually orthogonal and 
		\[
		\lambda\in I_k
		\;\Longrightarrow\;
		\lambda^{2\varepsilon}\le\Bigl(\tfrac1{k}\Bigr)^{2\varepsilon},
		\quad
		\log^2\lambda\ge \log^2 k.
		\]
		Define coefficients
		\[
		a_k \;=\;\frac{1}{\sqrt{k}\,\bigl|\log k\bigr|}, k\ge 2
		\]
		and set
	\begin{equation}\label{fdls}
		f \;=\;\sum_{k=2}^\infty a_k\,f_k.
	\end{equation}
		Note that
		\[\sum_{k=2}^\infty a_k^2 = \sum_{k=2}^{\infty} \frac{1}{k\log^2 k}< \infty,\]
		so the series (\ref{fdls})
		converges in \(L^2(M,\mu)\).
		
		\noindent\emph{(i) \(f\in H^{2\varepsilon}(M)\).}  
		By orthogonality,
		\[
		\int_{[0,\infty)}\lambda^{2\varepsilon}\,dE_{f,f}
		= \sum_{k=1}^\infty 
		\int_{I_k} \lambda^{2\varepsilon}\,dE_{f,f}+	\int_{1}^{\infty} \lambda^{2\varepsilon}\,dE_{f,f}
		\;\le
		\sum_{k=2}^\infty \frac{1}{k\log^2 k}\,\bigl(\tfrac1{k}\bigr)^{2\varepsilon}
		<\infty.
		\]
		
		\medskip
		\noindent\emph{(ii) \(f\notin H^{\log}(M)\).}  
		Similarly,
		\[
		\int_0^\infty(\log\lambda)^2\,dE_{f,f}\ge \sum_{k=2}^\infty \int_{I_k}(\log\lambda)^2\,dE_{f,f}
		\;\ge
		\sum_{k=2}^\infty  a_k^2 \log^2 k 
		=\infty,
		\]
		Thus in the case \(\delta=0\) we have constructed
		\[
		f\in H^{2\varepsilon}(M)\quad\text{but}\quad f\notin H^{\log}(M).
		\]
		This completes the proof for \(\delta=0\)
	\end{proof}
	
	\begin{remark}[Examples of spectral accumulation at zero]\label{zdsa}
		A basic class of examples satisfying the condition in part (ii) is given by the Euclidean Space \(\mathbb{R}^n\).
	\end{remark}
	
	\begin{remark}[Spectral gap vs.\ essential spectrum]
		The quantity
		\(\delta=\inf\bigl(\sigma(-\Delta)\setminus\{0\}\bigr)\)
		measures the spectral gap above the zero‐eigenvalue. By contrast,
		\[
		\inf\sigma_{\rm ess}(-\Delta)
		\]
		is the bottom of the essential spectrum.  These two can coincide, such as Remark \ref{zdsa}. But they may differ in some cases, such as the Laplace–Beltrami on a closed connected Riemannian manifold, where $\delta>0$ and $\sigma_{\rm ess}\left(-\Delta\right)=\emptyset.$
	\end{remark}
	
	\begin{remark}
		On Euclidean space $\mathbb{R}^n$, the abstract spectral measure admit the following concrete Fourier descriptions with
		$$\widehat f(\xi)=(2\pi)^{-n/2}\int f(x)e^{-ix\cdot\xi}dx,$$
	i.e. for any Borel set $B\subset[0,\infty)$,
		$$
		E_{f,f}(B)
		=\int_{\{|\xi|^2\in B\}}|\widehat f(\xi)|^2\,d\xi.
		$$
		Define spectral measure density
		$$
		F(\lambda)=E_{f,f}((0,\lambda])
		=\int_{|\xi|\le\sqrt\lambda}|\widehat f(\xi)|^2\,d\xi,
		$$
	then by differentiating in $\lambda$ (in polar coordinates $\xi=r\omega$) gives
		$$
		dE_{f,f}(\lambda)
		=F'(\lambda)\,d\lambda
		=\frac12\,\lambda^{\frac n2-1}
		\int_{S^{n-1}}\bigl|\widehat f(\sqrt\lambda\,\omega)\bigr|^2\,d\omega
		\;d\lambda.
		$$
		Hence	
		$$
		\int_{[0,\infty)}\lambda^2\,dE_{f,f}(\lambda)
		=\int_0^\infty\lambda^2\Bigl[\tfrac12\,\lambda^{\frac n2-1}
		\!\!\int_{S^{n-1}}\!|\widehat f(\sqrt\lambda\,\omega)|^2\,d\omega\Bigr] 
		d\lambda
		=\int_{\mathbb{R}^n}|\xi|^4\,|\widehat f(\xi)|^2\,d\xi,
		$$
		which is exactly $\|(-\Delta)f\|_{L^2}^2$ in Fourier variables.
	\end{remark}
	
		It follows that the spectral Sobolev spaces coincide with the classical fractional Sobolev spaces on \(\mathbb{R}^n\), where fractional Sobolev spaces are given by
	$$
	H^s(\mathbb{R}^n)
	=\bigl\{f\in L^2(\mathbb{R}^n):\,(1+|\xi|^2)^{\frac s2}\,\widehat f(\xi)\in L^2(\mathbb{R}^n)\bigr\},
	$$
	with norm
	$$
	\|f\|_{H^s}^2
	=\int_{\mathbb{R}^n}(1+|\xi|^2)^s\,\bigl|\widehat f(\xi)\bigr|^2\,d\xi.
	$$
	and Logarithmic Sobolev space
	$$
	H^{\log}(\mathbb{R}^n)
	=\bigl\{f\in L^2(\mathbb{R}^n):\log\bigl(|\xi|^2\bigr)\,\widehat f(\xi)\in L^2(\mathbb{R}^n)\bigr\},
	$$
	with norm
	$$
	\|f\|_{H^{\log}}^2
	=\int_{\mathbb{R}^n}\bigl[1+\log^2\bigl(|\xi|^2\bigr)\bigr]\,\bigl|\widehat f(\xi)\bigr|^2\,d\xi.
	$$
	
 If \(f\in C_c^\infty(\mathbb{R}^n)\), then \(\widehat f(\xi)\) is a Schwartz‐class function and in particular decays faster than any power of \(|\xi|\).  Hence
	\[
	(1+|\xi|^2)^{\frac s2}\,\widehat f(\xi)\;\in\;L^2(\mathbb{R}^n)
	\quad\text{for every }s\ge0,
	\]
	so \(C_c^\infty(\mathbb{R}^n)\subset H^s(\mathbb{R}^n)\).  In fact \(C_c^\infty\) is dense in each \(H^s\).
	
	 Similarly, since 
	 \[
	 \int_{|\xi|\le1}\log^2\bigl(|\xi|^2\bigr)\,\bigl|\widehat f(\xi)\bigr|^2\,d\xi
	 \;\le\;
	 \|\widehat f\|_{L^\infty}^2
	 \int_{0}^{1}\!r^{n-1}\,\bigl|\log(r^2)\bigr|^2\,dr,
	 \]
	 then 
	\[
	\log\bigl(|\xi|^2\bigr)\,\widehat f(\xi)\;\in\;L^2(\mathbb{R}^n),
	\]
	and therefore \(C_c^\infty(\mathbb{R}^n)\subset H^{\log}(\mathbb{R}^n)\).  Again, \(C_c^\infty\) is dense in \(H^{\log}\) under its natural norm.
	
	We now take a more intrinsic viewpoint to analyze the limits of the fractional Laplacian as \(s \to 0^+\) and \(s \to 1^-\), as well as its relation to the logarithmic Laplacian. This perspective yields alternative proofs of \cite[Proposition 4.4]{di2012hitchhikerʼs} and \cite[Theorem 1.1]{chen2019dirichlet} in the case of $M=\mathbb{R}^n.$

		\begin{proposition}For every $f\;\in\;H^{\epsilon}(M),\epsilon>0$, we have
		\[
		\lim_{s\to0^{+}}\bigl\|(-\Delta)^{s}f - f+E\left(\left\{0\right\}\right)f\bigr\|_{L^{2}(M,\mu)} \;=\; 0.
		\]
	\end{proposition}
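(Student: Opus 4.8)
The plan is to translate the statement into a scalar claim about the finite positive Borel measure $E_{f,f}$ on $[0,\infty)$ and then invoke dominated convergence. First I would use the functional calculus: since $\lambda^{s}=0$ at $\lambda=0$ for every $s>0$, we have $(-\Delta)^{s}f=\int_{(0,\infty)}\lambda^{s}\,dE(\lambda)f$, while $f-E(\{0\})f=\int_{(0,\infty)}dE(\lambda)f$. Subtracting and using the isometry $\|\varphi(-\Delta)f\|_{L^{2}}^{2}=\int\varphi(\lambda)^{2}\,dE_{f,f}(\lambda)$ with $\varphi(\lambda)=(\lambda^{s}-1)\mathbf 1_{\{\lambda>0\}}$ gives
\[
\bigl\|(-\Delta)^{s}f-f+E(\{0\})f\bigr\|_{L^{2}(M,\mu)}^{2}=\int_{(0,\infty)}(\lambda^{s}-1)^{2}\,dE_{f,f}(\lambda),
\]
so it suffices to show this integral tends to $0$ as $s\to0^{+}$. (One should first note that $f\in\operatorname{Dom}((-\Delta)^{s})$ for all small $s$, which follows from $f\in H^{\varepsilon}(M)$ by the same splitting used below, so that both sides above are meaningful.)

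Next I would run the dominated convergence argument on $(0,\infty)$. Pointwise, $(\lambda^{s}-1)^{2}\to0$ as $s\to0^{+}$ for every fixed $\lambda>0$. For a uniform majorant, split $(0,\infty)=(0,1]\cup(1,\infty)$: on $(0,1]$ one has $0<\lambda^{s}\le1$, hence $(\lambda^{s}-1)^{2}\le1$; on $(1,\infty)$, for $0<s\le\varepsilon$ one has $1\le\lambda^{s}\le\lambda^{\varepsilon}$, hence $(\lambda^{s}-1)^{2}\le\lambda^{2\varepsilon}$. Thus for all $s\in(0,\varepsilon]$ the integrand is dominated by $g(\lambda):=\mathbf 1_{(0,1]}(\lambda)+\lambda^{2\varepsilon}\,\mathbf 1_{(1,\infty)}(\lambda)$, and $g\in L^{1}(E_{f,f})$ because $E_{f,f}$ is a finite measure with total mass $\|f\|_{L^{2}}^{2}$ and $\int_{(1,\infty)}\lambda^{2\varepsilon}\,dE_{f,f}\le\int_{[0,\infty)}\lambda^{2\varepsilon}\,dE_{f,f}<\infty$ by the hypothesis $f\in H^{\varepsilon}(M)$.

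Finally, for an arbitrary sequence $s_{n}\downarrow0$ the dominated convergence theorem yields $\int_{(0,\infty)}(\lambda^{s_{n}}-1)^{2}\,dE_{f,f}(\lambda)\to0$; since the sequence is arbitrary, the limit as $s\to0^{+}$ is $0$, which is the assertion. I do not expect a genuine obstacle here: the only points requiring care are the bookkeeping at the atom $\{0\}$—precisely what the correction term $E(\{0\})f$ is designed to remove—and the production of a single integrable majorant valid for all small $s$, for which the threshold $s\le\varepsilon$ and the cut at $\lambda=1$ are the natural devices.
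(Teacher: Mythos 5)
Your proposal is correct and follows essentially the same route as the paper: reduce via the spectral isometry to $\int_{(0,\infty)}(\lambda^{s}-1)^{2}\,dE_{f,f}(\lambda)$ and apply dominated convergence with a power-type majorant supplied by $f\in H^{\epsilon}(M)$. One small caveat: your majorant $\lambda^{2\varepsilon}$ on $(1,\infty)$ presumes the normalization $H^{\varepsilon}(M)=\{\,f:\int\lambda^{2\varepsilon}\,dE_{f,f}<\infty\,\}$ (the paper's introduction uses this, but its Section~2 convention would only give $\int\lambda^{\varepsilon}\,dE_{f,f}<\infty$); in the latter case simply restrict to $0<s\le\varepsilon/2$ so that $(\lambda^{s}-1)^{2}\le\lambda^{\varepsilon}$, exactly as the paper does, and the argument is unaffected.
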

	
	\begin{proof}
		By the spectral decomposition, we obtain that:
		\[
		(-\Delta)^s f
		=\int_{[0,\infty)}\lambda^s\,dE(\lambda)\,f
		=\int_{(0,\infty)}\lambda^s\,dE(\lambda)\,f.
		\]
		Since 
		\[
		f
		=E(\{0\})f+\int_{(0,\infty)}1\,dE(\lambda)\,f,
		\]
		we have
		\[
		(-\Delta)^s f
		-\bigl(f - E(\{0\})f\bigr)
		=\int_{(0,\infty)}(\lambda^s-1)\,dE(\lambda)\,f.
		\]
		By the spectral representation,
		\[
		\bigl\|(-\Delta)^s f - f + E(\{0\})f\bigr\|^2
		=\int_{(0,\infty)}(\lambda^s - 1)^2\,dE_{f,f}(\lambda).
		\]
		On $(0,\infty)$ we have $\lim_{s\to0^+}(\lambda^s-1)^2=0$, and for any fixed $\epsilon>0$ there is $C>0$ such that
		\[
		(\lambda^s - 1)^2 \;\le\; C\bigl(1+\lambda^{\epsilon}\bigr),
		\quad
		0<s<\frac{\epsilon}{2},
		\]
		while $f\in H^\epsilon\left(M\right)$ implies
		$\displaystyle\int_0^\infty(1+\lambda^{2\epsilon})\,dE_{f,f}<\infty$.  By dominated convergence,
		\[
		\int_{(0,\infty)}(\lambda^s - 1)^2\,dE_{f,f}
		\;\xrightarrow{s\to0^+}\;0.
		\]
	\end{proof}
	
	By \cite[Proposition E.2]{keller2021graphs}, we know that \(E\left(\{0\}\right) \neq 0\) if and only if \(0\) is an eigenvalue of \(-\Delta\). This occurs precisely when the corresponding Laplacian admits nontrivial \(L^2\)-harmonic functions. For example, in \(\mathbb{R}^n\) and on general noncompact complete Riemannian manifolds with infinite volume, the constant function is not square-integrable, and thus \(0\) is not an eigenvalue of \(-\Delta\). On the other hand, on compact manifolds (with Neumann  boundary or without boundary), the constant functions are in \(L^2\), and hence \(0\) is always an eigenvalue of \(-\Delta\). In particular, $$E(\{0\})f = \frac{1}{\mathrm{Vol}(M)}\int_M f(x)d\mathrm{vol}_{\mathbb{H}^n}(x) $$
	projects \(f\) onto the space of constant functions in this case.
	
	\begin{proposition}For every $f\;\in\;H^2(M)$, we have
		\[
		\lim_{s\to1^{-}}\bigl\|(-\Delta)^{s}f + \Delta\: f\bigr\|_{L^{2}(M,\mu)} \;=\; 0.
		\]
	\end{proposition}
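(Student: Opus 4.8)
The plan is to work entirely at the level of the spectral measure $E_{f,f}$, exactly as in the proof of the previous proposition. Since $f\in H^2(M)=\operatorname{Dom}(-\Delta)$, we have $\int_{[0,\infty)}\lambda^2\,dE_{f,f}(\lambda)<\infty$; in particular $f\in H^{2s}(M)=\operatorname{Dom}\bigl((-\Delta)^s\bigr)$ for every $s\in(0,1]$, so that both $(-\Delta)^s f=\int_{[0,\infty)}\lambda^s\,dE(\lambda)f$ and $-\Delta f=\int_{[0,\infty)}\lambda\,dE(\lambda)f$ are defined. Applying the functional calculus to the multiplier $\lambda\mapsto\lambda^s-\lambda$ and using the isometry identity $\|\varphi(-\Delta)f\|_{L^2}^2=\int_{[0,\infty)}\varphi(\lambda)^2\,dE_{f,f}(\lambda)$ recalled in Subsection~\ref{hanshu}, I obtain
\[
\bigl\|(-\Delta)^s f+\Delta f\bigr\|_{L^2(M,\mu)}^2
=\int_{[0,\infty)}\bigl(\lambda^s-\lambda\bigr)^2\,dE_{f,f}(\lambda).
\]
It then suffices to show that this integral tends to $0$ as $s\to1^-$.

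Next I would verify the hypotheses of the dominated convergence theorem on the finite measure $E_{f,f}$ (recall $E_{f,f}([0,\infty))=\|f\|_{L^2}^2$). The pointwise convergence is clear: at $\lambda=0$ the integrand vanishes for every $s\in(0,1)$, and for $\lambda>0$ one has $\lambda^s=e^{s\log\lambda}\to\lambda$, hence $(\lambda^s-\lambda)^2\to0$. For the $s$-uniform bound, restrict to $s\in[\tfrac12,1]$ and split the range of $\lambda$: for $\lambda\in[0,1]$ both $\lambda^s$ and $\lambda$ lie in $[0,1]$, so $(\lambda^s-\lambda)^2\le1$; for $\lambda\ge1$ one has $0\le\lambda^s\le\lambda$, so $(\lambda^s-\lambda)^2\le\lambda^2$. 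In either case $(\lambda^s-\lambda)^2\le1+\lambda^2$, and $\int_{[0,\infty)}(1+\lambda^2)\,dE_{f,f}(\lambda)=\|f\|_{L^2}^2+\|\Delta f\|_{L^2}^2<\infty$ since $f\in H^2(M)$.

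Finally, for an arbitrary sequence $s_n\to1^-$ the dominated convergence theorem gives $\int_{[0,\infty)}(\lambda^{s_n}-\lambda)^2\,dE_{f,f}(\lambda)\to0$, and since the sequence was arbitrary the limit $s\to1^-$ follows. I do not anticipate a real obstacle here: the only step requiring a moment's care is exhibiting a single dominating function valid for all $s$ in a left-neighbourhood of $1$, which the elementary case split above supplies; the rest is a direct application of the spectral-calculus identities already established in Subsection~\ref{hanshu}.
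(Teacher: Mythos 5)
Your proposal is correct and follows essentially the same route as the paper: both reduce to $\bigl\|(-\Delta)^s f+\Delta f\bigr\|_{L^2}^2=\int_0^\infty(\lambda^s-\lambda)^2\,dE_{f,f}(\lambda)$ and apply dominated convergence with a bound of the form $(\lambda^s-\lambda)^2\le C(1+\lambda^2)$, which is $E_{f,f}$-integrable since $f\in H^2(M)$. Your explicit case split justifying the dominating function is just a slightly more detailed version of the estimate the paper asserts.
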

	
	\begin{proof}
		For \(f\in H^{2}\left(M\right)\), the spectral representation gives
		\[
		\bigl\|(-\Delta)^{s}f+\Delta f\bigr\|_{L^{2}}^{2}
		=\int_{0}^{\infty}\bigl(\lambda^{s}-\lambda\bigr)^{2}\,dE_{f,f}(\lambda).
		\]
		Note that \(\lambda^{s}\to\lambda\) for each \(\lambda\ge0\) as
		\(s\uparrow1\).  To apply dominated convergence, note that
		\[\left(\lambda^s-\lambda\right)^2\le c\left(1+\lambda^2\right),s\in \left(0,1\right),\lambda\in \left(0,\infty\right).\]
		Hence, 
		\[
		\lim_{s\to1^{-}}
		\int_{0}^{\infty}\bigl(\lambda^{s}-\lambda\bigr)^{2}\,dE_{f,f}(\lambda)=0,
		\]
		i.e.\ \((-\Delta)^{s}f\to-\Delta f\) in \(L^{2}(M,\mu)\) as \(s\uparrow1\).
	\end{proof}
	
	\begin{proposition}
		For every $f\in H^{\epsilon}(M)\cap H^{\log}\left(M\right),$ then
		\[\tfrac{(-\Delta)^{s} - I+E\left(\left\{0\right\}\right)}{s}f\xrightarrow{s\to0^+} \log(-\Delta)\:f \quad \text{in}\quad L^2\left(M,\mu\right).\]
	\end{proposition}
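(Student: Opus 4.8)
The plan is to transport the whole question, via the spectral theorem, to an elementary dominated-convergence statement for the finite positive Borel measure $dE_{f,f}$ on $[0,\infty)$. First I would record that for $s>0$ and $f\in L^{2}(M,\mu)$, since $\lambda^{s}\big|_{\lambda=0}=0$ and $I=E(\{0\})+\int_{(0,\infty)}1\,dE(\lambda)$,
\[
\frac{(-\Delta)^{s}-I+E(\{0\})}{s}\,f
=\int_{(0,\infty)}\frac{\lambda^{s}-1}{s}\,dE(\lambda)\,f ;
\]
moreover $f\in H^{\log}(M)$ forces $E_{f,f}(\{0\})=0$, hence $\log(-\Delta)f=\int_{(0,\infty)}\log\lambda\,dE(\lambda)\,f$. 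Subtracting and invoking the isometry rule $\|\varphi(-\Delta)f\|_{L^{2}}^{2}=\int_{[0,\infty)}\varphi(\lambda)^{2}\,dE_{f,f}(\lambda)$ of the functional calculus yields
\[
\Bigl\|\tfrac{(-\Delta)^{s}-I+E(\{0\})}{s}f-\log(-\Delta)f\Bigr\|_{L^{2}(M,\mu)}^{2}
=\int_{(0,\infty)}\Bigl(\tfrac{\lambda^{s}-1}{s}-\log\lambda\Bigr)^{2}dE_{f,f}(\lambda),
\]
so the claim amounts to showing the right-hand side tends to $0$ as $s\to0^{+}$.

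The integrand converges to $0$ pointwise, since $\tfrac{\lambda^{s}-1}{s}=\tfrac{e^{s\log\lambda}-1}{s}\to\log\lambda$ for each fixed $\lambda>0$. For a dominating function I would use the identity $\lambda^{s}-1=\int_{0}^{s}\lambda^{t}\log\lambda\,dt$, which gives
\[
\frac{\lambda^{s}-1}{s}-\log\lambda=\log\lambda\cdot\frac1s\int_{0}^{s}\bigl(\lambda^{t}-1\bigr)\,dt .
\]
On $(0,1]$ we have $|\lambda^{t}-1|\le1$ for $0\le t\le s$, so the absolute value of this expression is at most $|\log\lambda|$; on $(1,\infty)$, for $0<s<\epsilon/2$ we have $0\le\lambda^{t}-1\le\lambda^{s}\le\lambda^{\epsilon/2}$ for $0\le t\le s$ together with $|\log\lambda|\le C_{\epsilon}\lambda^{\epsilon/2}$, so the expression is at most $C_{\epsilon}\lambda^{\epsilon}$ in absolute value. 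Hence, for all $0<s<\epsilon/2$,
\[
\Bigl(\tfrac{\lambda^{s}-1}{s}-\log\lambda\Bigr)^{2}\le G(\lambda):=(\log\lambda)^{2}\,\mathbf 1_{(0,1]}(\lambda)+C_{\epsilon}^{2}\,\lambda^{2\epsilon}\,\mathbf 1_{(1,\infty)}(\lambda),
\]
and $\int_{(0,\infty)}G\,dE_{f,f}=\int_{(0,1]}(\log\lambda)^{2}\,dE_{f,f}+C_{\epsilon}^{2}\int_{(1,\infty)}\lambda^{2\epsilon}\,dE_{f,f}<\infty$, the first term being finite because $f\in H^{\log}(M)$ and the second because $f\in H^{\epsilon}(M)$. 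Dominated convergence on $\bigl([0,\infty),dE_{f,f}\bigr)$ then gives $\int_{(0,\infty)}\bigl(\tfrac{\lambda^{s}-1}{s}-\log\lambda\bigr)^{2}dE_{f,f}\to0$, proving the proposition.

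The one point requiring care — the main obstacle — is producing a \emph{single} $dE_{f,f}$-integrable majorant valid for all small $s$ at once: the limiting multiplier $\log\lambda$ is unbounded both as $\lambda\to0^{+}$ and as $\lambda\to\infty$, so the majorant must control both regimes uniformly in $s$. The identity $\lambda^{s}-1=\int_{0}^{s}\lambda^{t}\log\lambda\,dt$ (equivalently, the monotonicity of $s\mapsto(\lambda^{s}-1)/s$ on $s>0$ for each fixed $\lambda>0$) is exactly what decouples the $s$-dependence and pins the majorant to $(\log\lambda)^{2}$ near $\lambda=0$ and to a power $\lambda^{2\epsilon}$ near $\lambda=\infty$; after that the argument is a routine application of dominated convergence, and it is the full hypothesis $f\in H^{\epsilon}(M)\cap H^{\log}(M)$ — not either half alone — that makes the majorant integrable.
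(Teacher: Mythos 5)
Your proposal is correct and follows essentially the same route as the paper: both reduce the claim via the spectral calculus isometry to showing $\int_{(0,\infty)}\bigl(\tfrac{\lambda^{s}-1}{s}-\log\lambda\bigr)^{2}\,dE_{f,f}(\lambda)\to 0$, and conclude by dominated convergence with an $s$-independent majorant consisting of $(\log\lambda)^{2}$ near $\lambda=0$ (using $f\in H^{\log}(M)$) and a small power of $\lambda$ at infinity (using $f\in H^{\epsilon}(M)$). The only cosmetic difference is that you obtain the majorant from the identity $\lambda^{s}-1=\int_{0}^{s}\lambda^{t}\log\lambda\,dt$, while the paper uses the Lagrange form $\lambda^{s}=1+s\,\lambda^{s_{0}}\log\lambda$ with $s_{0}\in(0,s)$; these yield the same bound.
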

	
	\begin{proof}
		By spectral calculus,
		\[
		\frac{(-\Delta)^s  - I+E\left(\left\{0\right\}\right)}{s}
		-\log(-\Delta)
		=\int_{0}^{\infty}
		\Bigl(\tfrac{\lambda^{s}-1}{s}-\log\lambda\Bigr)\,dE(\lambda)\,
		=\int_{0}^{\infty}h_s(\lambda)\,dE(\lambda)\,,
		\]
		where
		\[
		h_s(\lambda)
		:=\frac{\lambda^{s}-1}{s}-\log\lambda.
		\]
		By Taylor's theorem, when $s$ is sufficiently small
		\[
		\lambda^{s}
		=1 + s\lambda^{s_0}\log\lambda  ,
		\quad
		s_0\in(0,s).
		\]
		We obtain that there exists $\epsilon_0<\frac{\epsilon}{2},$ such that
		\[
		|h_s(\lambda)|\le
		C\left|\log \lambda\right|\bigl(1+\lambda^{\epsilon_0}\bigr),
		\]
		where \(C>0\) is independent of \(s\).
		Therefore
			\begingroup\small\[
		\Bigl\|\int_0^\infty h_s(\lambda)\,dE(\lambda)\,f\Bigr\|_{L^2}^2
		=\int_0^\infty\bigl|h_s(\lambda)\bigr|^2\,dE_{f,f}(\lambda)
		\;\le\;
		2C^2\int_0^\infty\bigl(\left(\log \lambda\right)^2+\lambda^{2\epsilon_0}(\log\lambda)^2\bigr)\,dE_{f,f}(\lambda).
		\]
		\endgroup
		Since $f\in H^{\epsilon}(M)\cap H^{\log}\left(M\right),$ for some \(\epsilon>0\), it is obvious that for large \(\lambda\), \(\lambda^{2\epsilon_0}(\log\lambda)^2=o(\lambda^{\epsilon})\), the integrability
		\(\int_0^\infty\lambda^{2\epsilon}\,dE_{f,f}<\infty\) guarantees
		\(\int_0^\infty\lambda^{\epsilon_0}(\log\lambda)^2\,dE_{f,f}<\infty\).  
		On the other hand, \(h_s(\lambda)\to0\) pointwise as \(s\downarrow0\).  By the dominated convergence theorem,
		\[
		\lim_{s\to0}
		\int_0^\infty |h_s(\lambda)|^2\,dE_{f,f}(\lambda)
		=0.
		\]
		This completes the proof.
	\end{proof}

	\subsection{Bochner Integral Representation}
	
	We have already defined the fractional and logarithmic Laplacians via the spectral measure. However, for general Riemannian manifolds, the spectral measure can be rather abstract and not well suited for detailed analytical investigations. In what follows, we combine the theory of operator semigroups with  functional calculus to derive an explicit Bochner integral representation for both the fractional and logarithmic Laplacians, and to establish fundamental properties.
	
	In particular, to the best of our knowledge, the Bochner representation for the logarithmic Laplacian presented here is new and has not appeared previously in the literature.
	
	For every $t>0,$ put $\varphi_{t}(\lambda)=e^{-t\lambda}$.  
	Then
	\[
	e^{t\Delta}:=\varphi_{t}(-\Delta)
	\]
	is the strongly continuous contraction heat semigroup on $L^{2}(M,\mu)$ with generator $-\Delta.$ For $f\in \operatorname{Dom}\left(-\Delta\right)$, $e^{t\Delta}f$ is a unique solution of $\partial_t u_t=\Delta u_t$ with initial value $u_0=f$.
	
	For \(0<s<1\), we will show that the fractional Laplacian \((-\Delta)^{s}\) can be expressed in
	terms of the heat semigroup by the Bochner integral:
	\begin{equation}\label{eq:balakrishnan}
		(-\Delta)^{s}f
		\;=\;
		\frac{s}{\Gamma(1-s)}
		\int_{0}^{\infty}
		\bigl(f-e^{t\Delta}f\bigr)
		t^{-1-s}\,dt,
		\quad
		f\in H^{2s}(M).
	\end{equation}
	Note that for every \(\lambda\ge0\) and \(0<s<1\) the scalar identity holds
	\[
	\lambda^{s}
	=\frac{s}{\Gamma(1-s)}
	\int_{0}^{\infty}\bigl(1-e^{-t\lambda}\bigr)t^{-1-s}\,dt.
	\]  
	Applying functional calculus to \(-\Delta\) gives
	\begin{equation}\label{bochner}
		\begin{aligned}
			(-\Delta)^{s}
			=&\frac{s}{\Gamma(1-s)}
			\int_{\sigma\left(-\Delta\right)}{\int_{0}^{\infty}}
			\bigl(1-e^{-t\lambda}\bigr)\;t^{-1-s}\,dtdE\left(\lambda\right)
		\end{aligned}
	\end{equation}
A natural question that arises is whether one may interchange the order of integration in the strong operator topology. If so, one recovers the well–known Bochner integral representation of the fractional Laplacian: for every \(f\in H^{2s}(M)\) and \(0<s<1\),  
	\[
	(-\Delta)^{s}f=\frac{s}{\Gamma(1-s)}
	\int_{0}^{\infty}\bigl(f-e^{t\Delta}f\bigr)\,t^{-1-s}\,dt,f\in H^{2s}(M).
	\]
	
	The proposition below provides an affirmative answer.
	
	\begin{proposition}\label{fslps}
		Let $0<s<1$, for each $f\in H^{2s}(M)$ and $g\in L^2(M,\mu)$, define
		\[
		A
		:=\int_{\sigma(-\Delta)}\!\int_{0}^{\infty}(1-e^{-t\lambda})\,t^{-1-s}\,dt\;dE(\lambda)\,,
		\]
		\[
		B 
		:=\int_{0}^{\infty}\!\int_{\sigma(-\Delta)}(1-e^{-t\lambda})\,dE(\lambda)\;\,t^{-1-s}dt\;.
		\]
		Then for all $f,g$
		\[
		\bigl\langle A f,\,g\bigr\rangle_{L^2}
		=\bigl\langle B f,\,g\bigr\rangle_{L^2},
		\]
		and hence $A=B$ as operators on $H^{2s}(M)$ in the strong operator topology.
	\end{proposition}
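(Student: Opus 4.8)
The plan is to test both operators against an arbitrary $g\in L^2(M,\mu)$, rewrite $\langle Af,g\rangle$ and $\langle Bf,g\rangle$ as iterated scalar integrals against the complex spectral measure $E_{f,g}$, and then exchange the order of integration by Fubini; the whole proof hinges on one absolute–integrability estimate.

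\emph{Step 1 (rewriting $\langle Af,g\rangle$).} By the scalar Balakrishnan identity, $\psi(\lambda):=\int_0^\infty(1-e^{-t\lambda})\,t^{-1-s}\,dt=\tfrac{\Gamma(1-s)}{s}\,\lambda^{s}$ for $\lambda\ge0$, so $A=\psi(-\Delta)=\tfrac{\Gamma(1-s)}{s}(-\Delta)^{s}$, whose domain is exactly $H^{2s}(M)=\operatorname{Dom}(\psi(-\Delta))$. By the action rule of the functional calculus, for $f\in H^{2s}(M)$ and $g\in L^2(M,\mu)$,
\[
\langle Af,g\rangle_{L^2}=\int_{\sigma(-\Delta)}\psi(\lambda)\,dE_{f,g}(\lambda)
=\int_{\sigma(-\Delta)}\!\Bigl(\int_0^\infty(1-e^{-t\lambda})\,t^{-1-s}\,dt\Bigr)dE_{f,g}(\lambda).
\]
\emph{Step 2 (well-definedness of $B$ and its weak form).} For each fixed $t>0$, $\int_{\sigma(-\Delta)}(1-e^{-t\lambda})\,dE(\lambda)=I-e^{t\Delta}$, so $Bf=\int_0^\infty (I-e^{t\Delta})f\,t^{-1-s}\,dt$, where $t\mapsto(I-e^{t\Delta})f$ is continuous into $L^2$ by strong continuity of the heat semigroup. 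By spectral calculus,
\[
\Bigl\|\int_{a}^{b}(I-e^{t\Delta})f\,t^{-1-s}\,dt\Bigr\|_{L^2}^2
=\int_{\sigma(-\Delta)}\Bigl(\int_{a}^{b}(1-e^{-t\lambda})\,t^{-1-s}\,dt\Bigr)^{2} dE_{f,f}(\lambda),
\]
whose integrand is dominated by $\psi(\lambda)^{2}\in L^{1}(dE_{f,f})$ (since $f\in H^{2s}$) and tends to $0$ pointwise as $a,b\to0^+$ or as $a,b\to\infty$; dominated convergence then shows the Cauchy criterion holds at both endpoints, so the improper Bochner integral defining $Bf$ converges in $L^2(M,\mu)$. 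Consequently
\[
\langle Bf,g\rangle_{L^2}=\int_0^\infty\langle(I-e^{t\Delta})f,g\rangle_{L^2}\,t^{-1-s}\,dt
=\int_0^\infty\!\Bigl(\int_{\sigma(-\Delta)}(1-e^{-t\lambda})\,dE_{f,g}(\lambda)\Bigr)t^{-1-s}\,dt.
\]
\emph{Step 3 (Fubini and conclusion).} It remains to interchange the orders of integration in the two displays. Let $|E_{f,g}|$ be the total-variation measure of the complex measure $E_{f,g}$, a finite positive measure. Since $(1-e^{-t\lambda})\,t^{-1-s}\ge0$, Tonelli gives
\[
\int_{\sigma(-\Delta)}\!\!\int_0^\infty(1-e^{-t\lambda})\,t^{-1-s}\,dt\;d|E_{f,g}|(\lambda)
=\tfrac{\Gamma(1-s)}{s}\int_{\sigma(-\Delta)}\lambda^{s}\,d|E_{f,g}|(\lambda),
\]
and the Cauchy–Schwarz inequality for spectral measures (obtained by factoring the phase of $E_{f,g}$ and using $\|\varphi(-\Delta)f\|_{L^2}^2=\int|\varphi|^2\,dE_{f,f}$) yields $\int\lambda^{s}\,d|E_{f,g}|\le\|(-\Delta)^{s}f\|_{L^2}\,\|g\|_{L^2}<\infty$. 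Hence Fubini's theorem applies to $E_{f,g}\otimes(t^{-1-s}\,dt)$ and equates the two iterated integrals, so $\langle Af,g\rangle_{L^2}=\langle Bf,g\rangle_{L^2}$ for every $g\in L^2(M,\mu)$; since $g$ is arbitrary, $Af=Bf$ in $L^2(M,\mu)$ for all $f\in H^{2s}(M)$, i.e.\ $A=B$.

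\emph{Main obstacle.} The delicate point is the Fubini step: because $E_{f,g}$ is only a complex measure of bounded variation one must pass to $|E_{f,g}|$ and verify absolute integrability, and — as a side subtlety — the vector-valued integrand $t\mapsto(I-e^{t\Delta})f\,t^{-1-s}$ need not be Bochner integrable in norm for general $f\in H^{2s}(M)$ (only conditionally), so $B$ is understood as an $L^2$-convergent improper integral. Both difficulties dissolve once the single estimate $\int\lambda^{s}\,d|E_{f,g}|<\infty$ is in hand, which is exactly Cauchy–Schwarz for the spectral measure combined with $f\in H^{2s}(M)$; the remaining arguments are routine measure-theoretic bookkeeping.
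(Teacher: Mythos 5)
Your proposal is correct and follows essentially the same route as the paper: test both operators against $g\in L^2$, reduce to iterated scalar integrals against the complex spectral measure $E_{f,g}$, and interchange by Fubini. In fact you are somewhat more careful than the paper, which justifies the interchange only by noting that $E_{f,g}$ has bounded variation; your explicit estimate $\int\lambda^{s}\,d\lvert E_{f,g}\rvert\le\|(-\Delta)^{s}f\|_{L^2}\,\|g\|_{L^2}$ (needed because the inner $t$-integral equals $\tfrac{\Gamma(1-s)}{s}\lambda^{s}$, which is unbounded) and your verification that $B$ converges as an improper integral in $L^2$ supply the details the paper leaves implicit.
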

	
	\begin{proof}
		Fix $f\in H^{2s}(M)$ and $g\in L^2(M,\mu)$.  By the spectral theorem ,
		\begin{align*}
			\bigl\langle A f,\,g\bigr\rangle
			&=
			\int_{\sigma(-\Delta)}
			\int_{0}^{\infty}(1-e^{-t\lambda})\,t^{-1-s}\,dt
			\;dE_{f,g}\left(\lambda\right).
		\end{align*}
		Since $dE_{f,g}\left(\lambda\right)$ is bounded variation measure, by the Fubini Theorem
		\begin{align*}
			\bigl\langle A f,\,g\bigr\rangle
			&=
			\int_{0}^{\infty}\int_{\sigma(-\Delta)}
			(1-e^{-t\lambda})\,t^{-1-s}\,
			\;dE_{f,g}\left(\lambda\right)dt.
		\end{align*}
		By standard results in the theory of vector‐valued  integration, we conclude that
		\begin{align*}
			\bigl\langle B f,\,g\bigr\rangle
			&=	\bigl\langle \int_{0}^{\infty}\int_{\sigma(-\Delta)}
			(1-e^{-t\lambda})
			\;dE\left(\lambda\right)ft^{-1-s}dt,\,g\bigr\rangle	\\&=	\int_{0}^{\infty}\int_{\sigma(-\Delta)}
			(1-e^{-t\lambda})\,t^{-1-s}\,
			\;dE_{f,g}\left(\lambda\right)dt\\&=\bigl\langle A f,\,g\bigr\rangle.
		\end{align*}
	\end{proof}
	
	The Bochner integral representation of the fractional Laplacian has been known for quite some time; see, for example, \cite[Theorem 1.1]{kwasnicki2017ten}. Our Proposition \ref{fslps}, however, provides a more refined analysis by clarifying the precise sense in which the Bochner integral is well-defined. In contrast, to the best of our knowledge, there has been no corresponding Bochner-type representation available for the logarithmic Laplacian. This absence poses a significant obstacle for attempts to define \(\log(-\Delta)\) on general Riemannian manifolds via spectral calculus, as the lack of an explicit and analytically tractable integral formula hinders further development.
	
	While the function \(\log \lambda\) admits multiple equivalent integral representations, not all of them are suitable for analytical purposes. Simply replacing \(\log \lambda\) by an arbitrary integral formula in the spectral calculus does not necessarily lead to a representation that is useful for analysis. The effectiveness of such a replacement critically depends on the structure and properties of the integral kernel involved.
	
After extensive analysis and experimentation, we have identified the following integral representation of \(\log\lambda\), which is the classical Frullani integral in the logarithmic setting, see Lemma~\ref{scalr}. This formula immediately yields a Bochner‐type expression for the logarithmic Laplacian.  For completeness, we include a brief and elementary proof.

Remarkably, this new formulation recovers the classical pointwise expression for \(\log(-\Delta)\) on \(\mathbb{R}^n\), thereby providing strong evidence that the proposed representation is both meaningful and well suited for further analytical exploration.
	
	\begin{lemma}[Scalar integral representation]\label{scalr}
		For each \(\lambda>0\),
		\[
		I(\lambda)
		:=
		\int_{0}^{\infty}\frac{e^{-t}-e^{-\lambda t}}{t}\,dt
		=\log\lambda.
		\]
	\end{lemma}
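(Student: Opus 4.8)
The plan is to prove the identity by expressing the integrand as a one–parameter integral and then applying Tonelli's theorem to reduce everything to the elementary integral $\int_0^\infty e^{-ut}\,dt=1/u$. First I would record the auxiliary identity, valid for all $t>0$ and $\lambda>0$,
\[
\frac{e^{-t}-e^{-\lambda t}}{t}=\int_{1}^{\lambda}e^{-ut}\,du ,
\]
which follows from the fundamental theorem of calculus since $\partial_u\bigl(-t^{-1}e^{-ut}\bigr)=e^{-ut}$. This representation also settles absolute convergence of $I(\lambda)$: as $t\to0^+$ the quotient tends to $\lambda-1$ (Taylor expansion), so the integrand is bounded near $0$, while for $t\ge1$ one has $|e^{-t}-e^{-\lambda t}|/t\le e^{-\min(1,\lambda)t}$, which is integrable at infinity. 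Hence $I(\lambda)$ is a well-defined absolutely convergent integral.

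Next I would substitute the auxiliary identity into $I(\lambda)$ to obtain
\[
I(\lambda)=\int_{0}^{\infty}\!\int_{1}^{\lambda}e^{-ut}\,du\,dt .
\]
When $\lambda\ge1$ the integrand $e^{-ut}$ is nonnegative on $(0,\infty)\times[1,\lambda]$, so Tonelli's theorem allows the interchange of the order of integration, giving
\[
I(\lambda)=\int_{1}^{\lambda}\!\int_{0}^{\infty}e^{-ut}\,dt\,du=\int_{1}^{\lambda}\frac{du}{u}=\log\lambda .
\]
When $0<\lambda<1$ I would write the inner integral as $-\int_{\lambda}^{1}e^{-ut}\,du$ and apply the same computation; since $\int_{\lambda}^{1}\!\int_{0}^{\infty}e^{-ut}\,dt\,du=\int_{\lambda}^{1}u^{-1}\,du=-\log\lambda<\infty$, Fubini's theorem applies and again $I(\lambda)=-(-\log\lambda)=\log\lambda$. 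The case $\lambda=1$ is trivial, the integrand being identically zero.

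As an alternative one could differentiate under the integral sign: $\partial_\lambda\bigl[(e^{-t}-e^{-\lambda t})/t\bigr]=e^{-\lambda t}$ is dominated, uniformly for $\lambda$ in compact subsets of $(0,\infty)$, by an integrable function of $t$, so $I'(\lambda)=\int_0^\infty e^{-\lambda t}\,dt=1/\lambda$, and combined with $I(1)=0$ this yields $I(\lambda)=\log\lambda$. I do not expect any genuine obstacle in this lemma; the only point requiring attention is the justification of the interchange of integrals (equivalently, of differentiation under the integral), and this is immediate from Tonelli/dominated convergence once the absolute integrability above is noted. I would present the Tonelli argument as the main proof since it is entirely self-contained and elementary.
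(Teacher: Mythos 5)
Your Tonelli argument is correct, and it takes a genuinely different route from the paper. The paper proves the lemma by differentiating under the integral sign for \(\lambda>1\) (obtaining \(I'(\lambda)=1/\lambda\), hence \(I(\lambda)=\log\lambda+c\), with the constant then identified as \(0\)) and handles \(0<\lambda<1\) separately via the substitution \(t=s/\lambda\), which reduces that range to the previous case through \(I(\lambda)=-I(1/\lambda)\). You instead write \((e^{-t}-e^{-\lambda t})/t=\int_1^\lambda e^{-ut}\,du\) and interchange the two integrals by Tonelli (resp.\ Fubini after a sign flip for \(\lambda<1\)), which collapses everything to \(\int_1^\lambda u^{-1}\,du=\log\lambda\). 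Your approach has the advantage of producing the value directly, with no additive constant to pin down and no separate change of variables for small \(\lambda\); the paper's approach is equally elementary but leaves the step ``\(c=0\)'' to the reader, which your alternative sketch (differentiation under the integral combined with \(I(1)=0\)) actually handles more explicitly. Both are complete proofs; your main argument is, if anything, the cleaner and more self-contained of the two.
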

	
	\begin{proof}
		
		For $\lambda >1,$ note that
		\[\frac{\partial}{\partial \lambda}\left(\frac{e^{-t}-e^{-\lambda t}}{t}\right)\le e^{-t},\]
		by the dominated convergence theorem,
		\[\frac{d}{d\lambda}I\left(\lambda\right)=\int_0^{\infty}e^{-\lambda t}dt=\frac{1}{\lambda}.\]
		Thus, $I\left(\lambda\right)=\log \lambda +c.$ It is not hard to see that $c=0.$
		
		For $\lambda \in \left(0,1\right),$ set $t=\frac{s}{\lambda},$ then 
		\[I\left(\lambda\right)=\int_0^\infty \frac{e^{-\frac{s}{\lambda}}-e^{-s}}{s}ds=\log\lambda.\]
		Thus, we complete the proof.
	\end{proof}
	
	\medskip
	
	Combining the scalar lemma with the spectral theorem yields immediately the following representation. We restate Theorem \ref{logboch11} from the introduction for the reader’s convenience.
	
	\begin{theorem}[Bochner formula for \(\log(-\Delta)\)]  \label{logboch}
	For every \(f\in H^{\log}(M)\),
		\[
		\log(-\Delta)\,f
		:=\int_{0}^{\infty}\!\int_{0}^{\infty}\frac{e^{-t}-e^{-t\lambda}}{t}\,dt\;\,dE(\lambda)\,f
		=\int_{0}^{\infty}\frac{e^{-t}f - e^{t\Delta}f}{t}\,dt,
		\]
		where the Bochner integral converges in \(L^2(M,\mu)\).
	\end{theorem}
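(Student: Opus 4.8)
The plan is to reduce the statement, via the scalar identity of Lemma~\ref{scalr}, to a dominated–convergence argument at the level of the spectral measure $E$, after first removing the interchange of the $t$- and $\lambda$-integrations by a truncation in $t$. By Lemma~\ref{scalr} the inner integral $\int_{0}^{\infty}t^{-1}(e^{-t}-e^{-t\lambda})\,dt$ equals $\log\lambda$ for every $\lambda>0$, so the left-hand side of the asserted identity is, by the functional calculus \eqref{spec}, exactly $\int_{(0,\infty)}\log\lambda\,dE(\lambda)f=\log(-\Delta)f$, a well-defined element of $L^{2}(M,\mu)$ because $f\in H^{\log}(M)=\operatorname{Dom}(\log(-\Delta))$. (If $E(\{0\})f\neq0$ then $\int_{[0,\infty)}(\log\lambda)^{2}\,dE_{f,f}(\lambda)=\infty$, so membership in $H^{\log}(M)$ already forces $E(\{0\})f=0$ and the value of the integrand at $\lambda=0$ is immaterial.) It thus remains to identify $\log(-\Delta)f$ with the $L^{2}$-valued integral $\int_{0}^{\infty}t^{-1}(e^{-t}f-e^{t\Delta}f)\,dt$, which we read as the limit of the truncations $\int_{\varepsilon}^{R}$ as $\varepsilon\to0^{+}$ and $R\to\infty$.

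For $0<\varepsilon<R<\infty$ put $I_{\varepsilon,R}f:=\int_{\varepsilon}^{R}t^{-1}(e^{-t}f-e^{t\Delta}f)\,dt$. Since $t\mapsto e^{t\Delta}f$ is strongly continuous and $t\mapsto e^{-t}f$ is continuous, the integrand is a continuous $L^{2}$-valued function on the compact interval $[\varepsilon,R]$, hence Bochner integrable there. Testing against an arbitrary $g\in L^{2}(M,\mu)$ and invoking the spectral representation of $e^{t\Delta}$,
\[
\langle I_{\varepsilon,R}f,g\rangle
=\int_{\varepsilon}^{R}\!\int_{\sigma(-\Delta)}\frac{e^{-t}-e^{-t\lambda}}{t}\,dE_{f,g}(\lambda)\,dt ;
\]
because $|e^{-t}-e^{-t\lambda}|\le1$, the integrand is bounded by $\varepsilon^{-1}$ on $[\varepsilon,R]\times[0,\infty)$, and $E_{f,g}$ has finite total variation, so Fubini's theorem applies and the two integrals may be exchanged. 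The resulting inner $t$-integral is $\varphi_{\varepsilon,R}(\lambda):=\int_{\varepsilon}^{R}t^{-1}(e^{-t}-e^{-t\lambda})\,dt$, whence $I_{\varepsilon,R}f=\varphi_{\varepsilon,R}(-\Delta)f$ by \eqref{spec} (the same mechanism used in Proposition~\ref{fslps} for the fractional case).

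Now let $\varepsilon\to0^{+}$ and $R\to\infty$. By the spectral theorem,
\[
\bigl\|I_{\varepsilon,R}f-\log(-\Delta)f\bigr\|_{L^{2}(M,\mu)}^{2}
=\int_{0}^{\infty}\bigl|\varphi_{\varepsilon,R}(\lambda)-\log\lambda\bigr|^{2}\,dE_{f,f}(\lambda).
\]
For each fixed $\lambda>0$ we have $\varphi_{\varepsilon,R}(\lambda)\to\log\lambda$ by Lemma~\ref{scalr}. The decisive observation is that, for fixed $\lambda$, the function $t\mapsto e^{-t}-e^{-t\lambda}$ does not change sign on $(0,\infty)$ (it is $\ge0$ for $\lambda\ge1$ and $\le0$ for $\lambda\le1$); hence
\[
|\varphi_{\varepsilon,R}(\lambda)|
\;\le\;\int_{0}^{\infty}\frac{|e^{-t}-e^{-t\lambda}|}{t}\,dt
\;=\;\Bigl|\int_{0}^{\infty}\frac{e^{-t}-e^{-t\lambda}}{t}\,dt\Bigr|
\;=\;|\log\lambda| .
\]
Consequently the integrand above is dominated by $4(\log\lambda)^{2}\in L^{1}(dE_{f,f})$, precisely because $f\in H^{\log}(M)$, and the dominated convergence theorem gives $\|I_{\varepsilon,R}f-\log(-\Delta)f\|_{L^{2}}\to0$, which is the claim.

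The only nonroutine point is the interchange of the $\lambda$- and $t$-integrations: one cannot apply Fubini directly on $\sigma(-\Delta)\times(0,\infty)$, since pairing with a general $g\in L^{2}$ would require $\int|\log\lambda|\,d|E_{f,g}|<\infty$, i.e.\ $g\in H^{\log}(M)$, which is not assumed. Truncating in $t$ circumvents this, and the passage to the limit is then controlled by the \emph{sharp} dominating function $|\log\lambda|$, whose appearance hinges on the sign-definiteness of $t\mapsto e^{-t}-e^{-t\lambda}$ and whose square-integrability against $dE_{f,f}$ is exactly the defining condition of $H^{\log}(M)$; this is also why the integral is naturally understood as the improper $L^{2}$-limit of the truncations.
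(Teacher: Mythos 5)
Your argument is correct, but it takes a genuinely different route from the paper. The paper proves the identity by pairing with an arbitrary $g\in L^{2}(M,\mu)$ and applying Fubini \emph{directly} on $(0,\infty)\times\sigma(-\Delta)$: it splits the $t$-integral at $t=1$, bounds $\int_{0}^{1}|e^{-t}-e^{-t\lambda}|\,t^{-1}dt$ and $\int_{1}^{\infty}|e^{-t}-e^{-t\lambda}|\,t^{-1}dt$ (the latter via the exponential integral $\mathrm{E}_1$) by $C\bigl(1+|\log\lambda|\bigr)$, and then uses $\int_{0}^{\infty}\bigl(1+|\log\lambda|\bigr)\,d|E_{f,g}|(\lambda)\le\|f\|_{L^2}\|g\|_{L^2}+\|g\|_{L^2}\|\log(-\Delta)f\|_{L^2}$ to get absolute integrability; the identification with the Bochner integral is then made weakly. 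You instead truncate in $t$, where Fubini is trivial (bounded integrand, finite total variation), identify $I_{\varepsilon,R}f=\varphi_{\varepsilon,R}(-\Delta)f$, and pass to the limit in the $L^{2}$ norm by dominated convergence, with the sharp dominating function $|\log\lambda|$ coming from the sign-definiteness of $t\mapsto e^{-t}-e^{-t\lambda}$. What your route buys is a clean statement of the mode of convergence: you establish strong $L^{2}$ convergence of the truncated integrals to $\log(-\Delta)f$, which is the honest reading of the theorem, since for general $f\in H^{\log}(M)$ the integral need not converge absolutely in the Bochner sense (the spectral mass near $\lambda=\infty$ controlled only logarithmically can make $\int_0^1\|e^{-t}f-e^{t\Delta}f\|_{L^2}\,t^{-1}dt=\infty$), a point on which the paper's weak-pairing argument is less explicit. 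What the paper's route buys is a one-step global Fubini with elementary kernel estimates and no limiting procedure.

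One remark in your closing paragraph is inaccurate: you claim that a direct Fubini argument would require $g\in H^{\log}(M)$. It does not. By the Cauchy--Schwarz-type inequality for the total variation of the complex spectral measure, $\int_{0}^{\infty}|\log\lambda|\,d|E_{f,g}|(\lambda)\le\|\log(-\Delta)f\|_{L^2}\,\|g\|_{L^2}$ for every $g\in L^{2}(M,\mu)$ once $f\in H^{\log}(M)$, and this is exactly how the paper justifies the interchange. This misstatement does not affect the validity of your own proof, since your truncation argument is self-contained, but the motivation you give for avoiding the direct interchange is not the real obstruction.
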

	
	\begin{proof}
		Define two operators on \(H^{\log}(M)\):
		\[
		A f
		=\int_{0}^{\infty}\!\int_{0}^{\infty}\frac{e^{-t}-e^{-t\lambda}}{t}\,dt\;dE(\lambda)\,f,
		\qquad
		B f
		=\int_{0}^{\infty}\frac{e^{-t}f - e^{t\Delta}f}{t}\,dt.
		\]
		We show \(A=B\) in the strong topology by an arbitrary \(g\in L^2(M,\mu)\).
		
		Fix \(f\in H^{\log}(M)\) and \(g\in L^2(M,\mu)\).  By the spectral theorem,
		\begin{align*}
			\bigl\langle A f,\,g\bigr\rangle
			&=\int_{0}^{\infty}
			\int_{0}^{\infty}\frac{e^{-t}-e^{-t\lambda}}{t}\,dt
			\;dE_{f,g}(\lambda).
		\end{align*}
		
		Next, we examine whether the above integrals may be interchanged. We split the \(t\)–integral into two regions.
		
		\medskip
		\paragraph{\textbf{(i) Estimate on \(\boldsymbol{0<t<1}\).}}
		We show
		\[
		\int_{0}^{\infty}
		\int_{0}^{1}\Bigl|\frac{e^{-t}-e^{-t\lambda}}{t}\Bigr|
		\,dt\;d\bigl|\!E_{f,g}\bigr|(\lambda)
		<\infty.
		\]
		For \(0<t<1\) and \(\lambda>0\), note that
		\[
		\bigl|e^{-t}-e^{-t\lambda}\bigr|
		\le \bigl|e^{-t}-1\bigr|+\bigl|1-e^{-t\lambda}\bigr|
		\]
		so there exists $c>0$ such that for $\lambda>0,$
		\[
		\begin{aligned}
			\int_{0}^{1}\Bigl|\frac{e^{-t}-e^{-t\lambda}}{t}\Bigr|\,dt
			\;\le& \int_{0}^{1}\Bigl|\frac{e^{-t}-1}{t}\Bigr|\,dt+\int_{0}^{1}\frac{1-e^{-t\lambda}}{t}\,dt\\\le &\int_{0}^{1}\Bigl|\frac{e^{-t}-1}{t}\Bigr|\,dt+\int_{0}^{\lambda}\frac{1-e^{-t}}{t}\,dt\\\le &c\left(1+\chi_{\left\{\lambda>1\right\}}\log \lambda\right).
		\end{aligned}
		\]
		Therefore
		\[
		\begin{aligned}
			\int_{0}^{\infty}
			\int_{0}^{1}\Bigl|\frac{e^{-t}-e^{-t\lambda}}{t}\Bigr|
			\,dt\;d\bigl|\!E_{f,g}\bigr|(\lambda)
			\;\le&\;
			c \int_{0}^{\infty}\left(1+\chi_{\left\{\lambda>1\right\}}\log \lambda\right)\,d\bigl|\!E_{f,g}\bigr|(\lambda)\\\le &c||f||_{L^2}||g||_{L^2}+c||g||_{L^2}||\log\left(-\Delta\right)f||_{L^2}
		\end{aligned}
		\]
		This completes the estimate for \(0<t<1\).

		\medskip
		\paragraph{\textbf{(ii) Estimate on \(\boldsymbol{t\ge1}\).}}
		We must show
		\[
		\int_{0}^{\infty}
		\int_{1}^{\infty}\Bigl|\frac{e^{-t}-e^{-t\lambda}}{t}\Bigr|\,dt
		\;d\bigl|\!E_{f,g}\bigr|(\lambda)
		<\infty.
		\]
		Note that for each \(\lambda>0\),
		\[
		\int_{1}^{\infty}\frac{e^{-t}}{t}\,dt =: \mathrm{E}_1(1)<\infty,
		\qquad
		\int_{1}^{\infty}\frac{e^{-t\lambda}}{t}\,dt
		=\int_{\lambda}^{\infty}\frac{e^{-u}}{u}\,du
		:=\mathrm{E}_1(\lambda),
		\]
		where \(\mathrm{E}_1(x)\) is the exponential‐integral.  Hence
		\[
		\int_{1}^{\infty}\Bigl|\frac{e^{-t}-e^{-t\lambda}}{t}\Bigr|\,dt
		\;\le\;
		\mathrm{E}_1(1) + \mathrm{E}_1(\lambda).
		\]
		It is well‐known that as \(\lambda\to0^+\), \(\mathrm{E}_1(\lambda)\sim-\log\lambda\), and for \(\lambda\ge1\), \(\mathrm{E}_1(\lambda)\) decays exponentially.  Consequently there is a constant \(C\) such that
		\[
		\mathrm{E}_1(1) + \mathrm{E}_1(\lambda)
		\;\le\;
		C\bigl(1 + |\log\lambda|\bigr),
		\quad
		\forall\,\lambda>0.
		\]
		Therefore
		\[
		\int_{0}^{\infty}
		\int_{1}^{\infty}\Bigl|\frac{e^{-t}-e^{-t\lambda}}{t}\Bigr|\,dt
		\;d\bigl|\!E_{f,g}\bigr|(\lambda)
		\;\le\;
		C\int_{0}^{\infty}\bigl(1 + |\log\lambda|\bigr)\,d\bigl|\!E_{f,g}\bigr|(\lambda).
		\]
		Since \(f\in H^{\log}(M),g\in L^2\left(M,\mu\right)\), 
		\[\int_{0}^{\infty}\bigl(1 + |\log\lambda|\bigr)\,d\bigl|\!E_{f,g}\bigr|(\lambda)\le ||f||_{L^2}||g||_{L^2}+||g||_{L^2}||\log\left(-\Delta\right)f||_{L^2}.\]
		Thus, by the Fubini Theorem for the (signed) measure \(dE_{f,g}(\lambda)\),
		\begin{align*}
			\bigl\langle A f,\,g\bigr\rangle
			=\int_{0}^{\infty}
			\int_{0}^{\infty}\frac{e^{-t}-e^{-t\lambda}}{t}\,
			dE_{f,g}(\lambda)\;dt.
		\end{align*}
		
		On the other hand, by Bochner integration theory,
		\begin{align*}
			\bigl\langle B f,\,g\bigr\rangle
			&=\Bigl\langle \int_{0}^{\infty}\frac{e^{-t}f - e^{t\Delta}f}{t}\,dt,\;g\Bigr\rangle
			=\int_{0}^{\infty}\!\int_{0}^{\infty}\frac{e^{-t}-e^{-t\lambda}}{t}
			\,dE_{f,g}(\lambda)dt.
		\end{align*}
		Since both iterated integrals coincide for all \(f,g\), we conclude \(A=B\) as operators on \(H^{\log}(M)\) in the strong operator topology.
	\end{proof}

	\subsection{Logarithmic Laplacian on \(\mathbb{R}^n\)}
	
	We now turn to the classical setting \(M = \mathbb{R}^n\). For the fractional Laplacian, it is well known that the Bochner integral representation is equivalent to the pointwise kernel formula involving singular integrals; see \cite[Theorem 1.1]{kwasnicki2017ten}. A natural question is whether, in the case of the logarithmic Laplacian, the Bochner integral definition we have just introduced,
	\[
	\log(-\Delta)\,f(x)
	= \int_0^\infty \frac{e^{-t}f(x) - e^{t\Delta}f(x)}{t}\,dt,
	\]
	can recover the pointwise kernel representation of \(L_{\!\Delta}\) on \(\mathbb{R}^n\) given by H.Chen and T.Weth in \cite{chen2019dirichlet}. Theorem~\ref{logrn} gives a positive answer.
	
	In \(\mathbb{R}^n\), the semigroup \(e^{t\Delta}f\) corresponds to the Gauss--Weierstrass semigroup and is given explicitly by
	\[
	e^{t\Delta}f(x) = \frac{1}{(4\pi t)^{n/2}} \int_{\mathbb{R}^n} e^{-\frac{|x - y|^2}{4t}}\, f(y)\,dy.
	\]
	
	To prove Theorem~\ref{logrn}, we first present the following lemmas, which plays a crucial role in the argument.

	\begin{lemma}\label{euler}
		The following identity holds:
		\[
		\int_{0}^{1}\frac{e^{-t}-1}{t}\,dt
		+\int_{1}^{\infty}\frac{e^{-t}}{t}\,dt
		=-\gamma,
		\]
		where \(\gamma\) is the Euler–Mascheroni constant.
	\end{lemma}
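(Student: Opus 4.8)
The plan is to regularize both (individually convergent) integrals by inserting a factor $t^{s-1}$ in place of $t^{-1}$, with a small parameter $s>0$, so that the combination becomes an explicit meromorphic function of $s$, and then to take $s\to0^+$. Concretely, for $0<s<1$ one has $\Gamma(s)=\int_0^\infty t^{s-1}e^{-t}\,dt$; splitting at $t=1$ and using $\int_0^1 t^{s-1}\,dt=\tfrac1s$ yields the exact identity
\[
\Gamma(s)-\frac1s=\int_0^1 t^{s-1}\bigl(e^{-t}-1\bigr)\,dt+\int_1^\infty t^{s-1}e^{-t}\,dt .
\]
This is the computational heart of the argument: it replaces the (seemingly coincidental) value $-\gamma$ by the Laurent expansion of $\Gamma$ at $s=0$.

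Next I would pass to the limit $s\to0^+$ on both sides. On the right-hand side, the integrand on $(0,1]$ obeys $t^{s-1}\lvert e^{-t}-1\rvert\le t^{s-1}\cdot t= t^{s}\le 1$ (using $1-e^{-t}\le t$), and on $[1,\infty)$ it obeys $t^{s-1}e^{-t}\le e^{-t}$; hence dominated convergence gives
\[
\int_0^1 t^{s-1}\bigl(e^{-t}-1\bigr)\,dt+\int_1^\infty t^{s-1}e^{-t}\,dt
\;\xrightarrow[s\to0^+]{}\;
\int_0^1\frac{e^{-t}-1}{t}\,dt+\int_1^\infty\frac{e^{-t}}{t}\,dt .
\]
On the left-hand side, write $\Gamma(s)-\tfrac1s=\dfrac{\Gamma(s+1)-1}{s}$ and use $\Gamma(1)=1$ together with $\Gamma'(1)=-\gamma$ (which itself follows by differentiating $\Gamma(s)=\int_0^\infty t^{s-1}e^{-t}\,dt$ under the integral sign at $s=1$) to conclude $\Gamma(s)-\tfrac1s\to\Gamma'(1)=-\gamma$. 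Comparing the two limits proves the lemma.

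I expect no real obstacle here: the only points needing care are the uniform domination above (so that the $s\to0^+$ interchange is legitimate) and recalling the standard fact $\Gamma'(1)=-\gamma$, i.e.\ the expansion $\Gamma(s)=\tfrac1s-\gamma+O(s)$. As an alternative, one could argue directly from $\int_0^\infty e^{-t}\log t\,dt=\Gamma'(1)=-\gamma$ by integrating by parts on $[\varepsilon,\infty)$ with $u=\log t$, $dv=e^{-t}\,dt$: the divergent boundary term $-e^{-\varepsilon}\log\varepsilon$ cancels against the $-\log\varepsilon$ produced by writing $e^{-t}/t=(e^{-t}-1)/t+1/t$ on $[\varepsilon,1]$, the residual $(e^{-\varepsilon}-1)\log\varepsilon$ tends to $0$, and letting $\varepsilon\to0^+$ gives the stated identity. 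Either route is short and elementary; I would present the $\Gamma$-regularization version as the main proof.
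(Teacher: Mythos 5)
Your proposal is correct, but it takes a genuinely different route from the paper. The paper's proof is a two-step integration by parts: with $u=e^{-t}-1$, $dv=dt/t$ on $(0,1)$ and $u=e^{-t}$, $dv=dt/t$ on $(1,\infty)$, all boundary terms vanish and the two integrals become $\int_0^1 e^{-t}\log t\,dt$ and $\int_1^\infty e^{-t}\log t\,dt$, so the sum is $\int_0^\infty e^{-t}\log t\,dt=\Gamma'(1)=-\gamma$. You instead regularize with the exact identity $\Gamma(s)-\tfrac1s=\int_0^1 t^{s-1}(e^{-t}-1)\,dt+\int_1^\infty t^{s-1}e^{-t}\,dt$, pass to the limit $s\to0^+$ by dominated convergence (your bounds $t^{s}\le1$ on $(0,1]$ and $t^{s-1}e^{-t}\le e^{-t}$ on $[1,\infty)$ are valid for $0<s<1$), and identify the left side via $\Gamma(s)-\tfrac1s=\bigl(\Gamma(s+1)-1\bigr)/s\to\Gamma'(1)=-\gamma$; all steps check out. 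Both arguments ultimately rest on the same fact $\Gamma'(1)=-\gamma$, so neither is more elementary in substance; the paper's version is shorter, while yours has the conceptual merit of exhibiting the identity as the $s\to0^+$ limit of a $\Gamma$-regularization, which resonates with the paper's overall theme of obtaining $\log(-\Delta)$ as the $s\to0^+$ derivative of $(-\Delta)^s$. The alternative you sketch at the end (integrating $\int_0^\infty e^{-t}\log t\,dt$ by parts with the $\varepsilon$-cutoff) is essentially the paper's proof run in reverse.
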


	\begin{proof}
		Set
		\[
		I_1 := \int_{0}^{1}\frac{e^{-t}-1}{t}\,dt,\qquad
		I_2 := \int_{1}^{\infty}\frac{e^{-t}}{t}\,dt.
		\]
		We will show \(I_1+I_2=-\gamma\) by relating them to the well‐known formula
		\[
		\int_{0}^{\infty}e^{-t}\log t\,dt = \Gamma'(1) = -\gamma.
		\]
		Integrate by parts with 
		\(\displaystyle u=e^{-t}-1,\; dv=\frac{dt}{t}\):
		\[
		\begin{aligned}
			I_1
			= \Bigl[(e^{-t}-1)\log t\Bigr]_{0}^{1}
			-\int_{0}^{1}\log t\,(-e^{-t})\,dt = \int_{0}^{1}e^{-t}\log t\,dt.
		\end{aligned}
		\]
		Silimarly, we have
		\[
		\begin{aligned}
			I_2
			&= \Bigl[e^{-t}\log t\Bigr]_{1}^{\infty}
			-\int_{1}^{\infty}(-e^{-t})\log t\,dt \\
			&= \int_{1}^{\infty}e^{-t}\log t\,dt.
		\end{aligned}
		\]
		Adding the two pieces gives
		\[
		I_1 + I_2
		= \int_{0}^{1}e^{-t}\log t\,dt + \int_{1}^{\infty}e^{-t}\log t\,dt
		= \int_{0}^{\infty}e^{-t}\log t\,dt = -\gamma.
		\]
		This completes the proof.
	\end{proof}
	
	\begin{lemma}\label{fhzdk}
		The following identity holds: $$\int_{\frac{1}{4}}^{\infty}\int_{s}^{\infty}\frac{t^{\frac{n}{2}-1}e^{-t}}{2s}dtds-\int_0^{\frac{1}{4}}\int_{0}^{s}\frac{t^{\frac{n}{2}-1}e^{-t}}{2s}dtds=\frac{\Gamma^{\prime}\left(\frac{n}{2}\right)}{2}+\Gamma\left(\frac{n}{2}\right)\log2.$$
	\end{lemma}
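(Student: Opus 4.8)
The plan is to collapse each of the two double integrals by reversing the order of integration, and then to recognize the resulting one‑dimensional integrals as $\Gamma(n/2)$ and $\Gamma'(n/2)$. Write $a=\tfrac n2$ and set
\[
A:=\int_{1/4}^{\infty}\!\!\int_{s}^{\infty}\frac{t^{a-1}e^{-t}}{2s}\,dt\,ds,
\qquad
B:=\int_{0}^{1/4}\!\!\int_{0}^{s}\frac{t^{a-1}e^{-t}}{2s}\,dt\,ds ,
\]
so that the left‑hand side of the lemma is $A-B$. Since the integrand $t^{a-1}e^{-t}/(2s)$ is nonnegative on each domain of integration, Tonelli's theorem lets me freely swap the order of integration in both $A$ and $B$; the convergence of everything in sight follows from the standard bounds recorded at the end.

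Carrying out the swaps: for $A$ the region $\{s\ge \tfrac14,\ t\ge s\}$ is rewritten as $\{t\ge\tfrac14,\ \tfrac14\le s\le t\}$, and the inner integral is $\int_{1/4}^{t}\frac{ds}{s}=\log t+\log 4$, so $A=\tfrac12\int_{1/4}^{\infty}t^{a-1}e^{-t}(\log t+\log 4)\,dt$. For $B$ the region $\{0\le s\le\tfrac14,\ 0\le t\le s\}$ becomes $\{0\le t\le\tfrac14,\ t\le s\le\tfrac14\}$, and the inner integral is $\int_{t}^{1/4}\frac{ds}{s}=-\log t-\log 4$, so $B=-\tfrac12\int_{0}^{1/4}t^{a-1}e^{-t}(\log t+\log 4)\,dt$. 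Subtracting, the boundary value $t=\tfrac14$ cancels and the two $t$‑ranges glue together, giving
\[
A-B=\frac12\int_{0}^{\infty}t^{a-1}e^{-t}\bigl(\log t+\log 4\bigr)\,dt .
\]

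Finally I split this into $\tfrac12\int_{0}^{\infty}t^{a-1}e^{-t}\log t\,dt+\tfrac{\log 4}{2}\int_{0}^{\infty}t^{a-1}e^{-t}\,dt$. The second integral is $\Gamma(a)$, and the first is $\Gamma'(a)$, obtained by differentiating $\Gamma(a)=\int_0^\infty t^{a-1}e^{-t}\,dt$ under the integral sign (since $\partial_a t^{a-1}=t^{a-1}\log t$). As $\tfrac{\log 4}{2}=\log 2$, this yields $A-B=\tfrac12\Gamma'(a)+\Gamma(a)\log 2$, i.e.\ the claimed identity with $a=\tfrac n2$. There is essentially no obstacle here beyond bookkeeping the convergence: near $t=0$ one uses that $t^{a-1}\lvert\log t\rvert$ is integrable for $a>0$ (so all the one‑dimensional integrals above are finite and the differentiation under the integral sign is legitimate), and near $t=\infty$ the factor $e^{-t}$ absorbs all polynomial and logarithmic growth.
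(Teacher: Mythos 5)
Your proof is correct and follows essentially the same route as the paper's: interchange the order of integration in each double integral (the paper invokes Fubini, you invoke Tonelli for the nonnegative integrand), obtain inner logarithmic factors $\pm\tfrac12\log(4t)$, glue the two $t$-ranges into $\tfrac12\int_0^\infty t^{n/2-1}e^{-t}\log(4t)\,dt$, and identify the pieces as $\tfrac12\Gamma'(n/2)+\Gamma(n/2)\log 2$. The only difference is cosmetic: you spell out the convergence and the differentiation under the integral sign, where the paper simply cites the standard identity $\int_0^\infty t^{a-1}e^{-t}\log t\,dt=\Gamma(a)\psi(a)$.
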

	
	\begin{proof}
		Set
		\[
		I = \int_{\frac14}^{\infty}\int_{s}^{\infty}\frac{t^{\frac n2-1}e^{-t}}{2s}\,dt\,ds
		\;-\;
		\int_{0}^{\frac14}\int_{0}^{s}\frac{t^{\frac n2-1}e^{-t}}{2s}\,dt\,ds.
		\]
		We split into two regions and interchange the order of integration by Fubini’s theorem.
		
		1. \emph{Region \(s\ge\frac14,\;t\ge s\).}  Here
		\[
		\int_{\frac14}^{\infty}\int_{s}^{\infty}\frac{t^{\frac n2-1}e^{-t}}{2s}\,dt\,ds
		=\int_{\frac14}^{\infty} t^{\frac n2-1}e^{-t}
		\Bigl(\int_{\frac14}^{t}\frac{1}{2s}\,ds\Bigr)dt=\frac12	\int_{\frac14}^{\infty}t^{\frac n2-1}e^{-t}\,\ln(4t)\,dt.
		\]
		
		2. \emph{Region \(0\le s\le\frac14,\;0\le t\le s\).}  Here
		\[
		\int_{0}^{\frac14}\int_{0}^{s}\frac{t^{\frac n2-1}e^{-t}}{2s}\,dt\,ds
		=\int_{0}^{\frac14} t^{\frac n2-1}e^{-t}
		\Bigl(\int_{t}^{\frac14}\frac{1}{2s}\,ds\Bigr)dt=-\tfrac12\int_{0}^{\frac14}t^{\frac n2-1}e^{-t}\,\ln(4t)\,dt.
		\]

	Combining both gives
		\[
		I
		=\frac12\int_{0}^{\infty}t^{\frac n2-1}e^{-t}\ln(4t)\,dt=\frac{1}{2}\int_{0}^{\infty}t^{\frac n2-1}e^{-t}\ln t\,dt
		\;+\;\ln 2\int_{0}^{\infty}t^{\frac n2-1}e^{-t}\,dt.
		\]
		By the well‐known Gamma‐function identities,
		\[
		\int_{0}^{\infty}t^{a-1}e^{-t}\,dt
		=\Gamma(a),
		\quad
		\int_{0}^{\infty}t^{a-1}e^{-t}\ln t\,dt
		=\Gamma(a)\,\psi(a),
		\]
		where \(\psi(a)=\Gamma'(a)/\Gamma(a)\).  Taking \(a=\tfrac n2\) yields
		\[
		I
		=\frac12\,\Gamma\Bigl(\tfrac n2\Bigr)\Bigl[\psi\Bigl(\tfrac n2\Bigr)+\ln4\Bigr]
		=\frac{\Gamma'\bigl(\tfrac n2\bigr)}{2}
		\;+\;\Gamma\Bigl(\tfrac n2\Bigr)\ln2,
		\]
		as claimed.
	\end{proof}

	\begin{theorem}\label{logrn}
		Let \(f\in C^{\beta}_{c}(\mathbb{R}^{n})\) for some \(\beta>0\). The logarithmic Laplacian defined by Bochner integral admits the pointwise representation
		\[	\log \left(-\Delta\right)f(x)  =c_{n}\int_{B_{1}(x)}\frac{f(x)-f(y)}{|x-y|^{n}}\,dy
		-c_{n}\int_{\mathbb{R}^{n}\setminus B_{1}(x)}
		\frac{f(y)}{|x-y|^{n}}\,dy
		+\rho_{n}\,f(x), \quad x\in\mathbb{R}^{n},   \]
		where
		\[
		c_{n}:=\pi^{-n/2}\Gamma\!\bigl(\tfrac{n}{2}\bigr)
		=\frac{2}{|S^{n-1}|}, 
		\qquad
		\rho_{n}:=2\log 2+\psi\!\bigl(\tfrac{n}{2}\bigr)-\gamma,           
		\]
		\(\gamma=-\Gamma'(1)\) is the Euler–Mascheroni constant, and
		\(\psi=\Gamma'/\Gamma\) is the digamma function.
	\end{theorem}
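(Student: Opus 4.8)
The plan is to insert the Gauss--Weierstrass kernel $p_t(x,y)=(4\pi t)^{-n/2}e^{-|x-y|^2/4t}$ into the Bochner representation $\log(-\Delta)f(x)=\int_0^\infty t^{-1}\bigl(e^{-t}f(x)-e^{t\Delta}f(x)\bigr)\,dt$ of Theorem~\ref{logboch}, and to reduce the entire computation to the two scalar identities of Lemmas~\ref{euler} and~\ref{fhzdk} together with the elementary evaluation
\[
\int_0^\infty\frac{p_t(x,y)}{t}\,dt=\frac{c_n}{|x-y|^{n}},\qquad c_n=\pi^{-n/2}\Gamma(\tfrac n2),
\]
obtained from the substitution $u=|x-y|^2/4t$. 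For $f\in C_c^{\beta}(\mathbb{R}^n)$ one first records that the pointwise integrand is $O(t^{\beta/2-1})$ as $t\to0^+$ — using $|f(x)-f(y)|\le[f]_\beta|x-y|^\beta$ together with the Gaussian moment bound $\int p_t(x,y)|x-y|^\beta\,dy\lesssim t^{\beta/2}$ — and $O(t^{-1-n/2})$ as $t\to\infty$, since $e^{t\Delta}f(x)\le(4\pi t)^{-n/2}\|f\|_{L^1}$; hence the defining integral converges absolutely at every $x$.

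Next I would split the Bochner integral at $t=1$. On $(0,1)$ write, using $\int p_t(x,y)\,dy=1$,
\[
\frac{e^{-t}f(x)-e^{t\Delta}f(x)}{t}=\frac{(e^{-t}-1)f(x)}{t}+\frac1t\int_{\mathbb{R}^n}p_t(x,y)\bigl(f(x)-f(y)\bigr)\,dy,
\]
and on $(1,\infty)$ keep $e^{-t}f(x)$ and write $e^{t\Delta}f(x)=\int p_t(x,y)f(y)\,dy$. The Hölder estimate over $B_1(x)$, the exponential smallness of the tail mass $\int_{\mathbb{R}^n\setminus B_1(x)}p_t(x,y)\,dy$ as $t\to0^+$, and the bound $\int p_t(x,y)|f(y)|\,dy\le(4\pi t)^{-n/2}\|f\|_{L^1}$ for $t\ge1$ together justify interchanging the $t$- and $y$-integrations by Fubini's theorem, and show that each of the quantities $\int_{B_1(x)}(f(x)-f(y))K_1$, $\int_{\mathbb{R}^n\setminus B_1(x)}(f(x)-f(y))K_1$, $\int_{B_1(x)}f\,K_2$, $\int_{\mathbb{R}^n\setminus B_1(x)}f\,K_2$ is absolutely convergent, where $K_1(r):=\int_0^1 t^{-1}p_t\,dt$ and $K_2(r):=\int_1^\infty t^{-1}p_t\,dt$. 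The leftover constant term is $f(x)\bigl(\int_0^1\frac{e^{-t}-1}{t}\,dt+\int_1^\infty\frac{e^{-t}}{t}\,dt\bigr)=-\gamma f(x)$ by Lemma~\ref{euler}.

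Then I would recombine using $K_1(r)+K_2(r)=c_n|x-y|^{-n}$: the $B_1(x)$-contribution $\int_{B_1}(f(x)-f(y))K_1-\int_{B_1}f\,K_2$ becomes $c_n\int_{B_1}\frac{f(x)-f(y)}{|x-y|^n}\,dy-f(x)\int_{B_1}K_2\,dy$, and the exterior contribution becomes $-c_n\int_{\mathbb{R}^n\setminus B_1}\frac{f(y)}{|x-y|^n}\,dy+f(x)\int_{\mathbb{R}^n\setminus B_1}K_1\,dy$. It then remains to identify the total coefficient of $f(x)$, i.e.\ to prove
\[
-\gamma-\int_{B_1(x)}K_2\,dy+\int_{\mathbb{R}^n\setminus B_1(x)}K_1\,dy=\rho_n.
\]
Passing to polar coordinates, substituting $u=\rho^2/4t$ and then $s=1/4t$, and using $|S^{n-1}|=2\pi^{n/2}/\Gamma(n/2)$ so that the prefactors collapse to $1/\Gamma(n/2)$, one gets
\[
\int_{B_1(x)}K_2\,dy=\frac{2}{\Gamma(n/2)}\int_0^{1/4}\!\!\int_0^s\frac{t^{n/2-1}e^{-t}}{2s}\,dt\,ds,\qquad \int_{\mathbb{R}^n\setminus B_1(x)}K_1\,dy=\frac{2}{\Gamma(n/2)}\int_{1/4}^\infty\!\!\int_s^\infty\frac{t^{n/2-1}e^{-t}}{2s}\,dt\,ds,
\]
so by Lemma~\ref{fhzdk} their difference equals $\frac{2}{\Gamma(n/2)}\bigl(\frac{\Gamma'(n/2)}{2}+\Gamma(n/2)\log2\bigr)=\psi(n/2)+2\log2$; adding $-\gamma$ gives exactly $\rho_n=2\log2+\psi(n/2)-\gamma$, which finishes the proof.

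The step I expect to be the main obstacle is the bookkeeping of absolute convergence: one must arrange things so that the artificial threshold $t=1$ can be inserted, the kernels $K_1,K_2$ separated over $B_1(x)$ and its complement, and then recombined into $c_n|x-y|^{-n}$, all while every intermediate integral stays absolutely convergent — which is precisely where the Hölder exponent $\beta>0$ (near the diagonal) and the compact support of $f$ (for $t\to\infty$) are used. The remaining delicate point is tracking the three scalar contributions $-\gamma$, $\psi(n/2)$ and $2\log2$ with the correct signs through the polar/substitution reductions, matching the factor $\tfrac14$ in Lemma~\ref{fhzdk} with the $4t$ in the heat kernel — the only place where the precise normalization of the Gauss--Weierstrass kernel enters.
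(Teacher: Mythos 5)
Your proposal is correct and follows essentially the same route as the paper's proof: both insert the Gauss--Weierstrass kernel into the Bochner formula, split at $t=1$, use the H\"older bound near the diagonal and compact support at infinity to justify Fubini, and reduce the constant to Lemma~\ref{euler} (giving $-\gamma$) plus Lemma~\ref{fhzdk} (giving $\psi(\tfrac n2)+2\log 2$ after the substitution $s=r^{2}/4$). The only difference is organizational: you package the incomplete-Gamma corrections through the kernels $K_1,K_2$ and the identity $K_1+K_2=c_n|x-y|^{-n}$, whereas the paper expands $\int_{|y|^2/4}^{\infty}=\Gamma(\tfrac n2)-\int_0^{|y|^2/4}$ directly inside the short- and long-time integrals — an equivalent bookkeeping.
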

	
	\begin{proof}
		
		We split the time integral
		\[
		\int_{0}^{\infty}\frac{e^{-t}f - e^{t\Delta}f}{t}\,dt
		=\int_{0}^{1}\frac{e^{-t}f - e^{t\Delta}f}{t}\,dt+\int_{1}^{\infty}\frac{e^{-t}f - e^{t\Delta}f}{t}\,dt.
		\]
		\textbf{(1) Short–time contribution \(0<t<1\).}
		Write the heat semigroup on \(\mathbb R^{n}\) as
		\[
		(e^{t\Delta}f)(x)=\frac{1}{(4\pi t)^{n/2}}
		\int_{\mathbb R^{n}}e^{-\frac{|x-y|^{2}}{4t}}\,f(y)\,dy.
		\]
		Hence
		\[
		e^{-t}f(x)-e^{t\Delta}f(x)
		=\frac{1}{(4\pi t)^{n/2}}
		\int_{\mathbb R^{n}}e^{-\frac{|x-y|^{2}}{4t}}
		\bigl(e^{-t}f(x)-f(y)\bigr)\,dy.
		\]
		Set \(y=x+\sqrt{t}\,z\), then
		\[
		e^{-t}f(x)-e^{t\Delta}f(x)
		=\frac{1}{(4\pi)^{n/2}}
		\int_{\mathbb R^{n}}e^{-|z|^{2}/4}
		\bigl(e^{-t}f(x)-f(x+\sqrt{t}\,z)\bigr)\,dz.
		\]
		Insert this into the time–integral and divide by \(t\):
		\[
		\begin{aligned}
			\int_{0}^{1}\frac{e^{-t}f(x)-e^{t\Delta}f(x)}{t}\,dt
			&=\frac{1}{(4\pi)^{n/2}}
			\int_{\mathbb R^{n}}\!e^{-|z|^{2}/4}
			\int_{0}^{1}\frac{e^{-t}f(x)-f(x+\sqrt{t}\,z)}{t}\,dt\,dz.
		\end{aligned}
		\]
		Note that
		\[
		\int_{0}^{1}\frac{e^{-t}f(x)-f(x+\sqrt{t}z)}{t}\,dt
		=\int_{0}^{1}\frac{f(x)-f(x+\sqrt{t}z)}{t}\,dt
		+f(x)\!\int_{0}^{1}\frac{e^{-t}-1}{t}\,dt.
		\]
		For the second term, since
		$$
		\int_{\mathbb{R}^{n}}e^{-\lvert z\rvert^{2}/4}\,dz
		=(4\pi)^{n/2}.
		$$	
		For the first term set \(t=r^{2}\), then
		\[
		\int_{0}^{1}\frac{f(x)-f(x+\sqrt{t}z)}{t}\,dt
		=\int_{0}^{1}\frac{f(x)-f(x+r z)}{r^{2}}\,2r\,dr
		=2\int_{0}^{1}\frac{f(x)-f(x+r z)}{r}\,dr.
		\]
		Because \(f\) is compactly supported, Fubini theorem allows
		\[
		\int_{\mathbb R^{n}}\!e^{-|z|^{2}/4}
		\int_{0}^{1}\frac{f(x)-f(x+r z)}{r}\,dr\,dz
		=\int_{0}^{1}\frac{2}{r}
		\int_{\mathbb R^{n}} e^{-|z|^{2}/4}
		\bigl(f(x)-f(x+r z)\bigr)\,dz\,dr.
		\]
		Set \(w=r z\).  Then \(dz=r^{-n}dw\) and
		\[
		\int_{\mathbb R^{n}} e^{-|z|^{2}/4}\bigl(f(x)-f(x+r z)\bigr)\,dz
		=r^{-n}\int_{\mathbb R^{n}}e^{-|w|^{2}/(4r^{2})}\bigl(f(x)-f(x+w)\bigr)\,dw.
		\]
		Thus,
		\[
		\begin{aligned}
			\int_{0}^{1}\frac{e^{-t}f(x)-e^{t\Delta}f(x)}{t}\,dt
			&=f(x)\!\int_{0}^{1}\frac{e^{-t}-1}{t}\,dt+\frac{1}{(4\pi)^{n/2}}
			\int_{0}^{1}\frac{2}{t^{n+1}}\int_{\mathbb R^{n}}\!e^{\frac{-|y|^{2}}{4t^2}}\left(f(x)-f(x+y)\right)\,dy\,dt.
		\end{aligned}
		\]
		Set $x=\frac{|y|^{2}}{4t^2}$, then we calculate that 
		\[\frac{1}{(4\pi)^{n/2}}
		\int_{0}^{1}\frac{2}{t^{n+1}}e^{\frac{-|y|^{2}}{4t^2}}dt=\pi^{-\frac{n}{2}}|y|^{-n}\int_{\frac{|y|^2}{4}}^{\infty}t^{\frac{n}{2}-1}e^{-t}dt,\]
		so 
		\[\frac{1}{(4\pi)^{n/2}}
		\int_{0}^{1}\frac{2}{t^{n+1}}\int_{\mathbb R^{n}}\!e^{\frac{-|y|^{2}}{4t^2}}\left(f(x)-f(x+y)\right)\,dy\,dt=\pi^{-\frac{n}{2}}\int_{\mathbb R^{n}}\int_{\frac{|y|^2}{4}}^{\infty}t^{\frac{n}{2}-1}e^{-t}dt\frac{\left(f(x)-f(x+y)\right)}{|y|^n}dy.\]
		Note that
		\[\int_{\frac{|y|^2}{4}}^{\infty}t^{\frac{n}{2}-1}e^{-t}dt=\Gamma\left(\frac{n}{2}\right)-\int_{0}^{\frac{|y|^2}{4}}t^{\frac{n}{2}-1}e^{-t}dt,\]
		then
		\[\begin{aligned}
			\int_{0}^{1}\frac{e^{-t}f(x)-e^{t\Delta}f(x)}{t}\,dt
			=&f(x)\!\int_{0}^{1}\frac{e^{-t}-1}{t}\,dt+c_{n}\int_{B_1(0)}\frac{\left(f(x)-f(x+y)\right)}{|y|^n}dy\\-&\pi^{-\frac{n}{2}}\int_{B_1(0)}\int_{0}^{\frac{|y|^2}{4}}t^{\frac{n}{2}-1}e^{-t}dt\frac{\left(f(x)-f(x+y)\right)}{|y|^n}dy\\+&\pi^{-\frac{n}{2}}\int_{\mathbb R^{n}\setminus B_1(0)}\int_{\frac{|y|^2}{4}}^{\infty}t^{\frac{n}{2}-1}e^{-t}dt\frac{\left(f(x)-f(x+y)\right)}{|y|^n}dy.
		\end{aligned}\]
		\medskip
		\noindent\textbf{(2) Long–time contribution \(\boldsymbol{t\ge1}\).} We start from
		\[\int_{1}^{\infty}\frac{e^{-t}f - e^{t\Delta}f}{t}\,dt=f\left(x\right)\int_1^\infty \frac{e^{-t}}{t}dt-\int_1^{\infty}\int_{\mathbb R^{n}}\frac{1}{t}\frac{1}{(4\pi t)^{n/2}}
		e^{-\frac{|x-y|^{2}}{4t}}\,f(y)\,dydt.\]
		For the second term,
		\[\int_1^{\infty}\int_{\mathbb R^{n}}\frac{1}{t}\frac{1}{(4\pi t)^{n/2}}
		e^{-\frac{|x-y|^{2}}{4t}}\,f(y)\,dy=\int_1^{\infty}\int_{\mathbb R^{n}}\frac{1}{t}\frac{1}{(4\pi t)^{n/2}}
		e^{-\frac{|y|^{2}}{4t}}\,f(x+y)\,dydt.\]
		Silimarly, we can calculate that
		\[\begin{aligned}
			\int_1^{\infty}\frac{1}{t}\frac{1}{(4\pi t)^{n/2}}
			e^{-\frac{|y|^{2}}{4t}}dt=&\pi^{-\frac{n}{2}} |y|^{-n} \int_0^{|y|^2/4} t^{n/2 - 1} e^{-t} \, dt\\=&\pi^{-\frac{n}{2}} |y|^{-n}\left(\Gamma\left(\frac{n}{2}\right)-\int_{\frac{|y|^2}{4}}^{\infty}t^{\frac{n}{2}-1}e^{-t}dt\right).
		\end{aligned}\]
		Hence,
		\[\begin{aligned}
			\int_{1}^{\infty}\frac{e^{-t}f - e^{t\Delta}f}{t}\,dt=&f\left(x\right)\int_1^\infty \frac{e^{-t}}{t}dt-\pi^{-\frac{n}{2}}\int_{B_1(0)}\int_{0}^{\frac{|y|^2}{4}}t^{\frac{n}{2}-1}e^{-t}dt\frac{f(x+y)}{|y|^n}dy\\-&c_{n}\int_{\mathbb{R}^{n}\setminus B_1(0)}\frac{f(x+y)}{|y|^n}dy+\pi^{-\frac{n}{2}}\int_{\mathbb{R}^{n}\setminus B_1(0)}\int_{\frac{|y|^2}{4}}^{\infty}t^{\frac{n}{2}-1}e^{-t}dt\frac{f(x+y)}{|y|^n}dy
		\end{aligned}\]
		Combining (1) and (2), by Lemma \ref{euler}, we obtain that
		\[\begin{aligned}
			\int_{0}^{\infty}\frac{e^{-t}f - e^{t\Delta}f}{t}\,dt=&c_{n}\int_{B_{1}(x)}\frac{f(x)-f(y)}{|x-y|^{n}}\,dy
			-c_{n}\int_{\mathbb{R}^{n}\setminus B_{1}(x)}
			\frac{f(y)}{|x-y|^{n}}\,dy-\gamma \:f(x)\\+&\pi^{-\frac{n}{2}}\left\{\int_{\mathbb{R}^{n}\setminus B_1(0)}\int_{\frac{|y|^2}{4}}^{\infty}\frac{t^{\frac{n}{2}-1}e^{-t}}{|y|^n}dtdy-\int_{B_1(0)}\int_{0}^{\frac{|y|^2}{4}}\frac{t^{\frac{n}{2}-1}e^{-t}}{|y|^n}dtdy\right\}f(x).
		\end{aligned}\]
		Note that the last term and by  Lemma \ref{fhzdk}
		\[\begin{aligned}
			&	\int_{\mathbb{R}^{n}\setminus B_1(0)}\int_{\frac{|y|^2}{4}}^{\infty}\frac{t^{\frac{n}{2}-1}e^{-t}}{|y|^n}dtdy-\int_{B_1(0)}\int_{0}^{\frac{|y|^2}{4}}\frac{t^{\frac{n}{2}-1}e^{-t}}{|y|^n}dtdy\\=&|S^{n-1}| \int_1^{\infty}\int_{\frac{r^2}{4}}^{\infty}\frac{t^{\frac{n}{2}-1}e^{-t}}{r}dtdr-|S^{n-1}|\int_0^{1}\int_{0}^{\frac{r^2}{4}}\frac{t^{\frac{n}{2}-1}e^{-t}}{r}dtdr\\=&|S^{n-1}|\int_{\frac{1}{4}}^{\infty}\int_{s}^{\infty}\frac{t^{\frac{n}{2}-1}e^{-t}}{2s}dtds-|S^{n-1}|\int_0^{\frac{1}{4}}\int_{0}^{s}\frac{t^{\frac{n}{2}-1}e^{-t}}{2s}dtds\\=&|S^{n-1}|\frac{\Gamma^{\prime}\left(\frac{n}{2}\right)}{2}+|S^{n-1}|\Gamma\left(\frac{n}{2}\right)\log2.
		\end{aligned}\]
		Thus,
		\[\pi^{-\frac{n}{2}}\left\{\int_{\mathbb{R}^{n}\setminus B_1(0)}\int_{\frac{|y|^2}{4}}^{\infty}\frac{t^{\frac{n}{2}-1}e^{-t}}{|y|^n}dtdy-\int_{B_1(0)}\int_{0}^{\frac{|y|^2}{4}}\frac{t^{\frac{n}{2}-1}e^{-t}}{|y|^n}dtdy\right\}=2\log 2+\psi\!\bigl(\tfrac{n}{2}\bigr).\]
		Hence, we complete the proof.
	\end{proof}

	\section{Logarithmic Laplacian on Riemannian Manifolds}
	
In this section we derive explicit representations of both the fractional and logarithmic Laplacians in two geometric settings: first on compact Riemannian manifolds (with or without boundary), where the spectrum of the Laplace–Beltrami operator is purely discrete; and then on non–compact complete Riemannian manifolds, where the spectral measure is continuous and a Bochner integral approach is required. These formulas unify the treatment of nonlocal operators across all Riemannian manifolds.

		\subsection{Compact Riemannian Manifolds}
	
Due to the compactness of the manifold, the spectrum of the Laplace--Beltrami operator is discrete under all standard settings, including closed manifolds as well as manifolds with Dirichlet or Neumann boundary conditions. As a consequence, the associated spectral measure consists of a pure point spectrum, and the definitions of both the fractional and logarithmic Laplacians become particularly simple. In the following, we restrict our attention to the case of closed manifolds.

	Let $(M,g)$ be a connected, closed Riemannian manifold and the Laplace–Beltrami
	\[
	-\Delta\colon C^{\infty}(M)\subset L^{2}(M)\to L^{2}(M)
	\]
	is essentially self-adjoint, and its closure is the unique self-adjoint extension in $L^2\left(M\right).$
	
	Since $M$ is closed and connected, $-\Delta$ has purely discrete spectrum
	\[
	0=\lambda_0<\lambda_1\le\lambda_2\le\cdots,\quad
	\lambda_j\to\infty,
	\]
	with an orthonormal eigenbasis $\{\varphi_j\}_{j=0}^{\infty}\subset C^{\infty}(M)$:
	\[
	-\Delta\varphi_j=\lambda_j\varphi_j,
	\qquad
	\langle\varphi_j,\varphi_k\rangle_{L^{2}}=\delta_{jk}.
	\]
	
	The heat semigroup $e^{t\Delta}$ is given pointwise by
	\[
	(e^{t\Delta}f)(x)=\int_{M}p_t(x,y)f(y)\,d\mathrm{vol}(y),
	\quad t>0,\;x\in M,
	\]
	where the heat kernel has the eigen‐expansion
	\[
	p_t(x,y)=\sum_{j=0}^{\infty}e^{-t\lambda_j}\varphi_j(x)\varphi_j(y).
	\]
	
Let \(E(\lambda)\) be the unique projection–valued spectral measure of \(-\Delta\). For every \(f,g \in L^2(M)\), we have
\[
\langle E((\alpha,\beta])f, g\rangle
= \sum_{\lambda_j \in (\alpha,\beta]} \langle f, \varphi_j \rangle \langle \varphi_j, g \rangle,
\qquad -\infty < \alpha < \beta < \infty,
\]
where \(\{\varphi_j\}_{j=0}^{\infty}\) forms a complete orthonormal basis of eigenfunctions of \(-\Delta\) corresponding to eigenvalues \(0 = \lambda_0 < \lambda_1 \le \lambda_2 \le \cdots \to \infty\). The spectral calculus then yields, for any Borel measurable function \(\Phi : [0,\infty) \to \mathbb{R}\),
\[
\Phi(-\Delta)f
= \int_0^{\infty} \Phi(\lambda)\, dE(\lambda)\, f
= \sum_{j=0}^\infty \Phi(\lambda_j)\, \langle f, \varphi_j \rangle \varphi_j.
\]

In particular, for \(s\in (0,1)\), the fractional Laplacian is given by
\[
(-\Delta)^s f
:= \int_0^{\infty} \lambda^s\, dE(\lambda)\, f
= \sum_{j=0}^\infty \lambda_j^s\, \langle f, \varphi_j \rangle \varphi_j,
\]
and the corresponding fractional Sobolev space is
\[
H^s(M)
= \left\{ f = \sum_{j=0}^\infty c_j \varphi_j \in L^2(M)\;\Big|\; \sum_{j=0}^\infty \bigl(1 + \lambda_j^{2s} \bigr)\, |c_j|^2 < \infty \right\},
\]
with norm
\[
\|f\|_{H^s(M)}^2
= \sum_{j=0}^\infty \bigl(1 + \lambda_j^{2s} \bigr)\, |c_j|^2, \qquad c_j = \langle f, \varphi_j \rangle_{L^2(M)}.
\]

Similarly, the logarithmic Laplacian on a closed Riemannian manifold is defined by
\[
\log(-\Delta)\, f
:= \int_0^{\infty} \log \lambda\, dE(\lambda)\, f
= \sum_{j=1}^\infty (\log \lambda_j)\, \langle f, \varphi_j \rangle \varphi_j.
\]
 By Proposition~\ref{logarith}, the associated logarithmic Sobolev space is
\[
H^{\log}(M)
= \left\{ f = \sum_{j=0}^\infty c_j \varphi_j \in L^2(M)\;\Big|\; \sum_{j=0}^\infty \bigl(1 + (\log \lambda_j)^2 \bigr)\, |c_j|^2 < \infty \right\},
\]
\[
\|f\|_{H^{\log}(M)}^2
= \sum_{j=0}^\infty \bigl(1 + (\log \lambda_j)^2 \bigr)\, |c_j|^2,
\qquad c_j = \langle f, \varphi_j \rangle_{L^2(M)}.
\]
Thus, \(\log(-\Delta)\,f\) is well defined in \(L^2(M)\) for all \(f \in H^{\log}(M)\).

\subsection{Non-Compact Riemannian Manifolds}

We now turn to the setting of non-compact Riemannian manifolds, where we aim to define both the fractional and logarithmic Laplacians. In this context, the spectral measure associated with the Laplace--Beltrami operator becomes significantly more intricate and depends heavily on the geometry of the underlying manifold. As a result, it is not feasible to work directly with the spectral expansion as in the compact case.

Let \((M,g)\) be a  non-compact, complete and connected Riemannian manifold
of dimension \(n\).
Denote the Laplace–Beltrami operator by
\[
-\Delta\colon C^\infty_{c}(M)\;\subset\;L^{2}(M)\longrightarrow L^{2}(M).
\]

\begin{theorem}[Gaffney--Chernoff]\cite{gaffney1954heat,chernoff1973essential}\label{gaff}
	If \((M,g)\) is complete, the operator \(-\Delta\)
	defined on \(C^\infty_{c}(M)\) is essentially self-adjoint, i.e.
	\[
	\overline{-\Delta}\;=\;(-\Delta)^{*}.
	\]
	Consequently there is a unique self-adjoint realisation (still denoted
	\(-\Delta\)) in \(L^{2}(M)\).
\end{theorem}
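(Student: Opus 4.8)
The statement is classical; I recall the argument. Since the operator $-\Delta$ with domain $C^\infty_c(M)$ is symmetric and nonnegative, because $\langle-\Delta\varphi,\varphi\rangle_{L^2(M)}=\int_M|\nabla\varphi|^2\,d\mathrm{vol}\ge0$, the basic criterion for essential self-adjointness of symmetric operators together with the lower semiboundedness of $-\Delta$ \cite{reed1981functional} reduces the problem to the single claim
\[
u\in L^2(M),\quad -\Delta u+u=0\ \text{in }\mathcal D'(M)\ \Longrightarrow\ u\equiv0 .
\]
(Equivalently one may check $\ker\bigl((-\Delta)^{*}\mp i\bigr)=\{0\}$; the argument is identical.) Given such a $u$, interior elliptic regularity for the elliptic operator $-\Delta+1$ shows that $u\in C^\infty(M)$ and $\Delta u=u$ pointwise, and in particular $u\in H^1_{\mathrm{loc}}(M)$, so that Lipschitz, compactly supported functions may be used as test functions against the equation.

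The next step produces the localising cutoffs, and this is exactly where completeness enters. Fix $o\in M$ and let $r(x)=d(o,x)$. By the Hopf--Rinow theorem, completeness forces every closed ball $\overline{B_R(o)}$ to be compact, and $r$ is $1$-Lipschitz with $|\nabla r|\le1$ almost everywhere. Pick $\eta\in C^\infty([0,\infty))$ with $\eta\equiv1$ on $[0,1]$, $\eta\equiv0$ on $[2,\infty)$ and $|\eta'|\le2$, and set $\chi_R(x)=\eta\bigl(r(x)/R\bigr)$. Then $\chi_R$ is Lipschitz, $\chi_R\equiv1$ on $B_R(o)$, $\operatorname{supp}\chi_R\subset\overline{B_{2R}(o)}$ is compact, and $|\nabla\chi_R|\le 2/R$ a.e., supported in $B_{2R}(o)\setminus B_R(o)$.

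Now comes the energy estimate. Applying Green's identity to the smooth function $u$ against the compactly supported Lipschitz function $\chi_R^2\bar u$ (no boundary term) and using $\Delta u=u$ gives
\[
\int_M\chi_R^2|u|^2\,d\mathrm{vol}
=-\int_M\chi_R^2|\nabla u|^2\,d\mathrm{vol}
-2\operatorname{Re}\int_M\chi_R\,\bar u\,\langle\nabla\chi_R,\nabla u\rangle\,d\mathrm{vol}.
\]
By Cauchy--Schwarz and Young's inequality the last term is bounded by $\tfrac12\int_M\chi_R^2|\nabla u|^2+2\int_M|u|^2|\nabla\chi_R|^2$, so
\[
\int_M\chi_R^2|u|^2\,d\mathrm{vol}
+\tfrac12\int_M\chi_R^2|\nabla u|^2\,d\mathrm{vol}
\;\le\;2\int_M|u|^2\,|\nabla\chi_R|^2\,d\mathrm{vol}
\;\le\;\frac{8}{R^2}\int_{B_{2R}(o)\setminus B_R(o)}|u|^2\,d\mathrm{vol}.
\]
Since $u\in L^2(M)$, the right-hand side tends to $0$ as $R\to\infty$, whereas the left-hand side increases to $\|u\|_{L^2(M)}^2$ by monotone convergence. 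Hence $u\equiv0$, both deficiency subspaces are trivial, and $\overline{-\Delta}=(-\Delta)^{*}$, which yields the unique self-adjoint realisation.

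The one genuinely delicate point is the cutoff construction: it is completeness, through Hopf--Rinow, that guarantees compactly supported cutoffs whose gradients decay like $1/R$, and without such a device the localisation simply fails. A minor technical wrinkle is that $r$ is only Lipschitz, but this causes no trouble since Lipschitz, compactly supported functions are legitimate in the Green/weak formulation used above, so no smoothing of the distance function is required. For completeness I note the alternative proof of Chernoff \cite{chernoff1973essential}, which derives essential self-adjointness from the finite propagation speed of the wave equation $\partial_t^2 w+(-\Delta)w=0$ on the complete manifold; it invokes completeness in the same essential way.
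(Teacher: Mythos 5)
Your argument is correct, and it is worth noting that the paper itself does not prove this statement at all: it is quoted as a classical result with citations to Gaffney and to Chernoff, the latter of whom derives essential self-adjointness from finite propagation speed for the wave equation, exactly as you remark at the end. What you have written is the other standard route (the Gaffney/Strichartz-type energy argument): the reduction for the nonnegative symmetric operator $-\Delta$ to showing that the only $L^2$ distributional solution of $\Delta u=u$ is $u\equiv0$ is the correct semibounded criterion (equivalently $\ker\bigl((-\Delta)^{*}+1\bigr)=\{0\}$, i.e.\ density of $\mathrm{Ran}(-\Delta+1)$); elliptic regularity legitimately upgrades $u$ to a smooth $H^1_{\mathrm{loc}}$ function; completeness enters precisely through Hopf--Rinow to produce compactly supported Lipschitz cutoffs $\chi_R$ with $|\nabla\chi_R|\le 2/R$; and your integration by parts with test function $\chi_R^2\bar u$, followed by Young's inequality, gives
\[
\int_M\chi_R^2|u|^2\,d\mathrm{vol}\;\le\;\frac{8}{R^2}\int_{B_{2R}(o)\setminus B_R(o)}|u|^2\,d\mathrm{vol}\;\longrightarrow\;0,
\]
which kills $u$. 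The use of merely Lipschitz cutoffs is harmless, as you say, since compactly supported $H^1$ functions are admissible in Green's identity against a smooth $u$. So your proposal is a complete and correct proof, somewhat more self-contained and elementary than the wave-equation argument the paper cites; the only caveat is cosmetic: in the paper's context this theorem is meant to be invoked as known, so a full proof is optional, but yours would stand as a valid appendix-style justification.
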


Since \(M\) is non-compact, the spectrum is no longer discrete; one always has $\sigma(-\Delta)\subset[0,\infty).$
Concrete descriptions depend on the geometry:
\[
\begin{array}{rclcl}
	M=\mathbb R^{n} &\Longrightarrow&
	\sigma(-\Delta)=[0,\infty)
	&\text{(purely continuous)}\\[4pt]
	M=\mathbb H^{n} &\Longrightarrow&
	\sigma(-\Delta)=[(n-1)^{2}/4,\;\infty)
	&\text{(purely continuous).}
\end{array}
\]

Let \(E(\lambda)\) denote the projection–valued spectral measure of the unique self–adjoint extension of \(-\Delta\).  For each \(f\in L^2(M)\) the scalar measure
\[
\mu_f(B)=\langle E(B)f,f\rangle_{L^2(M)},\qquad B\subset[0,\infty)\ \text{Borel}.
\]
With the projection–valued measure \(E(\lambda)\) of the self–adjoint
\(-\Delta\), we define
\[
(-\Delta)_{spec}^{\,s}f
:=\int_{0}^{\infty}\lambda^{\,s}\,dE(\lambda)\,f,
\qquad f\in\operatorname{Dom}\bigl((-\Delta)^s\bigr),
\]
where
\[
H^{2s}(M):=\operatorname{Dom}\bigl((-\Delta)_{spec}^{s}\bigr)
=\Bigl\{f\in L^{2}(M):\int_0^\infty\lambda^{\,2s}\,dE_{f,f}(\lambda)<\infty\Bigr\}\]
and
\[\|f\|_{H^{2s}}^{2}
=\|f\|_{L^{2}}^{2}
+\int_0^\infty\lambda^{\,2s}\,dE_{f,f}(\lambda).
\]
By Proposition \ref{fslps}, the Bochner integral formula
\[
(-\Delta)_{B}^{\,s}f=(-\Delta)_{spec}^{\,s}f
=\frac{s}{\Gamma(1-s)}
\int_{0}^{\infty}\frac{f-e^{\,t\Delta}f}{t^{1+s}}\,dt
\quad\text{in }L^{2}(M)\quad\;f\in H^{2s}(M).
\]

Recall that \((M,g)\) is stochastically complete when
\[
\int_{M}p_t(x,y)d\mathrm{vol}(y)=1
\quad\text{for all }x\in M\text{ and }t>0.
\]
Set
\[
H(t,x) := e^{t\Delta}1(x) = \int_M p_t(x,y)\,d\mathrm{vol}(y)\in [0,1],\quad t > 0,
\]
and define
\[
H(0,x) := \lim_{t \to 0^+} H(t,x) = 1.
\]
Since \( e^{t\Delta} \) is positivity preserving semigroups, we have
\[
H(t+s,x) = e^{t\Delta} \bigl(e^{s\Delta}1\bigr)(x) \le H(t,x).
\]
Therefore, the limit \( \lim_{t \to \infty} H(t,x) \) exists and defines a function
\begin{equation}\label{taoyi}
	H(x) := \lim_{t \to \infty} H(t,x) \in [0,1].
\end{equation}
The function \(H(x)\) quantifies the potential “loss of mass” due to diffusion escape in the general (possibly stochastically incomplete) case. Equivalently, if \(\tau\) denotes the explosion (or lifetime) of Brownian motion starting at \(x\), then
\[
H(x) = \mathbb{P}_x\{\tau = \infty\},
\]
that is, the probability that the diffusion does not explode. Clearly,
\[
0 \le H(x) \le 1,
\]
and \(M\) is said to be stochastically complete if and only if \(H(x) \equiv 1\) for all \(x \in M\).

\begin{proposition}
	 \(H\) is a nonnegative bounded harmonic function.
\end{proposition}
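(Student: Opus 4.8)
The plan is to get nonnegativity and boundedness for free, and then prove harmonicity in two moves: first show that $H$ is a fixed point of the heat semigroup, $e^{s\Delta}H=H$ for every $s>0$, and then upgrade this to $\Delta H=0$ using the smoothing property of the heat kernel.

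Nonnegativity and the bound $0\le H\le 1$ are immediate: $p_t(x,y)\ge 0$ forces $H(t,x)\ge 0$, and the monotonicity $H(t+s,x)\le H(t,x)$ already recorded above gives $H(t,x)\in[0,1]$ for all $t$; letting $t\to\infty$ yields $0\le H(x)\le 1$. For the fixed-point identity I would fix $s>0$ and $x\in M$ and compute, using the Chapman--Kolmogorov identity $\int_M p_s(x,y)p_t(y,z)\,d\mathrm{vol}(y)=p_{s+t}(x,z)$ together with Tonelli,
\[
\int_M p_s(x,y)\,H(t,y)\,d\mathrm{vol}(y)
=\int_M\!\Bigl(\int_M p_s(x,y)p_t(y,z)\,d\mathrm{vol}(y)\Bigr)d\mathrm{vol}(z)
=H(s+t,x).
\]
As $t\to\infty$ the integrand $p_s(x,y)\,H(t,y)$ decreases to $p_s(x,y)\,H(y)$ and is dominated by $p_s(x,\cdot)\in L^1(M)$, so by dominated convergence the left-hand side tends to $(e^{s\Delta}H)(x)$ while the right-hand side tends to $H(x)$; hence $e^{s\Delta}H=H$ for every $s>0$. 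Note $H$ need not lie in $L^2(M)$ (e.g. $H\equiv 1$ on $\mathbb{R}^n$), so this argument is carried out directly with the heat kernel rather than in the $L^2$ functional calculus.

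For harmonicity I would consider $u(s,x):=\int_M p_s(x,y)\,H(y)\,d\mathrm{vol}(y)$, which is finite since $H$ is bounded and $\int_M p_s(x,y)\,d\mathrm{vol}(y)\le 1$. Because $p_s(x,y)$ is jointly smooth and satisfies the heat equation, the hypoellipticity of $\partial_s-\Delta$ together with interior parabolic estimates lets one differentiate under the integral, so $u\in C^\infty((0,\infty)\times M)$ with $\partial_s u=\Delta_x u$. By the previous step $u(s,\cdot)\equiv H$ is independent of $s$, whence $\partial_s u\equiv 0$ and therefore $\Delta H=\Delta_x u(s,\cdot)=0$; in particular $H\in C^\infty(M)$, so $H$ is a genuine harmonic function. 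The hard part will be the justification of differentiation under the integral sign in this last step: on a general complete manifold, with no curvature assumption, one needs for $s\in[a,b]\subset(0,\infty)$ and $x$ in a compact set $K\subset M$ a uniform $L^1(d\mathrm{vol}(y))$ bound for $|\partial_s^j\nabla_x^k p_s(x,y)|$, which follows from the local regularity theory of the minimal heat kernel; the bound $\|H\|_{L^\infty}\le 1$ then closes the argument. All other steps are routine.
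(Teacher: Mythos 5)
Your proposal is correct, but it follows a genuinely different route from the paper. The paper never forms $e^{s\Delta}H$: it takes the (cited) fact that $H(t,x)=P_t1(x)$ is smooth and solves the heat equation, pairs $H$ with a test function $\varphi\in C_c^\infty(M)$, passes to the limit $t\to\infty$ by dominated convergence, and uses the boundedness of $g(t)=\int_M H(t,x)\varphi(x)\,d\mathrm{vol}(x)$ to force $\lim_{t\to\infty}g'(t)=0$; this shows $\int_M H\,\Delta\varphi\,d\mathrm{vol}=0$, i.e.\ $H$ is weakly harmonic, and elliptic (Weyl) regularity then gives smoothness and $\Delta H=0$ pointwise. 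You instead prove the semigroup invariance $e^{s\Delta}H=H$ via Chapman--Kolmogorov and monotone/dominated convergence, and then differentiate $u(s,x)=\int_M p_s(x,y)H(y)\,d\mathrm{vol}(y)$ in $s$. Your first two steps are clean and fully justified (the invariance identity is in fact a nicer structural statement, the stationarity of the non-explosion probability, and it sidesteps the paper's somewhat loosely written step $\lim_t g'(t)=\lim_t g(t)/t=0$). The price is the final step: you must know that on an arbitrary complete manifold, without curvature hypotheses, $P_s$ maps bounded measurable functions to smooth solutions of the heat equation. That is a standard fact (it is the companion of the very result the paper cites, \cite[Thm.~7.15--7.16]{grigoryan2009heat}, proved there by local parabolic regularity on an exhaustion), so your route closes; but your proposed justification via uniform $L^1_y$ bounds on $|\partial_s^j\nabla_x^k p_s(x,y)|$ needs care, since pointwise Gaussian or gradient bounds are unavailable without curvature assumptions --- one should bound the derivatives at $(s,x)$ by local space-time $L^1$ averages of $p$ (local parabolic mean value/interior estimates on a compact neighborhood) and then use $\int_M p_{s'}(x',y)\,d\mathrm{vol}(y)\le1$, or simply cite the quoted theorem. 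In short: the paper buys harmonicity cheaply through the distributional formulation plus elliptic regularity, while your argument buys a stronger intermediate statement ($e^{s\Delta}H=H$) at the cost of invoking the smoothing property of the heat semigroup on $L^\infty$.
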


\begin{proof}
 	By \cite[Theorem 7.16]{grigoryan2009heat}, $H(t,x)$ is $C^{\infty}$ in $[0,\infty)\times M,$ and satisfies the heat equation
	\[
	\partial_tH(t,x)=\Delta_xH(t,x),
	\quad H(0,x)=1.
	\]
	
	For any $\varphi\in C_c^\infty(M)$, by the dominated convergence theorem
	\[
	\int_M H(x)\,\Delta\varphi(x)\,d\mathrm{vol}(x)
	=\lim_{t\to\infty}\int_M H(t,x)\,\Delta\varphi(x)\,d\mathrm{vol}(x).
	\]
	Since $H$ satisfies the heat equation,
	\[
	\int_M H(t,x)\,\Delta\varphi(x)\,d\mathrm{vol}(x)
	=\int_M (\partial_tH(t,x))\,\varphi(x)\,d\mathrm{vol}(x)
	=\frac{d}{dt}\int_M H(t,x)\,\varphi(x)\,d\mathrm{vol}(x).
	\]
It is easy to see that
	\[\lim\limits_{t\rightarrow \infty}\frac{d}{dt}\int_M H(t,x)\,\varphi(x)\,d\mathrm{vol}(x)=\lim\limits_{t\rightarrow \infty}\frac{\int_M H(t,x)\,\varphi(x)\,d\mathrm{vol}(x)}{t}=0.\]
	Hence,
	\[
	\int_M H(x)\,\Delta\varphi(x)\,d\mathrm{vol}(x) = 0,
	\]
	i.e.\ $H$ is a distributional solution of $\Delta\: H=0$. By standard elliptic regularity implies $H\in C^\infty(M)$ and hence
	\[
	\Delta \:H(x)=0
	\quad\text{for all }x\in M.
	\]
\end{proof}

By exploiting the connection between the heat semigroup and the heat kernel, one can easily derive a pointwise integral representation of the fractional Laplacian via the Bochner integral formula on stochastically complete manifolds,  for example, on those whose Ricci curvature is bounded below, then for every $f\in L^{\infty}\left(M\right),$ $x\in M,$
\[
(-\Delta)_{spec}^s f(x)
=\frac{s}{\Gamma(1-s)}
\int_M \left(f(x)-f(y)\right)\mathcal{K}_s\left(x,y\right)d\mathrm{vol}(y).
\]
where
\[\mathcal{K}_s\left(x,y\right)=\int_0^\infty p_t(x,y)\frac{dt}{t^{1+s}},\]
which has also been given in \cite{caselli2024fractional} on closed manifolds.

On a general (possibly non‐stochastically complete) manifold \((M,g)\), we require that the kernel
\[
\mathcal{K}_s(x,y)
:=\int_{0}^{\infty}\frac{p_t(x,y)}{t^{1+s}}\,dt
\]
be well‐defined (i.e.\ the integral converges for all \(x\neq y\)), and we assume that \(f\) satisfies
\[
\int_{0}^{\infty}\int_{M}\bigl|f(x)-f(y)\bigr|\frac{p_t(x,y)}{t^{1+s}}
\,d\mathrm{vol}(y)\,dt<\infty.
\]
Under these hypotheses, one may interchange the order of integration and define the heat‐kernel fractional Laplacian by
\begin{equation}\label{rhfens}
	(-\Delta)_{\mathrm{hk}}^s f(x)
	:= \frac{s}{\Gamma(1-s)}
	\int_M\bigl(f(x)-f(y)\bigr)\,\mathcal{K}_s(x,y)\,d\mathrm{vol}(y).
\end{equation}

Then a natural question arises: on a non-stochastically complete manifold, does \((-\Delta)^s_{hk} f(x)\) coincide with the spectrally defined fractional Laplacian \((-\Delta)^s_{spec} f(x)\)? If not, what exactly is the difference between the two definitions?

Let
\[
r(t,x)
=1-\int_M p_t(x,y)\,d\mathrm{vol}(y)\;>\;0.
\]
Then for $f\in H^{2s}\left(M\right),$ we have
\[
(-\Delta)^s_{spec}f(x)
=\frac{s}{\Gamma(1-s)}
\int_0^\infty\frac{f(x)-e^{t\Delta}f(x)}{t^{1+s}}\,dt,
\]
and
\[
(-\Delta)^s_{\rm hk}f(x)
=\frac{s}{\Gamma(1-s)}
\int_0^\infty\!\int_M\frac{f(x)-f(y)}{t^{1+s}}\,p_t(x,y)\,d\mathrm{vol}(y)\,dt.
\]
Subtracting yields
\[
\bigl((-\Delta)^s_{spec}-(-\Delta)^s_{\rm hk}\bigr)f(x)
=\frac{s}{\Gamma(1-s)}
\int_0^\infty\frac{r(t,x)}{t^{1+s}}\,dt\;f(x),
\]
so that
\[
(-\Delta)^s_{spec}
=V_s(\,\cdot\,)\;+\;(-\Delta)^s_{\rm hk},
\quad
V_s(x)
:=\frac{s}{\Gamma(1-s)}
\int_0^\infty t^{-1-s}\,r(t,x)\,dt.
\]
Thus,
\[(-\Delta)^s_{\rm hk}f(x)\rightarrow f(x)-E\left(\left\{0\right\}\right)f(x)-V_s\left(x\right)f(x), \quad as \quad s\rightarrow 0^+.\]

\begin{proposition}\label{chaz}
	The following limit relation holds
	\[\lim\limits_{s\rightarrow 0^+}V_s\left(x\right)=1-H(x),\:x \in M\]
	where $H(x)$ is defined in (\ref{taoyi}).
\end{proposition}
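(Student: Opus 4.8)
The plan is to reduce everything to the single monotone profile $t\mapsto r(t,x)$ and its derivative, and then to exchange a limit with an integral. First I would record the elementary facts about $r(t,x)=1-H(t,x)$ that are already available in the excerpt: by the cited smoothness of $H(t,x)$ on $[0,\infty)\times M$, the function $r(\cdot,x)$ is $C^1$ (indeed $C^\infty$) on $[0,\infty)$; by the semigroup monotonicity $H(t+s,x)\le H(t,x)$ it is non-decreasing in $t$, so $\partial_t r(t,x)=-\partial_tH(t,x)\ge0$; and $r(0^+,x)=1-H(0,x)=0$ while $\lim_{t\to\infty}r(t,x)=1-H(x)$. In particular $\int_1^\infty\partial_t r(t,x)\,dt=(1-H(x))-r(1,x)<\infty$.

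The key algebraic step is a Tonelli interchange. Writing $r(t,x)=\int_0^t\partial_\tau r(\tau,x)\,d\tau$ (valid since $r(\cdot,x)$ is $C^1$ with $r(0^+,x)=0$) and using $\int_\tau^\infty s\,t^{-1-s}\,dt=\tau^{-s}$, Tonelli applied to the nonnegative integrand $t^{-1-s}\partial_\tau r(\tau,x)\mathbf 1_{\{\tau<t\}}$ gives
\[
V_s(x)=\frac{s}{\Gamma(1-s)}\int_0^\infty t^{-1-s}r(t,x)\,dt
=\frac{1}{\Gamma(1-s)}\int_0^\infty \tau^{-s}\,\partial_\tau r(\tau,x)\,d\tau ,
\]
this identity holding in $[0,\infty]$; in particular, since $V_s(x)<\infty$ for $s$ near $0$ — which is exactly the standing hypothesis that makes $V_s$ well defined — the right-hand integral is finite for such $s$.

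Now I would let $s\to0^+$. Since $\Gamma(1-s)\to\Gamma(1)=1$, it suffices to prove that $\int_0^\infty\tau^{-s}\partial_\tau r(\tau,x)\,d\tau\to\int_0^\infty\partial_\tau r(\tau,x)\,d\tau$. Split the integral at $\tau=1$. On $(1,\infty)$ one has $0\le\tau^{-s}\partial_\tau r\le\partial_\tau r$ with $\tau^{-s}\partial_\tau r\to\partial_\tau r$ pointwise, so dominated convergence (the dominant $\partial_\tau r$ is integrable on $(1,\infty)$ by the previous paragraph) gives $\int_1^\infty\tau^{-s}\partial_\tau r\,d\tau\to\int_1^\infty\partial_\tau r\,d\tau$. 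On $(0,1)$ one has $1\le\tau^{-s}\le\tau^{-s_0}$ for $0<s\le s_0$, where $s_0>0$ is any exponent with $V_{s_0}(x)<\infty$; the displayed identity for $s_0$ then shows $\int_0^1\tau^{-s_0}\partial_\tau r\,d\tau<\infty$, so dominated convergence gives $\int_0^1\tau^{-s}\partial_\tau r\,d\tau\to\int_0^1\partial_\tau r\,d\tau$. Adding the two pieces, $\int_0^\infty\tau^{-s}\partial_\tau r\,d\tau\to\int_0^\infty\partial_\tau r(\tau,x)\,d\tau=\lim_{T\to\infty}r(T,x)-r(0^+,x)=1-H(x)$, and hence $V_s(x)\to 1-H(x)$.

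The only genuinely delicate point is the behavior of the mass-loss $r(t,x)$ as $t\to0^+$: both the Tonelli identity (used with exponent $s_0$) and the $(0,1)$ estimate hinge on the finiteness of $\int_0^1 t^{-1-s_0}r(t,x)\,dt$ for some $s_0>0$. On stochastically complete manifolds $r\equiv0$ and the statement is trivial; in general this is precisely the (implicit) hypothesis that $V_s(x)$ be finite near $s=0$, which I would state explicitly. Everything else is monotone/dominated convergence and the fundamental theorem of calculus.
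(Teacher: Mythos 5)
Your argument is correct, and it reaches the limit by a different mechanism than the paper. You integrate by parts: writing $r(t,x)=\int_0^t\partial_\tau r(\tau,x)\,d\tau$ and applying Tonelli together with $s\int_\tau^\infty t^{-1-s}\,dt=\tau^{-s}$, you turn $V_s(x)$ into $\frac{1}{\Gamma(1-s)}\int_0^\infty\tau^{-s}\,\partial_\tau r(\tau,x)\,d\tau$ and then pass to the limit by dominated convergence, splitting at $\tau=1$. The paper instead works directly with $\frac{s}{\Gamma(1-s)}\int_0^\infty t^{-1-s}\,r(t,x)\,dt$, exploiting that the measure $s\,t^{-1-s}\,dt$ concentrates at $t=\infty$: it fixes $T$ with $|r(t,x)-(1-H(x))|<\varepsilon$ for $t\ge T$, uses the exact computation $s\int_T^\infty t^{-1-s}\,dt=T^{-s}\to1$ to extract $1-H(x)$ from the tail, and disposes of the head $\int_0^T$ via the bound $r(t,x)\le Ct$, which follows from $\max_{t\in[0,T]}|\partial_t H(t,x)|<\infty$ and $H(0,x)=1$. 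Both proofs ultimately rest on the same two facts (regularity of $r(\cdot,x)$ at $t=0$ and $r(t,x)\to 1-H(x)$); yours trades the paper's hands-on $\varepsilon$--$T$ splitting for a cleaner Frullani-type identity plus dominated convergence. One correction to your closing remark: the finiteness of $\int_0^1 t^{-1-s_0}\,r(t,x)\,dt$, equivalently of $\int_0^1\tau^{-s_0}\,\partial_\tau r(\tau,x)\,d\tau$, is not an additional hypothesis that needs to be stated. Since you have already recorded (from the smoothness of $H$ cited in the paper) that $r(\cdot,x)$ is $C^1$ on $[0,\infty)$ with $r(0,x)=0$, the derivative $\partial_\tau r(\tau,x)$ is bounded on $[0,1]$, so $\int_0^1\tau^{-s_0}\,\partial_\tau r(\tau,x)\,d\tau\le C\int_0^1\tau^{-s_0}\,d\tau<\infty$ for every $s_0\in(0,1)$, and likewise $r(t,x)\le Ct$ shows $V_s(x)<\infty$ for all $s\in(0,1)$. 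Thus your dominated-convergence step on $(0,1)$ is already justified by the regularity you invoked, and no extra standing assumption on $V_s$ should be introduced.
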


\begin{proof}
	Note that 
	\[V_s\left(x\right)=\frac{s}{\Gamma(1-s)}
	\int_0^\infty\left(1-H\left(t,x\right)\right)t^{-1-s}dt,x\in M\]
	where, for each fixed $x$, the function $t\mapsto H\left(t,x\right)$ is differentiable and satisfies
	$$
	H\left(t,x\right)\in[0,1],\quad H\left(0,x\right)=1,\quad \lim\limits_{t\rightarrow \infty}H\left(t,x\right)=H\left(x\right).
	$$
	For $\varepsilon>0,$ there exists $T>0$ such that
	\[|H\left(t,x\right)-H(x)|<\varepsilon,\quad t\ge T,\]
	so 
	\[\frac{s}{\Gamma(1-s)}
	\int_T^\infty\left(H(x)-H\left(t,x\right)\right)t^{-1-s}dt\le\frac{T^{-s}}{\Gamma(1-s)}\varepsilon.\]
	Note that
	\[\begin{aligned}
		\frac{s}{\Gamma(1-s)}
		\int_T^\infty\left(1-H\left(t,x\right)\right)t^{-1-s}dt&=\frac{s}{\Gamma(1-s)}
		\int_T^\infty\left(1-H(x)\right)t^{-1-s}dt\\&+\frac{s}{\Gamma(1-s)}
		\int_T^\infty\left(H(x)-H\left(t,x\right)\right)t^{-1-s}dt,
	\end{aligned}\]
	where
	\[\lim\limits_{s\rightarrow 0^+}\frac{s}{\Gamma(1-s)}
	\int_T^\infty\left(1-H(x)\right)t^{-1-s}dt=\lim\limits_{s\rightarrow 0^+}\frac{T^{-s}}{\Gamma\left(1-s\right)}\left(1-H(x)\right)=1-H(x).\]
	Since $max_{t\in [0,T]}|\partial_{t}H\left(t,x\right)|<\infty,$ there exists $C>0$ such that
	\[\frac{s}{\Gamma(1-s)}
	\int_0^T\left(1-H\left(t,x\right)\right)t^{-1-s}dt\le C\frac{sT^{1-s}}{\Gamma\left(1-s\right)\left(1-s\right)},\]
	thus,
	\[\lim\limits_{s\rightarrow 0^+}\frac{s}{\Gamma(1-s)}
	\int_0^T\left(1-H\left(t,x\right)\right)t^{-1-s}dt=0.\]
	It is easy to see that
	\[\lim\limits_{s\rightarrow 0^+}\frac{T^{-s}}{\Gamma(1-s)}=1,\]
	so
	\[\lim\limits_{s\rightarrow 0^+}V_s\left(x\right)=1-H(x),x \in M.\]
\end{proof}

If \(M\) is a stochastically complete Riemannian manifold, then \(H \equiv 1\) and
\[
\lim_{s \to 0^+} V_s(x) = 0 \quad \text{for all } x \in M,
\]
this is consistent with the conclusion of Proposition~\ref{chaz}.

Consequently, in the stochastically complete case, the heat-kernel-based fractional Laplacian \((-\Delta)^s_{\rm hk}f(x)\) converges to the expected limit
\[
\lim_{s \to 0^+} (-\Delta)^s_{\rm hk}f(x) = f(x) - E(\{0\})f(x),
\]
in agreement with the spectral definition.

We now turn to the definition and representation of the logarithmic Laplacian on non-compact, complete and connected Riemannian manifolds. 

Define the logarithmic Sobolev space by
\[
H^{\log}(M)
:= \left\{\, f \in L^2(M)\; \middle|\;
\int_0^\infty (\log \lambda)^2\, dE_{f,f}(\lambda) < \infty \right\}
\]
equipped with the norm
\[
\|f\|_{H^{\log}}^2
:= \|f\|_{L^2(M)}^2 + \int_0^\infty (\log \lambda)^2\, dE_{f,f}\lambda).
\]

The logarithmic Laplacian is then defined spectrally by
\[
\log(-\Delta)_{\mathrm{spec}}\,f
:= \int_0^\infty \log \lambda\, dE(\lambda)\, f, \quad f \in H^{\log}(M).
\]
Moreover, by Theorem~\ref{logboch}, the Bochner integral representation holds:
\[
\log(-\Delta)_{B}f=\log(-\Delta)_{\mathrm{spec}}\,f
= \int_0^\infty \frac{e^{-t}f - e^{t\Delta}f}{t}\,dt
\quad \text{in } L^2(M).
\]

If $f\in C_{c}\left(M\right)$ and suppose
\begin{equation}\label{shdja}
	\frac{e^{-t}\delta_x(y) - p_t(x,y)}{t}\in L_t^1\left(0,\infty\right)
\end{equation}
then the logarithmic Laplacian admits a pointwise representation:
\[
\bigl(\log(-\Delta)_{spec}f\bigr)(x)
= \int_M \mathcal{K}_{\log}(x,y)\,f(y)\,d\mathrm{vol}(y),
\]
where the kernel \(	\mathcal{K}_{\log}(x,y)\) is given by
\[
\mathcal{K}_{\log}(x,y) := \int_0^\infty \frac{e^{-t}\delta_x(y) - p_t(x,y)}{t}\,dt.
\]

In the absence of further geometric hypotheses on \((M,g)\), one cannot derive many of the refined results above, nor can one guarantee that \eqref{shdja} holds.  Henceforth we shall restrict our attention to the classical setting of complete Riemannian manifolds with Ricci curvature satisfies
\[
\mathrm{Ric}_g \;\ge\;-\left(n-1\right)k,\:k\ge 0
\]
Yau \cite{Yau1976}
proved that complete Riemannian manifold with Ricci curvature bounded from
below is stochastically complete, allowing us to apply the full strength of the heat‐kernel and spectral representations derived above. Moreover, Li-Yau estimate holds in \cite{li2012geometric}:

For any \(\varepsilon>0\) there exist constants \(C_1,C_2>0\), depending only on dimension \(n\) and \(\varepsilon\), such that for all \(x,y\in M\) and all \(t>0\),
	\begin{equation}\label{liyau}
			p_t(x,y)
		\;\le\;
		C_1\,
		\exp\bigl(-\mu_1(M)\,t\bigr)	V_x\bigl(\sqrt t\bigr)^{-\frac12}
		V_y\bigl(\sqrt t\bigr)^{-\frac12}
		\exp\!\Bigl(-\frac{d(x,y)^2}{4(1+2\varepsilon)\,t}
		+ C_2\sqrt{t}\Bigr),
	\end{equation}
	where 
	\[
	\mu_1(M)=\inf\sigma(-\Delta)\ge 0.
	\]

In geodesic polar coordinates centered at \(x\), then the Riemannian volume element 
\[
d\mathrm{vol}(y)
=J_x(r,\theta)\,dr\,d\theta.
\]
By the Bishop–Gromov comparison theorem \cite{petersen2006riemannian}, one has the model‐space bound
\[
J_x(r,\theta)\;\le\;\bigl(s_k(r)\bigr)^{\,n-1},
\qquad
s_K(r)=
\begin{cases}
	r,&k=0,\\
	\frac1{\sqrt{k}}\sinh(\sqrt{k}\,r),&k>0.
\end{cases}
\]
In particular there exists \(C>0\) such that for all \(r\ge0\),
\begin{equation}\label{ykb}
	J_x(r,\theta)\;\le\;
	C\,e^{(n-1)\sqrt k\,r}\,r^{\,n-1},
\end{equation}
while for small \(r\) one recovers the Euclidean lower bound
\[
J_x(r,\theta)\sim r^{\,n-1}.
\]
Moreover, by Bishop–Gromov one has the following two‐regime estimates: for \(0<t\le t_0\),
\begin{equation}\label{yjsh}
	V_x\bigl(\sqrt t\bigr)\sim t^{\frac n2};\:k\ge 0
\end{equation}
and for \(t\ge t_0\),
\[V_x\bigl(\sqrt t\bigr)\lesssim t^{\frac{n}{2}},\:k=0;\quad 
V_x\bigl(\sqrt t\bigr)
\lesssim e^{(n-1)\sqrt k\,\sqrt t},\: k>0.
\]

\begin{theorem}\label{thm:log-noncompact}
	Let \((M,g)\) be a complete Riemannian manifold with
	\[
\mathrm{Ric}_g \;\ge\;-\left(n-1\right)k,\:k\ge 0
	\quad k\ge0.
	\]
	If \(f\in C_c^\alpha(M)\) for some \(\alpha>0\), then for every \(x\in M\) one has
	\begingroup\small	\[
	\log\bigl(-\Delta\bigr)_{\mathrm{spec}}f(x)
	=\int_M K_1(x,y)\,\bigl(f(x)-f(y)\bigr)\,d\mathrm{vol}(y)
	\;-\;\int_M K_2(x,y)\,f(y)\,d\mathrm{vol}(y)
	\;+\;\Gamma'(1)\,f(x),
	\]
	where
	\[
	K_1(x,y)=\int_0^1\frac{p_t(x,y)}{t}\,dt,
	\qquad
	K_2(x,y)=\int_1^\infty\frac{p_t(x,y)}{t}\,dt.
	\]
	\endgroup
\end{theorem}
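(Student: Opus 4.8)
The plan is to evaluate, pointwise, the $L^2$–Bochner integral furnished by Theorem~\ref{logboch}, namely $\log(-\Delta)_{\mathrm{spec}}f=\int_0^\infty t^{-1}\bigl(e^{-t}f-e^{t\Delta}f\bigr)\,dt$. First I would show that, for $f\in C_c^\alpha(M)$ and each fixed $x\in M$, the scalar function $t\mapsto t^{-1}\bigl(e^{-t}f(x)-e^{t\Delta}f(x)\bigr)$ belongs to $L^1(0,\infty)$; this follows from the heat–kernel estimates discussed below. Granting it, a standard consistency property of Bochner integrals — if an $L^2$–valued Bochner integrable map is also, at a.e.\ point, absolutely integrable as a scalar function of $t$, then the Bochner integral coincides a.e.\ with the pointwise integral — identifies $[\log(-\Delta)_{\mathrm{spec}}f](x)$ with $\int_0^\infty t^{-1}\bigl(e^{-t}f(x)-e^{t\Delta}f(x)\bigr)\,dt$ for a.e.\ $x$; since the estimates below make both sides continuous in $x$, the identity then holds for every $x$.

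I would then split the $t$–integral at $t=1$ and insert the heat–kernel representation $e^{t\Delta}f(x)=\int_M p_t(x,y)f(y)\,d\mathrm{vol}(y)$. Because $\mathrm{Ric}_g\ge-(n-1)k$ forces $M$ to be stochastically complete (Yau), $\int_M p_t(x,y)\,d\mathrm{vol}(y)=1$, so on $(0,1)$
\[
e^{-t}f(x)-e^{t\Delta}f(x)=\int_M p_t(x,y)\bigl(f(x)-f(y)\bigr)\,d\mathrm{vol}(y)+(e^{-t}-1)f(x),
\]
whereas on $(1,\infty)$ I keep $e^{-t}f(x)-\int_M p_t(x,y)f(y)\,d\mathrm{vol}(y)$. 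Dividing by $t$ and integrating, the scalar pieces combine to $f(x)\bigl(\int_0^1\frac{e^{-t}-1}{t}\,dt+\int_1^\infty\frac{e^{-t}}{t}\,dt\bigr)=-\gamma\,f(x)=\Gamma'(1)f(x)$ by Lemma~\ref{euler}, while a Fubini interchange turns the two remaining double integrals into $\int_M K_1(x,y)\bigl(f(x)-f(y)\bigr)\,d\mathrm{vol}(y)$ and $-\int_M K_2(x,y)f(y)\,d\mathrm{vol}(y)$, which is exactly the claimed formula.

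The analytic core is the justification of the two Fubini interchanges. For $0<t<1$, I would use the short–time Li--Yau bound \eqref{liyau} together with \eqref{yjsh}, which yield $p_t(x,y)\lesssim t^{-n/2}e^{-c\,d(x,y)^2/t}$, and the Bishop--Gromov volume bound \eqref{ykb}; one splits $M$ into a small geodesic ball about $x$, where $|f(x)-f(y)|\le[f]_{C^\alpha}\,d(x,y)^\alpha$ and the resulting Gaussian $\alpha$–th moment is $O(t^{\alpha/2})$, so that $\int_0^1 t^{\alpha/2-1}\,dt<\infty$ uses precisely $\alpha>0$, and its complement, where $|f(x)-f(y)|$ is bounded and the Gaussian tail contributes a factor $O(e^{-c/t})$. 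For $t\ge1$, $f$ being supported in a fixed compact set $\Omega$ reduces the interchange to the finiteness of $\int_1^\infty t^{-1}\bigl(\int_\Omega p_t(x,y)\,d\mathrm{vol}(y)\bigr)\,dt$; here the large–time part of \eqref{liyau} does the work — the factor $e^{-\mu_1(M)t}$ when $\mu_1(M)>0$, and otherwise the volume growth from Bishop--Gromov — showing that $\int_\Omega p_t(x,y)\,d\mathrm{vol}(y)$ decays at a rate integrable against $dt/t$. These same bounds show $K_1(x,y),K_2(x,y)<\infty$ for $x\ne y$, with $K_1$ carrying a $d(x,y)^{-n}$ singularity along the diagonal — which is exactly why it must be paired with the difference $f(x)-f(y)$ rather than with $f(y)$. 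I expect the large–time estimate, and more generally keeping track of which half of the heat kernel is paired with which term, to be the main obstacle; the rest is bookkeeping once \eqref{liyau}, \eqref{ykb}, \eqref{yjsh} and Lemma~\ref{euler} are available.
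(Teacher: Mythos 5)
Your proposal follows essentially the same route as the paper's proof: split the Bochner integral of Theorem~\ref{logboch} at $t=1$, use stochastic completeness (Yau) to insert the heat-kernel representation, justify the two Fubini interchanges via the Li--Yau bound \eqref{liyau} together with Bishop--Gromov \eqref{ykb}, \eqref{yjsh} and the H\"older continuity of $f$, and collect the scalar pieces into $\Gamma'(1)f(x)$ via Lemma~\ref{euler}. The only real difference is that you also spell out the a.e.\ identification of the $L^2$-valued Bochner integral with the pointwise $t$-integral, a step the paper leaves implicit, which is a refinement of the same argument rather than a different approach.
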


	\begin{proof}
	We apply the Bochner integral representation on \(M\):
	\[
	\log\bigl(-\Delta\bigr)\,f
	\;=\;
	\int_{0}^{\infty} \frac{e^{-t}f \;-\; e^{\,t\Delta}f}{t}\,dt.
	\]
	Split the integral at \(t=1\):
	\[
	\log\bigl(-\Delta\bigr)\,f
	\;=\;
	\int_{0}^{1} \frac{e^{-t}f - e^{\,t\Delta}f}{t}\,dt
	\;+\;
	\int_{1}^{\infty} \frac{e^{-t}f - e^{\,t\Delta}f}{t}\,dt.
	\]
	
	\noindent\textbf{(1) Short‐time part \(\,0 < t < 1\).}  Since
	\[
	\bigl(e^{\,t\Delta}f\bigr)(x)
	\;=\;
	\int_{M} p_t(x,y)\;f(y)\;d\mathrm{vol}(y),
	\]
	we write
	\[
	e^{-t}f(x) - e^{\,t\Delta}f(x)
	\;=\;
	\bigl(e^{-t}-1\bigr)\,f(x)
	\;+\;
	\int_{M} p_{t}(x,y)\,\bigl(f(x)-f(y)\bigr)\,d\mathrm{vol}(y).
	\]
	Hence
	\begingroup\small	\[
	\int_{0}^{1} \frac{e^{-t}f(x) - e^{\,t\Delta}f(x)}{t}\,dt
	\;=\;
	\int_{0}^{1} \frac{e^{-t}-1}{t}\,dt\;f(x)
	\;+\;
	\int_{0}^{1} \int_{M}
	\frac{p_{t}(x,y)}{t}\,\bigl(f(x)-f(y)\bigr)\,d\mathrm{vol}(y)\,dt.
	\]
	\endgroup	
	Since $f\in C_c^{\alpha}\left(M\right),$ there exists $C>0$ such that $|f(x)-f(y)|\le Cr^{\alpha}.$ By Li-Yau estimate \ref{liyau}, (\ref{ykb}) and (\ref{yjsh}), we calculate that
	\[\begin{aligned}
	&\int_{0}^{1} \int_{M}
		\frac{p_t(x,y)}{t}r^{\alpha}d\mathrm{vol}(y)\,dt\\\lesssim& \int_{0}^{1} \int_{0}^{\infty}
		V_x\bigl(\sqrt t\bigr)^{-\frac12}\,
		V_y\bigl(\sqrt t\bigr)^{-\frac12}\,
		\exp\!\Bigl(-\frac{r^2}{4(1+2\varepsilon)\,t}\Bigr)e^{(n-1)\sqrt k\,r}\,r^{\,n-1} r^{\alpha}\,drdt\\\lesssim  & 
		\int_{0}^{1} \int_{0}^{\infty}
t^{-\frac{n}{2}-1}
		\exp\!\Bigl(-\frac{r^2}{4(1+2\varepsilon)\,t}\Bigr)e^{(n-1)\sqrt k\,r}\,r^{\,n-1} r^{\alpha}\,drdt	\\\lesssim  & 
		 \int_{0}^{\infty}
	e^{(n-1)\sqrt k\,r}r^{\alpha-1}\,dr \cdot \int_{\frac{r^2}{4\left(1+2\epsilon\right)}}^{\infty}e^{-t}t^{\frac{n}{2}-1}dt.
	\end{aligned}\]
	Below, we examine the integrability of the function separately as \(r\to0\) and as \(r\to\infty\).
	
	For $0<r<1,$ 
	\[ \int_{0}^{1}
	e^{(n-1)\sqrt k\,r}r^{\alpha-1}\,dr \cdot \int_{\frac{r^2}{4\left(1+2\epsilon\right)}}^{\infty}e^{-t}t^{\frac{n}{2}-1}dt\sim \int_0^1 r^{\alpha-1}dr<\infty.\]
	
	For sufficiently large $r,$  by Lemma \ref{lem:gamma-tail} we yields that
	\[\begin{aligned}
		&	\int_1^{\infty}e^{(n-1)\sqrt k\,r}r^{\alpha-1}\,dr \cdot \int_{\frac{r^2}{4\left(1+2\epsilon\right)}}^{\infty}e^{-t}t^{\frac{n}{2}-1}dt
		\\ \lesssim& \int_1^{\infty}e^{(n-1)\sqrt k\,r}r^{\alpha+n-3}e^{-\frac{r^2}{4\left(1+2\epsilon\right)}}dr<\infty.
	\end{aligned}\]
	By the Fubini theorem, we can interchange the \(t\) and \(y\) integrals in the second term,
	\begingroup\small	\[
	\int_{0}^{1} \int_{M}
	\frac{p_t(x,y)}{t}\,\bigl(f(x)-f(y)\bigr)\,d\mathrm{vol}(y)\,dt
	\;=\;
	\int_{M}
	\Bigl(\,\int_{0}^{1} \frac{p_t(x,y)}{t}\,dt\Bigr)\,\bigl(f(x)-f(y)\bigr)\,
	d\mathrm{vol}(y).
	\]
	\endgroup
	Define
	\[
	K_{1}(x,y) \;=\; \int_{0}^{1} \frac{p_t(x,y)}{t}\,dt.
	\]
	Therefore the short‐time contribution is
	\[
	\int_{0}^{1} \frac{e^{-t}f - e^{\,t\Delta}f}{t}\,dt
	= \int_{M} K_{1}(x,y)\,\bigl(f(x)-f(y)\bigr)\,d\mathrm{vol}(y)
	\;+\;
	\Bigl(\int_{0}^{1} \frac{e^{-t}-1}{t}\,dt\Bigr)\,f(x).
	\]
	
	\noindent\textbf{(2) Long‐time part \(\,t \ge 1\).}  Similarly,
	\[
	\int_{1}^{\infty} \frac{e^{-t}f(x) - e^{\,t\Delta}f(x)}{t}\,dt
	\;=\;
	\int_{1}^{\infty} \frac{e^{-t}}{t}\,dt\,f(x)
	\;-\;
	\int_{1}^{\infty} \int_{M}
	\frac{p_t(x,y)}{t}\,f(y)\,d\mathrm{vol}(y)\,dt.
	\]
	Again by Li-Yau estimate \ref{liyau} and the Fubini theorem, we interchange integrals in the second term, justified by rapid decay of \(p_t(x,y)\) as \(t\to\infty\) and compact support of \(f\):
	\[
	\int_{1}^{\infty} \int_{M} 
	\frac{p_t(x,y)}{t}\,f(y)\,d\mathrm{vol}(y)\,dt
	\;=\;
	\int_{M}
	\Bigl(\,\int_{1}^{\infty} \frac{p_t(x,y)}{t}\,dt\Bigr)\,f(y)\,
	d\mathrm{vol}(y).
	\]
	Define
	\[
	K_{2}(x,y) \;=\; \int_{1}^{\infty} \frac{p_t(x,y)}{t}\,dt.
	\]
	Hence the long‐time contribution becomes
	\[
	\int_{1}^{\infty} \frac{e^{-t}f - e^{\,t\Delta}f}{t}\,dt
	= \Bigl(\int_{1}^{\infty}\frac{e^{-t}}{t}\,dt\Bigr)\,f(x)
	\;-\;
	\int_{M} K_{2}(x,y)\,f(y)\,d\mathrm{vol}(y).
	\]
	Combining (1)(2) and Lemma \ref{euler}, we get
		\begingroup\small	\[
	\bigl(\log(-\Delta)_{spec}f\bigr)(x)
	= \int_M K_1(x,y)\bigl(f(x) - f(y)\bigr)\,d\mathrm{vol}(y)
	- \int_M K_2(x,y)\,f(y)\,d\mathrm{vol}(y)
	+ \Gamma^{\prime}(1)\,f(x),
	\]
	\endgroup
\end{proof}

		\section{Logarithmic Laplacian on Hyperbolic Space \(\mathbb{H}^n\)}
	
		In this section, we derive explicit formulas for fractional Laplacian and logarithmic Laplacian on real hyperbolic space, which is complete and stochastically complete. We then explore its spectral and heat‐kernel representations and discuss further analytic and geometric properties.
		
	\subsection{Geometry and Heat Kernel of  \(\mathbb{H}^n\)}
	
	In this subsection, we recall some basic geometric properties of hyperbolic space \(\mathbb{H}^n\) and summarize key estimates for its heat kernel. It is important to note that the essential features of hyperbolic geometry only emerge in dimensions \(n \ge 2\). Therefore, in the following, we restrict our attention to the case \(n \ge 2\).
	
	The standard model for real hyperbolic \(n\)-space is the Poincaré disc:
	\[
	\mathbb{H}^n
	=\bigl\{\,x\in \mathbb{R}^n:\lvert x\rvert<1\,\bigr\},
	\]
	equipped with the metric
	\[
	g
	=\frac{4\,|dx|^2}{\bigl(1-\lvert x\rvert^2\bigr)^2}.
	\]
	Thus the Riemannian volume element is
	\[
	d\mathrm{vol}_{\mathbb{H}^n}(x)
	=2^{n}\bigl(1-\lvert x\rvert^2\bigr)^{-n}\,dx,
	\]
	where \(dx\) is the Euclidean Lebesgue measure on \(\{\,\lvert x\rvert<1\,\}\).
	
	The geodesic distance from the origin is
	\[
	\rho = d_{\mathbb{H}^n}(0,x)
	=2\,\tanh^{-1}(r)
	= \log\!\bigl(\tfrac{1+r}{1-r}\bigr).
	\]
	Equivalently, \(r = \tanh(\tfrac{\rho}{2})\).
	
	\medskip
	In these geodesic polar coordinates, the metric becomes
	\[
	g
	= d\rho^2 \;+\; \sinh^2(\rho)\,d\omega^2,
	\]
	where \(d\omega^2\) denotes the standard round metric on \(\mathbb{S}^{n-1}\). The associated volume element is
	\[
	d\mathrm{vol}_{\mathbb{H}^n} = \sinh^{n-1}(\rho)\,d\rho\,d\omega,
	\]
	thus, the volume of a geodesic ball \(B(\rho)\subset \mathbb{H}^n\) of radius \(\rho>0\) is given by
	\[
	\mathrm{Vol}(B(\rho)) = \mathrm{Vol}(S^{n-1})\int_0^\rho \sinh^{n-1}(t)\,dt,
	\]
	where \(\mathrm{Vol}(S^{n-1})\) is the surface area of the unit sphere in \(\mathbb{R}^n\).
	
	Let \(p_n(r,t)\) denote the heat kernel on the real hyperbolic space \(\mathbb{H}^n\), where $r \;=\; d_{\mathbb{H}^n}(x,y)$ is the geodesic distance between points \(x\) and \(y\), and \(t>0\) is the time variable.  By definition, \(p_t(x,y)\) is the minimal fundamental solution of the heat equation
	\[
	\bigl(\partial_{t} - \Delta_{\mathbb{H}^n}\bigr)\,u(x,t) \;=\; 0,
	\]
	subject to the initial condition 
	\[
	\lim_{t\to0^+} u(x,t) \;=\; \delta_{y}(x).
	\]
	Equivalently, \(p_{n}(r,t)\) is the kernel of the heat semigroup \(e^{-\,t\Delta_{g}}\).  Consequently, for any \(f\in L^{2}(\mathbb{H}^n)\), the function
	\begin{equation}\label{heat equa}
		u(x,t) :=e^{t\Delta_{\mathbb{H}^n}}f(x)\;=\; \int_{\mathbb{H}^n} p_{n}\bigl(r,\,t\bigr)\;f(y)\,d\mathrm{vol}_{\mathbb{H}^n}(y)
	\end{equation}
	solves 
	\[
	\bigl(\partial_{t} - \Delta_{\mathbb{H}^n}\bigr)\,u(x,t) \;=\; 0,
	\qquad
	u(x,0) \;=\; f(x).
	\]
	
	\begin{theorem}\cite[Theorem 1.1]{grigor1998heat}\label{heatfo}
		The heat kernel $p_n(r,t)$ on the hyperbolic space \(\mathbb{H}^n\) is given by the following formulas:
		
		If $n=2m+1,m\ge0,$ then
		\begin{equation}\label{sqkjp}
			p_{n}(r,t)
			=\frac{(-1)^{m}}{2^{m}\pi^{m}}
			\frac{1}{(4\pi t)^{1/2}}
			\Bigl(\frac{1}{\sinh r}\,\partial_{r}\Bigr)^{\!m}
			\exp\!\Bigl(-m^{2}t-\frac{r^{2}}{4t}\Bigr);
		\end{equation}
		if $n=2m+2,m\ge 0,$ then
		\[
		p_{n}(r,t)
		=\frac{(-1)^{m}}{2^{m+5/2}\pi^{m+3/2}}\,
		t^{-3/2}\exp\!\Bigl(-\tfrac{(2m+1)^{2}}{4}t\Bigr)
		\Bigl(\frac{1}{\sinh r}\,\partial_{r}\Bigr)^{\!m}
		\int_{r}^{\infty}
		\frac{x\,\exp\!\left(-\tfrac{x^{2}}{4t}\right)}
		{\bigl(\cosh x-\cosh r\bigr)^{1/2}}
		\,\mathrm{d}x .
		\]
	\end{theorem}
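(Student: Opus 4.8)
The plan is to reduce the statement to a one-dimensional radial heat equation, establish the Millson descent recursion linking dimensions $n$ and $n+2$, treat the two base dimensions $n=1$ and $n=2$ by hand, and conclude by induction on $m$.

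First I would use symmetry: since $\mathrm{Isom}(\mathbb H^n)$ acts transitively and the isotropy subgroup of a point acts transitively on unit tangent vectors, the heat kernel has the form $p_t(x,y)=p_n\bigl(d(x,y),t\bigr)$, and in the geodesic polar coordinates recalled above the radial profile solves
\[
\partial_t p_n = L_n p_n,\qquad L_n:=\partial_r^2+(n-1)\coth r\,\partial_r ,
\]
together with $p_n(\cdot,t)\to\delta_o$ (the Dirac mass at the reference point) as $t\to0^+$. Since $\mathbb H^n$ is complete and stochastically complete, $p_n$ is characterised as the minimal nonnegative fundamental solution of this problem and it has unit total mass; this is the tool I will use to identify candidate kernels.

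Next comes the algebraic core. Put $\mathcal D:=\frac1{\sinh r}\,\partial_r$. A direct computation --- differentiate $\mathcal D f$ twice and regroup, using $\coth^2 r-1=\sinh^{-2}r$ --- gives the intertwining identity
\[
L_{n+2}\circ\mathcal D \;=\; \mathcal D\circ(L_n-n).
\]
Since $\mathcal D$ commutes with $\partial_t$ and with multiplication by functions of $t$, while $L_{n+2}$ also commutes with the latter, it follows that if $u$ solves $\partial_t u=L_n u$ then $e^{-nt}\mathcal D u$ solves $\partial_t v=L_{n+2}v$. Taking $u=p_n$, I set
\[
\widetilde p_{n+2}(r,t):=-\frac{e^{-nt}}{2\pi}\,\frac1{\sinh r}\,\partial_r p_n(r,t),
\]
a nonnegative solution of the heat equation on $\mathbb H^{n+2}$ (nonnegativity because the radial profile is decreasing in $r$, hence $\partial_r p_n<0$ --- visible at each induction step from the explicit expression, or via a maximum principle for $\partial_r p_n$). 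To pin down the constant I would use the Euclidean short-time asymptotics $p_n(r,t)=(4\pi t)^{-n/2}e^{-r^2/4t}\bigl(1+o(1)\bigr)$, uniform for $r=O(\sqrt t)$: substituting and using $\sinh r\sim r$ gives $\widetilde p_{n+2}(r,t)\sim(4\pi t)^{-(n+2)/2}e^{-r^2/4t}$, so $\widetilde p_{n+2}(\cdot,t)\to\delta_o$ --- the factor $\tfrac1{2\pi}$ is exactly the one making this normalisation correct. Mass conservation, $\tfrac{d}{dt}\int_{\mathbb H^{n+2}}\widetilde p_{n+2}=0$ by a radial integration by parts (legitimate by the Gaussian spatial decay) and $\int\widetilde p_{n+2}\to1$ as $t\to0$, then gives total mass $1$; combined with $p_{n+2}\le\widetilde p_{n+2}$ from minimality this forces $\widetilde p_{n+2}=p_{n+2}$, i.e.
\[
p_{n+2}(r,t)=-\frac{e^{-nt}}{2\pi}\,\frac1{\sinh r}\,\partial_r p_n(r,t).
\]
For $n=1$, $\mathbb H^1=\mathbb R$ and $p_1(r,t)=(4\pi t)^{-1/2}e^{-r^2/4t}$, which is the $m=0$ odd formula; iterating the recursion, pulling the $r$-independent factors $e^{-t},e^{-3t},\dots$ through $\bigl(\tfrac1{\sinh r}\partial_r\bigr)^m$ and summing $1+3+\cdots+(2m-1)=m^2$, reproduces the stated formula for all $n=2m+1$. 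For the even base dimension I would verify directly that $q(r,t):=\tfrac1{2^{5/2}\pi^{3/2}}\,t^{-3/2}e^{-t/4}\int_r^\infty\frac{x\,e^{-x^2/4t}}{(\cosh x-\cosh r)^{1/2}}\,dx$ solves $\partial_t q=L_2 q$ with $q(\cdot,t)\to\delta_o$ and then invoke the same uniqueness; iterating the recursion from $n=2$ (now collecting $2+4+\cdots+2m=m(m+1)$, which combines with the $e^{-t/4}$ of $p_2$ to give $e^{-(2m+1)^2t/4}$) yields the even formula for all $n=2m+2$.

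The step I expect to be the main obstacle is the even base case $n=2$: unlike the odd case it does not collapse to a finite computation, and verifying that the subordination integral solves the radial heat equation on $\mathbb H^2$ with the correct normalisation requires careful differentiation under an improper integral whose integrand is mildly singular at $x=r$, together with an integration by parts there (one uses that $(\cosh x-\cosh r)^{-1/2}$ is, up to a constant, the Riemann function of the relevant second-order ODE in $r$); alternatively $q$ may be produced from the explicit $\mathbb H^3$ kernel by an Abel-type transform. A secondary technical point is justifying the nonnegativity of $\widetilde p_{n+2}$ and the appeals to minimality and uniqueness of the heat kernel, which rely on $\mathbb H^n$ having bounded geometry and being stochastically complete, so that its heat kernel is the unique nonnegative unit-mass fundamental solution with $\delta$ initial data.
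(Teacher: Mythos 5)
The paper does not prove this statement at all: Theorem~\ref{heatfo} is quoted verbatim from the cited reference \cite[Theorem 1.1]{grigor1998heat} and is used as a black box, so there is no internal proof to compare against. Your blind reconstruction is, in substance, the standard argument of that reference: radial reduction of the heat equation to $\partial_t p_n=\bigl(\partial_r^2+(n-1)\coth r\,\partial_r\bigr)p_n$, the Millson intertwining identity $L_{n+2}\circ\bigl(\tfrac1{\sinh r}\partial_r\bigr)=\bigl(\tfrac1{\sinh r}\partial_r\bigr)\circ(L_n-n)$ giving the descent $p_{n+2}(r,t)=-\tfrac{e^{-nt}}{2\pi\sinh r}\,\partial_r p_n(r,t)$, and induction from the base dimensions $n=1$ and $n=2$. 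Your bookkeeping is right: the intertwining identity does hold (the $u'$-coefficients match via $(1-n)\cosh^2 r-\sinh^2 r=(1-n)-n\sinh^2 r$), the factor $-\tfrac1{2\pi}$ is exactly what the short-time Euclidean normalisation forces, the odd exponents accumulate as $1+3+\cdots+(2m-1)=m^2$, and in the even case $2+4+\cdots+2m+\tfrac14=\tfrac{(2m+1)^2}{4}$, consistent with the stated constants $\tfrac{(-1)^m}{2^m\pi^m}$ and $\tfrac{(-1)^m}{2^{m+5/2}\pi^{m+3/2}}$. The two places you correctly flag as unfinished are also where the real work lies: (a) the $n=2$ base case, where $\partial_r$ cannot be pushed naively through the Abel-type integral (the endpoint term is infinite and the differentiated integrand is non-integrable, so one must integrate by parts in $x$ first, or descend from the explicit $\mathbb H^3$ kernel); and (b) the identification $\widetilde p_{n+2}=p_{n+2}$, where monotonicity of $p_n$ in $r$ is not "visible from the explicit expression" at a general induction step and is better obtained from a maximum-principle or symmetrisation argument, after which unit mass plus minimality closes the argument as you describe. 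With those two items carried out, your plan is a complete and correct proof; as it stands it is a faithful sketch of the proof in the source the paper cites rather than an alternative to anything in the paper itself.
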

	
 In \cite{davies1988heat}, Davies and Mandouvalos obtain the equivalent asymptotic property of heat kernel, see Proposition \ref{prop:heat-asymp}.

	\begin{proposition}\cite[Theorem 3.1]{davies1988heat}\label{prop:heat-asymp}
		Let \(p_{n}(r,t)\) be the heat kernel on \(\mathbb{H}^{n}\), where \(r=d_{\mathbb{H}^n}(x,y)\) and \(n\ge 2\).  Then for each fixed \(r\ge0\) and all \(t>0\)
		\[
		p_{n}(r,t) \sim
		t^{-\,\frac{n}{2}} \,
		\exp\!\left\{-\,\frac{(n-1)^{2}}{4}\,t \;-\;\frac{r^{2}}{4t}\;-\;\frac{(n-1)\,r}{2}\right\}
		\;\cdot\;(1+r+t)^{\frac{n-3}{2}}\,(1+r)
		\]
		uniformly in \(0\le r<\infty\) and \(0<t<\infty\).
	\end{proposition}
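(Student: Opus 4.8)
This statement is Theorem~3.1 of Davies and Mandouvalos \cite{davies1988heat}, so in the paper I will simply cite it; the remarks below are only to indicate why it holds and where the work lies. The plan is to extract the two-sided bound from the exact expressions of Theorem~\ref{heatfo}, treating odd and even $n$ separately.

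For odd $n=2m+1$, write $p_n(r,t)=\frac{(-1)^m}{(2\pi)^m(4\pi t)^{1/2}}\,e^{-m^2t}\bigl(\tfrac1{\sinh r}\partial_r\bigr)^m e^{-r^2/(4t)}$, using that the $r$-operator commutes with $e^{-m^2t}$. First I would prove by induction on $m$ that
\[
\Bigl(\tfrac1{\sinh r}\partial_r\Bigr)^{\!m} e^{-r^2/(4t)}
= e^{-r^2/(4t)}\sum_{j=1}^{m} t^{-j}\,R_{m,j}(r),
\]
where each $R_{m,j}$ is an explicit rational function of $r$, $\sinh r$, $\cosh r$, produced by the three effects of $\partial_r$ --- on the Gaussian (lowering one power of $t$ and inserting $-r/2$), on an accumulated power of $r$, or on an accumulated $\sinh/\cosh$ factor. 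Next I would record the elementary fact that, uniformly for $r\ge0$, the coefficient $R_{m,j}(r)$ is comparable to $(1+r)^{\,j}e^{-mr}$. Summing the resulting geometric-type series then shows that $\sum_{j=1}^m t^{-j}R_{m,j}(r)$ is comparable, uniformly in $r,t>0$, to $e^{-mr}(1+r)(1+r+t)^{m-1}t^{-m}$; multiplying by the leftover $t^{-1/2}$ and using $2m=n-1$ and $m^2=(n-1)^2/4$ reproduces precisely the profile $t^{-n/2}(1+r+t)^{(n-3)/2}(1+r)$ times $e^{-(n-1)^2t/4}\,e^{-r^2/(4t)}\,e^{-(n-1)r/2}$.

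For even $n=2m+2$ the new ingredient is $I(r,t)=\int_r^\infty x\,e^{-x^2/(4t)}(\cosh x-\cosh r)^{-1/2}\,dx$. Here I would substitute $x=r+s$ and carry out a Laplace analysis: near $s=0$ one has $\cosh(r+s)-\cosh r\sim s\,\sinh r\,(1+s)$ and the effective width of the integral is $s\sim t/(1+r)$, which yields $I(r,t)$ comparable to $e^{-r^2/(4t)}(\sinh r)^{-1/2}\,\Phi(r,t)$ for an explicit factor $\Phi$ carrying the remaining polynomial dependence (in particular the half-integer exponent $(n-3)/2$ originates here), while the tail $s\gtrsim1$ is exponentially smaller. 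One then applies $\bigl(\tfrac1{\sinh r}\partial_r\bigr)^m$ together with the prefactor $t^{-3/2}e^{-(2m+1)^2t/4}$ exactly as in the odd case.

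The hard part is bookkeeping rather than ideas: applying the non-commuting operator $\tfrac1{\sinh r}\partial_r$ exactly $m$ times generates a proliferating family of terms, so one must (i) identify the dominant term in each range --- the top ($j=m$) term when $(1+r)/t\gtrsim1$ and the bottom ($j=1$) term when $(1+r)/t\lesssim1$; (ii) check that no cancellation destroys the lower bound implicit in the two-sided estimate $\sim$, which requires a little care near the transition $(1+r)\asymp t$; and (iii) verify that all implied constants can be taken uniform in $r$ and $t$ simultaneously, which forces a careful matching of the estimates across $r=1$ and $t=1$. The even-dimensional Laplace analysis of $I(r,t)$, needed uniformly even as $r\to0$ where the integrand has an inverse-square-root singularity, is the single most delicate point --- and precisely what is carried out in \cite{davies1988heat}.
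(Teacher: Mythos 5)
The paper gives no proof of this proposition: it is quoted verbatim from Davies–Mandouvalos \cite[Theorem 3.1]{davies1988heat}, exactly as you propose to do, so your approach coincides with the paper's. Your supplementary sketch (inductive expansion of $\bigl(\tfrac1{\sinh r}\partial_r\bigr)^m$ in the odd case, Laplace analysis of the integral in the even case) is a reasonable outline of where the work in the cited reference lies, but it is not needed here.
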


	\subsection{Pointwise Representation of Fractional Laplacian}
	
	Since the hyperbolic space \(\mathbb{H}^n\) is a complete and stochastically complete Riemannian manifold, it follows from Proposition~\ref{chaz} that the various definitions of the fractional Laplacian coincide. In particular, they agree with the heat-kernel based formulation, see (\ref{rhfens}):
	\[
	(-\Delta_{\mathbb{H}^n})_{\mathrm{hk}}^{\,s}f(x)
	= \frac{s}{\Gamma(1-s)}
	\int_{\mathbb{H}^n} \bigl(f(x) - f(y)\bigr)\, \mathcal{K}_s(r)d\mathrm{vol}_{\mathbb{H}^n}(y),
	\]
	where the kernel \(\mathcal{K}_s(r)\) is given by
	\begin{equation}\label{fenshureh}
			\mathcal{K}_s(r) := \int_0^\infty p_n(r,t)\, \frac{dt}{t^{1+s}},
	\end{equation}
	and \(r = d_{\mathbb{H}^n}(x,y)\) denotes the hyperbolic distance between \(x\) and \(y\).
	
	Next, we calculate \(K_s(r)\) in odd dimension \(n=2m+1,m>0\).
	
	We start from
	\[
	\mathcal{K}_{s}(r)\;=\;\int_0^\infty p_{2m+1}(r,t)\,\frac{dt}{t^{1+s}},r>0
	\]
	with
	\[
	p_{2m+1}(r,t)
	=\frac{(-1)^m}{2^m\pi^m}\,
	\frac{1}{(4\pi t)^{1/2}}
	\Bigl(\tfrac1{\sinh r}\partial_r\Bigr)^m
	\exp\!\Bigl(-m^2t - \tfrac{r^2}{4t}\Bigr).
	\]
	Hence
	\[
	\mathcal{K}_{s}(r)
	=\frac{(-1)^m}{2^m\pi^m\,(4\pi)^{1/2}}
	\Bigl(\tfrac1{\sinh r}\partial_r\Bigr)^m
	\int_0^\infty
	e^{-m^2t - \tfrac{r^2}{4t}}\,
	t^{-\tfrac32 - s}\,dt.
	\]
	For \(z\in \mathbb{R}\), the modified Bessel function of the second kind admits the Laplace‐type integral representation \cite{olver2010nist}
	\[
	K_{\nu}(z)
	=\frac12\Bigl(\frac{z}{2}\Bigr)^{\!\nu}
	\int_{0}^{\infty}
	\exp\!\Bigl(-t-\frac{z^{2}}{4t}\Bigr)\,t^{-\nu-1}\,\mathrm{d}t.
	\]
	Thus,
	\begin{equation}\label{bessel2}
		\int_0^\infty
		t^{-\nu-1}
		\exp\!\Bigl(-\alpha t - \tfrac\beta t\Bigr)\,dt
		=2\bigl(\tfrac\beta\alpha\bigr)^{-\nu/2}\!
		\,K_{\nu}\!\bigl(2\sqrt{\alpha\beta}\bigr),
		\quad \alpha,\beta>0.
	\end{equation}
	Here set
	\(\alpha=m^2,\ \beta=\tfrac{r^2}{4},\ \nu=s+\tfrac12\).  Then
	\[
	\int_0^\infty
	e^{-m^2t - \tfrac{r^2}{4t}}t^{-\tfrac32 - s}\,dt
	=2\bigl(\tfrac{r^2}{4m^2}\bigr)^{-\tfrac{2s+1}{4}}
	K_{\,s+\tfrac12}(m r).
	\]
	Since \(\bigl(\tfrac{r^2}{4m^2}\bigr)^{-\tfrac{2s+1}{4}}
	=2^{\,s+\tfrac12}\,m^{\,s+\tfrac12}\,r^{-\,s-\tfrac12}\),
	we obtain that for $n\ge 3$ and $n$ is odd
	\[
	\begin{aligned}
		\mathcal{K}_{s}(r)
		=&\frac{(-1)^m}{2^m\pi^m\,(4\pi)^{1/2}}
		\Bigl(\tfrac1{\sinh r}\partial_r\Bigr)^m
		\Bigl[\,2^{\,s+\tfrac32}\,m^{\,s+\tfrac12}\,r^{-\,s-\tfrac12}\,
		K_{\,s+\tfrac12}(m r)\Bigr]\\=&\left(-1\right)^{m}\frac{m^{s+\frac{1}{2}}}{2^{\,m-s-\frac{1}{2}}\,\pi^{m+\frac{1}{2}}}
		\Bigl(\tfrac1{\sinh r}\partial_r\Bigr)^m
		\!\Bigl[r^{-s-\frac{1}{2}}\,K_{\,s+\tfrac12}(m r)\Bigr]\\=&\left(-1\right)^{\frac{n-1}{2}}\frac{(n-1)^{s+\frac{1}{2}}}{2^{\frac{n-1}{2}}\,\pi^{\frac{n}{2}}}
		\Bigl(\tfrac1{\sinh r}\partial_r\Bigr)^{\frac{n-1}{2}}
		\!\Bigl[r^{-s-\frac{1}{2}}\,K_{\,s+\tfrac12}(\frac{n-1}{2} r)\Bigr].
	\end{aligned}
	\]
	
	For even dimension \(n=2m+2,m\ge0\). Silimarly, we begin with the definition
	\[
	\mathcal{K}_{s}(r)
	=\int_0^\infty p_{2m+2}(r,t)\,\frac{dt}{t^{1+s}},
	\]
	and the known heat‐kernel formula
	\[
	p_{2m+2}(r,t)
	=\frac{(-1)^m}{2^{m+\tfrac52}\,\pi^{m+\tfrac32}}\,
	t^{-\tfrac32}\,e^{-\tfrac{(2m+1)^2}{4}\,t}\,
	\Bigl(\tfrac1{\sinh r}\partial_r\Bigr)^{\!m}
	\!\int_r^\infty\frac{x\,e^{-\,x^2/(4t)}}{\sqrt{\cosh x-\cosh r}}\,dx.
	\]
	Thus
	\[
	\begin{aligned}
		\mathcal{K}_{s}(r)
		&=\frac{(-1)^m}{2^{m+\tfrac52}\,\pi^{m+\tfrac32}}
		\Bigl(\tfrac1{\sinh r}\partial_r\Bigr)^{\!m}
		\int_r^\infty\frac{x}{\sqrt{\cosh x-\cosh r}}
		\Bigl[\int_0^\infty t^{-s-\frac{5}{2}}\,
		e^{-\,\frac{(2m+1)^2}{4}t - \tfrac{x^2}{4t}}
		\,dt\Bigr]
		\,dx.
	\end{aligned}
	\]
	Using formula (\ref{bessel2}) again
	with \(\nu=s+\frac{3}{2}\), \(\alpha=(2m+1)^2/4\), and \(\beta=x^2/4\).  Then
	\[
	\int_0^\infty t^{-s-\frac{5}{2}}\,
	e^{-\frac{(2m+1)^2}{4}t - \frac{s^2}{4t}}\,dt
	=2\Bigl(\tfrac{x^2}{(2m+1)^2}\Bigr)^{-\frac{2s+3}{4}}
	K_{\,s+\frac{3}{2}}\!\bigl((m+\tfrac12)x\bigr).
	\]
	We obtain that for $n\ge 2$ and $n$ is even
	\[
	\begin{aligned}
		\mathcal{K}_{s}(r)
		=&\left(-1\right)^{m}\frac{(2m+1)^{s+\frac{3}{2}}}{2^{\,m+\frac32}\,\pi^{\,m+\frac32}}
		\Bigl(\tfrac1{\sinh r}\partial_r\Bigr)^{\!m}
		\int_r^\infty
		\frac{x^{-s-\frac{1}{2}}}{\sqrt{\cosh x-\cosh r}}\,
		K_{\,s+\frac{3}{2}}\!\bigl((m+\tfrac12)x\bigr)
		\,dx\\=&\left(-1\right)^{\frac{n}{2}-1}\frac{(n-1)^{s+\frac{3}{2}}}{2^{\,\frac{n+1}{2}}\,\pi^{\frac{n+1}{2}}}
		\Bigl(\tfrac1{\sinh r}\partial_r\Bigr)^{\frac{n}{2}-1}
		\int_r^\infty
		\frac{x^{-s-\frac{1}{2}}}{\sqrt{\cosh x-\cosh r}}\,
		K_{\,s+\frac{3}{2}}\!\bigl(\frac{n-1}{2}x\bigr)
		\,dx.
	\end{aligned}
	\]
	If we index $n$ beginning at zero, then the following formula holds:
	\[
	\begin{aligned}
		\mathcal{K}_{s}(r)
		=\left(-1\right)^{\frac{n}{2}}\frac{(n+1)^{s+\frac{3}{2}}}{2^{\,\frac{n+3}{2}}\,\pi^{\frac{n+3}{2}}}
		\Bigl(\tfrac1{\sinh r}\partial_r\Bigr)^{\frac{n}{2}}
		\int_r^\infty
		\frac{x^{-s-\frac{1}{2}}}{\sqrt{\cosh x-\cosh r}}\,
		K_{\,s+\frac{3}{2}}\!\bigl(\frac{n+1}{2}x\bigr)
		\,dx.
	\end{aligned}
	\]
	
This is essentially consistent with the expression obtained in \cite{banica2015some} via the Fourier transform on hyperbolic space.  Moreover, our approach yields the exact constants and features a more streamlined computation.  In what follows, we derive asymptotic estimates for the kernel, which recover the same conclusion as Theorem 2.4 in \cite{banica2015some}.

In order to derive the asymptotic estimates for the fractional kernel $	\mathcal{K}_{s}(r)$, we first establish the following lemma.

\begin{lemma}[Tail estimate for an incomplete Gamma integral]\label{lem:gamma-tail}
	Let \(n\ge 2,\:s\ge 0\), set 
	\[
	I(n,r):=\int_{\,r^{2}/4}^{\infty}t^{\frac{n}{2}+s-1}\,e^{-t}\,dt,
	\quad r>0.
	\]
	Then for $r>\sqrt{2n+4s-4}$, we derive that
	\[
2^{2-2s-n}\,r^{\,n+2s-2}\,
e^{-\,\frac{r^{2}}{4}}\le I(n,r)\le\;
2^{2-2s-n}\,r^{\,n+2s-2}\,
e^{-\,\frac{r^{2}}{4}}
\Bigl(1-\tfrac{2(n-2+2s)}{r^{2}}\Bigr)^{-1}.
	\]
	In particular, whenever \(r\ge2\sqrt{n-2+2s}\),
	\[
	I(n,r)\;\le2^{3-2s-n}\,r^{\,n+2s-2}\,
	e^{-\,\frac{r^{2}}{4}}.
	\]
\end{lemma}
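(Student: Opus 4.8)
The plan is to estimate $I(n,r)=\int_{r^2/4}^\infty t^{n/2+s-1}e^{-t}\,dt$ by a direct integration-by-parts argument on the tail, exploiting that the integrand is rapidly decaying once $t$ is past the lower endpoint. First I would substitute $a:=\tfrac{n}{2}+s-1\ge 0$ and $R:=r^2/4$, so the integral becomes $\int_R^\infty t^a e^{-t}\,dt$. The lower bound is immediate: since $t\mapsto t^a$ is increasing, $\int_R^\infty t^a e^{-t}\,dt \ge R^a\int_R^\infty e^{-t}\,dt = R^a e^{-R}$, and rewriting $R^a e^{-R}$ in terms of $r$ gives exactly $2^{2-2s-n}r^{n+2s-2}e^{-r^2/4}$ (using $R^a=(r^2/4)^{n/2+s-1}=2^{2-2s-n}\,r^{n+2s-2}$ and $e^{-R}=e^{-r^2/4}$).

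For the upper bound I would integrate by parts once: $\int_R^\infty t^a e^{-t}\,dt = R^a e^{-R} + a\int_R^\infty t^{a-1}e^{-t}\,dt$. On the remaining integral, bound $t^{a-1}\le R^{-1}t^a$ for $t\ge R$, which yields $\int_R^\infty t^{a-1}e^{-t}\,dt \le R^{-1}\int_R^\infty t^a e^{-t}\,dt$. Hence $I \le R^a e^{-R} + \tfrac{a}{R}\,I$, i.e. $I\bigl(1-\tfrac{a}{R}\bigr)\le R^a e^{-R}$, valid precisely when $R>a$, that is $r^2>4a=2n+4s-4$, or $r>\sqrt{2n+4s-4}$. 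Solving, $I\le R^a e^{-R}\bigl(1-\tfrac{a}{R}\bigr)^{-1}$; substituting $a/R = (n-2+2s)/(2\cdot r^2/4)\cdot 2 = 2(n-2+2s)/r^2$ (care with the factor: $a=\tfrac{n-2}{2}+s$, $R=r^2/4$, so $a/R = \tfrac{2(n-2+2s)}{r^2}$) and rewriting $R^a e^{-R}$ as before gives the stated two-sided estimate.

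Finally, for the simplified bound: when $r\ge 2\sqrt{n-2+2s}$ we have $r^2\ge 4(n-2+2s)$, so $\tfrac{a}{R}=\tfrac{2(n-2+2s)}{r^2}\le \tfrac12$, whence $\bigl(1-\tfrac{a}{R}\bigr)^{-1}\le 2$, and the upper bound becomes $I\le 2\cdot 2^{2-2s-n}r^{n+2s-2}e^{-r^2/4}=2^{3-2s-n}r^{n+2s-2}e^{-r^2/4}$, as claimed.

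The only mild subtlety is bookkeeping with the exponents and the constant $a/R$ — making sure the threshold $r>\sqrt{2n+4s-4}$ matches $R>a$ and that the $2(n-2+2s)/r^2$ in the denominator is recorded correctly. There is no real analytic obstacle here; the argument is a one-step integration by parts plus the elementary monotonicity bound $t^{a-1}\le R^{-1}t^a$ on $[R,\infty)$.
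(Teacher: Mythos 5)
Your proposal is correct and follows essentially the same route as the paper: the lower bound via the elementary monotonicity estimate $t^{a}\ge (r^{2}/4)^{a}$, and the upper bound via a single integration by parts followed by bounding the remaining tail integral back by $R^{-1}I$ (the paper phrases this as $\Gamma(a-1,x)\le x^{-1}\Gamma(a,x)$ for the incomplete Gamma function) and solving the resulting inequality, with the same threshold $r>\sqrt{2n+4s-4}$ and the same factor-of-two simplification. Your exponent and constant bookkeeping ($R^{a}=2^{2-2s-n}r^{n+2s-2}$, $a/R=2(n-2+2s)/r^{2}$) matches the paper's.
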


	\begin{proof}
	Write the integral in terms of the upper incomplete Gamma function:
	\[
	I(n,r)=\Gamma\!\Bigl(\tfrac{n}{2}+s,\,\tfrac{r^{2}}{4}\Bigr),\quad
	a:=\tfrac{n}{2}+s\ge 1,\;x:=\tfrac{r^{2}}{4},
	\]
	where 
	\[\Gamma\left(a,x\right)=\int_x^{\infty}t^{a-1}e^{-t}dt.\]
	
	Integrate once by parts:
	\[
	\Gamma(a,x)=x^{a-1}e^{-x}+(a-1)\Gamma(a-1,x).
	\]
	Note that
	\(\Gamma(a-1,x)\le x^{-1}\Gamma(a,x)\).  Hence
	\[
	\Gamma(a,x)\;\le\;x^{a-1}e^{-x}
	\Bigl(1-\tfrac{a-1}{x}\Bigr)^{-1}.
	\]
	Substituting \(a=\tfrac{n}{2}+s\) and \(x=\tfrac{r^{2}}{4}\) gives
	\begingroup\small	\[
	I(n,r)\;\le\;
	\Bigl(\tfrac{r^{2}}{4}\Bigr)^{\frac{n}{2}+s-1}
	e^{-\,\frac{r^{2}}{4}}
	\Bigl(1-\tfrac{2(n-2+2s)}{r^{2}}\Bigr)^{-1}
	=2^{2-2s-n}\,r^{\,n+2s-2}\,
	e^{-\,\frac{r^{2}}{4}}
	\Bigl(1-\tfrac{2(n-2+2s)}{r^{2}}\Bigr)^{-1}.
	\]
	\endgroup
	On the other hand, it is obvious that for $n\ge 2,$ 
	\begingroup\small	\[	
	I(n,r)=\int_{\,r^{2}/4}^{\infty}t^{\frac{n}{2}+s-1}\,e^{-t}\,dt\ge 2^{2-2s-n}\,r^{\,n+2s-2}\int_{\,r^{2}/4}^{\infty}e^{-t}\,dt=2^{2-2s-n}\,r^{\,n+2s-2}\,
	e^{-\,\frac{r^{2}}{4}},
	\]
	\endgroup
	which is the desired bound.
\end{proof}

Below we state Laplace’s method for an interior minimum (see \cite[Chapter 2]{murray2012asymptotic}), a result which will be used in the proof of the following lemma.

\begin{lemma}\label{lem:laplace}
	Let $$I(r)=\int_{0}^{a}f(r,t)\,e^{-r\,g(r,t)}\,dt$$
	 and suppose for all large \(r\) that:

	(i) \(g(r,\cdot)\) attains a unique global minimum at \(t=t_r\in(0,a)\);
	
	(ii) \(g(r,t_r)=g_{\min}(r)>0\), \(\partial_tg(r,t_r)=0\), and \(\partial_{tt}g(r,t_r)>0\) uniformly in \(r\);
	
	(iii) \(f(r,t)>0\) is twice  differentiable in a neighborhood of \(t_r\).
	
	Then as \(r\to\infty\),
	\[
	I(r)\;\sim\;f(r,t_r)\,\exp\!\bigl(-r\,g_{\min}(r)\bigr)\,
	\sqrt{\frac{2\pi}{r\,\partial_{tt}g(r,t_r)}}.
	\]
\end{lemma}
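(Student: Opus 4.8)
\emph{Proof proposal.} The plan is to run the classical Laplace argument while tracking uniformity in the parameter $r$. First I would normalise by pulling out the value at the minimum, writing
\[
I(r) = e^{-r g_{\min}(r)} \int_0^a f(r,t)\, e^{-r\,\Phi(r,t)}\,dt,
\qquad \Phi(r,t):=g(r,t)-g_{\min}(r)\ge 0,
\]
so that the integrand is $\le f(r,t)$ and concentrates at $t=t_r$, where $\Phi$ vanishes. Using hypothesis (ii) — in particular $\partial_{tt}g(r,t_r)\ge c_0>0$ uniformly — together with uniqueness of the minimum, one extracts a fixed $\delta>0$ and $\eta>0$, independent of $r$, with $\Phi(r,t)\ge \tfrac{c_0}{4}(t-t_r)^2$ for $|t-t_r|\le\delta$ and $\Phi(r,t)\ge\eta$ for $t\in[0,a]$, $|t-t_r|\ge\delta$.

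Next I would split $I=I_{\mathrm{near}}+I_{\mathrm{far}}$ along $|t-t_r|=\delta$. The far piece is controlled by
\[
0\le I_{\mathrm{far}} \le e^{-r g_{\min}(r)}\,e^{-r\eta}\int_0^a f(r,t)\,dt,
\]
which, since the claimed main term decays only like $r^{-1/2}e^{-r g_{\min}(r)}$, is negligible: $I_{\mathrm{far}} = o\!\bigl(r^{-1/2}e^{-r g_{\min}(r)}\bigr)$ as long as $\int_0^a f(r,t)\,dt$ does not grow faster than polynomially in $r$.

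For the near piece I would rescale so as to normalise the quadratic term: set $\alpha_r:=\partial_{tt}g(r,t_r)$ and substitute $t=t_r+u/\sqrt{r\alpha_r}$, giving
\[
I_{\mathrm{near}} = \frac{e^{-r g_{\min}(r)}}{\sqrt{r\,\alpha_r}}\int_{|u|\le\delta\sqrt{r\alpha_r}} f\!\bigl(r,t_r+\tfrac{u}{\sqrt{r\alpha_r}}\bigr)\,
\exp\!\Bigl(-r\,\Phi\bigl(r,t_r+\tfrac{u}{\sqrt{r\alpha_r}}\bigr)\Bigr)\,du.
\]
Taylor's theorem at the critical point $t_r$ (where $\partial_t g(r,t_r)=0$) yields $r\,\Phi(r,t_r+u/\sqrt{r\alpha_r}) = \tfrac12 u^2 + o(1)$ for each fixed $u$, while continuity of $f$ near $t_r$ gives $f(r,t_r+u/\sqrt{r\alpha_r}) = f(r,t_r)\,(1+o(1))$; the lower bound $\Phi(r,t)\ge\tfrac{c_0}{4}(t-t_r)^2$ together with $\alpha_r\ge c_0$ supplies the $r$-uniform Gaussian majorant $e^{-u^2/4}$. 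Dividing through by $f(r,t_r)$ and applying dominated convergence gives
\[
\frac{\sqrt{r\,\alpha_r}\;e^{r g_{\min}(r)}}{f(r,t_r)}\,I_{\mathrm{near}} \;\longrightarrow\; \int_{\mathbb R} e^{-u^2/2}\,du = \sqrt{2\pi},
\]
that is, $I_{\mathrm{near}} = f(r,t_r)\,e^{-r g_{\min}(r)}\sqrt{2\pi/(r\,\alpha_r)}\,(1+o(1))$. Adding this to the negligible far piece yields the asserted equivalence.

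The main obstacle is precisely the $r$-dependence of $f$ and $g$: one must upgrade the pointwise hypotheses (i)--(iii) to the uniform statements used above — a uniform lower bound $\partial_{tt}g\ge c_0$, a uniform neighbourhood on which $\Phi(r,t)\gtrsim(t-t_r)^2$, a uniform modulus of continuity for $f$ near $t_r$, and at most polynomial growth of $\int_0^a f(r,t)\,dt$. In the concrete applications of this paper these are read off directly from the explicit forms of $f$ and $g$, so in practice one verifies them case by case; alternatively, once these uniformities are recorded, the conclusion is exactly \cite[Chapter 2]{murray2012asymptotic}.
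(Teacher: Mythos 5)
The paper never proves this lemma: it is stated as a quotation of Laplace's method for an interior minimum from \cite[Chapter 2]{murray2012asymptotic} and used as a black box, so there is no internal proof to compare against. Your argument --- factoring out \(e^{-r g_{\min}(r)}\), splitting at \(|t-t_r|=\delta\), killing the far region by a uniform gap \(\Phi\ge\eta\), rescaling \(t=t_r+u/\sqrt{r\alpha_r}\), and concluding by dominated convergence under a Gaussian majorant --- is the standard proof of this classical result, and under your uniform reading of hypothesis (ii) (a bound \(\partial_{tt}g(r,t_r)\ge c_0>0\)) together with the extra uniformities you spell out (uniform quadratic lower bound near \(t_r\), uniform modulus of continuity of \(f\) at \(t_r\), at most polynomial growth of \(\int_0^a f\)), it is correct; in that sense you supply strictly more than the paper does.

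One caution, which you only partly anticipate: the particular uniformization you chose is not the one the paper actually needs. In the sole application, Lemma \ref{lem:K2-asymp}, one has \(g(r,t)=\tfrac{(n-1)^2r}{16t}+\tfrac{t}{r}\), so \(t_r=\tfrac{(n-1)r}{4}\), \(\partial_{tt}g(r,t_r)=\tfrac{8}{r^2(n-1)}\to 0\), and both \(t_r\) and the upper endpoint \(a=r^2/4\) move with \(r\); thus \(\alpha_r\ge c_0\) fails, a fixed \(\delta\)-window around \(t_r\) is not the right near zone, and a fixed gap \(\eta\) on the far region is not available. The correct uniform input for your scheme is relative, not absolute: the Gaussian width \((r\alpha_r)^{-1/2}\) (of order \(\sqrt r\) in that example) must be small compared with the distance from \(t_r\) to the endpoints and with the scales on which \(f(r,\cdot)\) and the cubic remainder of \(g(r,\cdot)\) vary, e.g. \(r\,\sup_{|t-t_r|\le\delta_r}|\partial_{ttt}g|\,(r\alpha_r)^{-3/2}\to 0\) with an \(r\)-dependent window \(\delta_r\). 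With that replacement your proof covers the paper's use of the lemma; as a proof of the lemma exactly as stated, it is fine as written.
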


\begin{lemma}\label{lem:K2-asymp}
	Let \(n\ge2,s\ge 0\) and define
	\[
	J(n,r)\;:=\;
		\int_{0}^{r^2/4}t^{\frac n2+s-1}\exp\!\left\{-\,\frac{(n-1)^{2}r^2}{16t} \;-t\right\}
	\;\cdot\;(1+r+\frac{r^2}{4t})^{\frac{n-3}{2}} dt,
	\quad r>0.
	\]
	Then there exists positive constant  $c_1,c_2,c_3,c_4$  such that
	\[c_{1}r^{\,n+s-2}\,e^{-\frac{(n-1)r}{2}}\le J(n,r)\le c_2 r^{\,n+s-2}\,e^{-\frac{(n-1)r}{2}},\quad \text{when}\:\:\:r\ge 1;\]
	and
	\[c_{3}r^{n+2s}\le J(n,r)\le c_4 r^{n+2s},\quad \text{when}\:\:\:r\le 1.\]
\end{lemma}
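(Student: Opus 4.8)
The plan is to start from the elementary completion of the square
\[
\frac{(n-1)^2 r^2}{16t}+t \;=\; \frac{(n-1)r}{2}\;+\;\frac{\bigl((n-1)r-4t\bigr)^2}{16t},
\]
which rewrites
\[
J(n,r)=e^{-\frac{(n-1)r}{2}}\int_{0}^{r^2/4}t^{\frac n2+s-1}\exp\!\Bigl(-\frac{\bigl((n-1)r-4t\bigr)^2}{16t}\Bigr)\Bigl(1+r+\tfrac{r^2}{4t}\Bigr)^{\frac{n-3}{2}}dt .
\]
This isolates the factor $e^{-(n-1)r/2}$ (the value of the exponential $e^{-(n-1)^2r^2/16t-t}$ at its unconstrained maximum $t=(n-1)r/4$) and displays the residual weight as a Gaussian-type bump at that point, which lies inside the range $(0,r^2/4]$ exactly when $r>n-1$. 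I treat $r\le1$ and $r\ge1$ separately, splitting off a compact band inside the latter.

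\emph{Case $r\le1$.} Here $t\le r^2/4\le\tfrac14$ forces $e^{-t}\asymp1$, while $\tfrac{r^2}{4t}\ge1$ gives $1+r+\tfrac{r^2}{4t}\asymp\tfrac{r^2}{4t}$ (the ratio lies in $[1,3]$). Using $\bigl(\tfrac n2+s-1\bigr)-\tfrac{n-3}{2}=s+\tfrac12$ this yields
\[
J(n,r)\;\asymp\;\Bigl(\tfrac{r^2}{4}\Bigr)^{\frac{n-3}{2}}\int_{0}^{r^2/4}t^{\,s+\frac12}\exp\!\Bigl(-\frac{(n-1)^2 r^2}{16t}\Bigr)dt .
\]
Substituting $t=\tfrac{r^2}{4}w$ turns the integral into $\bigl(\tfrac{r^2}{4}\bigr)^{s+\frac32}\int_{0}^{1}w^{\,s+\frac12}e^{-(n-1)^2/(4w)}\,dw$, and the last integral is a finite positive constant depending only on $n$ and $s$ (its integrand is positive and $\le1$ on $(0,1]$). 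Collecting the powers of $\tfrac{r^2}{4}$, namely $\tfrac{n-3}{2}+s+\tfrac32=\tfrac n2+s$, gives $J(n,r)\asymp r^{\,n+2s}$.

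\emph{Case $r\ge1$.} Substituting $t=rv$ and using $\tfrac{\bigl((n-1)-4v\bigr)^2}{16v}=\tfrac{(v-v^\ast)^2}{v}$ with $v^\ast:=\tfrac{n-1}{4}$, one gets
\[
J(n,r)=r^{\frac n2+s}\,e^{-\frac{(n-1)r}{2}}\int_{0}^{r/4}v^{\frac n2+s-1}\exp\!\Bigl(-\frac{r\,(v-v^\ast)^2}{v}\Bigr)\Bigl(1+r+\tfrac{r}{4v}\Bigr)^{\frac{n-3}{2}}dv ,
\]
so it suffices to show the last integral is $\asymp r^{(n-4)/2}$ for large $r$. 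For $r\ge R_0:=2(n-1)$, so that $v^\ast<r/4$, I split $[0,r/4]=(0,v^\ast/2]\cup[v^\ast/2,2v^\ast]\cup[2v^\ast,r/4]$. On the fixed central interval $[v^\ast/2,2v^\ast]$ one has $v\asymp1$, hence $1+r+\tfrac r{4v}\asymp r$, $\bigl(1+r+\tfrac r{4v}\bigr)^{(n-3)/2}\asymp r^{(n-3)/2}$, $v^{\frac n2+s-1}\asymp1$, and $\tfrac{(v-v^\ast)^2}{v}\asymp(v-v^\ast)^2$; a standard Laplace-method estimate — carried out directly, or via Lemma~\ref{lem:laplace} after factoring out $r^{(n-3)/2}$ so the residual prefactor converges uniformly near $v^\ast$ — then makes the central contribution $\asymp r^{(n-3)/2}\cdot r^{-1/2}=r^{(n-4)/2}$. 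On the two outer pieces $\tfrac{(v-v^\ast)^2}{v}$ is bounded below by a positive constant $c(n)$ (indeed it tends to $+\infty$ as $v\to0$), so $e^{-r(v-v^\ast)^2/v}\le e^{-c(n)r}$; since there the prefactor is at most polynomially large in $r$ — near $v=0$ the blow-up of $(\tfrac r{4v})^{(n-3)/2}$ being absorbed into the integrable power $v^{\,s+\frac12}$ — these pieces contribute $O\!\bigl(\mathrm{poly}(r)\,e^{-c(n)r}\bigr)=o\bigl(r^{(n-4)/2}\bigr)$, which is negligible. Finally, for the compact band $1\le r\le R_0$ a substitution $t=\tfrac{r^2}{4}w$ fixes the domain to $w\in(0,1]$; the integrand is then jointly continuous in $(r,w)$ and dominated by an integrable function of $w$ (the factor $e^{-(n-1)^2/(4w)}$ kills the singularity at $w=0$), so $r\mapsto J(n,r)$ is continuous and strictly positive on $[1,R_0]$, and comparing it there with the continuous positive function $r\mapsto r^{n+s-2}e^{-(n-1)r/2}$ delivers the two-sided bound. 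Combining the three ranges gives the estimate for $r\ge1$.

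\emph{Main obstacle.} The only genuinely delicate point is this large-$r$ concentration estimate: one must make the Laplace asymptotics rigorous even though the prefactor $\bigl(1+r+\tfrac r{4v}\bigr)^{(n-3)/2}$ carries its own $r$-growth — which has to be extracted and shown to behave uniformly near $v^\ast$ — and even though the endpoint $v=r/4$ moves with $r$, which is exactly why the split into a fixed central interval plus an exponentially small tail is needed; one also has to verify that the tail near $v\to0$, where the prefactor blows up, is genuinely dominated by the super-exponential decay coming from $\tfrac{(v-v^\ast)^2}{v}\to+\infty$. The remaining ingredients — the completion of the square, the two substitutions, and the compactness argument on $[1,R_0]$ — are routine.
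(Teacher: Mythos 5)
Your proof is correct and follows essentially the same route as the paper's: the small-\(r\) reduction (\(e^{-t}\asymp 1\), \(1+r+\tfrac{r^2}{4t}\asymp\tfrac{r^2}{4t}\), then a rescaling of \(t\) by \(r^2/4\)) is identical, and for large \(r\) both arguments concentrate the integral at \(t_*=\tfrac{(n-1)r}{4}\), extracting the factor \(e^{-\frac{(n-1)r}{2}}\) and a Gaussian width \(\asymp r^{-1/2}\) against an amplitude \(\asymp r^{\frac n2+s-1}\cdot r^{\frac{n-3}{2}}\). The only difference is packaging: the paper invokes its Laplace-method Lemma~\ref{lem:laplace} (applied only for \(r>n-1\)) and leaves the uniformity checks and the band \(1\le r\le n-1\) implicit, whereas you perform the completion of the square, the central-interval/tail splitting, and the compact-band comparison on \([1,2(n-1)]\) explicitly — a more self-contained execution of the same idea, with no gaps.
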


\begin{proof}
	For \(r> n-1\), set \(g(t)=\tfrac{(n-1)^2r}{16t}+\frac{t}{r}\).  Then
	\[
	g'(t)=0
	\;\Longrightarrow\;
	t_*=\frac{(n-1)r}{4}<\frac{r^2}{4},
	\quad
	g(t_*)=\frac{n-1}{2},
	\quad
	g''(t_*)=\frac{8}{r^2(n-1)}.
	\]
	Moreover at \(t_*\)
	\[
	t_*^{\frac n2+s-1}
	=\Bigl(\tfrac{(n-1)r}{4}\Bigr)^{\frac n2+s-1},
	\quad
	1+r+\frac{r^2}{4t_*}
	=1+r+\frac{r}{\,n-1}\sim r.
	\]
	Applying Laplace’s method (Lemma \ref{lem:laplace}) yields the leading‐order behavior
	\[
	J(n,r)
	\sim
	t_*^{\frac n2+s-1}
	r^{\frac{n-3}{2}}
	\exp\!\bigl(-rg (t_*)\bigr)
	\sqrt{\frac{2\pi}{rg''(t_*)}}
	\sim r^{\,n+s-2}\,e^{-\frac{(n-1)r}{2}}.
	\]

		For \(r\ll1\), note that on \(0<t\le r^2/4\):
		\[
		\exp(-t)\sim 1, 
		\quad
		1+r+\frac{r^2}{4t}\sim \frac{r^2}{t}.
		\]
		Hence
		\[
		J(n,r)\sim
		\int_{0}^{r^2/4}
		t^{\frac n2+s-1}
		\exp\!\bigl(-\tfrac{(n-1)^2r^2}{16t}\bigr)
		\Bigl(\tfrac{r^2}{t}\Bigr)^{\frac{n-3}{2}}
		\,dt.
		\]
		This simplifies to
		\[
		J(n,r)\sim
		r^{\,n-3}
		\int_{0}^{r^2/4}
		t^{s+\frac12}
		\exp\!\Bigl(-\tfrac{(n-1)^2r^2}{16t}\Bigr)
		\,dt.
		\]
		Substitute \(u=(n-1)^2r^2/(16t)\), one finds
		\[
		\int_{0}^{r^2/4}
		t^{s+\frac12}
		e^{-\,\frac{(n-1)^2r^2}{16t}}
		\,dt
		\sim
		r^{2s+3}
		\int_{(n-1)^2/4}^{\infty}
		u^{-s-\frac52}e^{-u}\,du
		\;\sim\;
		r^{2s+3}.
		\]
		Therefore, \(J(n,r)\sim r^{\,n-3}\,r^{2s+3}=r^{\,n+2s}\), as claimed.
	\end{proof}

		\begin{proposition}\label{fensjx}
		The radial kernel \(\mathcal{K}_s(r)\) satisfies the asymptotics
		\[
		\mathcal{K}_{s}(r)\;\sim r^{-n-2s}
		\quad r\to0,
		\qquad
		\mathcal{K}_{s}(r)\;\sim r^{-1-s}\,e^{-(n-1)r}
		\quad r\to\infty.
		\]
	\end{proposition}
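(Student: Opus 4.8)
The plan is to feed the sharp two‑sided heat‑kernel bound of Davies--Mandouvalos (Proposition~\ref{prop:heat-asymp}) into the defining integral \eqref{fenshureh}. Since the integrand is positive and that bound is two‑sided up to a multiplicative constant, integration in $t$ yields
\[
\mathcal{K}_s(r)\;\sim\;(1+r)\,e^{-\frac{(n-1)r}{2}}\int_0^\infty t^{-\frac n2-1-s}\exp\!\Bigl(-\tfrac{(n-1)^2}{4}\,t-\tfrac{r^2}{4t}\Bigr)(1+r+t)^{\frac{n-3}{2}}\,dt .
\]
Write $\mathcal I(r)$ for the remaining integral. The decisive step is to split $\mathcal I(r)=\int_0^1+\int_1^\infty$, the two pieces being governed respectively by Lemma~\ref{lem:gamma-tail} and Lemma~\ref{lem:K2-asymp}.

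On $0<t<1$ one has $e^{-(n-1)^2t/4}\sim 1$ and $(1+r+t)^{\frac{n-3}{2}}\sim(1+r)^{\frac{n-3}{2}}$ uniformly in $r$ (whatever the sign of $n-3$), so $\int_0^1\sim(1+r)^{\frac{n-3}{2}}\int_0^1 t^{-\frac n2-1-s}e^{-r^2/(4t)}\,dt$; the substitution $u=r^2/(4t)$ turns the latter into a constant times $r^{-n-2s}I(n,r)$ with $I(n,r)=\int_{r^2/4}^\infty u^{\frac n2+s-1}e^{-u}\,du$, exactly the incomplete Gamma integral of Lemma~\ref{lem:gamma-tail}. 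On $t>1$ the substitution $t=r^2/(4\tau)$ maps $(1,\infty)$ onto $(0,r^2/4)$ and, after the elementary bookkeeping of powers of $t$, identifies $\int_1^\infty$ with a constant times $r^{-n-2s}J(n,r)$, where $J(n,r)$ is precisely the integral estimated in Lemma~\ref{lem:K2-asymp}. Collecting the two pieces,
\[
\mathcal{K}_s(r)\;\sim\;(1+r)\,e^{-\frac{(n-1)r}{2}}\,r^{-n-2s}\Bigl[(1+r)^{\frac{n-3}{2}}I(n,r)+J(n,r)\Bigr].
\]

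It remains to compare the bracketed terms in each regime. As $r\to0$: $I(n,r)\to\Gamma(\tfrac n2+s)$, so $I(n,r)\sim1$, while $J(n,r)\sim r^{\,n+2s}\to0$ by Lemma~\ref{lem:K2-asymp}; hence the bracket is $\sim1$ and $\mathcal{K}_s(r)\sim r^{-n-2s}$. As $r\to\infty$: Lemma~\ref{lem:gamma-tail} gives $I(n,r)\sim r^{\,n+2s-2}e^{-r^2/4}$ and Lemma~\ref{lem:K2-asymp} gives $J(n,r)\sim r^{\,n+s-2}e^{-(n-1)r/2}$, and since the Gaussian factor $e^{-r^2/4}$ is overwhelmed by $e^{-(n-1)r/2}$ the bracket is $\sim r^{\,n+s-2}e^{-(n-1)r/2}$; multiplying by the prefactor $(1+r)e^{-(n-1)r/2}r^{-n-2s}$ gives $\mathcal{K}_s(r)\sim r^{-1-s}e^{-(n-1)r}$. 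The main obstacle is the bookkeeping in the two substitutions together with choosing the splitting point $t=1$ so that the short‑ and long‑time parts land exactly on $I(n,r)$ and $J(n,r)$; once that is arranged, the only genuinely analytic input is the (elementary) domination of $J$ over $(1+r)^{(n-3)/2}I$ for large $r$ and the reverse for small $r$, and one must only keep track uniformly of the polynomially bounded, subdominant factor $(1+r+t)^{(n-3)/2}$ in low dimensions such as $n=2$.
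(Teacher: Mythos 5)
Your proposal is correct and follows essentially the same route as the paper: insert the uniform two-sided Davies--Mandouvalos bound into \eqref{fenshureh}, split at $t=1$, substitute $u=r^2/(4t)$, and invoke Lemma~\ref{lem:gamma-tail} for the short-time piece and Lemma~\ref{lem:K2-asymp} for the long-time piece, with the same dominance comparisons in each regime. The only difference is presentational (you carry both regimes in one bracketed formula, while the paper simplifies the kernel bound separately for $r\le 1$ and $r\gg 1$ before integrating), and your bookkeeping of the substitutions and of the factor $(1+r+t)^{(n-3)/2}$ checks out.
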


\begin{proof}
	Recall that
	\[
	\mathcal K_s(r)
	=\frac{s}{\Gamma(1-s)}
	\int_{0}^{\infty}p_n(r,t)\,t^{-1-s}\,dt,
	\]
	and split the integral at \(t=1\): $\mathcal K_s(r)
	=I_1(r)+I_2(r)$ where
	\[
	I_1(r)=\frac{s}{\Gamma(1-s)}\int_{0}^{1}p_n(r,t)\,t^{-1-s}\,dt,
	\quad
	I_2(r)=\frac{s}{\Gamma(1-s)}\int_{1}^{\infty}p_n(r,t)\,t^{-1-s}\,dt.
	\]
	\noindent\textbf{1. Behavior as \(r\to0\).}
	
	By Proposition~\ref{prop:heat-asymp}, for \(0<t\le1\) and \(0\le r\le1\),
	\[
	p_n(r,t)\;\sim\;t^{-\frac n2}\exp\!\Bigl(-\tfrac{r^2}{4t}\Bigr),
	\]
	since other factors remain bounded above and below.  Hence
	\[
	I_1(r)\;\sim\;\int_{0}^{1}t^{-\frac n2-1-s}\exp\!\Bigl(-\tfrac{r^2}{4t}\Bigr)\,dt.
	\]
	Make the substitution \(u = r^2/(4t)\), so that \(dt = -(r^2/(4u^2))\,du\).  Then
	\[
	I_1(r)
	\;\sim\;
	r^{-n-2s}
	\int_{r^2/4}^{\infty}u^{\frac n2+s-1}e^{-u}\,du
\sim r^{-n-2s}
	\quad r\to0.
	\]
	
	On the other hand, for \(t\ge1\) and \(r\le1\), Proposition~\ref{prop:heat-asymp} gives
	\[
p_n(r,t)\;\sim\;t^{-\frac n2}\left(1+r+t\right)^{\frac{n-3}{2}}e^{-\frac{\left(n-1\right)^2}{4}t}\exp\!\Bigl(-\tfrac{r^2}{4t}\Bigr),
	\]
	so by Lemma \ref{lem:K2-asymp}
	\[
	I_2(r)\sim 
	r^{-n-2s}
	\int_{0}^{r^2/4}u^{\frac n2+s-1}\left(1+r+\frac{r^2}{4u}\right)^{\frac{n-3}{2}}e^{-u}e^{-\frac{\left(n-1\right)^{2}r^2}{16u}}\,du \sim 1
	\quad r\to0.
	\]
	Thus \(I_2(r)\) does not affect the leading asymptotics.  We conclude
	\[
	\mathcal K_s(r)\sim r^{-n-2s}
	\quad(r\to0).
	\]
\noindent\textbf{2. Behavior as \(r\to\infty\).}
	
	For \(0<t\le1\) and \(r\gg1\), Proposition~\ref{prop:heat-asymp} yields
	\[
	p_n(r,t)\;\sim\;
	t^{-\frac n2}
	\exp\!\Bigl(-\tfrac{r^2}{4t}-\tfrac{(n-1)r}{2}\Bigr)\left(1+r\right)^{\frac{n-1}{2}}.
	\]
	Hence, by Lemma \ref{lem:gamma-tail}
	\[
	\begin{aligned}
			I_{1}(r)
		\sim&
		(1+r)^{\frac{n-1}{2}}
		e^{-\frac{(n-1)r}{2}}\;
		\int_{0}^{1} t^{-\frac{n}{2}-1-s}e^{-\frac{r^2}{4t}}\,dt\\ \sim& r^{-n-2s} (1+r)^{\frac{n-1}{2}}
		e^{-\frac{(n-1)r}{2}}\;
		\int_{r^2/4}^{\infty} u^{\frac{n}{2}+s-1}e^{-u}\,du \\ \sim& r^{-n-2s} (1+r)^{\frac{n-1}{2}}
		e^{-\frac{(n-1)r}{2}}\;
	r^{\,n+2s-2}\,
	e^{-\,\frac{r^{2}}{4}} \\=&r^{-2}(1+r)^{\frac{n-1}{2}}	e^{-\frac{r^2}{4}-\frac{(n-1)r}{2}}.
	\end{aligned}
	\]
	
	For \(t\ge1\), still with \(r\gg1\), one has
	\[
		p_n(r,t)\;\sim\;
rt^{-\,\frac{n}{2}} \,
\exp\!\left\{-\,\frac{(n-1)^{2}}{4}\,t \;-\;\frac{r^{2}}{4t}\;-\;\frac{(n-1)\,r}{2}\right\}
\;\cdot\;(1+r+t)^{\frac{n-3}{2}},
	\]
	so by Lemma \ref{lem:K2-asymp} we obtain that
	\[
\begin{aligned}
		I_2(r)
	\sim&
	re^{-\frac{(n-1)r}{2}}
	\int_{1}^{\infty}t^{-\frac n2-1-s}\exp\!\left\{-\,\frac{(n-1)^{2}}{4}\,t \;-\;\frac{r^{2}}{4t}\right\}
	\;\cdot\;(1+r+t)^{\frac{n-3}{2}} dt\\\sim
&r^{1-n-2s}e^{-\frac{(n-1)r}{2}}
		\int_{0}^{r^2/4}t^{\frac n2+s-1}\exp\!\left\{-\,\frac{(n-1)^{2}r^2}{16t} \;-t\right\}
	\;\cdot\;(1+r+\frac{r^2}{4t})^{\frac{n-3}{2}} dt \\ \sim& r^{-s-1}e^{-\left(n-1\right)r}.
\end{aligned}
	\]
	This lower–order term does not affect the main exponential decay.  Therefore
	\[
	\mathcal K_s(r)\sim r^{-1-s}\,e^{-(n-1)r},	\quad r\to\infty.
	\]
	Combining the two regimes completes the proof.
\end{proof}

We have thus derived the pointwise formula for the fractional Laplacian on $\mathbb{H}^n$ directly from the spectral functional calculus by means of the Bochner integral representation.  In particular, the above Proposition \ref{fensjx} establishes the precise small– and large–distance asymptotics of the radial kernel $\mathcal{K}_s(r)$.

	\subsection{Pointwise Representation of Logarithmic Laplacian} 
	In this subsection we derive an explicit pointwise integral formula for the logarithmic Laplacian on \(\mathbb{H}^n\) and carry out a detailed asymptotic analysis of its kernel. To the best of our knowledge, this is the first time such an explicit representation has been obtained on hyperbolic space.
	
The spectrum of \(-\Delta_{\mathbb{H}^n}\) on \(C_c^\infty(\mathbb{H}^n)\) is purely continuous:
	\[
	\sigma\bigl(-\Delta_{\mathbb{H}^n}\bigr)
	=\bigl[\rho^2,\infty\bigr),
	\qquad \rho=\tfrac{n-1}{2}.
	\]
	Define
	\[
	H^{\log}(\mathbb{H}^n)
	=\Bigl\{\,f\in L^2(\mathbb{H}^n)\;\Big|\;
	\int_{\rho^2}^{\infty}(\log\lambda)^2\,dE_{f,f}(\lambda)<\infty\Bigr\},
	\]
	with norm 
	\[
	\|f\|_{H^{\log}}^2
	=\|f\|_{L^2}^2
	+\int_{\rho^2}^{\infty}(\log\lambda)^2\,dE_{f,f}(\lambda).
	\]
	The spectral logarithmic Laplacian is
	\[
	\log(-\Delta_{\mathbb{H}^n})\,f
	:=\int_{\rho^2}^{\infty}\log\lambda\;dE(\lambda)\,f\in L^2\left(\mathbb{H}^n\right),
	\quad f\in H^{\log}(\mathbb{H}^n).
	\]
	Equivalently, by the Bochner integral formula one has in \(L^2\)-sense, see Theorem \ref{logboch}
	\[
	\log(-\Delta_{\mathbb{H}^n})\,f
	=\int_{0}^{\infty}\frac{e^{-t}f - e^{t\Delta_{\mathbb{H}^n}}f}{t}\,dt.
	\]
	
	Since the hyperbolic space \(\mathbb{H}^n\) has
	\[
	\mathrm{Ric}_g = -(n-1),
	\]
	all the hypotheses of Theorem~\ref{thm:log-noncompact} are satisfied.  Hence we obtain:
	
	\begin{theorem}\label{point}
		Let $f\in C_c^{\alpha}\left(\mathbb{H}^n\right),\alpha>0$, then the logarithmic Laplacian admits pointwise representation on \(\mathbb{H}^n\):
		\begingroup\small\[
		\begin{aligned}
			\bigl(\log(-\Delta_{\mathbb{H}^n})f\bigr)(x)=\int_{\mathbb{H}^n}K_1\bigl(r\bigr)\,\bigl(f(x)-f(y)\bigr)\,d\mathrm{vol}_{\mathbb{H}^n}(y)
			-\int_{\mathbb{H}^n}K_2\bigl(r\bigr)\,f(y)\,d\mathrm{vol}_{\mathbb{H}^n}(y)
			\;+\;\Gamma'(1)\,f(x),
		\end{aligned}
		\]
		\endgroup
		where
		\begin{equation}\label{loghe}
			K_1(r)
			=\int_{0}^{1}\frac{p_n(r,t)}{t}\,dt,
			\qquad
			K_2(r)
			=\int_{1}^{\infty}\frac{p_n(r,t)}{t}\,dt,
		\end{equation}
		and $p_n(r,t)$ is given by (\ref{heat equa}).
	\end{theorem}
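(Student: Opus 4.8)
The plan is to deduce Theorem~\ref{point} as a direct specialization of Theorem~\ref{thm:log-noncompact}. First I would observe that $\mathbb{H}^n$ is a complete, connected, noncompact Riemannian manifold with constant Ricci curvature $\mathrm{Ric}_g = -(n-1)$, so the hypothesis $\mathrm{Ric}_g \ge -(n-1)k$ holds with $k=1$; in particular all the geometric inputs used in the proof of Theorem~\ref{thm:log-noncompact}—essential self-adjointness of $-\Delta_{\mathbb{H}^n}$ on $C_c^\infty$, stochastic completeness, the Bishop--Gromov volume bounds \eqref{ykb}, \eqref{yjsh}, and the Li--Yau Gaussian estimate \eqref{liyau}—are available. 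Since $f\in C_c^\alpha(\mathbb{H}^n)$ for some $\alpha>0$, Theorem~\ref{thm:log-noncompact} applies verbatim and yields, for every $x\in\mathbb{H}^n$,
\[
\bigl(\log(-\Delta_{\mathbb{H}^n})_{\mathrm{spec}}f\bigr)(x)
=\int_{\mathbb{H}^n}K_1(x,y)\,\bigl(f(x)-f(y)\bigr)\,d\mathrm{vol}_{\mathbb{H}^n}(y)
-\int_{\mathbb{H}^n}K_2(x,y)\,f(y)\,d\mathrm{vol}_{\mathbb{H}^n}(y)
+\Gamma'(1)\,f(x),
\]
with $K_1(x,y)=\int_0^1 t^{-1}p_t(x,y)\,dt$ and $K_2(x,y)=\int_1^\infty t^{-1}p_t(x,y)\,dt$, together with the absolute convergence of both integrals (that convergence is exactly what the short-time estimate via Lemma~\ref{lem:gamma-tail} and the long-time estimate in the proof of Theorem~\ref{thm:log-noncompact} establish).

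The only point requiring a remark is the reduction from the two-point kernels $K_i(x,y)$ to radial kernels $K_i(r)$. Here I would invoke the homogeneity of $\mathbb{H}^n$ under its isometry group: the heat kernel $p_t(x,y)$ depends on $(x,y)$ only through the geodesic distance $r=d_{\mathbb{H}^n}(x,y)$, i.e.\ $p_t(x,y)=p_n(r,t)$ with $p_n$ as in Theorem~\ref{heatfo} and \eqref{heat equa}. Substituting this into the definitions of $K_1(x,y)$ and $K_2(x,y)$ and interchanging the (absolutely convergent) $t$-integral with the dependence on $(x,y)$ gives $K_i(x,y)=K_i(r)$ with $K_1(r)=\int_0^1 t^{-1}p_n(r,t)\,dt$ and $K_2(r)=\int_1^\infty t^{-1}p_n(r,t)\,dt$, which is precisely \eqref{loghe}. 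Finally, because $\log(-\Delta_{\mathbb{H}^n})_B = \log(-\Delta_{\mathbb{H}^n})_{\mathrm{spec}}$ on $H^{\log}(\mathbb{H}^n)$ by Theorem~\ref{logboch} and $C_c^\alpha(\mathbb{H}^n)\subset H^{\log}(\mathbb{H}^n)$ (the compactly supported $f$ lies in the domain by the convergence just discussed), the left-hand side is simply $\bigl(\log(-\Delta_{\mathbb{H}^n})f\bigr)(x)$, completing the proof.

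Since the substantive analytic work—interchange of integration, justification of absolute convergence via the heat-kernel asymptotics—has already been carried out in full generality in Theorem~\ref{thm:log-noncompact}, there is no real obstacle here; the proof is essentially a bookkeeping step identifying $k=1$ and exploiting radial symmetry of $p_n(r,t)$. The only mild subtlety worth spelling out is verifying that the hypotheses of Theorem~\ref{thm:log-noncompact} are genuinely met (they are, since $\mathrm{Ric}_g=-(n-1)$ is constant) and that the two-point kernels collapse to functions of $r$ alone; both are immediate from the structure of $\mathbb{H}^n$.
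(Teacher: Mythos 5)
Your proposal is correct and matches the paper's own treatment: the paper simply notes that $\mathrm{Ric}_g=-(n-1)$ on $\mathbb{H}^n$ so all hypotheses of Theorem~\ref{thm:log-noncompact} hold, and states Theorem~\ref{point} as its direct specialization, with the two-point kernels reducing to radial ones because $p_t(x,y)=p_n(d(x,y),t)$. Your additional remarks on the reduction to radial kernels and on $C_c^\alpha\subset H^{\log}$ are harmless elaborations of the same argument.
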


	Below we present the asymptotic estimates for the logarithmic kernel functions.

	\begin{proposition}[Pointwise bounds for $K_{1}$ and $K_{2}$]\label{prop:K1K2}  
		We have the following estimate
		\[K_1(r)\sim r^{-n},\:\: r\le1; \quad K_1(r)\sim r^{\frac{n-5}{2}}
		e^{-\frac{(n-1)r}{2}}\;
		e^{-\frac{r^2}{4}},\:\:r \ge1;
		\]
		and
		\[K_2(r)\sim 1,\:\: r\le1; \quad K_2(r)\sim r^{-1}\exp\!\Bigl(-(n-1)r\Bigr),\:\:r \ge1
		\]
		where $K_1(r)$ and $K_2(r)$ are defined in (\ref{loghe}).
	\end{proposition}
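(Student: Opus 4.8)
The plan is to feed the sharp two--sided heat--kernel estimate of Proposition~\ref{prop:heat-asymp} into the defining integrals \eqref{loghe}, treating $K_1$ and $K_2$ separately and, for $K_1$, splitting into the distance regimes $r\le 1$ and $r\ge 1$. The guiding principle is to identify, in each range of $(r,t)$, which of the factors $t^{-n/2}$, $e^{-(n-1)^2t/4}$, $e^{-r^2/(4t)}$, $(1+r+t)^{(n-3)/2}$, $(1+r)$ appearing in Proposition~\ref{prop:heat-asymp} lies between positive constants (hence can be absorbed into the implied $\sim$--constant) and which one carries the asymptotics; the uniformity of the estimate over $0\le r<\infty$, $0<t<\infty$ is exactly what makes those constants independent of $r$.

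For $K_1(r)=\int_0^1 p_n(r,t)\,t^{-1}\,dt$ the time variable is bounded, so on $t\in(0,1)$ the factors $e^{-(n-1)^2t/4}$ and $(1+r+t)^{(n-3)/2}(1+r)^{-(n-3)/2}$ lie between positive constants uniformly in $r$ (using $1\le (1+r+t)/(1+r)\le 2$), whence
\[
K_1(r)\ \sim\ (1+r)^{\frac{n-1}{2}}\,e^{-\frac{(n-1)r}{2}}\int_0^1 t^{-\frac n2-1}\,e^{-\frac{r^2}{4t}}\,dt .
\]
The substitution $u=r^2/(4t)$ gives $\int_0^1 t^{-n/2-1}e^{-r^2/(4t)}\,dt=4^{n/2}r^{-n}\int_{r^2/4}^{\infty}u^{n/2-1}e^{-u}\,du$. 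For $r\le 1$ the last integral stays between positive constants (its limit as $r\to0$ is $\Gamma(n/2)>0$) and the prefactor is comparable to $1$, so $K_1(r)\sim r^{-n}$. For $r\ge 1$, Lemma~\ref{lem:gamma-tail} with $s=0$ gives $\int_{r^2/4}^{\infty}u^{n/2-1}e^{-u}\,du\sim r^{\,n-2}e^{-r^2/4}$; collecting powers of $r$ yields $K_1(r)\sim r^{(n-1)/2}e^{-(n-1)r/2}\cdot r^{-2}e^{-r^2/4}=r^{(n-5)/2}\,e^{-(n-1)r/2}\,e^{-r^2/4}$.

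For $K_2(r)=\int_1^\infty p_n(r,t)\,t^{-1}\,dt$ both exponentials $e^{-(n-1)^2t/4}$ and $e^{-r^2/(4t)}$ are genuinely present; pulling out the $r$--dependent prefactor gives
\[
K_2(r)\ \sim\ (1+r)\,e^{-\frac{(n-1)r}{2}}\int_1^\infty t^{-\frac n2-1}\exp\!\Bigl(-\tfrac{(n-1)^2}{4}t-\tfrac{r^2}{4t}\Bigr)(1+r+t)^{\frac{n-3}{2}}\,dt .
\]
The substitution $u=r^2/(4t)$ maps $\int_1^\infty$ onto $\int_0^{r^2/4}$, and with $t^{-n/2-1}\,dt=4^{n/2}r^{-n}u^{n/2-1}\,du$ the integral becomes exactly $4^{n/2}r^{-n}J(n,r)$ for $J$ as in Lemma~\ref{lem:K2-asymp} with $s=0$. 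By that lemma, $J(n,r)\sim r^{\,n-2}e^{-(n-1)r/2}$ for $r\ge 1$ and $J(n,r)\sim r^{\,n}$ for $r\le 1$; feeding these back gives $K_2(r)\sim(1+r)e^{-(n-1)r/2}\cdot r^{-n}\cdot r^{\,n-2}e^{-(n-1)r/2}\sim r^{-1}e^{-(n-1)r}$ for $r\ge 1$, and $K_2(r)\sim(1+r)e^{-(n-1)r/2}\sim 1$ for $r\le 1$.

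The main obstacle is uniform bookkeeping rather than a new idea: one must check that integrating the pointwise equivalence of Proposition~\ref{prop:heat-asymp} against $t^{-1}\,dt$ over $(0,1)$, respectively $(1,\infty)$, produces an equivalence for $K_1$, respectively $K_2$, with implied constants uniform in $r$ within each regime. This is where the uniformity clause of Proposition~\ref{prop:heat-asymp} is essential, together with the elementary fact that integrating a two--sided bound against a fixed positive measure preserves two--sided bounds; the substitution $u=r^2/(4t)$ and the identification of the resulting integrals with the incomplete Gamma tail of Lemma~\ref{lem:gamma-tail} and with $J(n,r)$ of Lemma~\ref{lem:K2-asymp} then pin down the precise exponents. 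A minor point to note is that for $n=2$ the factor $(1+r+t)^{(n-3)/2}=(1+r+t)^{-1/2}$ causes no difficulty since $1+r+t\ge 1$ on the entire domain of integration.
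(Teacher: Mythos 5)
Your proposal is correct and follows essentially the same route as the paper's proof: insert the uniform two-sided Davies--Mandouvalos bound of Proposition~\ref{prop:heat-asymp}, absorb the factors that are comparable to constants in each regime, substitute $u=r^{2}/(4t)$, and invoke Lemma~\ref{lem:gamma-tail} for the $K_{1}$ tail and Lemma~\ref{lem:K2-asymp} (with $s=0$) for $K_{2}$, with the same bookkeeping of powers of $r$.
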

	
	\begin{proof}
		From Proposition~\ref{prop:heat-asymp} we know that
		\begin{equation}\label{this est}
			p_{n}(r,t)\sim
			t^{-\,\frac{n}{2}}
			\exp\!\Bigl(-\tfrac{(n-1)^{2}}{4}\,t - \tfrac{r^{2}}{4t} - \tfrac{(n-1)r}{2}\Bigr)
			(1+r+t)^{\frac{n-3}{2}}(1+r),n\ge 2
		\end{equation}
		uniformly in $r\ge0,\;t>0$.
		
		We first estimate $K_1(r)$. Insert (\ref{this est}) with $t\le1$, we obtain that
		\[
		K_{1}(r)
		\sim
		(1+r)^{\frac{n-1}{2}}
		\exp\!\Bigl(-\tfrac{(n-1)r}{2}\Bigr)\int_{0}^{1} t^{-\frac{n}{2}-1}
		\exp\!\Bigl(-\tfrac{r^{2}}{4t}\Bigr)\,dt.
		\]
		Set $s=\dfrac{r^{2}}{4t}$, then
		\[
		K_{1}(r)
		\sim
		(1+r)^{\frac{n-1}{2}}
		e^{-\frac{(n-1)r}{2}}\;
		r^{-n}
		\int_{r^{2}/4}^{\infty} s^{\frac{n}{2}-1}e^{-s}\,ds.
		\]
		If $r\le1$, the tail integral is a
		Gamma‐function bounded by a constant, giving
		$$K_{1}(r)\sim (1+r)^{\frac{n-1}{2}}r^{-n}
		\sim \,r^{-n}.$$  
		If $r\ge1$, by Lemma \ref{lem:gamma-tail} we have
		\[ \int_{r^{2}/4}^{\infty} s^{\frac{n}{2}-1}e^{-s}\,ds\sim r^{n-2}e^{-\frac{r^2}{4}}.\]
		Hence,
		\[
		K_{1}(r)
		\sim r^{\frac{n-5}{2}}
		e^{-\frac{(n-1)r}{2}}\;
		e^{-\frac{r^2}{4}}.
		\]
		
		Next, we estimate $K_2(r).$ Insert (\ref{this est}) with $t\ge1$ we obtain that 
		\[
		K_{2}(r)
		\sim
		\left(1+r\right)
		\exp\!\Bigl(-\tfrac{(n-1)r}{2}\Bigr)\int_{1}^{\infty} t^{-\frac{n}{2}-1}
	exp\!\left\{-\,\frac{(n-1)^{2}}{4}\,t \;-\;\frac{r^{2}}{4t}\right\}\left(1+r+t\right)^{\frac{n-3}{2}}\,dt.
		\]
		Set $s=\dfrac{r^{2}}{4t}$, then
		\[
		K_{2}(r)
		\sim
		\left(1+r\right)r^{-n}e^{-\frac{(n-1)r}{2}}
		\int_{0}^{r^2/4}t^{\frac n2-1}\exp\!\left\{-\,\frac{(n-1)^{2}r^2}{16t} \;-t\right\}
		\;\cdot\;(1+r+\frac{r^2}{4t})^{\frac{n-3}{2}} dt.
		\]
		By Lemma \ref{lem:K2-asymp}, if $r\ge 1,$
		\[K_2(r)\sim r^{-1}e^{-\left(n-1\right)r},\]
and 
		\[K_2(r)\sim 1, \:r\le 1\]
		Hence, we complete the proof.
	\end{proof}

	\begin{remark}
		On the Euclidean space \(\mathbb{R}^n\), the heat kernel has the explicit Gaussian form
		\[
		p_n(r,t)\;=\;(4\pi t)^{-n/2}\,e^{-\,\frac{r^2}{4t}},
		\qquad r>0,\;t>0.
		\]
		Substituting this into \eqref{loghe}, we obtain
		\[
		K_1(r)
		=\int_0^1 \frac{p_n(r,t)}{t}\,dt
		=(4\pi )^{-n/2}\int_0^1 \,t^{-1-\frac{n}{2}}\,e^{-r^2/(4t)}\,dt=\pi^{-\frac{n}{2}}r^{-n}\int_{\frac{r^2}{4}}^{\infty}t^{\frac{n}{2}-1}e^{-t}dt,\]
		and
		\[
		K_2(r)
		=\int_1^\infty \frac{p_n(r,t)}{t}\,dt
		=(4\pi )^{-n/2}\int_1^\infty t^{-1-\frac{n}{2}}\,e^{-r^2/(4t)}\,dt=\pi^{-\frac{n}{2}}r^{-n}\int_{0}^{\frac{r^2}{4}}t^{\frac{n}{2}-1}e^{-t}dt.
		\]
		For large \(r\ge1\), the integrals exhibit different asymptotic behaviors. By Lemma \ref{lem:gamma-tail},
		
		\begin{itemize}
			\item[(i)] \textbf{Small-scale part \(K_1(r)\):} exponential decay dominates: 
			\[
			K_1(r)
			\;\sim\;
			r^{-2}e^{-\frac{r^2}{4}}.
			\]
			\item[(ii)] \textbf{Large-scale part \(K_2(r)\):} 
			\[
			K_2(r)
			\;\sim\;
			r^{-n}.
			\]
		\end{itemize}
		For small \(r\le1\),
		
		\begin{itemize}
			\item[(iii)] \textbf{Small-scale part \(K_1(r)\):} exponential decay dominates: 
			\[
			K_1(r)
			\;\sim\;
			r^{-n}.
			\]
			\item[(iv)] \textbf{Large-scale part \(K_2(r)\):} 
			\[
			K_2(r)
			\;\sim\;
			r^{-n}\int_0^{\frac{r^2}{4}}t^{\frac{n}{2}-1}dt\sim 1.
			\]
		\end{itemize}
	\end{remark}
	
	\subsection{Estimates for Integral Kernels and the Logarithmic Laplacian}\label{last}
	
	For \(f:\Omega\to \mathbb{R}\), which is continuous (hence locally uniformly bounded), fix $x\in \Omega,$ define the pointwise modulus of continuity on $\Omega$
	\[
	\omega_{f,x,\Omega}(r)
	\;:=\;\sup_{\substack{y\in\Omega\\d(x,y)\le r}}
	\bigl|f(y)-f(x)\bigr|,
	\quad r>0.
	\]
	We say \(f\) is Dini continuous at the point \(x\) on $\Omega$ if
	\[
	\int_{0}^{1}\frac{\omega_{f,x,\Omega}(r)}{r}\,dr<\infty.
	\]
	We say that $f$ is locally Dini continuous at a point $x$ on $\Omega$ if there exists an open neighborhood $x\in \Omega_0\subset \Omega$ such that
	$$
	\int_{0}^{1}\frac{\omega_{f,x,\Omega_0}(r)}{r}\,dr<\infty.
	$$
	
	Define the global modulus over \(\Omega\) by
	\[
	\omega_{f,\Omega}(r)
	\;:=\;\sup_{x\in\Omega}\omega_{f,x,\Omega}(r).
	\]
	We say \(f\) is uniformly Dini continuous on \(\Omega\) if
	\[
	\int_{0}^{1}\frac{\omega_{f,\Omega}(r)}{r}\,dr<\infty.
	\]
	
	\begin{proposition}[Dini–type condition for the $K_{1}$ potential on $\mathbb H^{n}$]\label{dinia}
		For all $x\in\mathbb H^{n}$, the integral
		\[
		\int_{\mathbb H^{n}}K_{1}\!\bigl(d_{\mathbb H^{n}}(x,y)\bigr)\,\bigl(f(x)-f(y)\bigr)\,d\mathrm{vol}_{\mathbb{H}^n}(y)
		\]
		is finite when $f$ is bounded in $\mathbb H^{n}$
		and \(f\) is locally Dini continuous at the point \(x\) on $\mathbb H^{n}$.
	\end{proposition}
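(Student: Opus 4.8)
The plan is to split the integral at a small radius $r_0$ adapted to the neighborhood on which $f$ is Dini continuous at $x$, and to treat the two resulting pieces by genuinely different mechanisms: the inner piece is controlled by matching the near-diagonal singularity of $K_1$ against the volume element, producing exactly a Dini integral; the outer piece uses only the boundedness of $f$ together with the decay of $K_1$ at infinity from Proposition~\ref{prop:K1K2}. Since we only need finiteness, it suffices throughout to bound the integral of the absolute value of the integrand.

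First I would fix, using local Dini continuity at $x$, an open neighborhood $x\in\Omega_0\subset\mathbb H^n$ with $\int_0^1\omega_{f,x,\Omega_0}(r)/r\,dr<\infty$, and choose $r_0\in(0,1]$ small enough that the geodesic ball $B(x,r_0)\subset\Omega_0$. Working in geodesic polar coordinates centered at $x$, where $d\mathrm{vol}_{\mathbb H^n}(y)=\sinh^{n-1}(r)\,dr\,d\omega$ with $r=d_{\mathbb H^n}(x,y)$, one has $\sinh^{n-1}(r)\sim r^{n-1}$ as $r\to0$ and $\sinh^{n-1}(r)\sim e^{(n-1)r}$ as $r\to\infty$. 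For the inner piece $0<r\le r_0$, Proposition~\ref{prop:K1K2} gives $K_1(r)\sim r^{-n}$ on $(0,1]$, and since $|f(x)-f(y)|\le\omega_{f,x,\Omega_0}(r)$ for $y\in B(x,r_0)$,
\[
\int_{B(x,r_0)}K_1\bigl(d_{\mathbb H^n}(x,y)\bigr)\,\bigl|f(x)-f(y)\bigr|\,d\mathrm{vol}_{\mathbb H^n}(y)
\;\lesssim\;\int_0^{r_0}r^{-n}\,\omega_{f,x,\Omega_0}(r)\,r^{n-1}\,dr
\;=\;\int_0^{r_0}\frac{\omega_{f,x,\Omega_0}(r)}{r}\,dr\;<\;\infty.
\]

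For the outer piece $r>r_0$ I would use only $|f(x)-f(y)|\le 2\|f\|_{L^\infty(\mathbb H^n)}$, so that it suffices to show $\int_{r_0}^\infty K_1(r)\,\sinh^{n-1}(r)\,dr<\infty$. On $r_0<r\le1$ one has $K_1(r)\sinh^{n-1}(r)\lesssim r^{-n}\cdot r^{n-1}=r^{-1}$, hence $\int_{r_0}^1 r^{-1}\,dr=\log(1/r_0)<\infty$. On $r\ge1$, Proposition~\ref{prop:K1K2} gives $K_1(r)\sim r^{(n-5)/2}e^{-(n-1)r/2}e^{-r^2/4}$, so
\[
K_1(r)\,\sinh^{n-1}(r)\;\sim\;r^{\frac{n-5}{2}}\,e^{\frac{(n-1)r}{2}}\,e^{-\frac{r^2}{4}},
\]
which is integrable on $[1,\infty)$ because the Gaussian factor $e^{-r^2/4}$ dominates the exponential $e^{(n-1)r/2}$. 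Adding the three contributions shows the integrand is absolutely integrable, which proves the claim. I do not expect a serious obstacle here: the only point that needs care is that finiteness requires absolute convergence, which forces one to estimate $|f(x)-f(y)|$ rather than exploit cancellation — but the Dini/boundedness split supplies exactly the pointwise bounds needed, and the far-field convergence is driven entirely by the Gaussian decay in Proposition~\ref{prop:K1K2} overcoming the exponential volume growth of $\mathbb H^n$.
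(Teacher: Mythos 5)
Your proposal is correct and follows essentially the same route as the paper: split at a small radius inside the Dini neighborhood, control the near-diagonal part by matching $K_1(r)\sim r^{-n}$ against $\sinh^{n-1}(r)\sim r^{n-1}$ to get the Dini integral, and use boundedness of $f$ plus the super-Gaussian decay $K_1(r)\sinh^{n-1}(r)\sim r^{\frac{n-5}{2}}e^{\frac{(n-1)r}{2}}e^{-\frac{r^2}{4}}$ away from the diagonal. Your explicit treatment of the intermediate region $r_0<r\le1$ is a minor tidiness improvement over the paper's write-up but not a different argument.
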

	
	\begin{proof}
		By Proposition \ref{prop:K1K2}, the kernel satisfies
		\[
		K_{1}(r)\;\sim\;
		\begin{cases}
			r^{-n}, & 0<r\le1,\\[4pt]
			r^{\frac{n-5}{2}}\exp\!\bigl(-\tfrac{(n-1)r}{2}\bigr)\exp\!\bigl(-\tfrac{r^{2}}{4}\bigr), & r\ge1.
		\end{cases}
		\]
		
		Let $x\in\mathbb H^{n}$ be fixed and set $r=d_{\mathbb H^{n}}(x,y)$.  
		Write the polar decomposition of volume as $d\mathrm{vol}_{\mathbb{H}^n}(y) = \sinh^{n-1}(r)\,dr\,d\omega$, where
		\[
		\sinh^{n-1}(r)\;\sim\;
		\begin{cases}
			r^{\,n-1}, & 0<r\le1,\\[4pt]
			e^{(n-1)r}, & r\ge1.
		\end{cases}
		\]
		Since \(f\) is locally Dini continuous at the point \(x\) on $\mathbb H^{n}$, then there exists $\Omega\subset \mathbb{H}^n,$ such that
		\[
		\int_{0}^{1}\frac{\omega_{f,x,\Omega}(r)}{r}\,dr<\infty.
		\]
		\noindent\textbf{Near the diagonal: $0<r\le1$.}  
		Since $K_{1}(r)\sinh^{n-1}(r)\sim r^{-1}$, for sufficiently small $\delta>0,$ we have
		\[
		\int_{0}^{\delta}K_{1}(r)\,|f(x)-f(y)|\,d\mathrm{vol}_{\mathbb{H}^n}(y)
		\;\lesssim \int_{0}^{\delta}\frac{\omega_{f,x,\Omega}(r)}{r}\,dr<\infty.
		\]
		\noindent\textbf{Far from the diagonal: $r\ge1$.}  
		Now
		$$K_{1}(r)\sinh^{n-1}(r)
		\sim 	r^{\frac{n-5}{2}}\exp\!\bigl(\tfrac{(n-1)r}{2}\bigr)\exp\!\bigl(-\tfrac{r^{2}}{4}\bigr)$$
		whose super-Gaussian decay ensures for every $f\in L^{\infty}\left(\mathbb{H}^n\right)$, we have
		$$
		\int_{\delta}^{\infty}K_{1}(r)\,|f(x)-f(y)|\,d\mathrm{vol}_{\mathbb{H}^n}(y)<\infty.
		$$
	\end{proof}
	
	\begin{proposition}[Integrability of \(K_{2}\) on \(\mathbb H^{n}\)]\label{k2kjx}	$K_2$ is defined in (\ref{loghe}), then		\[
		K_{2}\in L^{p}(\mathbb H^{n})\quad\Longleftrightarrow\quad p>1.
		\]
	\end{proposition}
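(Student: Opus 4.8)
The plan is to reduce the statement to a one–dimensional radial estimate by means of the polar decomposition $d\mathrm{vol}_{\mathbb H^{n}}(y)=\sinh^{n-1}(r)\,dr\,d\omega$, and then to substitute the two–sided bounds for $K_{2}$ recorded in Proposition~\ref{prop:K1K2}. Concretely, for $1\le p<\infty$ one writes
\[
\|K_{2}\|_{L^{p}(\mathbb H^{n})}^{p}
=\bigl|S^{n-1}\bigr|\int_{0}^{\infty}K_{2}(r)^{p}\,\sinh^{n-1}(r)\,dr
=\bigl|S^{n-1}\bigr|\Bigl(\int_{0}^{1}+\int_{1}^{\infty}\Bigr)K_{2}(r)^{p}\,\sinh^{n-1}(r)\,dr,
\]
so that the question decouples into a near–diagonal piece on $(0,1]$ and a tail piece on $[1,\infty)$, to be analysed separately.

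\emph{Near the origin.} By Proposition~\ref{prop:K1K2} one has $K_{2}(r)\sim1$ on $(0,1]$, while $\sinh^{n-1}(r)\sim r^{\,n-1}$ there; hence the integrand is comparable to $r^{\,n-1}$, and since $n\ge2$ the integral $\int_{0}^{1}r^{\,n-1}\,dr$ is finite for \emph{every} $p$. Thus the local part never obstructs membership in $L^{p}$, and the whole question is governed by the behaviour at infinity.

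\emph{At infinity.} Again by Proposition~\ref{prop:K1K2}, $K_{2}(r)\sim r^{-1}e^{-(n-1)r}$ for $r\ge1$, and $\sinh^{n-1}(r)\sim e^{(n-1)r}$, so that
\[
K_{2}(r)^{p}\,\sinh^{n-1}(r)\;\sim\;r^{-p}\,e^{-p(n-1)r}\,e^{(n-1)r}=r^{-p}\,e^{-(p-1)(n-1)r},\qquad r\ge1.
\]
If $p>1$ the exponent $-(p-1)(n-1)$ is strictly negative, so $\int_{1}^{\infty}r^{-p}e^{-(p-1)(n-1)r}\,dr<\infty$ and $K_{2}\in L^{p}(\mathbb H^{n})$; for $p=\infty$ one notes directly that $K_{2}$ is bounded and continuous on $\mathbb H^{n}$ (it is $\sim1$ near $0$ and decays at infinity), whence $K_{2}\in L^{\infty}(\mathbb H^{n})$. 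Conversely, if $p=1$ the exponential factors cancel exactly and the tail reduces to $\int_{1}^{\infty}r^{-1}\,dr=\infty$, so $K_{2}\notin L^{1}(\mathbb H^{n})$ (and for $0<p<1$ the tail grows exponentially, so membership fails even more dramatically). Combining the three regimes gives $K_{2}\in L^{p}(\mathbb H^{n})$ precisely when $p>1$.

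\emph{Main obstacle.} There is no deep analytic difficulty once Proposition~\ref{prop:K1K2} is available: the only point requiring care is that the symbol $\sim$ denotes a genuine two–sided bound on $(0,1]$ and on $[1,\infty)$, so that the $p$–th power and the radial integration may legitimately be transferred to the comparison function; and that the sharp threshold $p=1$ must be recognised as the exact cancellation of the exponential volume growth $e^{(n-1)r}$ against the exponential decay $e^{-(n-1)r}$ of $K_{2}$, which leaves behind the logarithmically divergent tail $\int^{\infty}dr/r$.
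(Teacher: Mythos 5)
Your proposal is correct and follows essentially the same route as the paper: polar decomposition of the volume, substitution of the two-sided bounds for $K_{2}$ from Proposition~\ref{prop:K1K2}, and the observation that the tail integrand is comparable to $r^{-p}e^{-(p-1)(n-1)r}$, which converges exactly when $p>1$ (with the $p=\infty$ case following from boundedness of $K_{2}$). The extra care you take with the $p=1$ divergence and the near-origin piece only makes explicit what the paper's shorter computation leaves implicit.
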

	\begin{proof}
		By Proposition \ref{prop:K1K2},
		\[
		K_{2}(r)\;\sim\;
		\begin{cases}
			1, & 0<r\le1 \\
			r^{-1}\exp\!\bigl(-(n-1)r\bigr), & r\ge1.
		\end{cases}
		\]
		Using the polar measure \(d\mathrm{vol}_{\mathbb H^{n}}=\sinh^{n-1}(r)\,dr\,d\omega_{n-1}\) and
		\(\sinh^{n-1}(r)\sim r^{\,n-1}\) as \(r\to0\), \(\sinh^{n-1}(r)\sim e^{(n-1)r}\) as \(r\to\infty\), note that
		\[
		\|K_2\|_{L^p(\mathbb{H}^n)}^p
		\;\sim\;
		\int_1^\infty r^{-p}e^{(n-1)r(1-p)}\,dr+1.
		\]
		Thus,
		\[
		K_{2}\in L^{p}(\mathbb H^{n})\quad\Longleftrightarrow\quad p>1.
		\]
	\end{proof}

	\begin{remark}\label{konhgj}
		Since \(K_{2}\notin L^{1}(\mathbb H^{n})\), then
		\[
	\int_{\mathbb H^{n}}K_{2}\bigl(d(x,y)\bigr)\,f(y)\,d\mathrm{vol}_{\mathbb H^{n}}(y)
		\]
		diverges in general for merely bounded smooth \(f\).  To make it finite one needs the weighted integrability
			\[
			f\in L^{1}_{w,x_{0}}(\mathbb H^{n})
			:=\Bigl\{\,f:\mathbb H^{n}\to\mathbb{R}:\ 
			\int_{\mathbb H^{n}}
			\frac{|f(y)|}{\bigl(1+d(x_{0},y)\bigr)}
			\,e^{-\left(n-1\right)d(x_{0},y)}\,
			d\mathrm{vol}_{\mathbb H^{n}}(y)
			<\infty
			\Bigr\}.
			\]
	\end{remark}

	\begin{remark}
		Changing the base point from $x_{0}$ to any other $x_{1}\in\mathbb H^n$ yields an equivalent norm, so the space does not depend on the choice of $x_0$.  Hence we may simply write
	\[
	 C_c\left(\mathbb{H}^n\right)\subset L^{1}(\mathbb H^{n}) \subset L^{1}_{w}(\mathbb H^{n})
		:=L^{1}_{w,x_{0}}(\mathbb H^{n})
		\]
		with the understanding that different base points give the same normed space.
	\end{remark}
	
	\begin{corollary}
		Proposition \ref{dinia} holds when when $f\in L_{w}^1(\mathbb H^n)$ and \(f\) is locally Dini continuous at the point \(x\) on $\mathbb H^{n}$.
	\end{corollary}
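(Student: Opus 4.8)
The plan is to re-run the diagonal/far-field splitting of the proof of Proposition~\ref{dinia}, keeping the near-diagonal part untouched and upgrading only the far-field estimate so that the hypothesis $f\in L^{\infty}(\mathbb H^n)$ is replaced by $f\in L^1_w(\mathbb H^n)$. First I would record that the assumption "$f$ is locally Dini continuous at $x$" already forces, by the definition in subsection~\ref{last}, that $f$ be continuous — hence bounded — on some open neighbourhood $\Omega_0\ni x$. So, choosing $\delta>0$ with $B_\delta(x)\subset\Omega_0$, the near-diagonal estimate is verbatim that of Proposition~\ref{dinia}: since $K_1(r)\sinh^{n-1}(r)\sim r^{-1}$ as $r\to 0^+$ by Proposition~\ref{prop:K1K2},
\[
\int_{B_\delta(x)}K_1\bigl(d(x,y)\bigr)\,|f(x)-f(y)|\,d\mathrm{vol}_{\mathbb H^n}(y)
\;\lesssim\;\int_0^\delta\frac{\omega_{f,x,\Omega_0}(r)}{r}\,dr\;<\;\infty .
\]

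For the far region $\{d(x,y)\ge\delta\}$ I would bound $|f(x)-f(y)|\le |f(x)|+|f(y)|$ and handle the two pieces separately. The $|f(x)|$ piece is a finite constant times $\int_{d(x,y)\ge\delta}K_1\bigl(d(x,y)\bigr)\,d\mathrm{vol}_{\mathbb H^n}(y)$, which converges because, again by Proposition~\ref{prop:K1K2}, $K_1(r)\sinh^{n-1}(r)\sim r^{(n-5)/2}e^{(n-1)r/2}e^{-r^2/4}$ has super-Gaussian decay. For the $|f(y)|$ piece, the crucial point is the pointwise domination, valid for $r\ge1$,
\[
K_1(r)\;\sim\;r^{\frac{n-5}{2}}e^{-\frac{(n-1)r}{2}}e^{-\frac{r^2}{4}}\;\lesssim\;\frac{1}{1+r}\,e^{-(n-1)r},
\]
which holds since the Gaussian factor $e^{-r^2/4}$ absorbs both the polynomial $r^{(n-5)/2}$ and the residual exponential gap $e^{(n-1)r/2}$. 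Taking base point $x_0=x$ in the definition of $L^1_w(\mathbb H^n)$ — legitimate by the base-point independence noted after Remark~\ref{konhgj} — this gives
\[
\int_{d(x,y)\ge\delta}K_1\bigl(d(x,y)\bigr)\,|f(y)|\,d\mathrm{vol}_{\mathbb H^n}(y)
\;\lesssim\;\int_{\mathbb H^n}\frac{|f(y)|}{1+d(x,y)}\,e^{-(n-1)d(x,y)}\,d\mathrm{vol}_{\mathbb H^n}(y)\;<\;\infty ,
\]
because $f\in L^1_w(\mathbb H^n)$. Summing the three bounds yields the asserted finiteness, and one also checks trivially that nothing is lost near $r\approx 1$, where $K_1$ and $\sinh^{n-1}$ are continuous and positive.

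This is not so much an obstacle as a verification: the only step that needs care is confirming the pointwise bound $K_1(r)\lesssim(1+r)^{-1}e^{-(n-1)r}$ for $r\ge1$ and matching the resulting weight precisely with the one defining $L^1_w(\mathbb H^n)$; everything else is a faithful repetition of the argument of Proposition~\ref{dinia}.
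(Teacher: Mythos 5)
Your proposal is correct and follows exactly the route the paper intends (the corollary is stated without a separate proof, as the adaptation of Proposition \ref{dinia}): the near-diagonal estimate is unchanged and only uses local Dini continuity, while the far-field part is absorbed by the super-Gaussian decay of $K_1(r)\sinh^{n-1}(r)$, which indeed dominates the $L^1_w$ weight $(1+r)^{-1}e^{-(n-1)r}$. Your explicit verification of the pointwise bound $K_1(r)\lesssim(1+r)^{-1}e^{-(n-1)r}$ for $r\ge1$, together with the base-point independence of $L^1_w$, is precisely the missing detail the paper leaves implicit.
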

	
	The formula in Theorem \ref{point} allows to define $\log\left(-\Delta_{\mathbb{H}^n}\right)f$ for a fairly large class of functions, see Corollary \ref{larclas}.
	
	\begin{corollary}\label{larclas}
		If $f\in L_{w}^1(\mathbb H^n)$ and \(f\) is locally Dini continuous at the point \(x\) on $\mathbb H^{n}$, then $\left(\log\left(-\Delta_{\mathbb{H}^n}\right)f\right)(x)$ is well defined in Theorem \ref{point}.
	\end{corollary}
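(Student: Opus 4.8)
The plan is to show that each of the three terms appearing in the pointwise formula of Theorem~\ref{point},
\[
\int_{\mathbb H^{n}}K_{1}(r)\bigl(f(x)-f(y)\bigr)\,d\mathrm{vol}_{\mathbb H^{n}}(y),
\qquad
\int_{\mathbb H^{n}}K_{2}(r)\,f(y)\,d\mathrm{vol}_{\mathbb H^{n}}(y),
\qquad
\Gamma'(1)\,f(x),
\]
is a well-defined (finite) real number under the hypotheses $f\in L^{1}_{w}(\mathbb H^{n})$ and $f$ locally Dini continuous at $x$; the value of $\bigl(\log(-\Delta_{\mathbb H^{n}})f\bigr)(x)$ is then their sum. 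Since the substantive analytic estimates have already been carried out, the argument is essentially an assembly of Propositions~\ref{dinia}, \ref{k2kjx} and Remark~\ref{konhgj}.

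First I would treat the $K_1$-integral. By Proposition~\ref{dinia}, in the form valid for $f\in L^{1}_{w}(\mathbb H^{n})$ (the corollary following Remark~\ref{konhgj}), the integral $\int_{\mathbb H^{n}}K_{1}\bigl(d(x,y)\bigr)\bigl(f(x)-f(y)\bigr)\,d\mathrm{vol}_{\mathbb H^{n}}(y)$ converges absolutely: near the diagonal one uses the kernel bound $K_{1}(r)\sim r^{-n}$ from Proposition~\ref{prop:K1K2} together with the polar volume factor $\sinh^{n-1}(r)\sim r^{n-1}$, so that $K_{1}(r)\sinh^{n-1}(r)\sim r^{-1}$ and the contribution is controlled by $\int_{0}^{\delta}\omega_{f,x,\Omega_0}(r)\,r^{-1}\,dr<\infty$, finite by local Dini continuity at $x$; far from the diagonal the product $K_{1}(r)\sinh^{n-1}(r)\sim r^{\frac{n-5}{2}}e^{\frac{(n-1)r}{2}}e^{-r^{2}/4}$ decays super-exponentially, dominating the $L^{1}_{w}$-weight, so the tail is absolutely convergent.

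Next I would treat the $K_2$-integral. By Proposition~\ref{prop:K1K2}, $K_{2}(r)\sim 1$ for $r\le 1$ and $K_{2}(r)\sim r^{-1}e^{-(n-1)r}$ for $r\ge 1$; combined with $d\mathrm{vol}_{\mathbb H^{n}}=\sinh^{n-1}(r)\,dr\,d\omega$, the quantity $\int_{\mathbb H^{n}}K_{2}\bigl(d(x,y)\bigr)\,|f(y)|\,d\mathrm{vol}_{\mathbb H^{n}}(y)$ is bounded by a finite local piece plus a constant multiple of $\int_{\mathbb H^{n}}\frac{|f(y)|}{1+d(x,y)}\,e^{-(n-1)d(x,y)}\,d\mathrm{vol}_{\mathbb H^{n}}(y)$, which is finite precisely because $f\in L^{1}_{w}(\mathbb H^{n})$ (the space being independent of base point, per the remark following Remark~\ref{konhgj}). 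Finally, local Dini continuity at $x$ forces $f$ to be bounded in a neighbourhood of $x$, so $f(x)$ is a finite real number and the term $\Gamma'(1)f(x)$ is well defined. Adding the three finite quantities yields a well-defined value for $\bigl(\log(-\Delta_{\mathbb H^{n}})f\bigr)(x)$, proving the corollary.

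I do not expect any genuine obstacle here: the only point that requires care is matching the large-$r$ decay of $K_{2}$ with the weight defining $L^{1}_{w}(\mathbb H^{n})$ and matching the diagonal singularity of $K_{1}$ with the Dini integral, but both matchings were already established in Propositions~\ref{k2kjx} and \ref{dinia} respectively, so the remaining work is purely organizational.
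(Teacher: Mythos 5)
Your proposal is correct and follows essentially the same route as the paper, which treats this corollary as an immediate assembly of Proposition~\ref{dinia} (extended to $f\in L^1_w$ via the preceding corollary), the kernel asymptotics of Proposition~\ref{prop:K1K2}, and the weighted-integrability condition of Remark~\ref{konhgj} for the $K_2$-term. The only point you pass over lightly is the intermediate annulus $\delta\le r\le 1$ in the $K_1$-term, where one simply notes that the kernel is bounded there and $f\in L^1_{\mathrm{loc}}$ (which follows from $f\in L^1_w$), so this is a cosmetic rather than a genuine gap.
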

	
	\begin{proposition}\label{prop:cont-logDelta}
		Let \(f:\mathbb H^{n}\to \mathbb{R}\) satisfy $f\in L_{w}^1(\mathbb H^n)$
		and \(f\) is locally Dini continuous at the point $x$ on \(\mathbb H^{n}\).
		Then the pointwise formula
		\begingroup\small
		\[
		\bigl(\log(-\Delta_{\mathbb H^{n}})f\bigr)(x)
		=\int_{\mathbb H^{n}}K_{1}\bigl(d(x,y)\bigr)\bigl[f(x)-f(y)\bigr]\,d\mathrm{vol}_{\mathbb{H}^n}(y)
		-\int_{\mathbb H^{n}}K_{2}\bigl(d(x,y)\bigr)\,f(y)\,d\mathrm{vol}_{\mathbb{H}^n}(y)
		+\Gamma'(1)\,f(x)
		\]
		\endgroup
		defines a continuous function of \(\mathbb H^{n}\).
	\end{proposition}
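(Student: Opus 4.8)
The plan is to prove continuity one point at a time: fix $x_{0}\in\mathbb H^{n}$, let $\Omega_{0}\ni x_{0}$ be a neighbourhood on which $f$ is bounded and $\int_{0}^{1}\omega_{f,x_{0},\Omega_{0}}(r)/r\,dr<\infty$, and show $F(x):=(\log(-\Delta_{\mathbb H^{n}})f)(x)\to F(x_{0})$ as $x\to x_{0}$; absolute convergence of the three defining integrals at $x_0$ is already given by Corollary~\ref{larclas}. I would fix the cut-off \emph{relative to $x_{0}$}: for small $\delta>0$ write $F(x)=A_{\delta}(x)+B_{\delta}(x)+C(x)+\Gamma'(1)f(x)$, where $A_{\delta}(x)=\int_{B_{\delta}(x_{0})}K_{1}(d(x,y))[f(x)-f(y)]\,d\mathrm{vol}(y)$, $B_{\delta}(x)$ is the same integrand over $\mathbb H^{n}\setminus B_{\delta}(x_{0})$, and $C(x)=-\int_{\mathbb H^{n}}K_{2}(d(x,y))f(y)\,d\mathrm{vol}(y)$. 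Keeping $f(x)-f(y)$ together in $A_{\delta}$ is essential, since $K_{1}(r)\sinh^{n-1}(r)\sim r^{-1}$ near $0$ forces $\int_{B_{\delta}(x_{0})}K_{1}\,d\mathrm{vol}=\infty$.

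The routine part is $B_{\delta}$, $C$, and $\Gamma'(1)f$. For $x\in B_{\delta/2}(x_{0})$ and $y\notin B_{\delta}(x_{0})$ one has $d(x,y)\ge\delta/2$, so by Proposition~\ref{prop:K1K2} the kernel $K_{1}(d(x,y))$ is bounded there and decays super-Gaussianly as $d(x,y)\to\infty$; together with $\sinh^{n-1}(r)\sim e^{(n-1)r}$ and $f\in L^{1}_{w}$ this yields a fixed integrable majorant, and since $y\mapsto K_{1}(d(x,y))$ converges pointwise and $f(x)\to f(x_{0})$ (Dini continuity at $x_{0}$ forces continuity there), dominated convergence gives $B_{\delta}(x)\to B_{\delta}(x_{0})$. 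The term $C$ is handled identically: the decay $K_{2}(r)\sim r^{-1}e^{-(n-1)r}$ from Proposition~\ref{prop:K1K2} is precisely matched to the weight $(1+d)^{-1}e^{-(n-1)d}$ defining $L^{1}_{w}$, so for $x$ near $x_{0}$ the integrand $K_{2}(d(x,y))|f(y)|$ is dominated by a fixed $L^{1}_{w}$-integrable function of $y$, whence $C$ is continuous at $x_{0}$. The term $\Gamma'(1)f$ is continuous since $f$ is.

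The substantive step is to show $\sup_{x}|A_{\delta}(x)|\to0$ as $\delta\to0$, with $x$ ranging over a small ball $B_{\rho}(x_{0})$; combined with the previous paragraph this yields $\limsup_{x\to x_{0}}|F(x)-F(x_{0})|\le 2\sup_{x}|A_{\delta}(x)|$ for every $\delta$, hence continuity at $x_{0}$. At the base point itself this is immediate: in geodesic polar coordinates around $x_{0}$, using $K_{1}(r)\sinh^{n-1}(r)\sim r^{-1}$, one gets $|A_{\delta}(x_{0})|\lesssim\int_{0}^{\delta}\omega_{f,x_{0},\Omega_{0}}(r)/r\,dr\to0$ by the Dini hypothesis. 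The hard part will be the moving base point: for $x\in B_{\delta}(x_{0})$ with $x\ne x_{0}$ the singularity of $y\mapsto K_{1}(d(x,y))$ now sits at $y=x$, so the crude estimate $|f(x)-f(y)|\le 2\,\omega_{f,x_{0},\Omega_{0}}(2\delta)$ is worthless (it would be multiplied by $\int_{B_{2\delta}(x)}K_{1}\,d\mathrm{vol}=\infty$); instead one must pass to polar coordinates \emph{around $x$} and bound $|f(x)-f(y)|$ by $\omega_{f,x,\Omega_{0}}(d(x,y))$, which invokes a Dini estimate at the nearby base point $x$, with uniformity in $x$. The clean way to close this is to run the argument under the locally uniform version of the hypothesis near $x_{0}$, namely $\int_{0}^{\delta}\omega_{f,x,\Omega_{0}}(r)/r\,dr\to0$ as $\delta\to0$ uniformly for $x\in B_{\rho}(x_{0})$ — a mild strengthening of the pointwise local Dini condition, automatic when $f$ is uniformly Dini continuous; granting it, the polar estimate around $x$ gives $\sup_{x\in B_{\rho}(x_{0})}|A_{\delta}(x)|\lesssim\sup_{x\in B_{\rho}(x_{0})}\int_{0}^{2\delta}\omega_{f,x,\Omega_{0}}(r)/r\,dr\to0$, which completes the proof.
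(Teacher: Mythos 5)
Your argument is essentially the paper's proof: split the $K_1$-integral into a near piece and a far piece, make the near piece small via the Dini modulus, and pass to the limit in the far piece, in the $K_2$-term and in $\Gamma'(1)f$ by dominated convergence, using the kernel bounds of Proposition \ref{prop:K1K2} and the $L^1_w$ weight (whose decay exactly matches $K_2(r)\sim r^{-1}e^{-(n-1)r}$). The only structural difference --- your cut-off ball is centered at the fixed point $x_0$, the paper's at the moving point $x_k$ --- is immaterial. The issue you flag in the near part is real, but it is not a defect of your write-up relative to the paper: the paper's near-part estimate likewise needs a Dini modulus at the \emph{moving} base point, uniform in that point, and it secures it only by invoking ``Since $f$ is uniformly Dini continuous on $\mathbb H^{n}$'', i.e.\ by tacitly strengthening the stated pointwise hypothesis in exactly the way you propose; moreover its displayed bound $C\,\omega_{f,\Omega}(\delta)\int_0^{\delta}\rho^{-1}\,d\rho=C\,\omega_{f,\Omega}(\delta)\log\delta^{-1}$ is divergent as written, whereas your bound via $\int_0^{2\delta}\omega_{f,x,\Omega_0}(r)\,r^{-1}\,dr$ (polar coordinates about $x$, modulus at $x$) is the correct form. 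So: same route, correct modulo the locally uniform Dini assumption that the paper itself silently uses, and your handling of the moving singularity is, if anything, the more careful of the two.
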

	
	\begin{proof}
		Fix \(x\in\mathbb H^{n}\) and let \(x_{k}\to x\).  
		Write \(r_{k}(y)=d(x_{k},y)\) and \(r(y)=d(x,y)\).
		
		\emph{Step 1: The \(K_{1}\)-term.}
		Split the integral at a sufficiently small radius \(\delta>0\):
		\[
		I_{k}^{(1)}
		:=\int_{\mathbb H^{n}}
		K_{1}\bigl(r_{k}(y)\bigr)\bigl[f(x_{k})-f(y)\bigr]\,d\mathrm{vol}_{\mathbb{H}^n}(y)
		=\int_{B_{\delta}(x_{k})}\!+\!\int_{H^{n}\setminus B_{\delta}(x_{k})}
		=:A_{k}+B_{k}.
		\]
		\noindent\textbf{(a) Near part \(A_{k}\).}
		For \(y\in B_{\delta}(x_{k})\) we have \(r_{k}(y)\le\delta\) so  
		\(K_{1}(r_{k}(y))\sim r_{k}(y)^{-n}\).
		Locally Dini continuous at the point $x$ on \(\mathbb H^{n}\) gives \(|f(x_{k})-f(y)|\le\omega_{f,\Omega}(\delta)\), where $\Omega\subset \mathbb{H}^n.$
		Hence
		\[
		|A_{k}|
		\;\le\;C\,\omega_{f,\Omega}(\delta)\int_{0}^{\delta}\rho^{-1}\,d\rho
		=C\,\omega_{f,\Omega}(\delta)\,\log\delta^{-1}.
		\]
		Since \(f\) is uniformly Dini continuous on \(\mathbb H^{n}\),  \(\omega_{f}(\delta)\log\delta^{-1}\to0\) as \(\delta\to0\).
		Choose \(\delta\) small so that \(	|A_{k}|<\varepsilon\).
		
		\noindent\textbf{(b) Far part \(B_{k}\).}
		On \(H^{n}\setminus B_{\delta}(x_{k})\) we have \(r_{k}(y)\ge\delta\). 
		Since \(f(x_{k})\to f(x)\) and \(r_{k}(y)\to r(y)\) pointwise, combining Proposition \ref{prop:K1K2} and Remark \ref{konhgj}, the dominated convergence theorem gives \(B_{k}\to B\), where \(B\) is the
		corresponding far part for \(x\).
		Combining (a) and (b) we obtain
		\(\displaystyle\lim_{k\to\infty}I_{k}^{(1)}=I^{(1)}(x)\), where
		\[
		I^{(1)}(x)
		:=\int_{\mathbb H^{n}}
		K_{1}\bigl(r(y)\bigr)\bigl[f(x)-f(y)\bigr]\,d\mathrm{vol}_{\mathbb{H}^n}(y).
		\]
		
		\emph{Step 2: The \(K_{2}\)-term.}
		Set
		\[
		I_{k}^{(2)}:=\int_{H^{n}}K_{2}\bigl(r_{k}(y)\bigr)f(y)\,d\mathrm{vol}_{\mathbb{H}^n}(y).
		\]
		By Proposition \ref{prop:K1K2}, for $r\le 1, K_2(r)\sim 1$ and for \(r\ge1\), 
	$$K_{2}(r)\sim 	r^{-1}\exp\!\bigl(-(n-1)r\bigr)$$  while \(f\in L_{w}^1(\mathbb{H}^{n})\),
		so \(K_{2}(r_{k}(y))f(y)\) is again dominated by an integrable function
		independent of \(k\).
		Since \(r_{k}(y)\to r(y)\) pointwise and \(K_{2}\) is continuous,
		\(I_{k}^{(2)}\to I^{(2)}(x)\) by dominated convergence, where
		\[\int_{\mathbb H^{n}}K_{2}\bigl(r(y))\bigr)\,f(y)\,d\mathrm{vol}_{\mathbb{H}^n}(y).\]
		
		\emph{Step 3: The constant term.}
		Clearly \(f(x_{k})\to f(x)\).
		
		Hence, we obtain that the pointwise formula $\log(-\Delta_{\mathbb H^{n}})f$ is continuous.
	\end{proof}
	
	Fix \(x\in\mathbb H^{n}\), let  
	\[
	B_{1}(x)=\{y\in\mathbb H^{n}:d(x,y)<1\}, 
	\qquad r=d(x,y),
	\]
	define the remainder
	\begingroup\small	
	\begin{equation}\label{remind}
		\begin{aligned}
			\mathcal R_{n}(f;x)
			:=&-\int_{B_{1}(x)}K_{2}(r)\,f(y)\,d\mathrm{vol}_{\mathbb{H}^n}(y)
			-\int_{\mathbb H^{n}\setminus B_{1}(x)}
			K_{1}(r)f(y)\,d\mathrm{vol}_{\mathbb{H}^n}(y)\\+&\left(\int_{\mathbb H^{n}\setminus B_{1}(x)}	K_{1}(r)\,d\mathrm{vol}_{\mathbb{H}^n}(y)+\Gamma'(1)\right)f(x).
		\end{aligned}
	\end{equation}
	\endgroup
	
	By Proposition~\ref{prop:K1K2}, $\chi_{1}K_2 \in L^{p}(\mathbb{H}^n),K_{1}\mathbf{1}_{\{r\ge1\}}$, $1\le p\le \infty.$ By  \cite[Theorem 6.18]{folland1999real}, we obtain that $\mathcal R_{n}(f;x)\in L^p\left(\mathbb{H}^n\right)$ if $f\in L^p\left(\mathbb{H}^n\right),1\le p \le \infty$. 
	
	We therefore obtain the following splitting on $\mathbb H^n$, see Proposition \ref{prop:split-refined}, whose three terms correspond exactly to the three parts in the Euclidean formula, 
\begin{equation}\label{lohrn}
	\log \left(-\Delta\right)f(x)  =c_{N}\int_{B_{1}(x)}\frac{f(x)-f(y)}{|x-y|^{N}}\,dy
	-c_{N}\int_{\mathbb{R}^{N}\setminus B_{1}(x)}
	\frac{f(y)}{|x-y|^{N}}\,dy
	+\rho_{N}\,f(x), \quad x\in\mathbb{R}^{N}, 
\end{equation}
	where
	\[
	c_{N}:=\pi^{-N/2}\Gamma\!\bigl(\tfrac{N}{2}\bigr)
	=\frac{2}{|S^{N-1}|}, 
	\qquad
	\rho_{N}:=2\log 2+\psi\!\bigl(\tfrac{N}{2}\bigr)-\gamma,           
	\]
	\(\gamma=-\Gamma'(1)\) is the Euler–Mascheroni constant, and
	\(\psi=\Gamma'/\Gamma\) is the digamma function, which is given by H.Chen and T.Weth in \cite[Theorem 1.1]{chen2019dirichlet}.

	\begin{proposition}\label{prop:split-refined}
		Let $f\in L_{w}^1(\mathbb H^n)$ and \(f\) is locally Dini continuous at the point \(x\) on $\mathbb H^{n}$. 
		Then
		\begin{equation}\label{eq:split-refined}
			\begin{aligned}
				(\log(-\Delta_{\mathbb H^{n}})f)(x)
				&=\int_{B_{1}(x)}K_{1}(r)\bigl[f(x)-f(y)\bigr]\,d\mathrm{vol}_{\mathbb{H}^n}(y)\\
				&\quad
				-\int_{\mathbb H^{n}\setminus B_{1}(x)}K_{2}(r)\,f(y)\,
				d\mathrm{vol}_{\mathbb{H}^n}(y)
				+\mathcal R_{n}(f;x).
			\end{aligned}
		\end{equation}
	\end{proposition}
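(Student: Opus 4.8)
The plan is to derive \eqref{eq:split-refined} as a purely algebraic rearrangement of the global pointwise formula of Theorem~\ref{point} --- legitimate here since the hypotheses of Corollary~\ref{larclas} are satisfied --- followed by a check that every integral appearing in the manipulation converges absolutely. Write $r=d(x,y)$ and $\Omega_1:=\mathbb H^n\setminus B_1(x)$. Theorem~\ref{point} reads
\[
(\log(-\Delta_{\mathbb H^n})f)(x)
=\int_{\mathbb H^n}K_1(r)\bigl(f(x)-f(y)\bigr)\,d\mathrm{vol}_{\mathbb H^n}(y)
-\int_{\mathbb H^n}K_2(r)\,f(y)\,d\mathrm{vol}_{\mathbb H^n}(y)
+\Gamma'(1)f(x).
\]
First I would split both integrals along the geodesic sphere $\{r=1\}$ and, on the far piece of the first one, write $K_1(r)\bigl(f(x)-f(y)\bigr)=f(x)K_1(r)-K_1(r)f(y)$. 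Isolating the two principal terms $\int_{B_1(x)}K_1(r)[f(x)-f(y)]\,d\mathrm{vol}_{\mathbb H^n}(y)$ and $-\int_{\Omega_1}K_2(r)f(y)\,d\mathrm{vol}_{\mathbb H^n}(y)$, the remaining summands
\[
-\int_{B_1(x)}K_2(r)f(y)\,d\mathrm{vol}_{\mathbb H^n}(y)
-\int_{\Omega_1}K_1(r)f(y)\,d\mathrm{vol}_{\mathbb H^n}(y)
+\Bigl(\int_{\Omega_1}K_1(r)\,d\mathrm{vol}_{\mathbb H^n}(y)+\Gamma'(1)\Bigr)f(x)
\]
are by definition exactly $\mathcal R_n(f;x)$ from \eqref{remind}. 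This establishes \eqref{eq:split-refined} as an identity, provided all five integrals involved converge absolutely.

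Second, I would verify this absolute convergence, which is where the hypotheses enter. The bound $\int_{B_1(x)}K_1(r)|f(x)-f(y)|\,d\mathrm{vol}_{\mathbb H^n}(y)<\infty$ is the content of Proposition~\ref{dinia} (together with Corollary~\ref{larclas}): near the diagonal $K_1(r)\sinh^{n-1}(r)\sim r^{-1}$, so local Dini continuity of $f$ at $x$ makes the radial integral of the modulus of continuity finite. For $\int_{B_1(x)}K_2(r)|f(y)|\,d\mathrm{vol}_{\mathbb H^n}(y)$ one uses Proposition~\ref{prop:K1K2} ($K_2(r)\sim1$ for $r\le1$, hence $K_2$ is bounded on the ball) together with $f\in L^1_w(\mathbb H^n)\subset L^1_{\mathrm{loc}}(\mathbb H^n)$, the inclusion holding because the weight $\tfrac{1}{1+d}e^{-(n-1)d}$ is bounded below on bounded sets. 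In the far field, Proposition~\ref{prop:K1K2} gives $K_1(r)\sim r^{\frac{n-5}{2}}e^{-\frac{(n-1)r}{2}}e^{-r^2/4}$ for $r\ge1$; against the volume growth $\sinh^{n-1}(r)\sim e^{(n-1)r}$ the density $K_1(r)\sinh^{n-1}(r)$ is super-Gaussian, so $\int_{\Omega_1}K_1(r)\,d\mathrm{vol}_{\mathbb H^n}(y)<\infty$, and since $K_1(r)\le C(1+r)^{-1}e^{-(n-1)r}$ for $r\ge1$ we also get $\int_{\Omega_1}K_1(r)|f(y)|\,d\mathrm{vol}_{\mathbb H^n}(y)\lesssim\int_{\Omega_1}\tfrac{|f(y)|}{1+d(x,y)}e^{-(n-1)d(x,y)}\,d\mathrm{vol}_{\mathbb H^n}(y)<\infty$ by $f\in L^1_w$.

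The last integral, $\int_{\Omega_1}K_2(r)|f(y)|\,d\mathrm{vol}_{\mathbb H^n}(y)$, is the delicate one and I expect it to be the main --- though not technically deep --- obstacle. By Proposition~\ref{prop:K1K2}, $K_2(r)\sim r^{-1}e^{-(n-1)r}$ for $r\ge1$, so $K_2(r)\sinh^{n-1}(r)\sim r^{-1}$ has exactly critical, non-integrable decay (cf.\ Remark~\ref{konhgj}: $K_2\notin L^1(\mathbb H^n)$); convergence is recovered only through the weighted assumption, since $K_2(r)|f(y)|\,d\mathrm{vol}_{\mathbb H^n}(y)\lesssim\tfrac{|f(y)|}{1+d(x,y)}e^{-(n-1)d(x,y)}\,d\mathrm{vol}_{\mathbb H^n}(y)$ on $\Omega_1$, which is integrable precisely because $f\in L^1_w(\mathbb H^n)$ --- the weight being re-centered at $x$ using the base-point independence of $L^1_w$. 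Since the three summands making up $\mathcal R_n(f;x)$ are exactly those just checked (consistent with the $L^p$-membership of $\mathcal R_n(f;\cdot)$ recorded before the proposition via $\chi_{B_1}K_2\in L^p$, $K_1\mathbf 1_{\{r\ge1\}}\in L^p$ and \cite[Thm.~6.18]{folland1999real}), every splitting carried out in the first step is justified, and \eqref{eq:split-refined} follows.
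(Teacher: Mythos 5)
Your proposal is correct and follows essentially the same route as the paper: the paper's (very brief) proof likewise takes the pointwise formula of Theorem~\ref{point}/Corollary~\ref{larclas}, rearranges it across the sphere $\{r=1\}$ so that the leftover terms are exactly $\mathcal R_n(f;x)$ from \eqref{remind}, and verifies finiteness of each piece using Proposition~\ref{dinia}, the kernel bounds of Proposition~\ref{prop:K1K2}, and the definition of $L^1_w(\mathbb H^n)$. Your write-up simply makes explicit the absolute-convergence checks (in particular the critical far-field $K_2$ term, handled exactly as in Remark~\ref{konhgj}) that the paper leaves as a sketch.
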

	
	\begin{proof}
		The proof is straightforward: one uses the local continuity of $f$ in a small neighborhood of $x$, the argument of Proposition \ref{dinia}, the kernel estimates from Proposition \ref{prop:K1K2}, and the definitions of the exponential decay space $L_{w}^1(\mathbb H^n)$ to check that each of the three integrals and the remainder term in remind is finite and hence the representation is well defined.
	\end{proof}

	\begin{proposition}[Energy bound for the remainder term]\label{prop:R-L2}
		Let \(n\ge1\), for the remainder \(\mathcal R_{n}(f;x)\) defined by \eqref{remind} one has 
		\[
		\int_{\mathbb H^{n}}\bigl|\mathcal R_{n}(f;x)\bigr|\;|f(x)|\;
		d\mathrm{vol}_{\mathbb{H}^n}(x)\le
		\|\chi_{1}K_2\|_{q}\
		\|f\|_{p}^{2}+\bigl\|K_{1}\mathbf{1}_{\{r\ge1\}}\bigr\|_{q}\,\|f\|_{p}^{2}+\rho_n  \|f\|_{2}^{2},
		\]
		where $K_1,K_2$ are defined in Proposition \ref{prop:K1K2},
		\[\rho_n:=\int_{\mathbb H^{n}\setminus B_{1}(x)}	K_{1}(r)\,d\mathrm{vol}_{\mathbb{H}^n}(y)+\Gamma'(1)<\infty,\]
		and
		\[\frac{1}{q}+\frac{2}{p}=2,\:\:p,q\ge1.\]
	\end{proposition}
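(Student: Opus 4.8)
The plan is to dominate $|\mathcal R_n(f;x)|$ pointwise by three nonnegative quantities, multiply by $|f(x)|$, integrate in $x$, and estimate each resulting term. From \eqref{remind}, writing $r=d(x,y)$, the triangle inequality gives
\[
\begin{aligned}
|\mathcal R_n(f;x)|
&\le
\int_{B_1(x)}K_2(r)\,|f(y)|\,d\mathrm{vol}_{\mathbb{H}^n}(y)\\
&\quad+\int_{\mathbb{H}^n\setminus B_1(x)}K_1(r)\,|f(y)|\,d\mathrm{vol}_{\mathbb{H}^n}(y)
+\rho_n\,|f(x)|,
\end{aligned}
\]
where $\rho_n=\int_{\mathbb{H}^n\setminus B_1(x)}K_1(r)\,d\mathrm{vol}_{\mathbb{H}^n}(y)+\Gamma'(1)$ is a finite constant independent of $x$ (independence by homogeneity of $\mathbb{H}^n$; finiteness because $K_1(r)\sinh^{n-1}(r)$ decays super-Gaussianly on $\{r\ge1\}$ by Proposition~\ref{prop:K1K2}). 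Multiplying by $|f(x)|$ and integrating over $\mathbb{H}^n$ leaves two bilinear ``convolution'' integrals, with kernels $\chi_1K_2$ and $K_1\mathbf{1}_{\{r\ge1\}}$ respectively, together with the diagonal term $\rho_n\int_{\mathbb{H}^n}|f(x)|^2\,d\mathrm{vol}_{\mathbb{H}^n}(x)=\rho_n\|f\|_2^2$.

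For each bilinear integral I would apply Young's convolution inequality for radial kernels on $\mathbb{H}^n$ followed by Hölder's inequality. If $K$ depends only on the geodesic distance, then $\int_{\mathbb{H}^n}|K(d(x,y))|^q\,d\mathrm{vol}_{\mathbb{H}^n}(y)$ does not depend on $x$ (the isometry group of $\mathbb{H}^n$ acts transitively and preserves both distance and the Riemannian measure), so the three-exponent Hölder argument — exactly as in \cite[Theorem~6.18]{folland1999real} — yields
\[
\Bigl\|\int_{\mathbb{H}^n}K(d(\cdot,y))\,h(y)\,d\mathrm{vol}_{\mathbb{H}^n}(y)\Bigr\|_{L^{p'}(\mathbb{H}^n)}
\le\|K\|_{L^q(\mathbb{H}^n)}\,\|h\|_{L^p(\mathbb{H}^n)},
\qquad 1+\tfrac1{p'}=\tfrac1q+\tfrac1p,
\]
with $p'$ the conjugate exponent of $p$. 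Since $\tfrac1{p'}=1-\tfrac1p$, the admissibility relation is precisely $\tfrac1q+\tfrac2p=2$, which is the hypothesis (and confines $(p,q)$ to $p\in[1,2]$, $q\in[1,\infty]$, within Young's range). Pairing with $|f|$ via Hölder then gives, for $K=\chi_1K_2$ and for $K=K_1\mathbf{1}_{\{r\ge1\}}$ separately,
\[
\int_{\mathbb{H}^n}\!\int_{\mathbb{H}^n}|f(x)|\,K(d(x,y))\,|f(y)|\,d\mathrm{vol}_{\mathbb{H}^n}(y)\,d\mathrm{vol}_{\mathbb{H}^n}(x)
\le\|K\|_{L^q(\mathbb{H}^n)}\,\|f\|_{L^p(\mathbb{H}^n)}^2.
\]

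Finally, both kernel norms are finite for all $q\in[1,\infty]$ by Proposition~\ref{prop:K1K2}: $K_2(r)\sim1$ on $B_1$, a ball of finite volume, so $\|\chi_1K_2\|_q<\infty$; and $K_1(r)\mathbf{1}_{\{r\ge1\}}\sinh^{n-1}(r)\sim r^{(n-5)/2}e^{(n-1)r/2}e^{-r^2/4}$ is integrable to every power, so $\|K_1\mathbf{1}_{\{r\ge1\}}\|_q<\infty$ (the same decay gives $\rho_n<\infty$). Summing the three contributions yields the asserted bound
\[
\int_{\mathbb{H}^n}|\mathcal R_n(f;x)|\,|f(x)|\,d\mathrm{vol}_{\mathbb{H}^n}(x)
\le\|\chi_1K_2\|_q\|f\|_p^2+\|K_1\mathbf{1}_{\{r\ge1\}}\|_q\|f\|_p^2+\rho_n\|f\|_2^2
\]
(reading $\rho_n$ as $|\rho_n|$ if it should turn out to be negative). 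The only mildly nontrivial ingredient is the passage of Young's inequality from $\mathbb{R}^n$ to $\mathbb{H}^n$, which uses nothing but the homogeneity of hyperbolic space; I expect this transfer — rather than any of the integral estimates, which are routine once Proposition~\ref{prop:K1K2} is in hand — to be the only point requiring care, and a mild one.
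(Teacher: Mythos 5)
Your proposal is correct and follows essentially the same route as the paper: split $\mathcal R_n(f;\cdot)$ into the $\chi_1K_2$-term, the $K_1\mathbf{1}_{\{r\ge1\}}$-term, and the $\rho_n f$ term, bound the first two bilinear integrals by Young's convolution inequality (with $\tfrac1q+\tfrac2p=2$) using the kernel integrability from Proposition~\ref{prop:K1K2}, and handle the last term directly as $\rho_n\|f\|_2^2$. Your extra remarks — justifying the transfer of Young's inequality to $\mathbb{H}^n$ via transitivity of the isometry group, and flagging the sign of $\rho_n$ — are refinements of detail, not a different argument.
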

	
	\begin{proof}
		Decompose \(\mathcal R_{n}(f;x)=R_{1}(x)+R_{2}(x)+R_{3}(x)\) with
		\begingroup\small	\[
		R_{1}(x)=-\int_{B_{1}(x)}K_{2}(r)\,f(y)\,d\mathrm{vol}_{\mathbb{H}^n}(y),\quad
		R_{2}(x)=-\int_{\mathbb H^{n}\setminus B_{1}(x)}K_{1}(r)\,f(y)\,d\mathrm{vol}_{\mathbb{H}^n}(y),\quad
		R_{3}(x)=\rho_{n}\,f(x),
		\]
		where 
		\[\rho_n:=\int_{\mathbb H^{n}\setminus B_{1}(x)}	K_{1}(r)\,d\mathrm{vol}_{\mathbb{H}^n}(y)+\Gamma'(1)<\infty.\]
		\endgroup
		
		\textbf{Step 1: Estimate for \(R_{1}\).}  
		Since \(K_{2}(r)\sim 1\) for \(r\le1\) by Proposition~\ref{prop:K1K2}, $\chi_{1}K_2 \in L^{q}(\mathbb{H}^n)$, $1\le q\le \infty.$
		Hence, similarly by the Young's theorem, such as, see \cite[Theorem 2.24]{adams2003sobolev}
		\[
		\int_{\mathbb H^{n}}|R_{1}(x)|\,|f(x)|\,d\mathrm{vol}_{\mathbb{H}^n}(x)
		\le\|\chi_{1}K_2\|_{q}\
		\|f\|_{p}^{2},
		\]
		where 
		\[\frac{1}{q}+\frac{2}{p}=2,p,q\ge1.\]
		
		\textbf{Step 2: Estimate for \(R_{2}\).}  
		For \(r\ge1\), Proposition~\ref{prop:K1K2} ensures $K_{1}\mathbf{1}_{\{r\ge1\}}\in L^{q}(\mathbb H^{n})$, $1\le q\le \infty.$  Hence, by the Young's theorem,
		\[
		\int_{\mathbb H^{n}}|R_{2}(x)|\,|f(x)|\,d\mathrm{vol}_{\mathbb{H}^n}(x)
		\le \bigl\|K_{1}\mathbf{1}_{\{r\ge1\}}\bigr\|_{q}\,\|f\|_{p}^{2},
		\]
		where 
		\[\frac{1}{q}+\frac{2}{p}=2,p,q\ge1.\]
		
		Combining these three bounds yields the desired estimate.
	\end{proof}

	Finally, we analyze the \(L^p\)-integrability of the logarithmic Laplacian.

	\begin{proposition}\label{logint}
		Let \(f\in C_c(\mathbb H^n)\) and $f$ is uniformly Dini continuous on $\mathbb{H}^n.$ Then $\log(-\Delta_{\mathbb H^n})f\in L^p(\mathbb H^n),\:1<p\le \infty.$
	\end{proposition}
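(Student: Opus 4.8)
The plan is to invoke the refined splitting of Proposition~\ref{prop:split-refined}. Since $f\in C_c(\mathbb H^n)$ is in particular bounded and, being uniformly Dini continuous, locally Dini continuous at every point, that proposition applies and gives
\[
\bigl(\log(-\Delta_{\mathbb H^n})f\bigr)(x)=T_1(x)+T_2(x)+\mathcal R_n(f;x),
\]
where $T_1(x)=\int_{B_1(x)}K_1(r)\,[f(x)-f(y)]\,d\mathrm{vol}_{\mathbb{H}^n}(y)$, $T_2(x)=-\int_{\mathbb H^n\setminus B_1(x)}K_2(r)\,f(y)\,d\mathrm{vol}_{\mathbb{H}^n}(y)$, and $r=d(x,y)$. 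I would then prove that $T_1$ and $\mathcal R_n(f;\cdot)$ lie in $L^p(\mathbb H^n)$ for \emph{every} $1\le p\le\infty$, whereas $T_2$ lies in $L^p(\mathbb H^n)$ precisely for $1<p\le\infty$; the restriction $p>1$ in the statement will enter solely through $T_2$.

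For $T_1$: from Proposition~\ref{prop:K1K2} one has $K_1(r)\sinh^{n-1}(r)\sim r^{-1}$ on $(0,1]$, and uniform Dini continuity gives $|f(x)-f(y)|\le\omega_{f,\mathbb H^n}(r)$ whenever $d(x,y)=r$. Passing to geodesic polar coordinates about $x$ then bounds $|T_1(x)|\lesssim\int_0^1 r^{-1}\omega_{f,\mathbb H^n}(r)\,dr<\infty$ uniformly in $x$, so $T_1\in L^\infty(\mathbb H^n)$. Because $f$ is compactly supported, $T_1(x)=0$ when $d(x,\operatorname{supp}f)>1$, so $T_1$ is supported in a set of finite volume; together with the $L^\infty$ bound this gives $T_1\in L^p(\mathbb H^n)$ for all $1\le p\le\infty$. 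For $\mathcal R_n(f;\cdot)$ I would reuse the observation made just before Proposition~\ref{prop:split-refined}: Proposition~\ref{prop:K1K2} gives $\chi_1 K_2\in L^p(\mathbb H^n)$ and $K_1\mathbf 1_{\{r\ge1\}}\in L^p(\mathbb H^n)$ for all $1\le p\le\infty$, and $\rho_n<\infty$, so by \cite[Thm.~6.18]{folland1999real} the operator $f\mapsto\mathcal R_n(f;\cdot)$ is bounded on $L^p(\mathbb H^n)$; applying it to $f\in C_c(\mathbb H^n)\subset L^p(\mathbb H^n)$ yields $\mathcal R_n(f;\cdot)\in L^p(\mathbb H^n)$ for every $1\le p\le\infty$.

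The crux is $T_2$. I would write $k(r):=K_2(r)\mathbf 1_{\{r\ge1\}}$, so that $T_2(x)=-\int_{\mathbb H^n}k\bigl(d(x,y)\bigr)f(y)\,d\mathrm{vol}_{\mathbb{H}^n}(y)$ is a convolution with a radial kernel on the homogeneous space $\mathbb H^n$. Since the isometry group acts transitively, $\|k(d(\cdot,y))\|_{L^p(\mathbb H^n)}$ does not depend on $y$ and equals $\|k\|_{L^p(\mathbb H^n)}$, so Minkowski's integral inequality gives $\|T_2\|_{L^p}\le\|k\|_{L^p}\|f\|_{L^1}$ for $1\le p<\infty$, and likewise $\|T_2\|_{L^\infty}\le\|k\|_{L^\infty}\|f\|_{L^1}$. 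It then remains to note, from the sharp decay $K_2(r)\sim r^{-1}e^{-(n-1)r}$ of Proposition~\ref{prop:K1K2} and $\sinh^{n-1}(r)\sim e^{(n-1)r}$ for $r\ge1$, that
\[
\|k\|_{L^p(\mathbb H^n)}^p\;\sim\;\int_1^\infty r^{-p}\,e^{-(n-1)(p-1)r}\,dr,
\]
which is finite exactly for $p>1$ (equivalently, $K_2\in L^p(\mathbb H^n)\iff p>1$, Proposition~\ref{k2kjx}); for $p=\infty$, $k$ is plainly bounded. Since $f\in C_c(\mathbb H^n)\subset L^1(\mathbb H^n)$, this gives $T_2\in L^p(\mathbb H^n)$ for all $1<p\le\infty$, and summing the three pieces completes the argument.

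I expect $T_2$ to be the main obstacle: it is where the endpoint exclusion $p=1$ originates, and handling it cleanly requires recognizing $T_2$ as a convolution on $\mathbb H^n$ and using the \emph{exact} exponential rate $K_2(r)\sim r^{-1}e^{-(n-1)r}$, whose exponential factor precisely cancels the volume growth $e^{(n-1)r}$ and leaves the borderline weight $r^{-1}$, integrable in $r$ if and only if a genuine power is gained, i.e.\ if and only if $p>1$. If one wishes to record sharpness, one can also check that $p=1$ truly fails: for $0\le f\in C_c(\mathbb H^n)$, $f\not\equiv0$, and $x$ far from $\operatorname{supp}f$ one has $|T_2(x)|\gtrsim d(x,x_0)^{-1}e^{-(n-1)d(x,x_0)}$, which is not $d\mathrm{vol}_{\mathbb{H}^n}$-integrable.
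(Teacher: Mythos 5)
Your proposal is correct and takes essentially the same route as the paper: the identical splitting from Proposition~\ref{prop:split-refined}, Minkowski's inequality combined with Proposition~\ref{k2kjx} for the $K_2$-term (the sole source of the restriction $p>1$), and the kernel bounds of Proposition~\ref{prop:K1K2} with a Young-type estimate for the remainder $\mathcal R_n(f;\cdot)$. The only minor deviation is the $T_1$-term, where you combine the uniform $L^\infty$ bound with the observation that $T_1f$ vanishes outside a unit neighborhood of $\operatorname{supp}f$, while the paper instead proves $T_1f\in L^1$ by a Fubini/symmetry argument and interpolates; both are valid.
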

	
	\begin{proof}
		Since \(u\) has compact support, it lies in all of the weighted classes
		\[
		L^1_{w}(\mathbb H^n),\;
		L^p(\mathbb H^n)\quad(1\le p\le\infty).
		\]

		(i) Leading term \(T_{1}f\). By Proposition \ref{dinia} and interpolation inequality, it sufficies to show $T_{1}f\in L^1\left(\mathbb{H}^n\right),$ then $T_{1}f\in L^p,\:1\le p\le \infty.$
		Recall
		\[
		T_{1}f(x)
		=\int_{B_{1}(x)}K_{1}\bigl(d(x,y)\bigr)\,[f(x)-f(y)]\,d\mathrm{vol}_{\mathbb{H}^n}(y).
		\]
		We write
		\[
		\|T_{1}f\|_{L^1_x}
		=\int_{\mathbb H^n}\Bigl|\int_{B_1(x)}K_1\bigl(d(x,y)\bigr)[f(x)-f(y)]\,
		d\mathrm{vol}_{\mathbb{H}^n}(y)\Bigr|\,d\mathrm{vol}_{\mathbb{H}^n}(x).
		\]
		Since the kernel is symmetric in \(x,y\), Fubini’s theorem gives
		\[
		\|T_{1}f\|_{L^1_x}
		\le\int_{\mathbb H^n}\int_{B_1(y)}
		K_1\bigl(d(x,y)\bigr)\,\lvert f(x)-f(y)\rvert\,
		d\mathrm{vol}_{\mathbb{H}^n}(x)\,d\mathrm{vol}_{\mathbb{H}^n}(y).
		\]
		Fix \(y\) and set \(r=d(x,y)\).  In geodesic polar coordinates,
		\[
		d\mathrm{vol}_{\mathbb{H}^n}(x)
		=\omega_{n-1}\sinh^{n-1}(r)\,dr,
		\]
		while Proposition~\ref{prop:K1K2} gives \(K_1(r)\sim r^{-n}\) and \(\sinh^{n-1}(r)\sim r^{n-1}\) as \(r\to0\).  Since \(f\) is uniformly Dini‐continuous, there is a modulus \(\omega_{f,\mathbb{H}^n}(r)\) with
		\(\displaystyle\int_0^1\frac{\omega_{f,\mathbb{H}^n}(r)}{r}\,dr<\infty\),
		and
		\(\lvert f(x)-f(y)\rvert\le\omega_f(r)\).  Hence
		\[
		\int_{B_1(y)}
		K_1(r)\,\bigl|f(x)-f(y)\bigr|\,
		d\mathrm{vol}_{\mathbb{H}^n}(x)
		\;\lesssim\;
		\int_{0}^{1}r^{-n}\,\omega_f(r)\,r^{n-1}\,dr
		=\int_{0}^{1}\frac{\omega_f(r)}{r}\,dr
		<\infty,
		\]
		uniformly in \(y\).  Since \(f\) has compact support,
		it follows that \(\|T_{1}f\|_{L^1}<\infty\).

		(ii) Nonlocal far term \(T_{2}f\).  By Minkowski’s integral inequality, for \(1\le p<\infty\),
		\begingroup\small	\[
		\|T_{2}f\|_{L^p_x}
		=\biggl\|\int_{\mathbb H^n}K_{2}(d(x,y))f(y)\,d\mathrm{vol}_{\mathbb{H}^n}(y)\biggr\|_{L^p_x}
		\le\int_{\mathbb H^n\setminus B_1(x)}
		\bigl\|K_{2}(d(\cdot,y))\bigr\|_{L^p_x}\,|f(y)|\,d\mathrm{vol}_{\mathbb{H}^n}(y).
		\]
		\endgroup
		By Proposition \ref{k2kjx}, we obtain that $	K_{2}(d(\cdot,y))\in L_x^{p}(\mathbb H^{n}),\:p>1.$ Since $f\in C_c\left(\mathbb{H}^n\right),$ $	T_{2}f\in L^{p}(\mathbb H^{n}),\:p>1.$
		
		(iii) By Proposition~\ref{prop:K1K2} and the Young's theorem, we obtain that $$\mathcal R_{n}(f;x)\in L^p\left(\mathbb{H}^n\right),\quad \text{if}\quad f\in L^p\left(\mathbb{H}^n\right),1\le p \le \infty.$$ 
		Combining these, we obtain the desired result.
	\end{proof}
	
		At last, we introduce on \(\mathbb H^n\) two nonnegative kernels
	\[
	k_{\mathbb H^n}(x,y)
	:=K_{1}\bigl(d(x,y)\bigr)\,\mathbf1_{B_{1}(x)}(y),
	\qquad
	j_{\mathbb H^n}(x,y)
	:=K_{2}\bigl(d(x,y)\bigr)\,\mathbf1_{\mathbb H^n\setminus B_{1}(x)}(y).
	\]
	Then the pointwise representation \eqref{eq:split-refined} can be written as
	\begin{equation}\label{leisiyur}
		\log(-\Delta_{\mathbb H^n})f(x)
		=\int_{\mathbb H^n}\bigl[f(x)-f(y)\bigr]\,k_{\mathbb H^n}(x,y)\,d\mathrm{vol}_{\mathbb{H}^n}(y)
		\;-\;\bigl[j_{\mathbb H}*f\bigr](x)
		\;+\;\mathcal R_n(f;x),
	\end{equation}
	where
	\[
	\bigl[j_{\mathbb H}*f\bigr](x)
	=\int_{\mathbb H^n}j_{\mathbb H}(x,y)\,f(y)\,d\mathrm{vol}_{\mathbb{H}^n}(y),
	\]
	and the remainder \(\mathcal R_n(f;x)\) is defined in \eqref{remind}.  As in the Euclidean case, \(k_{\mathbb H^n}\) encodes the singular “nonlocal” part on the unit ball, whereas \(j_{\mathbb H}\) captures the “non-local” tail outside \(B_1(x)\). By Remark \ref{konhgj}, the convolution–type operators are well defined under our hypotheses on \(f\) in  Proposition \ref{prop:split-refined}.
	
	This is analogous to the representation of the logarithmic Laplacian on \(\mathbb{R}^n\), and in this form we are well–positioned to study the associated variational problems. Recall that
	\[	k:\mathbb{R}^n\setminus\{0\}\to\mathbb{R},\qquad
	k(z)=c_n\,\mathbf1_{B_1}(z)\,\lvert z\rvert^{-n},\]
	\[	j:\mathbb{R}^n\to\mathbb{R},\qquad
	j(z)=c_n\,\mathbf1_{\mathbb{R}^n\setminus B_1}(z)\,\lvert z\rvert^{-n},\]
	where \(c_n=\pi^{-n/2}\Gamma(n/2)\).  Then the integral representation (\ref{lohrn}) can be rewritten as
	\begin{equation}\label{eq:euclid-loglap}
		\log \left(-\Delta\right)f(x)
		=\int_{\mathbb{R}^n}\bigl(f(x)-f(y)\bigr)\,k(x-y)\,dy
		\;-\;\bigl[j * f\bigr](x)
		\;+\;\rho_n\,f(x),
	\end{equation}
	with
	\[
	\bigl[j * f\bigr](x)
	=\int_{\mathbb{R}^n}j(x-y)\,u(y)\,dy,
	\quad
	\rho_ n
	=2\ln2 + \psi\!\bigl(\tfrac n2\bigr)-\gamma,
	\]
	where \(\psi=\Gamma'/\Gamma\) is the digamma function and \(\gamma\) the Euler–Mascheroni constant.

	\section{Acknowledgements}
	
We thank Zhiqin Lu for providing us the key idea of defining the Logarithmic Laplacian via spectral calculus on general manifolds. We would like to express our sincere gratitude to Bobo Hua for his valuable guidance and insightful discussions throughout this work. We are also grateful to Huyuan Chen for his numerous comments and suggestions that greatly improved the presentation of the paper. We thank Wancheng Zhang for his valuable discussions on heat kernel estimates in the noncompact setting.
	
	\bibliographystyle{unsrt}
	\bibliography{refs}

\end{document}